\numberwithin{equation}{section}
\newtheorem{theorem}{Theorem}[section]
\newtheorem{corollary}[theorem]{Corollary}
\newtheorem{lemma}[theorem]{Lemma}
\newtheorem{prop}[theorem]{Proposition}
\newtheorem{proposition}[theorem]{Proposition}
\theoremstyle{definition}
\newtheorem{definition}[theorem]{Definition}
\newtheorem{example}[theorem]{Example}
\newtheorem{remark}[theorem]{Remark}
\newtheorem{question}[theorem]{Question}
\newcommand{\N}{\mathbb{N}}
\newcommand{\Z}{\mathbb{Z}}
\newcommand{\Q}{\mathbb{Q}}
\newcommand{\R}{\mathbb{R}}
\newcommand{\C}{\mathbb{C}}
\newcommand{\T}{\mathbb{T}}
\newcommand{\TT}{\mathbb{T}}
\renewcommand{\P}{\mathbb{P}}
\newcommand{\CP}{\mathbb{CP}}
\renewcommand{\k}{\Bbbk}
\newcommand{\RR}{\mathcal{R}}
\newcommand{\V}{\mathcal{V}}
\newcommand{\VV}{\mathcal{V}}
\newcommand{\A}{{\mathcal{A}}}
\newcommand{\BB}{{\mathcal{B}}}
\newcommand{\HH}{{\mathfrak{h}}}
\newcommand{\h}{{\mathfrak{h}}}
\newcommand{\B}{{\mathfrak{B}}}
\newcommand{\bo}{{\mathbf 1}}
\newcommand{\bz}{{\mathbf 0}}
\newcommand{\bm}{{\mathbf{m}}}
\newcommand{\sE}{\mathsf{E}}
\newcommand{\sV}{\mathsf{V}}
\newcommand{\sT}{\mathsf{T}}
\DeclareMathOperator{\rank}{rank}
\DeclareMathOperator{\gr}{gr}
\DeclareMathOperator{\im}{im}
\DeclareMathOperator{\coker}{coker}
\DeclareMathOperator{\codim}{codim}
\DeclareMathOperator{\id}{id}
\DeclareMathOperator{\ab}{{ab}}
\DeclareMathOperator{\abf}{{abf}}
\DeclareMathOperator{\Sym}{Sym}
\DeclareMathOperator{\Hom}{{Hom}}
\DeclareMathOperator{\Tor}{{Tor}}
\DeclareMathOperator{\Hilb}{{Hilb}}
\DeclareMathOperator{\LL}{\Lie}
\DeclareMathOperator{\Lie}{Lie}
\DeclareMathOperator{\Spec}{{Spec}}
\DeclareMathOperator{\Aut}{Aut}
\DeclareMathOperator{\Tors}{Tors}
\DeclareMathOperator{\TC}{TC}
\DeclareMathOperator{\loc}{loc}
\DeclareMathOperator{\supp}{{supp}}
\DeclareMathOperator{\ii}{i}
\newcommand{\surj}{\twoheadrightarrow}
\newcommand{\inj}{\hookrightarrow}
\newcommand{\isom}{\xrightarrow{\,\simeq\,}}
\newcommand{\abs}[1]{\left| #1 \right|}
\def\set#1{{\{ #1\}}}
\newcommand{\bwedge}{\mbox{\normalsize $\bigwedge$}}
\definecolor{lime}{HTML}{A6CE39}
\DeclareRobustCommand{\orcidicon}{
	\begin{tikzpicture}
	\draw[lime, fill=lime] (0,0) 
	circle [radius=0.15] 
	node[white] {{\fontfamily{qag}\selectfont \tiny ID}};
	\draw[white, fill=white] (-0.0625,0.095) 
	circle [radius=0.007];
	\end{tikzpicture}
	\hspace{-2mm}
}
 \def\l@subsection{\@tocline{2}{0pt}{4pc}{6pc}{}}
\def\l@subsubsection{\@tocline{3}{0pt}{8pc}{8pc}{}}
\begin{document}

\title[Topology and combinatorics of decomposable arrangements]{%
On the topology and combinatorics of \\ decomposable arrangements} 

\author[Alexander~I.~Suciu]{Alexander~I.~Suciu$^1$\!\!\orcidA{}}
\address{Department of Mathematics,
Northeastern University,
Boston, MA 02115, USA}
\email{\href{mailto:a.suciu@northeastern.edu}{a.suciu@northeastern.edu}}
\urladdr{\href{https://suciu.sites.northeastern.edu}%
{https://suciu.sites.northeastern.edu}}
\thanks{$^1$Supported in part by Simons Foundation Collaboration 
Grants for Mathematicians \#693825}

\subjclass[2020]{Primary
52C35. 
Secondary
16W70, 
17B70,  
20F14,  
20F40,  
32S55,  
57M07.  
}

\keywords{Hyperplane arrangement, decomposable arrangement, 
intersection lattice, fundamental group, holonomy Lie algebra, 
associated graded Lie algebra, Chen ranks, Alexander invariant, 
cohomology jump loci, Milnor fibration}

\dedicatory{To Enrique Artal Bartolo, on his 60th birthday}

\begin{abstract}
We study topological properties of complex hyperplane 
arrangements that are decomposable. When this purely 
combinatorial condition is satisfied, it is known that 
the associated graded Lie algebra of the arrangement 
group $G$ decomposes (in degrees greater than $1$) 
as a direct product of free Lie algebras. 
It follows that the $I$-adic completion of the Alexander invariant 
$B(G)$ also decomposes as a direct sum of ``local" invariants 
and the Chen ranks of $G$ are the sums of the local 
contributions. Moreover, if $B(G)$ is separated, then the degree 
$1$ cohomology jump loci of the arrangement complement 
have only local components, and the algebraic monodromy 
of the Milnor fibration is trivial in degree~$1$. 
\end{abstract}
\maketitle

\tableofcontents
\section{Introduction}
\label{sect:intro}

\subsection{Hyperplane arrangements}
\label{subsec:arr-intro}
An arrangement of hyperplanes is a finite collection of codimension-$1$ 
linear subspaces in a finite-dimensional, complex vector space. 
One of the main goals of the subject is to decide whether a given 
topological invariant of the complement $M=M(\A)$ is combinatorially 
determined, and, if so, to express it explicitly in terms of 
the intersection lattice $L(\A)$. 

At one extreme are the Betti numbers $b_q(M)$, which may be 
computed solely in terms of the M\"{o}bius function $\mu\colon L(\A)\to \Z$, 
and the cohomology ring $H^*(M;\Z)$, which is 
a graded algebra with degree-$1$ generators 
indexed by the hyperplanes, subject to relations defined in terms of $L(\A)$. 
At the other extreme is the fundamental group of the complement, 
$G=G(\A)$, which also admits a presentation with generators indexed 
by $\A$ and with as many com\-mutator-relators as $b_2(M)$, 
yet is not always combinatorially determined. 
Other invariants fall somewhere in between. For instance, 
the LCS ranks $\phi_k(G)$ and the Chen ranks $\theta_k(G)$
of an arrangement group $G$  (see \S\ref{subsec:lcs-alex-intro}) 
are combinatorially determined, yet the torsion in the LCS quotients is not. 
Moreover, the degree-$1$ characteristic varieties $\VV_s(M)$ 
(see \S\ref{subsec:cjl-intro}) are finite unions of subtori that are 
combinatorially determined, and also translated subtori whose 
combinatorial status is still largely unknown.

In this paper, we narrow the lens, and focus on a combinatorially-defined 
class of arrangements for which many of the aforementioned difficulties 
disappear, yet several unresolved questions still remain. Following \cite{PS-imrn04}, 
we say that an arrangement $\A$ is {\em decomposable}\/ if the degree-$3$ 
part of the holonomy Lie algebra $\h(\A)$ consists only of contributions 
coming from the flats in $L_2(\A)$, see \S\ref{subsec:holo-intro}.
When this condition is satisfied, 
the associated graded Lie algebra of $G$ decomposes (in degrees 
greater than $1$) as a direct product of free Lie algebras determined 
by the M\"{o}bius function of $L(\A)$ \cite{PS-imrn04}, and all the nilpotent 
quotients of $G$ are also combinatorially determined \cite{PrS20}. 

The main part of our analysis concerns the {\em Alexander invariant}, 
$B(G)=G'/G''$, viewed as a module over the group ring of 
$G_{\ab}=G/G'$ and endowed with the topology defined by the 
filtration by powers of the augmentation ideal $I$. Pursuing 
work started in \cite{CS-tams99}, we show here the following: 
If $\A$ is decomposable over $\Q$, then the $I$-adic 
completion of $B(G)\otimes \Q$ also decomposes 
as a direct sum of ``local" invariants,  
and the Chen ranks of $G$ are the sums of the local 
contributions. Furthermore, we show that $B(G)$ is decomposable 
if and only if it is separated in the $I$-adic topology and $\A$ 
is decomposable. If this is the case (upon tensoring with $\Q$), 
then the characteristic variety $\V_1(M(\A))$ has only components 
arising from the $2$-flats of $\A$, and the algebraic monodromy 
of the Milnor fibration is trivial in degree $1$. 

\subsection{Lie algebras and Alexander invariants}
\label{subsec:lcs-alex-intro}
To describe in more detail our work, we start by defining the Lie algebras 
that come into play here. Given a group $G$, its lower central series (LCS) 
is defined inductively by setting $\gamma_1 (G)=G$ and 
$\gamma_{k+1}(G) =[G,\gamma_k (G)]$ for $k\ge 1$. 
This series is both normal and central; therefore, 
its successive quotients, $\gr_k(G)= \gamma_k(G)/\gamma_{k+1}(G)$, 
are abelian groups. 
The {\em associated graded Lie algebra}, $\gr(G)$, 
is the direct sum of the groups $\gr_k(G)$, with Lie bracket (compatible 
with the grading) induced from the group commutator. The 
{\em Chen Lie algebra}, $\gr(G/G'')$, is simply the  
associated graded Lie algebra of the maximal 
metabelian quotient of $G$. 

Assume now that $G$ is finitely generated. The LCS quotients of $G$ are then 
also finitely generated; we let $\phi_k(G)\coloneqq \rank(\gr_k(G))$ be the 
ranks of these abelian groups and we let $\theta_k(G)\coloneqq \rank(\gr_k(G/G''))$ 
be the Chen ranks of $G$. As shown by Massey in \cite{Ms-80}, the $I$-adic filtration 
on $B(G)$ coincides with the LCS-filtration on $G/G''$, after a shift of $2$; therefore, 
$\gr_k(B(G))=\gr_{k+2}(G/G'')$ for all $k\ge 0$.

The {\em holonomy Lie algebra}, $\h(G)$ is a finitely presented, quadratic 
Lie algebra built solely in terms of cohomological data associated 
to the group $G$. It is defined as the quotient of the free Lie algebra on 
the free abelian group $G_{\abf}=G_{\ab}/\Tors$ by the Lie ideal generated 
by the image of the dual of the cup-product map $H^1(G)\wedge H^1(G)\to H^2(G)$. 
The Lie algebra $\h(G)$ is a graded Lie algebra that maps 
surjectively to $\gr(G)$; moreover, its maximal metabelian quotient, 
$\h(G)/\h(G)''$, maps surjectively to $\gr(G/G'')$. 
Following \cite{PS-imrn04}, we use the holonomy Lie algebra  
to construct an infinitesimal version of the Alexander invariant, 
$\B(G)=\h(G)'/\h(G)''$, which is a graded module over the symmetric 
algebra $\Sym(G_{\abf})$.

If the group $G$ is $1$-formal---which is the case when $G=G(\A)$ is an 
arrangement group---these seemingly disparate strands tie together much 
more tightly, at least over the rationals. For instance, the map 
$\h(G)\otimes \Q\to \gr(G)\otimes \Q$ is an isomorphism \cite{Sullivan}, 
and so is the map $\h(G)/\h(G)''\otimes \Q\to \gr(G/G'')\otimes \Q$ \cite{PS-imrn04}, 
while the completions of $B(G)$ and $\B(G)$ are isomorphic, 
after tensoring with $\Q$ \cite{DPS-duke}.

\subsection{Holonomy, localization, and decomposability}
\label{subsec:holo-intro}
For an arrangement $\A$, the holonomy Lie algebra $\h(\A)=\h(G(\A))$ 
depends only on the truncated Orlik--Solomon algebra $H^{\le 2} (M(\A);\Z)$, 
and thus, only on the intersection poset $L_{\le 2}(\A)$. An explicit presentation 
for $\h(\A)$ was first given by Kohno in \cite{Kohno-83}.

The localization of $\A$ at a flat $X\in L_2(\A)$ is the 
sub-arrangement $\A_X$ consisting of those hyperplanes that contain $X$. 
The inclusion $\A_X \inj \A$ 
induces a map between complements, $M(\A)\inj M(\A)$, which in turn induces 
a (split) surjection from $G(\A)$ to $G(\A_X) \cong F_{\mu(X)}\times \Z$. 
We obtain in this fashion an epimorphism $\h(\A)\surj \h(\A_X)$. 
As shown in \cite{PS-imrn04}, these maps assemble 
into a morphism of graded Lie algebras, 
\begin{equation}
\label{eq:h-hloc}
\begin{tikzcd}[column sep=20pt]
\h(\A) \ar[r]& \h(\A)^{\loc}\coloneqq \prod_{X\in L_2(\A)} \h(\A_X),
\end{tikzcd}
\end{equation}
which is a surjection in degrees $k\ge 3$ and an isomorphism in degree $k=2$. 
The arrangement is said to be decomposable if $h_3(\A)\cong \h(\A)_3^{\loc}$, 
in which case the maps $\h'(\A)\to \h'(\A)^{\loc}$ and  $\h(G)\to \gr(G)$ are 
isomorphisms, and $\h(\A)$ is torsion-free. 

The decomposability property in inherited by sub-arrangements (\cite{PS-imrn04}) and 
is preserved under taking products of arrangements (Proposition \ref{prop:prod}). 
Graphic arrangements are decomposable precisely when the corresponding graph 
contains no $4$-cliques (\cite{PS-imrn04}), while split-solvable arrangements are always 
decomposable (Corollary \ref{cor:pen-decomp}). 

\subsection{Alexander invariants of arrangements}
\label{subsec:alex-intro}
Given a hyperplane arrangement $\A$, we let 
$B(\A)= G(\A)'/G(\A)''$ be its Alexander invariant, 
viewed as a module over the Laurent polynomial ring $R=\Z[G(\A)_{\ab}]$. 
The structure of this module, and that of its associated graded module, $\gr(B(G))$, 
holds rich and varied information regarding the Chen ranks of $G(\A)$, the 
characteristic and resonance varieties of $M(\A)$, and the algebraic 
monodromy of the Milnor fibration of $\A$.

For each $2$-flat $X\in L_2(\A)$, the homomorphism $G(\A)\surj G(\A_X)$ induces  
an epimorphism $B(G(\A))\surj B(G(\A_X))$. 
Letting $B(\A)^{\loc}$ be the direct sum of all the ``local" Alexander invariants 
$B(G(\A_X))$, viewed as $R$-modules by restriction of scalars, we obtain 
a map of $R$-modules, $B(\A)\to B(\A)^{\loc}$. 

In a similar fashion, we let $\B(\A)=\h(\A)'/\h(\A)''$ be the infinitesimal Alexander 
invariant of the arrangement, viewed as a module over the polynomial ring $S=\gr(R)$. 
The Lie algebra map $\h(\A) \to \h(\A)^{\loc}$ from \eqref{eq:h-hloc} induces a map 
of graded $S$-modules, $\B(\A)\to \B(\A)^{\loc}$.  In Theorem \ref{thm:b-bloc-q}, 
we prove the following.

\begin{theorem}
\label{thm:bbloc-intro}
For any hyperplane arrangement $\A$, the morphisms 
$\B(\A) \to \B(\A)^{\loc}$ and $B(\A)\otimes \Q\to B(\A)^{\loc}\otimes \Q$ 
are surjective.
\end{theorem}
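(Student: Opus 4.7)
The plan is to prove the two surjections in parallel: the first via the infinitesimal Lie-theoretic data, and the second by a direct group-theoretic lifting at the level of Alexander invariants.

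For $\B(\A) \surj \B(\A)^{\loc}$, I invoke \eqref{eq:h-hloc}: the morphism $\h(\A) \to \h(\A)^{\loc}$ is an isomorphism in degree $2$ and a surjection in all degrees $k \ge 3$. Since $\h(\A)$ is generated in degree $1$, its derived subalgebra $\h(\A)'$ coincides with $\bigoplus_{k \ge 2} \h(\A)_k$, so the restricted map $\h(\A)' \to (\h(\A)^{\loc})'$ remains surjective. Because any Lie algebra morphism carries the second derived subalgebra into the target's second derived subalgebra, passage to quotients yields the desired surjection $\B(\A) = \h(\A)'/\h(\A)'' \surj (\h(\A)^{\loc})'/(\h(\A)^{\loc})'' = \B(\A)^{\loc}$.

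For $B(\A) \surj B(\A)^{\loc}$, which already implies the rational statement, I argue directly, and in fact integrally. Fix a $2$-flat $X \in L_2(\A)$ and let $b_X \in B(G(\A_X))$. Choose a representative $w_X \in G(\A_X)'$ written as a product of iterated conjugates of commutators in the standard meridian generators $y_H$ with $H \in \A_X$, and substitute $y_H \mapsto x_H$ to obtain a lift $\tilde w_X \in G(\A)'$; its class $\tilde b_X \in B(\A)$ satisfies $\pi_X(\tilde b_X) = b_X$. For any other $Y \in L_2(\A)$, the geometric projection $\pi_Y\colon G(\A) \to G(\A_Y)$ sends $x_H$ to the corresponding meridian when $H \in \A_Y$ and kills it otherwise, since such a meridian bounds a small disk inside $M(\A_Y)$. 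Hence $\pi_Y([x_{H_1}, x_{H_2}]) = 1$ unless both $H_i \in \A_Y$; but two distinct hyperplanes $H_1, H_2 \in \A_X \cap \A_Y$ would force $X = H_1 \cap H_2 = Y$, contradicting $Y \neq X$. Consequently $\pi_Y(\tilde b_X) = 0$ for every $Y \neq X$, and $\sum_X \tilde b_X$ provides a preimage of an arbitrary element $(b_X)_X \in B(\A)^{\loc}$.

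The main obstacle is ensuring that the substitution $y_H \mapsto x_H$ respects the full $R$-module structure on $B(G(\A_X))$, so that all the conjugation inherent in a general representative of $b_X$ is correctly handled. This reduces to the two facts that $\pi_Y(x_H) = 1$ for $H \notin \A_Y$ and that conjugation fixes the identity, so the triviality of each $\pi_Y([x_{H_1}, x_{H_2}])$ propagates through any conjugate-commutator representative. With these in hand, one obtains the stronger integral statement $B(\A) \surj B(\A)^{\loc}$, which immediately yields the surjectivity of $B(\A) \otimes \Q \to B(\A)^{\loc} \otimes \Q$ asserted in the theorem.
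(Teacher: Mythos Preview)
Your argument for the infinitesimal surjection $\B(\A)\surj\B(\A)^{\loc}$ is exactly the paper's: both invoke the fact that $\h(j_\sharp)$ is surjective on derived subalgebras (Proposition~\ref{prop:PS-holo-prod}) and pass to metabelian quotients.

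For the second surjection, however, you take a genuinely different route. The paper does \emph{not} argue directly on $B(\A)$; instead it leverages the $1$-formality of $G(\A)$ to produce the commuting square \eqref{eq:bb-bloc} relating the $I$-adic completions of $B$ and $\B$ over~$\Q$, reads off surjectivity of $\widehat{\Pi}\otimes\Q$ from part~\eqref{pi1}, and then descends to $\Pi\otimes\Q$ via Lemma~\ref{lem:b-surj}. Your argument bypasses formality and completions entirely: using the splittings $(r_X)_\sharp$ from Lemma~\ref{lem:dsy} (this is what your substitution $y_H\mapsto x_H$ amounts to, cf.\ the proof of Lemma~\ref{lem:j-pi}), you lift each $b_X$ to $\tilde b_X\in B(\A)$ and then exploit the elementary observation that two distinct hyperplanes cannot lie in two distinct localizations $\A_X$, $\A_Y$ to kill all cross-terms. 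This is correct and in fact yields the stronger \emph{integral} surjectivity $B(\A)\surj B(\A)^{\loc}$, which the paper only obtains by citing a considerably more laborious Fox-calculus computation (see Remark~\ref{rem:cs-alex}). What the paper's approach buys is the commutative diagram \eqref{eq:bb-bloc} itself, which is reused in the proof of Theorem~\ref{thm:decomp-alex}; your approach is more economical for the statement at hand but does not set up that machinery.
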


As an application, we recover the following lower bound on the Chen ranks of 
arrangement groups, first established in \cite{CS-tams99} by other methods:
\begin{equation}
\label{eq:chen-bound-intro}
\theta_k(G(\A)) \ge 
(k-1) \sum_{X\in L_2(\A)} \binom{\mu(X)+k-2}{k} ,
\end{equation}
for all $k\ge 2$, with equality for $k=2$.

\subsection{Decomposable Alexander invariants}
\label{subsec:dec-intro}

We say that the Alexander invariant of an arrangement $\A$ decomposes 
if the canonical map $B(\A)\to B(\A)^{\loc}$ is an isomorphism. A similar 
definition was first made in \cite[\S6.4]{CS-tams99} in regards to the 
$I$-adic completion of this map, $\widehat{B(\A)}\to \widehat{B(\A)}{}^{\loc}$. 
In the same spirit, we say that the infinitesimal Alexander invariant decomposes 
if the map $\B(\A)\to \B(\A)^{\loc}$ is an isomorphism. In all three cases, 
analogous definitions work over the rationals. 

A natural question arises: What is the relationship between the decomposability 
of $\A$---a purely combinatorial notion that depends only on $L_{\le 2}(\A)$---and 
that of $B(\A)$---a notion that depends {\it a priori}\/ on the topology of $M(\A)$?
Analogous questions may be raised about the decomposability of $B(\A)$ over $\Q$, 
as well as the decomposability of $\widehat{B(\A)}$ and $\B(\A)$. 

As shown in Theorem \ref{thm:alex-decomp}, the topological decomposability 
notion implies the combinatorial one, that is, if $B(\A)$ decomposes then $\A$ 
decomposes (and likewise over $\Q$). The converse, though, is much more subtle. 
A first step in this direction was done in \cite[Theorem~7.9]{CS-tams99}, where it was shown 
that $\widehat{B(\A)}$ decomposes if $\A$ does. We give in 
Theorem \ref{thm:decomp-alex} 
a completely different proof of this result, albeit over the rationals. 
To ascertain the decomposability of $B(\A)$ itself, 
it remains to decide whether the Alexander invariant $B(\A)$ is separated in the 
$I$-adic topology, or, equivalently, whether the metabelian group $G/G''$ is 
residually nilpotent. This separability condition holds in many examples, 
but it is an open question whether it holds for all decomposable arrangements.

We summarize our results in Corollary \ref{cor:dec-sep}, as follows.

\begin{theorem}
\label{thm:alex-dec-intro}
Let $\A$ be a hyperplane arrangement.
\begin{enumerate}[itemsep=2pt, topsep=-1pt]
\item  \label{bdx1} 
$\B(\A)$ is decomposable (over $\Q$) if and only if $\A$ is decomposable (over $\Q$).
\item  \label{bdx2} 
$B(\A)$ is decomposable (over $\Q$) if and only if 
$\A$ is decomposable and $B(\A)$ is separated (over $\Q$). 
\end{enumerate}
\end{theorem}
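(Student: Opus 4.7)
The plan is to split each equivalence into forward and reverse directions, dispatching part~(1) entirely at the level of the holonomy Lie algebra before attacking the topological part~(2). For part~(1), the starting observation is the identification $\B(\A)^{\loc}=\bigoplus_{X\in L_2(\A)} \h(\A_X)'/\h(\A_X)''$, so that the canonical map $\B(\A)\to\B(\A)^{\loc}$ is the map of derived-modulo-second-derived quotients induced by~\eqref{eq:h-hloc}. If $\A$ is decomposable, the remarks following \eqref{eq:h-hloc} assert that $\h(\A)'\to\h(\A)^{\loc}{}'$ is already an isomorphism; since $\h''=[\h',\h']$ is determined functorially by $\h'$, this descends to an isomorphism $\B(\A)\isom\B(\A)^{\loc}$. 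For the converse, $\h''$ vanishes in degrees $\le 3$, so in degree $3$ one has $\B(\A)_3=\h(\A)_3$ and $\B(\A)^{\loc}_3=\h(\A)^{\loc}_3$; hence an isomorphism $\B(\A)\isom \B(\A)^{\loc}$ forces $\h(\A)_3\cong\h(\A)^{\loc}_3$, which is precisely the definition of decomposability of $\A$. The same argument runs verbatim over $\Q$.

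For part~(2), the direction $(\Rightarrow)$ is a gathering of facts already stated in the introduction: decomposability of $B(\A)$ forces decomposability of $\A$ by Theorem~\ref{thm:alex-decomp}, while separation follows because each local group $G(\A_X)\cong F_{\mu(X)}\times\Z$ has a residually nilpotent maximal metabelian quotient, so each $B(G(\A_X))$ is $I$-adically separated; the finite direct sum $B(\A)^{\loc}$ inherits separation, and so does $B(\A)$ through the assumed isomorphism.

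The substance is the $(\Leftarrow)$ direction of part~(2). Working over $\Q$, I would set up the commutative square
\[
\begin{tikzcd}[column sep=22pt]
B(\A)\otimes\Q \ar[r] \ar[d] & \widehat{B(\A)}\otimes\Q \ar[d, "\cong"] \\
B(\A)^{\loc}\otimes\Q \ar[r] & \widehat{B(\A)}{}^{\loc}\otimes\Q
\end{tikzcd}
\]
whose right vertical arrow is an isomorphism by decomposability of $\A$ together with Theorem~\ref{thm:decomp-alex}, whose top horizontal arrow is injective by the separation hypothesis on $B(\A)$, whose bottom horizontal arrow is injective because each local factor is already separated, and whose left vertical arrow is surjective by Theorem~\ref{thm:bbloc-intro}. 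If $x\in B(\A)\otimes\Q$ lies in the kernel of the left vertical arrow, then by commutativity its image $\widehat x\in\widehat{B(\A)}\otimes\Q$ is mapped to zero by the right vertical arrow, whence $\widehat x=0$; injectivity of the top arrow then gives $x=0$, and combined with surjectivity this yields the desired isomorphism.

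The main technical point I expect is verifying separation of the local Alexander invariants $B(G(\A_X))$ and checking that $I$-adic completion commutes with the finite direct sum defining $B(\A)^{\loc}$; once these are in place, the square commutes by naturality of completion and the chase above is routine. The remaining ingredients---the isomorphism $\h(\A)'\to\h(\A)^{\loc}{}'$ in the decomposable case and the surjectivity of Theorem~\ref{thm:bbloc-intro}---are already available from the earlier sections of the paper.
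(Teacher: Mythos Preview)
Your proof is essentially the same as the paper's. Part~(1) matches Theorems~\ref{thm:alex-decomp}(\ref{bd2}) and~\ref{thm:decomp-alex}(\ref{b1}) exactly: the forward direction goes through the isomorphism $\h'(\A)\isom\h'(\A)^{\loc}$ from Theorem~\ref{thm:PS-decomp}, and the converse reads off degree~$3$ (equivalently, $\B_1$) using $\h''_{\le 3}=0$. For part~(2), your $(\Rightarrow)$ is Proposition~\ref{prop:decomp-sep} plus Theorem~\ref{thm:alex-decomp}(\ref{bd1}), and your commutative-square chase for $(\Leftarrow)$ over~$\Q$ is precisely the argument packaged in Lemma~\ref{lem:b-inj}(\ref{i3}) and invoked in Theorem~\ref{thm:decomp-alex}(\ref{b3}). (Your bottom-row injectivity is not actually needed for the chase, though it is true.)

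One omission: you only treat $(\Leftarrow)$ of part~(2) over~$\Q$. The integral version requires knowing that $\widehat{B(\A)}$ decomposes over~$\Z$ when $\A$ is decomposable, and this does \emph{not} follow from your $\B$-and-formality route (the isomorphism $\widehat{B(\A)}\cong\widehat{\B(\A)}$ is only available after tensoring with~$\Q$). The paper imports this from \cite[Thm.~7.9]{CS-tams99} (Theorem~\ref{thm:cs-dec}) and explicitly flags it as the one non-self-contained step; you should do the same, or else restrict part~(2)'s reverse implication to~$\Q$.
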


The proofs of all these implications are self-contained, except 
for the integral version of the reverse implication in \eqref{bdx2}, 
which relies on the aforementioned result from \cite{CS-tams99}. 
As a consequence of part \eqref{bdx1}, we show in Corollary \ref{eq:chen-decomp} 
that the lower bound for the Chen ranks from \eqref{eq:chen-bound-intro} 
holds as an equality for $\Q$-decomposable arrangements. This 
recovers results from \cite{CS-tams99, PS-cmh06} in a slightly 
stronger form. 

\subsection{Cohomology jump loci and Milnor fibrations}
\label{subsec:cjl-intro}
We conclude with an analysis of the {\em characteristic varieties} 
(the jump loci for homology in rank $1$ local systems) and the 
{\em resonance varieties} (the jump loci of the Koszul complex 
associated to the cohomology algebra) of the complement 
of a decomposable arrangement.

Let $\A=\{H_1,\dots ,H_n\}$ be a hyperplane arrangement in $\C^{d+1}$. 
Its complement, $M=M(\A)$, is a smooth, complex, quasi-projective variety. 
Hence, by a general result of Arapura \cite{Ar}, its  
(degree~$1$) characteristic varieties, $\V_s(M)$, 
are finite unions of torsion-translates of algebraic subtori 
of the character group $\Hom(\pi_1(M),\C^*)=(\C^*)^n$.  
Since $M$ is also a formal space, its resonance varieties, $\RR_s(M)$,  
coincide with the tangent cone at the trivial character to $\V_s(M)$, 
see \cite{CS99, DPS-duke}.  
As shown in \cite{FY}, the resonance varieties of $\A$ 
may be described solely in terms of multinets on sub-arrangements 
of $\A$.  In general, though, $\V_1(M)$ may contain components 
that do not pass through the origin, see \cite{Su02, CDS, DeS-plms}. 

For each rank-$2$ flat with $\mu(X)>1$, consider the linear subspace
$L_X = \big\{ x \in \C^{n} : \sum_{H_i\in \A_X} x_i =0 \ \text{and $x_i= 0$ if 
$H_i\notin \A_X$} \big\}$, and let $T_X=\exp(L_X)\subset (\C^*)^{n}$ be 
the corresponding algebraic subtorus.  In Theorem \ref{thm:decomp-cjl}, 
we prove the following. 

\begin{theorem}
\label{thm:cjl-dec-intro}
Let $\A$ be a $\Q$-decomposable arrangement. For each $s\ge 1$, 
\begin{enumerate}[itemsep=3pt, topsep=2pt]
\item  \label{cjx1} 
$\RR_s(M)=\bigcup_{\substack{X \in L_2(\A)\\ \mu(X)>s}}   L_X$.
\item  \label{cjx2} 
If $B(\A)\otimes \Q$ is separated, then 
$\V_s(M)=\bigcup_{\substack{X \in L_2(\A)\\ \mu(X)>s}}T_X$.
\end{enumerate}
\end{theorem}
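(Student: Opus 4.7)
The strategy is to translate the decompositions of the (infinitesimal) Alexander invariant given by Theorem~\ref{thm:alex-dec-intro} into statements about the jump loci, using the standard descriptions of $\RR_s(M)$ and $\V_s(M)$ as Fitting-ideal vanishing loci of $\B(\A)\otimes\C$ and $B(\A)\otimes\C$ respectively (the former valid because $G(\A)$ is $1$-formal).

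For part~(\ref{cjx1}), $\Q$-decomposability of $\A$ together with Theorem~\ref{thm:alex-dec-intro}(\ref{bdx1}) gives an isomorphism of graded $(S\otimes\Q)$-modules
\[
\B(\A)\otimes\Q \isom \bigoplus_{X\in L_2(\A)} \B(\A_X)\otimes\Q,
\]
where each summand becomes an $S$-module via the restriction of scalars along the surjection $G(\A)_{\abf}\surj G(\A_X)_{\abf}$. Extending to $\C$ and passing to Fitting-ideal vanishing loci, one obtains
\[
\RR_s(M) = \bigcup_{X\in L_2(\A)} V\bigl(\Fitt_{s-1}(\B(\A_X)\otimes\C)\bigr).
\]
It remains to compute each local piece: since $G(\A_X)\cong F_{\mu(X)}\times\Z$, a direct calculation (reducing to the case of $F_m$) shows that $V(\Fitt_{s-1}(\B(\A_X)\otimes\C))$ equals $L_X$ when $\mu(X)>s$ and is empty otherwise, yielding the stated union.

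Part~(\ref{cjx2}) proceeds in parallel. Under the separation hypothesis, Theorem~\ref{thm:alex-dec-intro}(\ref{bdx2}) provides the analogous decomposition of $B(\A)\otimes\Q$ as a direct sum of local pieces $B(\A_X)\otimes\Q$. Replacing the Fitting-ideal description of $\RR_s$ by that of $\V_s$, and computing the characteristic variety of $F_{\mu(X)}\times\Z$---whose pullback under the split surjection $G(\A)\surj G(\A_X)$ is precisely $T_X=\exp(L_X)\subset(\C^*)^n$---yields the desired formula.

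The principal technical step is the local Fitting-ideal computation: one must verify, with the correct embedding dictated by the restriction of scalars along $G(\A)\surj G(\A_X)$, that the $(s-1)$-st Fitting loci of $B(F_m\times\Z)\otimes\C$ and $\B(F_m\times\Z)\otimes\C$ are, respectively, the subtorus $T_X$ and the linear subspace $L_X$ when $m>s$ (and trivial otherwise). One should also check that intersections $L_X\cap L_Y$ (which are at most one-dimensional, since distinct rank-$2$ flats share at most one hyperplane) do not produce spurious components in the Fitting locus of the direct sum; this is immediate because any such intersection already lies in each of $L_X$ and $L_Y$, so no extra components arise beyond the stated union.
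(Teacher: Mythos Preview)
Your proposal is correct and follows essentially the same route as the paper's proof (Theorem~\ref{thm:decomp-cjl}): the paper phrases the jump-locus identifications as $\RR_s(G)=\supp\big(\bigwedge^s\B(G)\otimes\C\big)$ and $\V_s(G)=\supp\big(\bigwedge^s B(G)\otimes\C\big)$ (Theorems~\ref{thm:cvb} and~\ref{thm:res-supp}), which are equivalent to your Fitting formulation via $\supp\big(\bigwedge^s M\big)=V(\Fitt_{s-1}M)$, and then feeds in the decomposition of the (infinitesimal) Alexander invariant exactly as you do. One small correction to your cross-terms remark: for distinct $X,Y\in L_2(\A)$ one actually has $L_X\cap L_Y=\{0\}$ (not merely ``at most one-dimensional''), since if the two flats share a hyperplane $H_0$ then the sum-zero condition in $L_X$ forces the $H_0$-coordinate to vanish as well; this is precisely what ensures that, away from the origin (resp.\ identity), at most one local summand has nonzero fiber, so no spurious components can arise.
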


For each hyperplane $H\in \A$, let $f_H\colon \C^{d+1} \to \C$ be a linear form 
with kernel $H$.  Assigning a multiplicity vector $\bm=\{m_H\}_{H\in \A}\in \N^{n}$ 
to the hyperplanes, we obtain a polynomial map, 
$f_{\bm}=\prod_{H\in \A} f_H^{m_H}\colon \C^{d+1} \to \C$, 
whose restriction to the complement, $f_{\bm}\colon M(\A) \to \C^{*}$,  
is the projection map of a smooth, locally trivial bundle, 
known as the {\em Milnor fibration}\/ of the multi-arrangement $(\A,\bm)$. 
Let $F_{\bm}$ be the typical fiber and let $h\colon F_{\bm}\to F_{\bm}$ 
be the monodromy of the fibration. A much-studied problem is to compute 
the first Betti number of $F_{\bm}$ and find the eigenvalues of the 
algebraic monodromy acting on $H_1(F_{\bm};\C)$; see for instance 
\cite{Artal, CS95, DeS-plms, PS-plms17}. As an application of the 
previous theorem, we prove in Theorem \ref{thm:decomp-mono} the following. 

\begin{theorem}
\label{thm:milnor-dec-intro}
Let $\A$ be an arrangement of rank $3$ or higher. Suppose 
$\A$ is $\Q$-decomposable and $B(\A)\otimes \Q$ is separated. Then, 
for any choice of multiplicities $\bm$ on $\A$, the algebraic monodromy 
of the Milnor fibration, $h_*\colon H_1(F_{\bm};\Q)\to H_1(F_{\bm};\Q)$, is trivial.
\end{theorem}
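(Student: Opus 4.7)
The plan is to convert the assertion about $h_{*}$ into a statement about characteristic varieties, then apply part~\eqref{cjx2} of Theorem~\ref{thm:cjl-dec-intro}. Set $N=\sum_{H\in\A}m_H$. The monodromy $h$ is the restriction of multiplication by $\exp(2\pi\mathrm{i}/N)$ on $\C^{d+1}$, so it has finite order and $h_{*}$ is semisimple on $H_1(F_{\bm};\Q)$. In particular, $h_{*}$ is trivial on $H_1(F_{\bm};\Q)$ if and only if every $\lambda$-eigenspace on $H_1(F_{\bm};\C)$ vanishes for $\lambda\ne 1$. By the standard dictionary between Milnor eigenspaces and twisted cohomology of the complement (cf.~\cite{CS95,DPS-duke,PS-plms17}), for each non-trivial $N$-th root of unity $\lambda$ the dimension of $H_1(F_{\bm};\C)_{\lambda}$ equals $\dim H^1(M(\A);L_{\delta(\lambda)})$, where $\delta(\lambda)\in(\C^{*})^{n}$ is the character sending the meridian around $H$ to $\lambda^{m_H}$. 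This dimension is positive exactly when $\delta(\lambda)\in\V_1(M(\A))$, so the theorem reduces to ruling out $\delta(\lambda)\in\V_1(M(\A))$ for every $N$-th root of unity $\lambda\ne 1$.

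Invoking part~\eqref{cjx2} of Theorem~\ref{thm:cjl-dec-intro}, under the hypotheses of $\Q$-decomposability and separation of $B(\A)\otimes\Q$ one has
\[
\V_1(M(\A))=\bigcup_{\substack{X\in L_2(\A)\\ \mu(X)>1}}T_X.
\]
So the goal becomes to show, for every rank-$2$ flat $X$ with $\mu(X)>1$, that $\delta(\lambda)\notin T_X$ whenever $\lambda\ne 1$. Unpacking the definition of $T_X$, this amounts to verifying that the system of equations $\lambda^{m_H}=1$ for $H\notin\A_X$ (the product condition $\prod_{H\in\A_X}\lambda^{m_H}=1$ being automatic from $\lambda^{N}=1$) has no non-trivial root-of-unity solution.

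The key and main difficulty lies in this final combinatorial/arithmetic step. Since $\A$ has rank at least $3$ while $\A_X$ has rank $2$, the complement $\A\setminus\A_X$ is non-empty and supplies at least one constraint $\lambda^{m_H}=1$; in general, however, a single constraint is not enough to force $\lambda=1$. I expect the argument to exploit the rank-$\ge 3$ hypothesis together with the combinatorial rigidity supplied by $\Q$-decomposability to produce a sufficient system of divisibility conditions on the order of $\lambda$---for instance, by combining constraints coming from several rank-$2$ flats, or by using the ambient structure of the flats $X$ in the intersection lattice---so that the only common root of unity solving all of them is $\lambda=1$. This pigeonhole-style step, tailored to the decomposable class in rank at least $3$, is where I anticipate the main technical work; the rest follows formally from Theorem~\ref{thm:cjl-dec-intro} and the general formalism of Milnor fibrations.
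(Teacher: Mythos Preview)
Your reduction via Theorem~\ref{thm:cjl-dec-intro}\eqref{cjx2} is exactly the paper's route, and you have correctly isolated the only nontrivial step: for each $X\in\widetilde{L}_2(\A)$ and each nontrivial $N$-th root of unity $\lambda$, rule out $\delta(\lambda)=(\lambda^{m_H})_{H\in\A}\in T_X$. At that step the paper does \emph{not} appeal to any further combinatorics of decomposability; it simply notes that $\rank\A\ge 3$ forces $\A_X\subsetneq\A$, so $T_X$ lies in a proper coordinate subtorus $\{t\colon t_j=1\text{ for }H_j\notin\A_X\}$, and then asserts that the one-parameter torus $T_{\bm}=\{(z^{m_1},\dots,z^{m_n})\colon z\in\C^*\}$ meets this only at $\bo$.

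Your intuition that ``a single constraint is not enough'' is well founded: that assertion only yields $z^{m_j}=1$ for $H_j\notin\A_X$, i.e., $\ord(z)\mid\gcd\{m_j\colon H_j\notin\A_X\}$, which need not equal $1$. In fact the statement fails for general $\bm$. Take $\A=\{x=0,\,y=0,\,x+y=0,\,z=0\}$ in $\C^3$: rank $3$, decomposable, with $G(\A)\cong F_2\times\Z^2$ and hence $B(\A)$ separated. With $\bm=(1,1,1,3)$ (so $N=6$) the sole $X\in\widetilde{L}_2(\A)$ is $\{x=y=0\}$, and for $\omega=e^{2\pi\ii/3}$ one has $\delta(\omega)=(\omega,\omega,\omega,1)\in T_X=\V_1(M(\A))$; a direct count gives $b_1(F_{\bm})=5>3=\abs{\A}-1$, so $h_*$ is nontrivial on $H_1(F_{\bm};\Q)$. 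The paper's argument \emph{is} valid whenever $\gcd\{m_H\colon H\notin\A_X\}=1$ for every $X\in\widetilde{L}_2(\A)$---in particular for the usual Milnor fiber $\bm=\bo$---but the ``for any $\bm$'' clause does not survive, and no extra rigidity coming from decomposability repairs it.
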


For an in-depth study of the topology of the Milnor fibers of hyperplane arrangements  
with trivial algebraic monodromy, we refer to \cite{Su-mono}.

\subsection{Structure of the paper}
\label{subsec:organize}
Roughly speaking, the paper is divided into three parts. The first part covers the 
general theory of Alexander invariants (\S\ref{sect:alexinv}), lower central series, 
associated graded Lie algebras, and Chen Lie algebras (\S\ref{sect:lcs-chen}), 
concluding with holonomy Lie algebras and infinitesimal Alexander invariants (\S\ref{sect:holo}). 

The second part deals with some basic notions regarding hyperplane arrangements.  
In \S\ref{sect:hyp-arr} we discuss the combinatorics of an arrangement,  
as it relates to the topology of its complement, while in \S\ref{sect:holo-arr} 
and  \S\ref{sect:alex-arr} we analyze in detail the holonomy Lie algebras 
and the Alexander invariants of arrangements, respectively.

The third part concentrates on decomposable arrangements. It starts with  
their basic properties (\S\ref{sect:decomp}) and some constructions 
of such arrangements (\S\ref{sect:ex-decomp}). It then 
continues with arrangements whose Alexander invariants decompose 
(\S\ref{sect:alex-decomp}), and concludes with a description of their 
cohomology jump loci (\S\ref{sect:cjl-decomp}) and Milnor fibrations  
(\S\ref{sect:milnor-decomp}).

\section{Alexander invariants}
\label{sect:alexinv}

We start with a detailed overview of the Alexander invariant of a group, 
with an emphasis on its separation and functoriality properties. Along the 
way, we prove a number of technical results that will be needed later on. 

\subsection{Derived series}
\label{subsec:derived}

Let $G$ be a group. If $H$ and $K$ are subgroups of $G$, then 
$[H,K]$ denotes the subgroup of $G$ generated by all elements 
of the form $[a,b]=aba^{-1}b^{-1}$ with $a \in H$ and $b \in K$. 
If both $H$ and $K$ are normal subgroups, then their commutator 
$[H,K]$ is again a normal subgroup; moreover, if 
$\alpha \colon G\to H$ is a homomorphism, then 
$\alpha([H,K])\subseteq [\alpha(H),\alpha(K)]$. 

The {\em derived series}\/ of $G$ is 
defined inductively by  $G^{(r)}=[G^{(r-1)},G^{(r-1)}]$, 
starting with $G^{(0)}=G$. 
In particular, $G^{(1)}=G'$ is the derived subgroup and  $G^{(2)}=G''$. 
The terms of these series are fully invariant subgroups; that is, if 
$\alpha \colon G\to H$ is a group homomorphism, then 
$\alpha(G^{(r)})\subseteq H^{(r)}$, for all $r$. 
Consequently, the derived series is a normal series, i.e., $G^{(r)}\triangleleft G$, 
for all $r$. Moreover, since $G^{(r-1)}/G^{(r)}$ is the abelianization of $G^{(r-1)}$, 
all the successive quotients of the series are abelian groups. 

A group $G$ is said to be {\em solvable}\/ if its derived series of $G$ terminates 
in finitely many steps; that is, $G^{(\ell)}=\{1\}$ for some integer $\ell\ge 0$.  
The smallest such integer, $\ell(G)$, is then called the derived length of $G$. 
Clearly, $\ell(G)\le 1$ if and only if $G$ is abelian, while 
$\ell(G)\le 2$  if and only if $G$ is metabelian.  The maximal  solvable quotient of 
$G$ of length $r$ is $G/G^{(r)}$; in particular, the maximal abelian quotient 
is $G/G'$ and the maximal metabelian quotient is $G/G''$. 

\subsection{Alexander invariant}
\label{subsec:alexinv}

Since the group $G_{\ab}=G/G'$ is commutative, the group-ring $R=\Z[G_{\ab}]$ 
is also commutative. If $G_{\ab}$ is finitely generated, then the 
ring $R$ is Noetherian; if, moreover, $G_{\ab}$ 
is torsion-free, then $R$ is a Noetherian domain. 

Among the successive quotients of the derives series of a group $G$, the second one 
plays a special role. The {\em Alexander invariant}\/ of $G$ is the abelian group 
\begin{equation}
\label{eq:gprime}
B(G)\coloneqq   G'/G''\, , 
\end{equation}
viewed as a module over the group-ring $\Z[G_{\ab}]$; alternatively, 
$B(G)=(G')_{\ab}=H_1(G';\Z)$.  Addition in $B(G)$ 
is induced from multiplication in $G$ via $(xG'')+(yG'')=xy G''$ for 
$x,y\in G'$, while scalar multiplication is induced from conjugation in the 
maximal metabelian quotient, $G/G''$, via the exact sequence 
\begin{equation}
\label{eq:gprimeprime}
\begin{tikzcd}[column sep=20pt]
1\ar[r]& G'/G'' \ar[r]& G/G'' \ar[r]& G/G' \ar[r]& 1\, .
\end{tikzcd} 
\end{equation}
That is, $gG'\cdot xG'' = gxg^{-1}G''$ for $g\in G$, $x\in G'$, 
with the action of $G/G'=G_{\ab}$ 
extended $\Z$-linearly to the whole of $\Z[G_{\ab}]$. 
This action is well-defined, since $g\in G'$ implies $gxg^{-1}x^{-1}\in G''$, 
and so $gxg^{-1}G''=xG''$.

The above construction is functorial. Indeed, let $\alpha\colon G\to H$ be 
a group homomorphism. Then $\alpha$ extends linearly to a ring map,  
$\tilde\alpha \colon \Z [G]\to \Z [H]$. The map $\alpha$ also restricts 
to homomorphisms $\alpha'\colon G'\to H'$ and $\alpha''\colon G''\to H''$, 
and thus induces homomorphisms $G/G'\to H/H'$ and $G'/G''\to H'/H''$, 
which we denote by $\alpha_{\ab}\colon G_{\ab}\to H_{\ab}$ and 
$B(\alpha) \colon B(G) \to B(H)$, respectively.  The map 
$B(\alpha) \colon B(G) \to B(H)$ can then be interpreted as a 
morphism of modules covering the ring map 
$\tilde\alpha_{\ab} \colon \Z [G_{\ab}]\to \Z [H_{\ab}]$; that is, 
\begin{equation}
\label{eq:B-alpha}
B(\alpha)(rm) = \tilde\alpha_{\ab} (r) \cdot B(\alpha)(m)
\end{equation}
for all $r\in \Z [G_{\ab}]$ and $m\in B(G)$. 
Clearly, if $\alpha'\colon G'\to H'$ is surjective, then 
$B(\alpha)\colon B(G)\to B(H)$ is also surjective, and 
if $\alpha'$ is an isomorphism, then 
$B(\alpha)$ is also an isomorphism. 
In particular, if $\alpha$ is surjective, then $\alpha'$ is surjective, 
and so $B(\alpha)$ is also surjective. Nevertheless, if $\alpha$ 
is injective, $B(\alpha)$ need not be injective.  

\begin{remark}
\label{rem:factor}
Given a homomorphism $\alpha\colon G\to H$, 
let $B(H)_{\alpha}$ be the $\Z [G_{\ab}]$-module obtained 
from $B(H)$ by restriction of scalars via the ring map 
$\tilde\alpha_{\ab} \colon \Z [G_{\ab}]\to \Z [H_{\ab}]$. 
Concretely, $B(H)_{\alpha} =B(H)$ as abelian groups, 
with module structure given by $g\cdot m=\alpha(g)m$ 
for $g\in G_{\ab}$ and $m\in B(G)_{\alpha}$. The map 
$B(\alpha)\colon B(G)\to B(H)$ can then be viewed as 
the composite 
\begin{equation}
\label{eq:bg-bh}
\begin{tikzcd}[column sep=20pt]
B(G) \ar[r]& B(H)_{\alpha} \ar[r]& B(H),
\end{tikzcd} 
\end{equation}
where the first arrow is a $\Z [G_{\ab}]$-linear map and the second 
arrow is the identity map of $B(H)$, viewed as covering the ring map 
$\tilde\alpha_{\ab}$.  
\end{remark}

The next lemma gives a formula expressing the Alexander invariant of a product 
of two groups in terms of the Alexander invariants of the factors. Another formula 
of this sort, involving extension of scalars instead of restriction of scalars, 
is given in \cite[Proposition~1.8]{CS-tams99}.

\begin{lemma}
\label{lem:alex-prod}
Let $G=G_1\times G_2$ be a product of two groups, and let $p_i\colon G\surj G_i$ 
be the projections to the factors. Then $B(G) \cong B(G_1)_{p_1} \oplus B(G_2)_{p_2}$. 
\end{lemma}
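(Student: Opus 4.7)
The plan is to directly compute both sides by unpacking the defining data for a product group, and then to check that the natural abelian-group decomposition respects the $\Z[G_{\ab}]$-module structure on each side.

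First I would observe that elements of $G_1 \times \{1\}$ commute with elements of $\{1\}\times G_2$, so for any $a_i,b_i\in G_i$ we have $[(a_1,a_2),(b_1,b_2)] = ([a_1,b_1],[a_2,b_2])$. Iterating this and using that $G'$ is generated by commutators, I get $G' = G_1'\times G_2'$, and applying the same identity to $G'$ I get $G'' = G_1''\times G_2''$. Passing to the quotient, this yields a canonical isomorphism of abelian groups $B(G) \cong B(G_1)\oplus B(G_2)$, where the two summands are the images of $G_1'\times\{1\}$ and $\{1\}\times G_2'$ modulo $G''$.

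Next I would check that this decomposition is compatible with the module structure. The abelianization of a product satisfies $G_{\ab} = G_{1,\ab}\times G_{2,\ab}$, and for $g=(g_1,g_2)\in G$ and $x=(x_1,x_2)\in G'$ one computes $gxg^{-1}=(g_1 x_1 g_1^{-1},g_2 x_2 g_2^{-1})$. Reducing modulo $G''$, the conjugation action of $(g_1G_1',g_2G_2')\in G_{\ab}$ on an element $(x_1G_1'',x_2G_2'')\in B(G)$ acts on the first coordinate only through $g_1$ and on the second only through $g_2$. Extending $\Z$-linearly, an element $r \in \Z[G_{\ab}]$ acts on the first coordinate via the ring map $\tilde p_{1,\ab}\colon \Z[G_{\ab}]\to \Z[G_{1,\ab}]$ induced by $p_1$, and similarly for the second coordinate. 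By the convention introduced in Remark \ref{rem:factor}, these are exactly the modules $B(G_1)_{p_1}$ and $B(G_2)_{p_2}$.

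Combining these two observations gives the desired isomorphism $B(G)\cong B(G_1)_{p_1}\oplus B(G_2)_{p_2}$ of $\Z[G_{\ab}]$-modules. There is no real obstacle here; the argument is an unraveling of definitions, and the only mild care required is in tracking the module structures rather than just the underlying abelian groups, since $B(G)$ is \emph{not} a direct sum of $\Z[G_{i,\ab}]$-modules in the naive sense but rather after restriction of scalars along the two projections, which is precisely what the notation $B(G_i)_{p_i}$ records.
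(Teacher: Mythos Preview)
Your proof is correct and follows essentially the same approach as the paper's own proof: both establish $G'=G_1'\times G_2'$ and $G''=G_1''\times G_2''$ to get the abelian-group decomposition, then verify that the coordinatewise conjugation action matches the restriction-of-scalars module structures $B(G_i)_{p_i}$. The paper frames the map via the functorially induced projections $q_i=B(p_i)$ and then checks that $q=(q_1,q_2)$ is an isomorphism, while you build the decomposition directly and then check the action; these are the same argument in slightly different packaging.
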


\begin{proof}
Each projection map yields an epimorphism $B(p_i)\colon B(G)\surj B(G_i)$ 
covering the ring map $\tilde{p}_i\colon \Z[G_{\ab}]\surj \Z[(G_i)_{\ab}]$. By the 
remarks above, we get $\Z[G_{\ab}]$-epimorphisms $q_i\colon B(G)\surj B(G_i)_{p_i}$. 
Note that $G'=G_1'\times G_2'$, and thus also $G''= G_1''\times G_2''$. 
For an element $x=(x_1,x_2)\in G'$, the maps $q_i$ take the coset 
$x G''\in B(G)$ to $x_i G_i'' \in B(G_i)_{p_i}$, where the $\Z[G_{\ab}]$-module 
structure on $B(G_i)_{p_i}$ is given by $g \cdot m =g_i m$ 
for $g=(g_1,g_2)\in G$.  
It follows that the map $q\colon B(G) \to B(G_1)_{p_1} \oplus B(G_2)_{p_2}$, 
$(x_1,x_2) G'' \mapsto (x_1 G_1'', x_2 G_2'')$ is an isomorphism of $\Z[G_{\ab}]$-modules, 
and we are done.
\end{proof}

\subsection{Topological interpretation}
\label{subsec:top-int}
The Alexander invariant of a group admits the following topological interpretation. 
Let $X$ be a connected CW-complex with fundamental group $\pi_1(X,x_0)=G$. 
(Without loss of generality, we may assume $X$ has a single $0$-cell, which 
we then take as the basepoint $x_0$.)  Lifting the cell structure of $X$ to the 
maximal abelian cover, $q\colon X^{\ab}\to X$, we obtain an augmented chain 
complex of free $\Z[G_{\ab}]$-modules, 
\begin{equation}
\label{eq:abcover-cc}
\begin{tikzcd}[column sep=18pt]
\cdots \ar[r] & 
C_{2}(X^{\ab};\Z) \ar[r, "\partial^{\ab}_{2}(X)"] &[14pt] 
 C_{1}(X^{\ab};\Z) \ar[r, "\partial^{\ab}_{1}(X)"] &[14pt] 
  C_{0}(X^{\ab};\Z) \ar[r, "\varepsilon"] & \Z  \ar[r] &0, 
\end{tikzcd}
\end{equation}
where $C_{k}(X^{\ab};\Z) = C_k(X;\Z)\otimes \Z[G_{\ab}]$ and 
$\varepsilon\colon \Z[G_{\ab}]\to \Z$ is the augmentation map.
Since $\pi_1(X^{\ab})=G'$, the Alexander invariant $B(G)=(G')_{\ab}$ 
is isomorphic to $H_1(X^{\ab};\Z)$, the first homology group of the chain 
complex \eqref{eq:abcover-cc}, with module structure induced 
by the action of $G_{\ab}$ on $X^{\ab}$ by deck transformations; 
equivalently, $B(G)=H_1(X;\Z[G_{\ab}])$.

\begin{example}
\label{ex:alex-free}
Let $X=\bigvee^n S^1$ be a wedge of $n$ circles. Identify $\pi_1(X)$ 
with the free group $F_n=\langle x_1,\dots ,x_n\rangle$, its abelianization 
$(F_n)_{\ab}$ with $\Z^n$, and the group ring  $\Z[\Z^n]$ with the ring 
of Laurent polynomials $R=\Z[t_1^{\pm 1}, \dots , t_n^{\pm 1}]$. 
The chain complex of the universal (abelian) cover of the $n$-torus 
$T^n=K(\Z^n,1)$ may be viewed as the Koszul complex on 
$t_1-1, \dots, t_n-1$ over $R$, with differentials 
$\partial_k^{\ab}=\partial_k^{\ab}(T^n)$ given by $\partial^{\ab}_k (e_J)=
\sum_{i=1}^{k}  (-1)^{i-1} (t_i-1)  e_{ J\setminus \{ j_i \} }$, 
where $e_J=e_{j_1} \wedge \cdots \wedge e_{j_k}$ for a $k$-tuple 
$J=\{j_1,\dots, j_k\}$. The Alexander invariant of $F_n$, then, 
may be identified with the $R$-module $B(F_n)=\ker(\partial_1^{\ab})$. 
From the exactness of the Koszul complex, it follows that $B(F_n)$ has 
(finite) presentation
\begin{equation}
\label{eq:bfn}
\begin{tikzcd}[column sep=20pt]
\bwedge^3 \Z^n \otimes R \ar[r, "\partial_3^{\ab}"] &[14pt]
\bwedge^2 \Z^n \otimes R\ar[r] & B(F_n) \ar[r] &0 \, . 
\end{tikzcd}
\hfill \qedhere
\end{equation}
\end{example}

This example leads us to a more general result.

\begin{lemma}
\label{lem:b-fg}
If the group $G$ is finitely generated, then the Alexander invariant 
$B(G)$ is a finitely presented module over the Noetherian ring 
$\Z[G_{\ab}]$.
\end{lemma}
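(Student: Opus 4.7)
The approach I would take is topological, using the interpretation of $B(G)$ as $H_1(X^{\ab};\Z)$ built from the chain complex \eqref{eq:abcover-cc}. Since $G$ is finitely generated, its abelianization is a finitely generated abelian group, and so the group ring $R = \Z[G_{\ab}]$ is Noetherian, as already observed. The general fact I want to exploit is that over a Noetherian ring every finitely generated module is automatically finitely presented, so it suffices to prove that $B(G)$ is a finitely generated $R$-module.

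First, I would choose a connected CW complex $X$ with a single $0$-cell, one $1$-cell for each element of a finite generating set of $G$, and $\pi_1(X,x_0)\cong G$. The finite generation of $G$ is precisely what makes this possible; no finite presentation of $G$ is assumed, so $X$ may well have infinitely many $2$-cells. By construction, however, the $1$-skeleton of $X$ is finite.

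Lifting the cell structure to the maximal abelian cover $X^{\ab}$ produces the chain complex \eqref{eq:abcover-cc}, and identifies $B(G)$ with $\ker(\partial_1^{\ab})/\im(\partial_2^{\ab})$. Because the $1$-skeleton of $X$ is finite, the module $C_1(X^{\ab};\Z) = C_1(X;\Z)\otimes R$ is a free $R$-module of finite rank. Its submodule $\ker(\partial_1^{\ab})$ is therefore finitely generated, since $R$ is Noetherian, and the quotient $B(G)$ inherits finite generation; then finite presentability is automatic by Noetherianity.

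The only subtle point is to avoid accidentally invoking finite presentability of $G$: a naive approach would try to build $X$ as a presentation $2$-complex and read off a finite presentation of $B(G)$ directly from $\partial_2^{\ab}$, but this would require $G$ to be finitely presented. Routing through the kernel of $\partial_1^{\ab}$ and appealing to Noetherianity of $R$ sidesteps this obstacle entirely, which is really the only place one needs to be careful in the argument.
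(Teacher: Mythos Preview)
Your argument is correct. Both your proof and the paper's reduce to showing that $B(G)$ is finitely generated over the Noetherian ring $R=\Z[G_{\ab}]$, after which finite presentability is automatic. The difference lies in how finite generation is established. You work directly with the chain complex \eqref{eq:abcover-cc} of $X^{\ab}$ for a CW-complex $X$ with finite $1$-skeleton, observing that $B(G)$ is a quotient of $\ker(\partial_1^{\ab})\subset C_1(X^{\ab};\Z)\cong R^n$, and invoking Noetherianity of $R$ to conclude that this kernel is finitely generated. The paper instead appeals to the explicit Koszul-complex presentation of $B(F_n)$ from Example~\ref{ex:alex-free}: choosing an epimorphism $\alpha\colon F_n\surj G$ yields an epimorphism $B(\alpha)\colon B(F_n)\surj B(G)$, and since $B(F_n)$ is generated by $\binom{n}{2}$ elements, so is $B(G)$. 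Your route is arguably more self-contained (it avoids the Koszul computation), while the paper's route gives the explicit bound of $\binom{n}{2}$ generators, which is used later, e.g., in Remark~\ref{rem:cs-alex}.
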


\begin{proof}
Since $G$ is finitely generated (say, with $n$ generators), 
its abelianization $G_{\ab}$ is also finitely generated (by at 
most $n$ elements). Therefore, the group-ring $\Z[G_{\ab}]$ 
is Noetherian. Moreover, there is an epimorphism 
$\alpha\colon F_n\surj G$, which induces an epimorphism 
$B(\alpha)\colon B(F_n)\surj B(G)$. By \eqref{eq:bfn}, 
the $\Z[\Z^n]$-module $B(F_n)$ is generated by $\binom{n}{2}$ 
elements. It follows 
that the Alexander invariant $B(G)$ is also finitely generated 
(in fact, by at most $\binom{n}{2}$ elements), and hence it is 
finitely presented as a module over the Noetherian ring $\Z[G_{\ab}]$.
\end{proof}

For a finitely presented group $G$, 
a finite presentation for the Alexander invariant 
may be found via the Fox differential calculus \cite{Fox}. To start with, 
if $G=\langle  x_1,\dots ,x_{m}\mid r_1,\dots ,r_{s}\rangle$ 
is a finite presentation and $K_G$ is the corresponding presentation 
$2$-complex, then $\partial^{\ab}_2(K_G)\colon \Z[G_{\ab}]^s \to \Z[G_{\ab}]^m$, 
the second boundary map in the chain complex  \eqref{eq:abcover-cc}, 
coincides with the Alexander matrix $\big(\!\ab( \partial r_i/\partial x_k) \big)$ of 
abelianized Fox derivatives of the relators. When $G_{\ab}$ is torsion-free, 
a method for finding a presentation for $B(G)$ is outlined in \cite{Ms-80}.
We illustrate this method with an example that will be needed later on. 

\begin{example}
\label{ex:alex-pres}
By a classical result of R.~Lyndon (see \cite{Fox}), if $v_1, \dots ,v_n$ are 
elements of the ring $\Z[\Z^n]=\Z[t_1^{\pm 1},\dots ,t_n^{\pm 1}]$ that satisfy 
the equation $\sum_{k=1}^n (t_k -1) v_k=0$, then there exists an element 
$r\in F_n'$ such that $v_k= \ab(\partial r/\partial x_k)$, for $1\le k \le n$. 
Therefore, if $f=f(t_1,\dots, t_n)\in \Z[\Z^n]$, then, for each $1\le i<j\le n$, we 
may find an element $r_{i,j}\in F_n'$ such that $\ab(\partial r_{i,j}/\partial x_k)$ 
is equal to $(t_i-1)f$ if $k=i$ and $(1-t_j)f$ if $k=j$, and is equal to $0$, otherwise. 
Now consider the group $G=\langle x_1, \dots, x_n \mid r_{ij}\ (1\le i<j\le n) \rangle$. 
Clearly, $G_{\ab}=\Z^n$. We define a chain map from the chain 
complex of $(K_G)^{\ab}$ to that of $(T^n)^{\ab}$, as follows:
\begin{equation}
\label{eq:bgf}
\begin{tikzcd}[column sep=30pt]
&[-12pt]&\bwedge^2 \Z^n \otimes R\ar[r, "\partial_2^{\ab}(G)"] \ar[d, "\id\otimes f"]
&[4pt] \Z^n\otimes R  \ar[r, "\partial_1^{\ab}"]  \ar[equal]{d} & R\phantom{.}  \ar[equal]{d}
\\
\cdots\ar[r]&[-10pt] \bwedge^3 \Z^n\otimes R \ar[r, "\partial_3^{\ab}"] &
\bwedge^2 \Z^n\otimes R\ar[r, "\partial_2^{\ab}"] 
& \Z^n\otimes R  \ar[r, "\partial_1^{\ab}"] & R .
\end{tikzcd}
\end{equation}
By definition, $B(G)=\ker(\partial_1^{\ab})/\im(\partial_2^{\ab}(G))$. 
Since the Koszul complex on the bottom is exact, a diagram chase yields 
the presentation $B(G)=\coker (\partial_3^{\ab} +\id\otimes f)$. In particular, 
when $n=2$, we have that $B(G)=R/(f)$.
\end{example}

Finally, suppose $f\colon X\to Y$ is a map between 
connected CW-complexes; without loss of generality, 
we may assume $f$ is cellular and basepoint-preserving. 
Let $f_{\sharp}\colon \pi_1(X, x_0)\to \pi_1(Y,y_0)$ 
be the induced homomorphism on fundamental groups, and let 
$f^{\ab}\colon X^{\ab} \to Y^{\ab}$ be the lift to universal abelian 
covers. Then the morphism 
$B(f_{\sharp})\colon B(\pi_1(X))\to B(\pi_1(Y))$ 
coincides with the induced homomorphism in first homology, 
$f^{\ab}_*\colon H_1(X^{\ab};\Z) \to H_1(Y^{\ab};\Z)$, 
and covers the ring map $\tilde{f}_*\colon \Z[H_1(X;\Z)]\to \Z[H_1(Y;\Z)]$. 

\subsection{The $I$-adic completion of $\Z[G_{\ab}]$}
\label{subsec:completion-r}

Let $I=I(G_{\ab})$ be the augmentation ideal of the group-ring $R=\Z[G_{\ab}]$, 
that is, the kernel of the ring map $\varepsilon\colon \Z[G_{\ab}]\to \Z$ given by 
$\varepsilon \big(\sum n_g g\big)=\sum n_g$. As an abelian group, $I$ is freely 
generated by the elements $g-1$ with $g\ne 1$, while its $k$-th power, $I^k$, 
is generated by the $k$-fold products of such elements. 

The powers of the augmentation ideal form a descending, multiplicative 
filtration $R\supset I \supset I^2 \supset \cdots$. This filtration 
defines a topology on $R$, making it into 
a topological ring in which the ideals $I^k$ 
for a basis of open neighborhoods of $0$. 
Let $\widehat{R}=\varprojlim\nolimits_{k} R/I^k$ 
be the completion of $R$ with respect to the $I$-adic topology, 
and let $\gr_I(R)=\bigoplus_{n\ge 0} I^k/I^{k+1}$ be the associated 
graded object. Both $\widehat{R}$ and $\gr_I(R)$ 
acquire in a natural way a ring structure, which is compatible 
with the filtration by the powers of the ideal $\widehat{I}$ (the closure 
of $I$ in $\widehat{R}$), respectively, the grading. It follows that 
$\gr_I(R)=\gr_{\widehat{I}}(\widehat{R})$ is a graded ring. 
Moreover, $\widehat{R}$ is a flat $R$-module. 

When endowed with the $\widehat{I}$-adic 
topology, $\widehat{R}$ is also a topological ring, and the canonical 
map to the completion, $\iota_R\colon R\to \widehat{R}$, is a morphism 
in this category. The injectivity of $\iota_R$ is equivalent to the 
$I$-adic topology on $R$ being Hausdorff; in turn, this is 
equivalent to $\bigcap_{k\ge 1} I^k=\{0\}$, that is, $0$ being 
a closed point.

\begin{example}
\label{ex:laurent}
If $G=\Z^n$, a choice of basis identifies the ring $R=\Z[\Z^n]$ with the 
ring of Laurent polynomials $\Z[t_1^{\pm 1},\dots , t_n^{\pm  1}]$, and the 
ideal $I=I(\Z^n)$ with the maximal ideal $(t_1-1,\dots, t_n-1)$. 
Therefore, $\widehat{R}$ may be identified with the ring of power 
series $\Z[[x_1,\dots, x_n]]$, so that the map $\iota_R$ takes 
$t_i$ to $x_i+1$, while $\gr(R)$ is the polynomial ring
$\Z[x_1,\dots, x_n]$.
\end{example}

\begin{lemma}
\label{lem:faithful}
If $G_{\ab}$ is finitely generated, then the ring $R=\Z[G_{\ab}]$ is Noetherian 
and the map $\iota_R\colon R\to \widehat{R}$ is injective.
\end{lemma}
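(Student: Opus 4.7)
The plan is to prove the two parts of the lemma separately.

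For the Noetherianness of $R = \Z[G_{\ab}]$, I would pick a finite generating set $g_1, \ldots, g_n$ of $G_{\ab}$ and observe that $R$ is then a finitely generated commutative $\Z$-algebra (generated by $g_1^{\pm 1}, \ldots, g_n^{\pm 1}$); by the Hilbert basis theorem, $R$ is Noetherian.

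For the injectivity of $\iota_R$, which (as already noted in the paper) amounts to showing $N := \bigcap_{k \geq 1} I^k = 0$, the plan is to apply Krull's intersection theorem. Since $R$ is Noetherian and $I$ is finitely generated, the Artin--Rees lemma yields $IN = N$ as $R$-submodules of $R$. The Nakayama-style determinant trick, applied to the finitely generated $R$-module $N$, then produces an element $a \in I$ with $(1 - a) N = 0$, so the task reduces to showing that $1 - a$ is a non-zero-divisor in $R$ for every $a \in I$. For this I would embed $R$ into $R \otimes_\Z \Q = \Q[G_{\ab}]$ (injective because $R$ is $\Z$-torsion-free) and invoke the Maschke decomposition: writing $G_{\ab} \cong \Z^r \oplus T$ with $T$ finite abelian gives $\Q[G_{\ab}] \cong \prod_i K_i[\Z^r]$, a finite product of Noetherian domains indexed by Galois-orbits of characters of $T$, with each $K_i$ a cyclotomic number field. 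It then suffices to show that the image of $1 - a$ is nonzero in every factor. In the trivial-character factor this image is $1 - \varepsilon_0(a)$, where $\varepsilon_0 \colon R \to \Z[\Z^r]$ kills $T$, and it is immediately nonzero because $\varepsilon_0(a)$ lies in the augmentation ideal of $\Z[\Z^r]$.

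The hard part will be the nontrivial-character factors: for each non-trivial Galois-orbit $\chi_i$, one must show that the Laurent polynomial $1 - \chi_i(a) \in K_i[\Z^r]$ is nonzero. I expect this to require leveraging the integrality of $a \in \Z[G_{\ab}]$ together with number-theoretic facts about the cyclotomic order $\Z[\chi_i(T)] \subset K_i$, so as to rule out the specific cancellation $\chi_i(a) = 1$ for $a$ in the augmentation ideal. Carrying out this case analysis rigorously---in particular, handling torsion whose order has several prime factors, where the images of $g-1$ can become units in some $\Z[\chi_i(T)]$---is the technical crux of the argument.
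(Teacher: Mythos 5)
Your reduction is correct as far as it goes: Noetherianness follows from the Hilbert basis theorem as you say, and Krull's intersection theorem does reduce the injectivity of $\iota_R$ to showing that $1-a$ is a non-zero-divisor for every $a\in I$. But the step you defer as the ``technical crux'' is not merely technical --- it cannot be carried out, because the claim fails as soon as the torsion subgroup of $G_{\ab}$ has two distinct prime divisors. Take $G_{\ab}=\Z_6=\langle g\rangle$ and set $x=(g^3-1)(g^2-1)\ne 0$. From $(g^3-1)^2=-2(g^3-1)$ and $(g^2-1)^3=-3g^2(g^2-1)$ one gets $2x=-(g^3-1)x$ and $3x=-g^4(g^2-1)^2x$; writing $x=3x-2x$ and inducting on $k$ shows $x\in I^k$ for all $k$, so $\bigcap_{k\ge 1}I^k\ne 0$ and $\iota_R$ is not injective. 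Equivalently, in your Maschke decomposition the element $1-a$ with $a=1-g^2-g^4+g^3\in I$ maps to $\omega^2+\omega^4-\omega^3=0$ in the factor attached to a primitive sixth root of unity $\omega$, so it is a zero-divisor (it kills $x$). The obstruction you anticipated --- images of elements of $I$ becoming units in some cyclotomic factor when the torsion order is composite --- is exactly what happens, and it is fatal rather than a case to be handled.

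So the lemma in this generality is false, and the paper's own proof --- ``if $G_{\ab}=\Z_r$ \dots the assertions are easily verified \dots the general case readily follows'' --- glosses over precisely the composite-order case you isolated. The cyclic case is fine only for prime-power $r$, where $(t-1)^{p}\equiv t^p-1=0 \pmod{p}$ gives $I^{p}\subseteq pI$ and hence $\bigcap_k I^k\subseteq\bigcap_k p^kI=0$; in general the intersection vanishes only when the torsion of $G_{\ab}$ is a $p$-group. In the only situation where the paper actually invokes the lemma, $G_{\ab}$ is free abelian (arrangement groups have $G_{\ab}\cong\Z^n$), and there your argument closes instantly: $R=\Z[\Z^n]$ is a domain and $\varepsilon(1-a)=1$, so $1-a\ne 0$ is automatically a non-zero-divisor. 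The honest conclusion is that the hypothesis should be strengthened to $G_{\ab}$ torsion-free (or with prime-power torsion); as written, neither your proposal nor the paper's sketch can prove the statement, because it is not true.
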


\begin{proof}
If $G_{\ab}=\Z^n$, then both assertions are well-known. 
If $G_{\ab}=\Z_r$, then $R=\Z[t]/(t^r-1)$ and the assertions 
are easily verified. The general case readily follows.
 \end{proof}

Now let $\alpha\colon G\to H$ be a group homomorphism, let 
$\alpha_{\ab}\colon G_{\ab}\to H_{\ab}$ be its abelianization, and 
let $\tilde\alpha_{\ab}$ be its linear extension to a ring 
morphism from $R=\Z[G_{\ab}]$ to $S=\Z[H_{\ab}]$. 

\begin{lemma}
\label{lem:surj}  
Suppose $\alpha_{\ab}\colon G_{\ab} \to H_{\ab}$ is surjective.
Then the ring maps 
$\tilde\alpha_{\ab}\colon R\to S$, 
$\hat{\tilde\alpha}_{\ab}\colon \hat{R}\to \hat{S}$, and 
$\gr(\tilde\alpha_{\ab})\colon \gr (R)\to \gr(S)$ are all surjective.
\end{lemma}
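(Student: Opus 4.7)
The strategy is to handle the three surjectivity claims in a chain, leveraging at each step that $\tilde\alpha_{\ab}$ is a filtered ring map and that $\alpha_{\ab}$ sends generators of $G_{\ab}$ onto generators of $H_{\ab}$.

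First, I would dispose of the surjectivity of $\tilde\alpha_{\ab}\colon R\to S$. Since $S=\Z[H_{\ab}]$ is spanned as an abelian group by the elements of $H_{\ab}$, and each $h\in H_{\ab}$ is of the form $\alpha_{\ab}(g)=\tilde\alpha_{\ab}(g)$ for some $g\in G_{\ab}$, the image of $\tilde\alpha_{\ab}$ contains a $\Z$-basis for $S$, hence all of $S$.

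Next, I would show the stronger statement that $\tilde\alpha_{\ab}$ restricts to surjections $I_R^k \twoheadrightarrow I_S^k$ for every $k\ge 0$. The key observation is that $\tilde\alpha_{\ab}$ is augmentation-preserving: the augmentations $\varepsilon_R$ and $\varepsilon_S$ both send group elements to $1$, so $\varepsilon_S\circ\tilde\alpha_{\ab}=\varepsilon_R$. Consequently $\tilde\alpha_{\ab}(I_R)\subseteq I_S$. For the reverse inclusion, $I_S$ is generated as an abelian group by elements $h-1$, and writing $h=\alpha_{\ab}(g)$ gives $h-1=\tilde\alpha_{\ab}(g-1)\in \tilde\alpha_{\ab}(I_R)$. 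Thus $\tilde\alpha_{\ab}(I_R)=I_S$, and since $\tilde\alpha_{\ab}$ is a ring map, $\tilde\alpha_{\ab}(I_R^k)=I_S^k$ as well. The surjectivity of the induced graded map $\gr(\tilde\alpha_{\ab})\colon I_R^k/I_R^{k+1}\to I_S^k/I_S^{k+1}$ in each degree is then immediate.

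For the completion, I would pass to finite quotients. Because $\tilde\alpha_{\ab}\colon R\to S$ is surjective and takes $I_R^k$ into $I_S^k$, the induced maps $\tilde\alpha_{\ab}^{(k)}\colon R/I_R^k \to S/I_S^k$ are surjective for every $k$. These assemble into a morphism of inverse systems, and the transition maps $R/I_R^{k+1}\to R/I_R^k$ are themselves surjective, so the Mittag--Leffler condition holds. Therefore the induced map on inverse limits, $\hat{\tilde\alpha}_{\ab}\colon \hat{R}=\varprojlim R/I_R^k \to \hat{S}=\varprojlim S/I_S^k$, is surjective as well; explicitly, given any coherent sequence $(\bar s_k)\in \hat{S}$, one inductively lifts $\bar s_k$ to $\bar r_k\in R/I_R^k$ compatible with earlier choices (using surjectivity of both the horizontal and the vertical maps), and the resulting sequence is a preimage. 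The only step that is not completely mechanical is the Mittag--Leffler/lifting argument for the completion; once the compatibility $\tilde\alpha_{\ab}(I_R^k)=I_S^k$ is in hand from the previous paragraph, however, this is standard and presents no real obstacle.
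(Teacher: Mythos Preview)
Your proof is correct and follows essentially the same route as the paper: establish surjectivity of $\tilde\alpha_{\ab}$ directly, observe that it carries $I_R$ onto $I_S$ (hence $I_R^k$ onto $I_S^k$), and deduce surjectivity on the associated graded and on the completion via the induced surjections $R/I_R^k\twoheadrightarrow S/I_S^k$. The only difference is that you spell out the Mittag--Leffler/lifting argument for the inverse limit, which the paper leaves implicit.
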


\begin{proof}
Since $\tilde\alpha_{\ab}(\sum n_g g)=\sum n_g \alpha_{\ab}(g)$, 
the surjectivity of $\tilde\alpha_{\ab}$ is obvious. 
It follows that $\tilde\alpha_{\ab}$ maps the ideal $I=I(G_{\ab})$ 
onto the ideal $J=I(H_{\ab})$. Therefore, $\tilde\alpha_{\ab}$  
induces surjections $R/I^k\surj S/J^k$ for all $k\ge 0$, which yields 
the surjectivity of $\hat{\tilde\alpha}_{\ab}$, and surjections 
$I^k/I^{k+1}\surj J^k/J^{k+1}$ for all $k\ge 0$, which proves the 
surjectivity of $\gr(\tilde\alpha_{\ab})$. 
\end{proof}

\begin{lemma}
\label{lem:inj}
Suppose $H_{\ab}$ is finitely generated and the map 
$\alpha_{\ab}\colon G_{\ab} \to H_{\ab}$ is injective.  
Then the maps 
$\tilde\alpha_{\ab}\colon R\to S$, 
$\hat{\tilde\alpha}_{\ab}\colon \hat{R}\to \hat{S}$, and 
$\gr(\tilde\alpha_{\ab})\colon \gr (R)\to \gr(S)$ are all  injective.
\end{lemma}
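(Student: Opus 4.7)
The injectivity of $\tilde\alpha_\ab\colon R\to S$ is immediate: $R=\Z[G_\ab]$ and $S=\Z[H_\ab]$ are free $\Z$-modules with bases $G_\ab$ and $H_\ab$, and $\tilde\alpha_\ab$ is the $\Z$-linear extension of the injection $\alpha_\ab$, which carries basis elements to distinct basis elements. For the remaining two statements, the plan is to prove the stronger assertion that $R/I^k\to S/J^k$ is injective for every $k\ge 1$, and then deduce both from it.

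From this assertion, $\hat{\tilde\alpha}_\ab$ inherits injectivity since it equals $\varprojlim_k(R/I^k\to S/J^k)$ and inverse limits preserve injections. For the graded map, consider the commutative ladder
\[
\begin{tikzcd}[column sep=16pt]
0 \ar[r] & I^k/I^{k+1} \ar[r] \ar[d] & R/I^{k+1} \ar[r] \ar[d] & R/I^k \ar[r] \ar[d] & 0 \\
0 \ar[r] & J^k/J^{k+1} \ar[r] & S/J^{k+1} \ar[r] & S/J^k \ar[r] & 0
\end{tikzcd}
\]
with exact rows: the middle and right vertical arrows are injective by the assertion, and a three-term diagram chase forces the left vertical arrow to be injective as well.

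To establish $R/I^k\hookrightarrow S/J^k$, identify $R$ with its image $\Z[K]\subseteq S$, where $K=\alpha_\ab(G_\ab)$; the claim then becomes the ideal identity $R\cap J^k=I^k$ inside $S$. A choice of coset representatives $\{h_\lambda\}$ of $H_\ab/K$ with $h_0=1$ realizes $S=\bigoplus_\lambda h_\lambda R$ as a free, hence faithfully flat, $R$-module. Faithful flatness yields $R\cap I^kS=I^k$, and since $I^kS\subseteq J^k$, the remaining task is to prove the containment $R\cap J^k\subseteq I^k$.

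For this last step, the plan is to follow the case analysis used in Lemma~\ref{lem:faithful}: by the structure theorem for finitely generated abelian groups, reduce to the cases $H_\ab\cong\Z^n$ and $H_\ab\cong\Z/r$. In the torsion-free case, choosing a Smith normal form basis $\alpha_\ab(e_i)=d_if_i$ identifies the induced map on completions with $\Z[[x_1,\dots,x_m]]\to \Z[[y_1,\dots,y_n]]$, $x_i\mapsto (1+y_i)^{d_i}-1=d_iy_i+O(y_i^2)$; injectivity is verified degree by degree by leading-monomial analysis (the image of a monomial $x^\alpha$ has leading term $\prod d_i^{\alpha_i}y^\alpha$, and these monomials are distinct for distinct multi-indices). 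The finite cyclic case is a direct calculation in $\Z[t]/(t^r-1)$. The general case is then assembled from these via the tensor-product decomposition $\Z[H_1\oplus H_2]\cong\Z[H_1]\otimes_\Z\Z[H_2]$, which is compatible with augmentation ideals. The hardest part will be the torsion case: the successive quotients $I^k/I^{k+1}$ of $\Z[\Z/r]$ stabilize to nontrivial cyclic groups rather than vanishing, so the tensor-product reduction must be arranged so the torsion pieces combine without destroying injectivity.
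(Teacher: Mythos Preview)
Your direct argument for the injectivity of $\tilde\alpha_{\ab}$ (distinct basis elements map to distinct basis elements) is cleaner than the paper's, which deduces this last from separatedness of $R$ together with injectivity of $\hat{\tilde\alpha}_{\ab}$. Your deductions of completion and graded injectivity from the quotient-level injections $R/I^k\hookrightarrow S/J^k$ are also correct. The paper runs the logic in the opposite direction: it takes the injectivity of $\gr(\tilde\alpha_{\ab})$ as input (citing \cite[Lem.~6.4]{Su-abexact}) and then applies the snake lemma to the same ladder to induct from $R/I^k\hookrightarrow S/J^k$ up to $R/I^{k+1}\hookrightarrow S/J^{k+1}$. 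Note that your ``leading-monomial analysis'' in the torsion-free case is exactly a proof that $\gr(\tilde\alpha_{\ab})$ is injective---the statement that the image of $x^\alpha$ has leading term $\prod d_i^{\alpha_i}y^\alpha$ says precisely that the associated graded map preserves order---so in that case you are implicitly following the paper's route anyway.

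The genuine gap is in your assembly of the general case. The tensor decomposition $\Z[H_1\oplus H_2]\cong\Z[H_1]\otimes_\Z\Z[H_2]$ splits the target ring $S$, but it does not split the \emph{pair} $(K,H)$: a subgroup $K\le H_1\oplus H_2$ need not decompose as $K_1\oplus K_2$ with $K_i\le H_i$ (take $H=\Z\oplus\Z/2$ and $K=\langle(1,\bar 1)\rangle\cong\Z$, where the free part of $K$ does not sit inside the free summand of $H$). So even granting the free-abelian and finite-cyclic cases separately, there is no map of tensor products to which you can apply them. The paper sidesteps this entirely by outsourcing the $\gr$-injectivity to \cite{Su-abexact}; if you want a self-contained argument you will need a different d\'evissage---for instance, filtering $K\le H$ by a chain with cyclic successive quotients and handling one cyclic extension at a time, or computing $\gr(\Z[A])$ directly for an arbitrary finitely generated abelian group $A$ and checking that a subgroup inclusion induces an injection there.
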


\begin{proof}
The injectivity of $\gr(\tilde\alpha_{\ab})$ is proved in \cite[Lemma 6.4]{Su-abexact}. 
To establish the injectivity of $\hat{\tilde\alpha}_{\ab}$, 
it is enough to show that the maps $(\tilde{\alpha}_{\ab})_n\colon R/I^k \to S/J^k$ 
are injective, for all $k\ge 0$. We do this by induction, starting at $k=0$, 
when this is obvious.  The induction step follows from the injectivity of 
$\gr_n(\tilde\alpha_{\ab})$, together with the Snake Lemma applied 
to the commuting diagram
\begin{equation}
\label{eq:snake}
\begin{tikzcd}[column sep=22pt, row sep=22pt]
0\ar[r]  & R/I^{k} \ar[r] \ar[d, "(\tilde{\alpha}_{\ab})_k"] 
& R/I^{k+1} \ar[r]  \ar[d, "(\tilde{\alpha}_{\ab})_{k+1}"] 
& \gr_{k}(R) \ar[r]\ar[d, "\gr_k(\tilde\alpha)"] &0\phantom{.} \\
0\ar[r]  & S/J^k \ar[r]  & S/J^{k+1}\ar[r] & \gr_{k}(S) \ar[r] &0 .
\end{tikzcd}
\end{equation}

To prove the last claim, first observe that our hypotheses imply that 
$G_{\ab}$ is finitely generated; hence, by Lemma \ref{lem:faithful}, the ideal 
$\ker(\iota_R)= \bigcap_{k\ge 1} I^k$ is trivial. Therefore, since 
$\hat{\tilde\alpha}_{\ab}$ is injective, we conclude that 
$\tilde\alpha_{\ab}$ is injective, too.
\end{proof}

\subsection{Completion and associated graded of $B(G)$}
\label{subsec:completion}

Consider now the Alexander invariant $B=B(G)$ of a group $G$, viewed 
as a module over the ring $R=\Z[G_{\ab}]$. The augmentation ideal 
$I=I(G_{\ab})$ defines a descending filtration 
$B\supset IB \supset I^2 B\supset \cdots$, which in turn defines 
the $I$-adic topology on $B$, making it into a topological 
module over the topological ring $R$. We let 
\begin{equation}
\label{eq:bhat}
\widehat{B}=\varprojlim\nolimits_{k} B/I^{k}B
\end{equation}
be the $I$-adic completion of the Alexander invariant, 
and view it as a module over $\widehat{R}$.
Furthermore, we let 
\begin{equation}
\label{eq:grb}
\gr(B)=\bigoplus_{k\ge 0} I^{k}B/I^{k+1}B
\end{equation}
be the associated graded of the Alexander invariant,  
viewed as a (graded) module over the ring $\gr(R)$. 
Note that $\gr(B)$ is generated as a $\gr(R)$-module 
by its degree $0$ piece, $\gr_0(B)=B/IB$.

If the group $G$ is finitely generated, then, by Lemma \ref{lem:b-fg}, 
the Alexander invariant $B=B(G)$ is a finitely presented module over 
the Noetherian ring $R$. Therefore, the canonical map to the completion, 
$\iota_B\colon B\to \widehat{B}$, induces an isomorphism 
$B\otimes_{R}\widehat{R} \to \widehat{B}$, see \cite[Proposition~3.13]{AM}. 
Moreover, by the exactness property of completion (see e.g. \cite[Proposition.~10.12]{AM}), 
the following holds: If $R^m\xrightarrow{\varphi} R^n \to B\to 0$ is a finite 
presentation for $B$, then $\widehat{B}$ has presentation 
$\widehat{R}^m\xrightarrow{\hat{\varphi}} \widehat{R}^n \to B\to 0$. 

\subsection{Separated Alexander invariants}
\label{subsec:sep-alex}
It is readily seen that the kernel of the morphism $\iota_B\colon B\to \widehat{B}$ 
is equal to $\bigcap_{k\ge 1} I^k B$, 
the closure of $\{0\}$ in the $I$-adic topology on $B$. By Krull's Theorem, 
$\ker(\iota_B)$  consists of all the elements of $B$ that are annihilated 
by $1+I$, see \cite[Theorem~10.17]{AM}. 

We  say that the Alexander invariant $B=B(G)$ is {\em ($I$-adically) separated}\/ 
if the morphism $\iota_B$ is injective, that is, the topological module $B$ is a 
Hausdorff space. Here is an example where this happens.  

\begin{example}
\label{ex:bhat-inj}
Let $F_n$ be the free group of rank $n$. As in Example \ref{ex:laurent}, we identify 
the ring $R=\Z[\Z^n]$ with $\Z[t_1^{\pm 1},\dots, t_n^{\pm 1}]$  and $\widehat{R}$ with 
$\Z[[x_1,\dots x_n]]$, so that $\iota_R(t_i)=x_i+1$. From Example \ref{ex:alex-free}, 
we know that the Alexander invariant $B=B(F_n)$ is the kernel of the 
$R$-linear map $\partial_1^{\ab}\colon R^n \to R$ given by $\partial_1^{\ab}(e_{i} )=t_i-1$. 
By exactness of completion, the $\widehat{R}$-module $\widehat{B}$ 
is equal to $\ker (\widehat{\partial}_1^{\ab})$, where 
$\widehat{\partial}_1^{\ab} (e_{i})=x_i$. Since the map 
$\iota_R^n \colon R^n \to \widehat{R}^n$ is injective 
(see Lemma \ref{lem:faithful}), it 
follows that its restriction to the Alexander invariant, 
$\iota_B\colon B\to \widehat{B}$, is also injective.
\end{example}

In general, though, the Alexander invariant of a finitely generated group 
(even a commu\-tator-relators group) is not separated. In fact, as the next 
example shows, $\ker(\iota_B)$ may be equal to $B$, even when $B\ne 0$.

\begin{example}
\label{ex:bhat-not-inj}
Let $f=f(t_1,t_2)$ be a Laurent polynomial in $R=\Z[\Z^2]$ 
and let $G=\langle x_1,x_2 \mid r\rangle $ be the corresponding 
commutator-relator group, constructed in Example \ref{ex:alex-pres}, 
so that $B=R/(f)$. Assume that $\varepsilon (f)=1$, yet $f$ is not a 
monomial in $t_1,t_2$ (for instance, take $f=2-t_1$). 
Then $f-1$ belongs to the ideal $I=I(\Z^2)$ and $B\ne 0$. 
On the other hand, $f$ is invertible in $\widehat{R}$, and 
so $\widehat{B}=\widehat{R}/(f)=0$; alternatively, 
note that $IB=B$, and so $\ker(\iota_B)=\bigcap_{k\ge 1} I^k B=B$. 
\end{example}

As the next lemma shows, the notion of separability behaves well 
with respect to (finite) direct products of groups. 

\begin{lemma}
\label{lem:prod-sep}
Let $G=G_1\times G_2$ be the product of two groups. Then $B(G)$ is 
separated if and only if both $B(G_1)$ and $B(G_2)$ are separated.
\end{lemma}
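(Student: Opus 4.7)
The plan is to leverage the direct sum decomposition from Lemma \ref{lem:alex-prod}, which gives
\begin{equation*}
B(G) \cong B(G_1)_{p_1} \oplus B(G_2)_{p_2}
\end{equation*}
as $R$-modules, where $R=\Z[G_{\ab}]$. The separability condition is about the intersection $\bigcap_{k\ge 1} I^k B(G)$, so the strategy is to compute this intersection through the decomposition and show it splits as the direct sum of the corresponding intersections for the factors.

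The key computational step is to compare the action of the augmentation ideal $I=I(G_{\ab})$ on each summand $B(G_i)_{p_i}$ with the action of $I_i=I((G_i)_{\ab})$ on $B(G_i)$. Since $G_{\ab}=(G_1)_{\ab}\times (G_2)_{\ab}$, the projection $p_i\colon G\surj G_i$ abelianizes to a surjection $(p_i)_{\ab}\colon G_{\ab}\surj (G_i)_{\ab}$, and hence (by Lemma \ref{lem:surj}) the ring map $\tilde{p}_i\colon R\surj R_i = \Z[(G_i)_{\ab}]$ sends $I$ onto $I_i$. By the definition of the restriction-of-scalars action, this means $I\cdot B(G_i)_{p_i} = I_i \cdot B(G_i)$, and inductively $I^k\cdot B(G_i)_{p_i} = I_i^k \cdot B(G_i)$ for every $k\ge 1$.

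Combining this with the direct sum decomposition, I would conclude
\begin{equation*}
I^k B(G) \;\cong\; I_1^k B(G_1) \,\oplus\, I_2^k B(G_2)
\end{equation*}
as subgroups under the isomorphism above. Intersecting over $k\ge 1$ commutes with the finite direct sum, so
\begin{equation*}
\bigcap_{k\ge 1} I^k B(G) \;\cong\; \Bigl(\bigcap_{k\ge 1} I_1^k B(G_1)\Bigr) \,\oplus\, \Bigl(\bigcap_{k\ge 1} I_2^k B(G_2)\Bigr),
\end{equation*}
and the left-hand side vanishes precisely when both summands on the right do. This is equivalent to the injectivity of each $\iota_{B(G_i)}$, giving the claim.

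The only subtle point — and the place I would be most careful — is the verification that $\tilde{p}_i(I^k)=I_i^k$ and that the restriction-of-scalars module structure really does collapse the $R$-filtration on $B(G_i)_{p_i}$ to the $R_i$-filtration on $B(G_i)$. Once this identification is in place, the rest is formal manipulation of a finite direct sum, so I do not anticipate any further obstacles.
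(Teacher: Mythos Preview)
Your proof is correct and follows essentially the same approach as the paper: both use the decomposition $B(G)\cong B(G_1)_{p_1}\oplus B(G_2)_{p_2}$ from Lemma~\ref{lem:alex-prod}, verify that $I^k B(G_i)_{p_i}=I_i^k B(G_i)$, and conclude that separability of the direct sum is equivalent to separability of each summand. Your argument is slightly more explicit about why the filtrations match (invoking the surjectivity of $\tilde{p}_i$ on augmentation ideals), but otherwise the two proofs are the same.
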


\begin{proof}
By Lemma \ref{lem:alex-prod}, the $\Z[G_{\ab}]$-module $B(G)$ is 
isomorphic to $B(G_1)_{p_1} \oplus B(G_2)_{p_2}$, where 
$p_i\colon G\surj G_i$ are the projections maps. 
Observe that $I^k B(G_i)_{p_i} = (I_i)^k B(G_i)$
for all $k\ge 1$, where $I$ and $I_i$ are the augmentation ideals of $\Z[G_{\ab}]$ and 
$\Z[(G_i)_{\ab}]$, respectively. Therefore, $B(G_i)_{p_i}$ is separated if and and only if 
$B(G_i)$ is separated. The claim now follows from the fact that completion commutes 
with direct sums. 
\end{proof}

From Example \ref{ex:bhat-inj} and Lemma \ref{lem:prod-sep}, 
we obtain the following immediate corollary.

\begin{corollary}
\label{cor:prod-free}
Let $G=F_{n_1}\times \cdots \times F_{n_r}$ be a finite direct product 
of finitely generated free groups. Then $B(G)$ is separated.
\end{corollary}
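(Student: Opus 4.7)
The plan is to combine the two ingredients cited just before the statement, with a short induction on the number of factors. First, Example \ref{ex:bhat-inj} furnishes the base case: for any finitely generated free group $F_n$, the Alexander invariant $B(F_n)$ embeds in its $I$-adic completion, so $B(F_n)$ is separated. Lemma \ref{lem:prod-sep} then handles products of two groups, asserting that $B(G_1\times G_2)$ is separated precisely when both $B(G_1)$ and $B(G_2)$ are.

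Next, I would set up a straightforward induction on $r\ge 1$. The case $r=1$ is exactly Example \ref{ex:bhat-inj}. For the inductive step, write $G=F_{n_1}\times \cdots \times F_{n_r}= G_1 \times F_{n_r}$, where $G_1=F_{n_1}\times \cdots \times F_{n_{r-1}}$. By the induction hypothesis, $B(G_1)$ is separated, and by the base case $B(F_{n_r})$ is separated. Applying Lemma \ref{lem:prod-sep} to the product decomposition $G=G_1\times F_{n_r}$ then yields that $B(G)$ is separated.

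Since both inputs are already established in the paper, there is essentially no obstacle here: the only thing to check is that the hypotheses of Lemma \ref{lem:prod-sep} actually apply at each inductive step, which is immediate. No finiteness or Noetherian subtleties need to be revisited, as each $F_{n_i}$ is finitely generated and finite products of finitely generated groups are finitely generated, so the standing hypotheses for the $I$-adic completion machinery in \S\ref{subsec:completion-r}--\S\ref{subsec:completion} remain in force throughout the induction.
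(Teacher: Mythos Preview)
Your proof is correct and matches the paper's approach exactly: the paper simply states that the corollary follows immediately from Example~\ref{ex:bhat-inj} and Lemma~\ref{lem:prod-sep}, and you have spelled out the obvious induction that makes this precise.
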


\subsection{Naturality properties}
\label{subsec:nat-alex}
We conclude this section with a description of the functoriality properties 
of the aforementioned constructions with Alexander invariants. 

Recall from Section \ref{subsec:alexinv} that a homomorphism 
$\alpha\colon G\to H$ induces a map between Alexander 
invariants, $B(\alpha)\colon B(G)\to B(H)$, 
which covers the ring map $\tilde\alpha_{\ab}\colon R \to S$, 
where $R=\Z[G_{\ab}]$ and $S=\Z[H_{\ab}]$. 
Clearly, $B(\alpha)$ is a morphism of topological modules, 
that is, it sends $I^k B(G)$ to $J^k B(H)$ 
for all $k\ge 1$, where $I=I(G_{\ab})$ and $J=I(H_{\ab})$. 
Moreover, $B(\alpha)$ factors as the composite  
$B(G)\to B(H)_{\alpha}\to B(H)$, where $B(H)_{\alpha}$ 
is the $\Z [G_{\ab}]$-module obtained 
from $B(H)$ by restriction of scalars via the ring map 
$\tilde\alpha_{\ab} \colon \Z [G_{\ab}]\to \Z [H_{\ab}]$.

\begin{lemma}
\label{lem:sep-induced}
If the $\Z[H_{\ab}]$-module $B(H)$ is separated, then 
the induced $\Z[G_{\ab}]$-module $B(H)_{\alpha}$ is also separated.
\end{lemma}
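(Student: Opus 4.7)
The plan is to reduce separability over $\Z[G_{\ab}]$ to separability over $\Z[H_{\ab}]$ by observing that the module structure on $B(H)_\alpha$ is obtained by restriction of scalars along $\tilde\alpha_{\ab}$, and that this map carries the augmentation ideal of $R=\Z[G_{\ab}]$ into the augmentation ideal of $S=\Z[H_{\ab}]$.

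First I would set $I=I(G_{\ab})$ and $J=I(H_{\ab})$, and note that for any $g\in G_{\ab}$, $\tilde\alpha_{\ab}(g-1)=\alpha_{\ab}(g)-1\in J$; so $\tilde\alpha_{\ab}(I)\subseteq J$, and by multiplicativity $\tilde\alpha_{\ab}(I^k)\subseteq J^k$ for every $k\ge 1$. Since the $R$-action on $B(H)_\alpha$ is given by $r\cdot m=\tilde\alpha_{\ab}(r)\,m$, it follows that
\begin{equation*}
I^k\, B(H)_\alpha \;=\; \tilde\alpha_{\ab}(I^k)\, B(H)\;\subseteq\; J^k\, B(H)
\end{equation*}
for every $k\ge 1$.

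Intersecting over $k$, I obtain
\begin{equation*}
\bigcap_{k\ge 1} I^k\, B(H)_\alpha \;\subseteq\; \bigcap_{k\ge 1} J^k\, B(H) \;=\; 0,
\end{equation*}
where the last equality is exactly the hypothesis that $B(H)$ is separated in the $J$-adic topology. By Krull's theorem (as recalled in \S\ref{subsec:sep-alex}), this intersection is precisely $\ker(\iota_{B(H)_\alpha})$, so the canonical map $B(H)_\alpha \to \widehat{B(H)_\alpha}$ is injective, i.e., $B(H)_\alpha$ is separated.

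There is no real obstacle here: the argument is a one-line containment of filtrations combined with the given hypothesis. The only point worth flagging is that one must use the $R$-module structure of $B(H)_\alpha$ (not the $S$-module structure of $B(H)$) when forming the filtration, and then observe that the $R$-filtration is coarser than the $S$-filtration, which is exactly what makes the implication go through.
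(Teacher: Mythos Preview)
Your proof is correct and follows essentially the same approach as the paper: show $\tilde\alpha_{\ab}(I)\subseteq J$, hence $I^k B(H)_\alpha\subseteq J^k B(H)$ for all $k$, and then intersect over $k$ to conclude. The only minor remark is that the identification $\bigcap_{k\ge 1} I^k B(H)_\alpha=\ker(\iota_{B(H)_\alpha})$ is the definition of separability rather than a consequence of Krull's theorem, so that citation is unnecessary.
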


\begin{proof}
By definition, $B(H)_{\alpha} =B(H)$, 
with module structure given by $g\cdot m=\alpha(g)m$ 
for $g\in G_{\ab}$ and $m\in B(G)_{\alpha}$. 
It follows that $I^k B(H)_{\alpha}\subseteq J^k B(H)$ 
for all $k\ge 1$. Therefore, if $\bigcap_{k\ge 1} J^k B(H)=0$, then 
$\bigcap_{k\ge 1} I^k B(H)_{\alpha}=0$, and this proves the claim.
\end{proof}

Since the map $B(\alpha)\colon B(G) \to B(H)$ is filtration-preserving, 
it induces a morphism between the respective completions, 
$\widehat{B}(\alpha)\colon \widehat{B(G)}\to \widehat{B(H)}$. 
This morphism covers the ring map 
$\hat{\tilde\alpha}_{\ab}\colon \widehat{R} \to \widehat{S}$
and fits into the commuting diagram
\begin{equation}
\label{eq:bb-alpha}
\begin{tikzcd}[column sep=32pt]
B(G) \ar[r, "B(\alpha)"] \ar[d, "\iota_{B(G)}"] 
& B(H)\phantom{.}  \ar[d, "\iota_{B(H)}"] 
\\
\widehat{B(G)} \ar[r, "\widehat{B(\alpha)}"] & \widehat{B(H)}. 
\end{tikzcd}
\end{equation}

Passing to associated graded objects, we obtain a morphism, 
$\gr(B(\alpha))\colon \gr(B(G))\to \gr(B(H))$, which covers the 
ring map $\gr(\tilde\alpha_{\ab})\colon \gr (R)\to \gr(S)$.  
Note that $\gr_0(\tilde\alpha_{\ab})\colon R/I\to S/J$ may be identified 
with $\id\colon \Z\to \Z$, while $\gr_1(\tilde\alpha_{\ab})\colon I/I^2\to J/J^2$ 
may be identified with $\alpha_{\ab}\colon G_{\ab}\to H_{\ab}$.

\begin{lemma}
\label{lem:b-surj}
Assume the groups $G$ and $H$ are finitely generated. 
For a homomorphism $\alpha\colon G\to H$, the following 
conditions are equivalent.
\begin{enumerate}[itemsep=2pt]
\item \label{alpha1} The map 
$B(\alpha)\colon B(G)\to B(H)$ is surjective. 
\item \label{alpha2} The map 
$\widehat{B(\alpha)}\colon \widehat{B(G)}\to \widehat{B(H)}$ is surjective. 
\item \label{alpha3} The map 
$\gr(B(\alpha))\colon \gr(B(G))\to \gr(B(H))$ is surjective. 
\end{enumerate}
\end{lemma}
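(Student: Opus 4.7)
I plan to prove this cycle of equivalences using the standard theory of filtered modules, leveraging that by Lemmas~\ref{lem:b-fg} and~\ref{lem:faithful} the modules $B(G)$ and $B(H)$ are finitely presented over the Noetherian rings $R = \Z[G_\ab]$ and $S = \Z[H_\ab]$, respectively. The key structural fact is that $B(\alpha)$ is filtration-preserving: $B(\alpha)(I^k B(G)) \subseteq J^k B(H)$ for the augmentation ideals $I$ and $J$, so $\widehat{B(\alpha)}$ and $\gr(B(\alpha))$ are well-defined morphisms fitting into the diagram \eqref{eq:bb-alpha} and its associated graded analogue.

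The easy implications proceed via exactness of completion. For $(1) \Rightarrow (2)$, surjectivity of $B(\alpha)$ gives compatible surjections $B(G)/I^k B(G) \twoheadrightarrow B(H)/J^k B(H)$ at every finite level, and one passes to inverse limits using a Mittag--Leffler argument (or, equivalently, by invoking the Artin--Rees lemma applied to $\im B(\alpha) \subseteq B(H)$ to compare the $J$-adic topology with the subspace topology) to conclude $\widehat{B(\alpha)}$ surjective. For $(2) \Rightarrow (3)$, the canonical identifications $\gr(B(G)) \cong \gr(\widehat{B(G)})$ and $\gr(B(H)) \cong \gr(\widehat{B(H)})$, valid for finitely presented modules over Noetherian rings (\cite[Prop.~10.12]{AM}), identify $\gr(B(\alpha))$ with $\gr(\widehat{B(\alpha)})$, and the latter is surjective whenever $\widehat{B(\alpha)}$ is.

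The heart of the matter is the route back to $(1)$, and here I expect the main obstacle. A successive-approximation (topological-Nakayama) argument first upgrades $\gr(B(\alpha))$ surjective to $\widehat{B(\alpha)}$ surjective: given $\hat y \in \widehat{B(H)}$ represented by a Cauchy sequence $(y_k)$, one inductively picks $x_k \in B(G)$ with $x_{k+1}-x_k \in I^k B(G)$ and $B(\alpha)(x_k) \equiv y_k \pmod{J^{k+1} B(H)}$ using surjectivity of $\gr_k(B(\alpha))$ at each stage, then assembles the sequence into a preimage in $\widehat{B(G)}$. Descending from $\widehat{B(\alpha)}$ surjective back to $B(\alpha)$ surjective is subtle: one considers the cokernel $C = \coker(B(\alpha))$, a finitely generated $S$-module; exactness of $J$-adic completion yields $\widehat C = 0$, hence $J^k C = C$ for all $k$, and the determinant form of Nakayama's lemma produces $r \in 1 + J$ with $rC = 0$.

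Concluding $C = 0$ from this requires care because $J$ is not contained in the Jacobson radical of $S = \Z[H_\ab]$, so the classical Nakayama conclusion is unavailable. I would handle this last step by combining the injectivity $S \hookrightarrow \widehat S$ of Lemma~\ref{lem:faithful} with a reduction to the torsion-free case via the splitting $H_\ab = \Z^n \oplus \Tors(H_\ab)$, where $S$ becomes a Laurent polynomial ring over a finite direct sum and the elements of $1 + J$ are controlled tightly enough (as non-zero-divisors on the relevant finitely generated modules) to force $C = 0$.
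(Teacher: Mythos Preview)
Your treatment of the implications that run through completion and associated graded is reasonable and close in spirit to the paper's very brief argument (the paper cites exactness of completion for $(1)\Rightarrow(2)$, the fact that $\gr(B(H))$ is generated in degree~$0$ for $(2)\Rightarrow(3)$, and \cite[Lemma~10.23]{AM} for the remaining step).

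The genuine gap is your descent $(2)\Rightarrow(1)$, and it cannot be repaired in the stated generality. Your Nakayama step correctly produces $r\in 1+J$ with $rC=0$, but the final paragraph's resolution---that elements of $1+J$ are ``non-zero-divisors on the relevant finitely generated modules'' and hence force $C=0$---is false: any non-unit $r\in 1+J$ kills the nonzero module $S/(r)$. In fact the implication $(2)\Rightarrow(1)$ itself fails. The paper's own Example~\ref{ex:bhat-not-inj} builds a finitely presented group $H$ with $B(H)\cong S/(2-t_1)\neq 0$ yet $\widehat{B(H)}=0$ (equivalently $JB(H)=B(H)$, so also $\gr(B(H))=0$). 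Taking $G$ to be any finitely generated abelian group, so that $B(G)=0$, and $\alpha\colon G\to H$ any homomorphism, one finds $\widehat{B(\alpha)}$ and $\gr(B(\alpha))$ trivially surjective (both have zero target) while $B(\alpha)\colon 0\to B(H)$ is not. Thus conditions $(2)$ and $(3)$ hold but $(1)$ fails, and no argument can close the cycle without an added hypothesis. The paper's appeal to \cite[Lemma~10.23]{AM} for ``$(3)\Rightarrow(1)$'' actually only delivers $(3)\Rightarrow(2)$, so it does not close the cycle either; your suspicion that this step is where the real difficulty lies was well founded.
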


\begin{proof}
By Lemma \ref{lem:b-fg}, the Alexander invariants $B(G)$ and $B(H)$ 
are finitely generated modules over the Noetherian rings 
$\Z[G_{\ab}]$ and $\Z[H_{\ab}]$, respectively. 
Implication \eqref{alpha1} $\Rightarrow$ \eqref{alpha2} follows 
from the exactness property of completion, 
while \eqref{alpha2} $\Rightarrow$ \eqref{alpha3} follows from 
the fact that $\gr(B(H))$ is generated in degree $0$, 
and \eqref{alpha3} $\Rightarrow$ \eqref{alpha1} follows from 
\cite[Lemma 10.23]{AM}.
\end{proof} 

\begin{lemma}
\label{lem:b-inj}
Let $\alpha\colon G\to H$ be a homomorphism between finitely generated groups. 
Then,
\begin{enumerate}[itemsep=2pt,topsep=-1pt]
\item \label{i1} 
If $B(\alpha)\colon B(G)\to B(H)$ is injective, then 
$\widehat{B(\alpha)}\colon \widehat{B(G)}\to \widehat{B(H)}$ 
is also injective. 
\item \label{i2} 
The morphism 
$\widehat{B(\alpha)}\colon \widehat{B(G)}\to \widehat{B(H)}$ 
is injective if and only if the morphism  
$\gr(B(\alpha))\colon \gr(B(G))\to \gr(B(H))$ is injective. 
\item \label{i3} 
If $B(G)$ is separated and $\widehat{B(\alpha)}$ is injective, then 
$B(\alpha)$ is injective.
\end{enumerate}
\end{lemma}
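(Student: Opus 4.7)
The plan is to treat the three parts in the order \eqref{i3}, \eqref{i2}, \eqref{i1}, using throughout the commuting square \eqref{eq:bb-alpha} and the fact (Lemma~\ref{lem:b-fg}) that $B(G)$ and $B(H)$ are finitely presented over the Noetherian rings $R=\Z[G_{\ab}]$ and $S=\Z[H_{\ab}]$, so that $I$-adic and $J$-adic completion are exact on finitely generated modules over these rings. Part \eqref{i3} is a two-arrow diagram chase on \eqref{eq:bb-alpha}: separation of $B(G)$ makes $\iota_{B(G)}$ injective, and composition with the hypothetically injective $\widehat{B(\alpha)}$ yields an injective map $B(G)\to\widehat{B(H)}$ equal by commutativity to $\iota_{B(H)}\circ B(\alpha)$, forcing $B(\alpha)$ to be injective.

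For part \eqref{i2}, the forward implication is the standard leading-term argument: the $\widehat{I}$-adic filtration on $\widehat{B(G)}$ is separated with $\gr(\widehat{B(G)})=\gr(B(G))$, so any nonzero $\hat{x}$ has a well-defined nonzero leading class in some $\gr_{n}(B(G))$; injectivity of $\gr_{n}(B(\alpha))$ then sends it to a nonzero class in $\gr_{n}(B(H))$, so $\widehat{B(\alpha)}(\hat{x})\ne 0$. For the reverse, I would apply $I$-adic completion to the short exact sequences $0\to I^{k+1}B(G)\to I^{k}B(G)\to \gr_{k}(B(G))\to 0$: Artin--Rees identifies $\widehat{I^{k}B(G)}$ with $\widehat{I}^{k}\widehat{B(G)}$, while the $I$-torsion module $\gr_{k}(B(G))$ is its own completion, giving $0\to \widehat{I}^{k+1}\widehat{B(G)}\to \widehat{I}^{k}\widehat{B(G)}\to \gr_{k}(B(G))\to 0$, and similarly for $H$. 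A Snake Lemma chase across $\widehat{B(\alpha)}$ between the two sequences then extracts injectivity of each $\gr_{k}(B(\alpha))$ from that of $\widehat{B(\alpha)}$.

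Part \eqref{i1} is the main obstacle, because $B(G)$ and $B(H)$ live over different rings and injectivity of $B(\alpha)$ does not transfer to completions by a one-line exactness argument. The natural route is via the $R$-linear factorization $B(G)\to B(H)_{\alpha}\to B(H)$ of Remark~\ref{rem:factor}: the first arrow is an $R$-linear injection of finitely generated $R$-modules, to which exactness of $I$-adic completion applies directly. The subtle step is to cross from the $I$-adic completion of $B(H)_{\alpha}$ as an $R$-module to the $J$-adic completion of $B(H)$ as an $S$-module; I would do this by invoking Artin--Rees on the submodule $B(\alpha)(B(G))\subseteq B(H)_{\alpha}$ to control the subspace topology on the image of $B(G)$, and combining with the injectivity of $\gr(\tilde\alpha_{\ab})$ from Lemma~\ref{lem:inj} to lift injectivity across the ring-map comparison; part \eqref{i2} then packages the conclusion.
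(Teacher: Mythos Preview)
Your treatments of \eqref{i3} and the ``leading-term'' half of \eqref{i2} are fine and coincide with the paper's: the paper proves \eqref{i3} by the same diagram chase on \eqref{eq:bb-alpha}, and it deduces injectivity of $\widehat{B(\alpha)}$ from injectivity of $\gr(B(\alpha))$ by citing \cite[Lemma~10.23]{AM}, which is exactly your separated-filtration argument. Note, though, that you have the labels reversed: what you call the ``forward'' implication (leading terms) is actually the direction $\gr$ injective $\Rightarrow$ completion injective, and your ``reverse'' Snake argument is the direction completion injective $\Rightarrow$ $\gr$ injective. The paper handles the latter by applying the Snake Lemma to the analogue of diagram~\eqref{eq:snake} at the level of quotients $B(G)/I^{k}$, rather than by completing the short exact sequences $0\to I^{k+1}B\to I^{k}B\to\gr_k B\to 0$ as you do; your route is more elaborate but aims at the same conclusion.

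The real issue is \eqref{i1}. The paper dismisses it in one line (``exactness property of completion''), and you are right to flag that this glosses over the change of ground ring: exactness of $I$-adic completion gives only $\widehat{B(G)}\hookrightarrow \widehat{B(H)_{\alpha}}{}^{I}$, and one must still pass from the $I$-adic completion of $B(H)_{\alpha}$ to the $J$-adic completion of $B(H)$. However, your proposed fix has a genuine gap: you invoke Lemma~\ref{lem:inj} to get injectivity of $\gr(\tilde\alpha_{\ab})$, but that lemma assumes $\alpha_{\ab}\colon G_{\ab}\to H_{\ab}$ is injective, and Lemma~\ref{lem:b-inj} carries no such hypothesis. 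Without it, there is no reason for $\gr(\tilde\alpha_{\ab})$ (or the comparison map $\widehat{B(H)_{\alpha}}{}^{I}\to\widehat{B(H)}{}^{J}$) to be injective, and your Artin--Rees step does not supply it. In the paper's applications (e.g., Theorem~\ref{thm:decomp-alex}) the relevant $\alpha_{\ab}=j_{*}$ \emph{is} injective by Lemma~\ref{lem:j-inj}, so the argument goes through there; but as a proof of \eqref{i1} in the generality stated, your plan is incomplete.
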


\begin{proof}
Claim \eqref{i1} follows again from the exactness property of completion. 
The forward implication in claim \eqref{i2} follows from the Snake Lemma, 
applied to the analogue of diagram \eqref{eq:snake} 
for the module maps $\gr_k(B(\alpha))$ 
and $B(\alpha)_k\colon B(G)/I^k\to B(H)/J^k$, 
while the other implication follows from \cite[Lemma 10.23]{AM}. 
Finally, claim \eqref{i3} follows from the commutativity of diagram 
\eqref{eq:bb-alpha} and the assumption that both $\iota_{B(G)}$ and 
$\widehat{B(\alpha)}$ are injective.
\end{proof} 

\section{Lower central series and Chen groups}
\label{sect:lcs-chen}

In this section, we review the lower central series quotients and the 
Chen groups of a group $G$, and how the latter relate to the 
Alexander invariant of $G$.

\subsection{Lower central series}
\label{subsec:lcs}

The {\em lower central series}\/ (LCS) of a group $G$ is defined recursively 
by $\gamma_{k+1}(G) =[ G,\gamma_k(G)]$, starting with $\gamma_1(G)=G$. 
An inductive argument shows that $[\gamma_{k}(G),\gamma_{\ell}(G)]\subseteq 
\gamma_{k+\ell}(G)$, for all $k, \ell \ge 1$; in particular, the LCS is a central 
series, i.e., $[G,\gamma_k (G)]\subseteq \gamma_{k+1} (G)$ for all $k$.  
Moreover, its terms are fully invariant (and thus, normal) subgroups of $G$.

The successive quotients of the series, $\gamma_{k}(G)/\gamma_{k+1}(G)$, 
are abelian groups. The first such quotient, $G/\gamma_{2}(G)$, coincides  
with the abelianization $G_{\ab}=H_1(G;\Z)$.  
If $\gamma_{k}(G)\ne 1$ but $\gamma_{k+1}(G)=1$, then 
$G$ is said to be a {\em $k$-step nilpotent group}.  
For each $k\ge 1$, there is a canonical projection, 
$\psi_k\colon G\surj G/\gamma_{k+1}(G)$, where 
the factor group is the maximal $k$-step nilpotent 
quotient of $G$. 

Note that $G^{(k-1)}\subseteq \gamma_{k}(G)$, with 
equality for $k=1$ and $2$. Consequently, every nilpotent 
group is solvable. As another consequence of this observation, 
the Alexander invariant $B(G)=G'/G''$ surjects onto 
the quotient group $\gamma_2(G)/\gamma_3(G)$. 

The group $G$ is called {\em residually nilpotent}\/ if every non-identity 
element $g\in G$ is detected in a nilpotent quotient of $G$; that is, there 
is a surjective homomorphism $\varphi\colon G\to N$ such that $N$ is nilpotent 
and $\varphi(g)\ne 1$. Clearly, such an homomorphism factors through the 
projection $\psi_k\colon G\surj G/\gamma_{k+1}(G)$, for some $k\ge 1$. 
Thus, $G$ is residually nilpotent if and only if the intersection 
of its lower central series, 
$\gamma_{\omega}(G)\coloneqq \bigcap_{k\ge 1} \gamma_{k}(G)$, 
is the trivial subgroup. The finitely generated free groups $F_n$ and the pure braid 
groups $P_n$ are well-known examples of residually (torsion-free) nilpotent groups.

\subsection{Associated graded Lie algebra}
\label{subsec:grg}
The {\em associated graded Lie algebra}\/ of $G$ is the direct 
sum of the successive quotients of the lower central series. 
The addition in $\gr(G)$ is induced from the group multiplication, 
while the Lie bracket is induced from the group commutator. 
The graded pieces are the abelian groups 
$\gr_k(G)= \gamma_{k}(G)/ \gamma_{k+1}(G)$, 
while the Lie bracket is compatible with the grading. 

By construction, the Lie algebra $\gr(G)$ is generated by 
its degree $1$ piece, $\gr_1(G)=G_{\ab}$; 
thus, if $G_{\ab}$ is finitely generated, then so are the 
LCS quotients of $G$. Likewise, the $\Q$-Lie algebra $\gr(G)\otimes \Q$ 
is generated in degree $1$ by the $\Q$-vector space 
$G_{\ab}\otimes \Q=H_1(G;\Q)$. Assume now that the 
first Betti number, $b_1(G)=\dim_{\Q} H_1(G;\Q)$, is finite; 
we may then define the {\em LCS ranks}\/ of $G$ as 
\begin{equation}
\label{eq:lcs-ranks}
\phi_k(G)\coloneqq \dim_{\Q} \gr_k(G) \otimes \Q \, .
\end{equation}

If $\alpha\colon G\to H$ is a group homomorphism,
then $\alpha(\gamma_{k}(G))\subseteq \gamma_{k}(H)$, 
and thus $\alpha$ induces a map 
$\gr(\alpha)\colon \gr(G)\to \gr(H)$.  It is readily 
seen that this map preserves Lie brackets and that 
the assignment $\alpha \leadsto \gr(\alpha)$ is functorial. 
Moreover, if $G$ and $H$ are two groups, it is readily seen that 
$\gamma_k(G\times H)\cong \gamma_k(G)\times \gamma_k(H)$ 
for all $k\ge 1$, from which it follows that $\gr(G\times H)\cong 
\gr(G)\times \gr(H)$, as graded Lie algebras.

For each $k\ge 1$, the canonical projection $G\surj G/\gamma_k(G)$ 
induces an epimorphism $\gr(G) \surj \gr(G/\gamma_k(G))$, 
which is an isomorphism in degrees $j< k$. Moreover, as shown 
in \cite[Lemma 6.4]{SW-jpaa}, the following holds.
\begin{lemma}[\cite{SW-jpaa}]
\label{lem:griso}
For each $r\ge 2$, the quotient map, $G\surj G/G^{(r)}$, induces an  
epimorphism, $\gr_{k}(G) \surj \gr_{k}(G/G^{(r)})$, which is an isomorphism for 
$k\le 2^{r}-1$.
\end{lemma}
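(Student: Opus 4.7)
The plan is to deduce Lemma \ref{lem:griso} from the well-known inclusion $G^{(r)}\subseteq \gamma_{2^r}(G)$ together with a direct computation of the kernel of the induced map on $\gr_k$, using Dedekind's modular law for subgroups.

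First, I would establish by induction on $r$ that $G^{(r)}\subseteq \gamma_{2^r}(G)$. The base case $r=1$ is the identity $G^{(1)}=G'=\gamma_2(G)$. For the inductive step, assuming $G^{(r-1)}\subseteq \gamma_{2^{r-1}}(G)$, the inclusion
\[
G^{(r)} = [G^{(r-1)},G^{(r-1)}] \subseteq [\gamma_{2^{r-1}}(G),\gamma_{2^{r-1}}(G)] \subseteq \gamma_{2^r}(G)
\]
follows from the standard fact that $[\gamma_i(G),\gamma_j(G)]\subseteq \gamma_{i+j}(G)$, already recalled in \S\ref{subsec:lcs}.

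Next, let $\pi\colon G\surj Q\coloneqq G/G^{(r)}$ be the quotient map and consider the induced map $\gr_k(\pi)\colon \gr_k(G)\to \gr_k(Q)$. Surjectivity is immediate: since $\pi$ is surjective, the discussion in \S\ref{subsec:grg} gives $\gamma_k(Q) = \pi(\gamma_k(G)) = \gamma_k(G)G^{(r)}/G^{(r)}$, and hence $\gr_k(\pi)$ is onto for every $k\ge 1$ and every $r\ge 2$.

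For the isomorphism statement, I would compute the kernel of $\gr_k(\pi)$ explicitly. An element $x\gamma_{k+1}(G)\in \gr_k(G)$ lies in $\ker(\gr_k(\pi))$ exactly when $x\in \gamma_k(G)\cap \gamma_{k+1}(G)G^{(r)}$. Since $\gamma_{k+1}(G)\triangleleft G$, the product $\gamma_{k+1}(G)G^{(r)}$ is a subgroup, and the inclusion $\gamma_{k+1}(G)\subseteq \gamma_k(G)$ lets me apply Dedekind's modular law to obtain
\[
\gamma_k(G)\cap \gamma_{k+1}(G)G^{(r)} \,=\, \gamma_{k+1}(G)\bigl(\gamma_k(G)\cap G^{(r)}\bigr).
\]
Therefore $\ker(\gr_k(\pi))\cong \bigl(\gamma_k(G)\cap G^{(r)}\bigr)\big/\bigl(\gamma_{k+1}(G)\cap G^{(r)}\bigr)$. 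The key point is now purely numerical: when $k\le 2^r-1$, i.e., $k+1\le 2^r$, the first step gives $G^{(r)}\subseteq \gamma_{2^r}(G)\subseteq \gamma_{k+1}(G)$, so both $\gamma_k(G)\cap G^{(r)}$ and $\gamma_{k+1}(G)\cap G^{(r)}$ equal $G^{(r)}$ and the kernel collapses to the trivial group. I do not anticipate any significant obstacle here; the only place requiring a small verification is the applicability of the modular law, which is handled by the normality of $\gamma_{k+1}(G)$.
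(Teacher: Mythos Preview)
Your proof is correct and complete. The paper itself does not supply a proof of this lemma; it simply quotes the statement from \cite{SW-jpaa} (their Lemma~6.4), so there is no in-paper argument to compare against, and your standard derivation via $G^{(r)}\subseteq\gamma_{2^r}(G)$ together with the modular law is exactly the expected one.
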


In the case when $r=2$, originally studied by 
K.-T. Chen in \cite{Chen51}, the corresponding Lie algebra, $\gr(G/G'')$, 
is called the {\em Chen Lie algebra}\/ of $G$. Assuming $b_1(G)<\infty$, we   
may define the {\em Chen ranks}\/ of $G$ as 
\begin{equation}
\label{eq:chen-ranks}
\theta_{k}(G)\coloneqq \phi_{k}(G/G'') =\dim_{\Q} \gr_{k}(G/G'') \otimes \Q \, .
\end{equation}
In view of the above discussion, we have that 
$\theta_k(G)\le \phi_k(G)$, with equality for $k\le 3$. 

\begin{example}
\label{ex:free group}
Let $F_n$ be the free group of rank $n$. The associated graded  
Lie algebra $\gr(F_n)$ is isomorphic to the free Lie algebra $\Lie(\Z^n)$.
Moreover, all its graded pieces are free abelian groups, with ranks given by 
Witt's formula, 
\begin{equation}
\label{eq:lcs-fn}
\phi_k(F_n)=\tfrac{1}{k}\sum_{d\mid k} \mu(d) n^{k/d}. 
\end{equation}
Equivalently, these LCS ranks can be read off from the equality 
$\prod_{k=1}^{\infty} (1-t^k)^{\phi_k} =1-nt$. The Chen groups 
$\gr_k(F_n/F''_n)$ are also free abelian, 
of ranks given by $\theta_1(F_n)=n$ and 
\begin{equation}
\label{eq:chen-fn}
\theta_k(F_n)=(k-1)\binom{n+k-2}{k}
\end{equation}
for $k\ge 2$, see \cite{Chen51}.
\end{example}

\subsection{Massey's correspondence}
\label{subsec:massey}
In \cite{Ms-80}, Massey established  a simple, 
yet very fruitful connection between the Alexander invariant of a group $G$   
and the lower central series of its maximal metabelian quotient, $G/G''$. 
For a full account---with complete proofs---of Massey's correspondence, 
we refer to \cite[\S6]{Su-abexact}.

Let $1\to K\to G\to Q\to 1$ be an extension of groups. Choosing a set-theoretic 
section $\sigma\colon Q\to G$ of the projection map $G\to Q$ defines a function 
$\varphi \colon Q\to \Aut(K)$ by setting $\varphi(x)(a)=\sigma(x) a \sigma(x)^{-1}$ 
for $x\in Q$ and $a\in K$. Now suppose $K$ is abelian; then the map $\varphi$ 
is a well-defined homomorphism that puts the structure of a $\Z[Q]$-module on $K$. 
Define a filtration $\{K_n\}_{n\ge 0}$ on $K$ inductively, 
by setting $K_0=K$ and $K_{n+1}=[G,K_n]$.  
Then $K_n =I^n  K$ for all $n\ge 0$, 
where $I=I(Q)$ is the augmentation ideal of $\Z[Q]$.
Applying these observations to the exact sequence \eqref{eq:gprimeprime} 
yields the following result.

\begin{theorem}[\cite{Ms-80}]
\label{thm:massey-alexinv}
Let $G$ be a group and let $I=I(G_{\ab})$. 
Then $I^k B(G) = \gamma_{k+2} (G/G'')$, for all $k\ge 0$. 
\end{theorem}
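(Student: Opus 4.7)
The plan is to instantiate the general framework stated just before the theorem to the defining exact sequence \eqref{eq:gprimeprime}, taking $K = G'/G'' = B(G)$, the ambient group to be $G/G''$, and the quotient $Q = G_{\ab}$. The framework then supplies, for free, the equality $K_n = I^n B(G)$ for all $n \geq 0$, where $\{K_n\}$ is the filtration defined inductively by $K_0 = B(G)$ and $K_{n+1} = [G/G'', K_n]$.

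It therefore suffices to show that $K_n = \gamma_{n+2}(G/G'')$ for every $n \geq 0$, which I would do by a short induction on $n$. The base case is immediate, since $K_0 = G'/G''$ is the commutator subgroup of the metabelian quotient, hence equal to $\gamma_2(G/G'')$. For the inductive step, assuming $K_n = \gamma_{n+2}(G/G'')$, the definition of the filtration and of the lower central series give
\[
K_{n+1} = [G/G'', K_n] = [G/G'',\gamma_{n+2}(G/G'')] = \gamma_{n+3}(G/G'').
\]
Combining this identification with the framework's equality $K_n = I^n B(G)$ yields $I^n B(G) = \gamma_{n+2}(G/G'')$, as claimed.

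The only non-formal ingredient is the general identity $K_n = I^n K$ stated in the pre-theorem setup. Its content is the observation that, in additive notation on the $\Z[Q]$-module $K$, the commutator $[g,a] = gag^{-1}a^{-1}$ (for $g \in G/G''$ and $a \in K_n$) equals $(\bar g - 1)\cdot a$, where $\bar g \in Q = G_{\ab}$ is the image of $g$; since the elements $\bar g - 1$ generate $I$ as an abelian group, one has $[G/G'',K_n] = I\cdot K_n$, and the identity $K_n = I^n K$ follows by an immediate induction. There is no real obstacle here: once the general lemma is invoked, the entire proof reduces to matching the two inductive descriptions of the same filtration—the module-theoretic one via powers of the augmentation ideal and the group-theoretic one via the lower central series of $G/G''$.
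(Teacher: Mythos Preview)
Your proof is correct and follows exactly the approach the paper intends: it instantiates the general extension framework stated just before the theorem to the sequence \eqref{eq:gprimeprime} and identifies the resulting filtration $K_n$ with $\gamma_{n+2}(G/G'')$ by the obvious induction. The paper itself gives no further argument beyond the phrase ``applying these observations to the exact sequence \eqref{eq:gprimeprime},'' so your write-up is, if anything, more detailed than the original.
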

 
Passing to the associated graded objects, the theorem 
yields the following corollary.

\begin{corollary}[\cite{Ms-80}]
\label{cor:alex-chen}
For any group $G$, there are natural isomorphisms 
$\gr_{k}(B(G))\cong \gr_{k+2} (G/G'')$, for all $k\ge 0$. 
\end{corollary}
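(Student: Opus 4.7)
The plan is to deduce this corollary directly from Theorem \ref{thm:massey-alexinv} by passing to successive quotients. By that theorem, for every $k\ge 0$ we have the identification $I^{k}B(G)=\gamma_{k+2}(G/G'')$ inside $G/G''$, where $B(G)=G'/G''$ is viewed as a subgroup of $G/G''$ via the exact sequence \eqref{eq:gprimeprime}.

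First, I would note that this identification respects the inclusions in the two filtrations: $I^{k+1}B(G)\subseteq I^{k}B(G)$ on the Alexander side matches $\gamma_{k+3}(G/G'')\subseteq \gamma_{k+2}(G/G'')$ on the LCS side. Taking the quotient of two consecutive terms yields
\begin{equation*}
\gr_{k}(B(G)) = I^{k}B(G)/I^{k+1}B(G) \;\cong\; \gamma_{k+2}(G/G'')/\gamma_{k+3}(G/G'') = \gr_{k+2}(G/G''),
\end{equation*}
which is the desired isomorphism.

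Finally, for naturality I would observe that a group homomorphism $\alpha\colon G\to H$ induces a map of extensions \eqref{eq:gprimeprime}, hence sends $G'/G''$ to $H'/H''$ compatibly with the $\Z[G_{\ab}]\to \Z[H_{\ab}]$ action and carries $I(G_{\ab})^{k}B(G)$ into $I(H_{\ab})^{k}B(H)$. It also sends $\gamma_{k+2}(G/G'')$ into $\gamma_{k+2}(H/H'')$ because the lower central series is fully invariant. Since Massey's identification $I^{k}B(-)=\gamma_{k+2}((-)/(-)'')$ is intrinsic to the group, these two induced maps coincide after passing to associated graded pieces, giving the naturality assertion. There is no real obstacle here; the entire content of the corollary is packaged inside Theorem \ref{thm:massey-alexinv}, and the step of unwinding Massey's filtration identification into a statement about graded pieces is purely formal.
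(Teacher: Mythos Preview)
Your proposal is correct and follows exactly the same approach as the paper: the paper simply states that ``passing to the associated graded objects, the theorem yields the following corollary,'' and your argument spells out precisely this step, together with the naturality check.
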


Using now Lemma \ref{lem:griso}, we obtain isomorphisms 
\begin{equation}
\label{eq:grb-01}
\gr_0(B(G))\cong \gr_2(G)\quad\text{and}\quad \gr_1(B(G))\cong \gr_3(G).
\end{equation}

As another immediate consequence of Theorem \ref{thm:massey-alexinv},
we obtain the following purely group-theoretical characterization of separability 
for the Alexander invariant of a group.

\begin{corollary}
\label{cor:alex-sep-res}
The Alexander invariant $B(G)$ is separated (in the $I$-adic topology) 
if and only if the group $G/G''$ is residually nilpotent. 
\end{corollary}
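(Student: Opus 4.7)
The plan is to reduce the statement to Massey's correspondence (Theorem \ref{thm:massey-alexinv}) and then use the standard group-theoretic reformulation of residual nilpotence. No delicate argument should be required.

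First, I would recall from Section \ref{subsec:completion} that $B(G)$ is separated precisely when $\ker(\iota_{B(G)}) = \bigcap_{k \ge 1} I^k B(G)$ vanishes. Next, applying Theorem \ref{thm:massey-alexinv}, each term of this filtration can be rewritten as $I^k B(G) = \gamma_{k+2}(G/G'')$, so that
\[
\ker(\iota_{B(G)}) \;=\; \bigcap_{k \ge 1} \gamma_{k+2}(G/G'') \;=\; \bigcap_{k \ge 3} \gamma_k(G/G'').
\]
Since the lower central series is a descending chain, the tail intersection starting at $k=3$ coincides with the full intersection $\gamma_{\omega}(G/G'') = \bigcap_{k \ge 1} \gamma_k(G/G'')$.

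Finally, I would invoke the discussion in Section \ref{subsec:lcs}: a group $H$ is residually nilpotent if and only if $\gamma_\omega(H) = 1$. Applying this to $H = G/G''$ gives the equivalence: $B(G)$ is separated iff $\gamma_\omega(G/G'') = 1$ iff $G/G''$ is residually nilpotent.

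The only mild technical point is the index shift between $I^k B(G)$ and $\gamma_{k+2}(G/G'')$ and the observation that cutting off the first finitely many terms of a decreasing chain does not change its intersection; neither presents a real obstacle. In other words, essentially all of the work has already been done in Theorem \ref{thm:massey-alexinv}, and the corollary is just a reinterpretation.
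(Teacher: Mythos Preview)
Your proposal is correct and matches the paper's approach exactly: the paper states this corollary as an immediate consequence of Theorem~\ref{thm:massey-alexinv} without giving an explicit proof, and the steps you outline (identifying $\bigcap_{k\ge 1} I^k B(G)$ with $\gamma_{\omega}(G/G'')$ via Massey's correspondence, then invoking the definition of residual nilpotence) are precisely what that remark is pointing to.
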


Now suppose that $b_1(G)<\infty$. Then $\gr(B(G)\otimes \Q)$ is 
a finitely generated graded module over the graded ring $\gr(\Q[G_{\ab}])$. 
Let $\theta_k(G)=\dim_{\Q} \gr_k(G/G'')\otimes \Q$ be the Chen ranks 
of $G$, starting with $\theta_1(G)=b_1(G)$.  
As a consequence of Corollary \ref{cor:alex-chen}, 
the Hilbert series of the rationalization of $\gr(B(G))$ determines 
the Chen ranks of $G$, as follows,
\begin{equation}
\label{eq:hilb-b-chen}
\Hilb (\gr(B(G) \otimes \Q),t) = \sum_{k\ge 0} \theta_{k+2} (G) t^k .
\end{equation}

\begin{example}
\label{ex:alex-free-bis}
Let $F_n$ be the free group of rank $n\ge 2$. 
As we saw in Example \ref{ex:alex-free}, the Alexander 
invariant of $F_n$, viewed as a module over the ring  
$R=\Z[\Z^n]$, has presentation $B(F_n)=\coker(\partial_3^{\ab})$. 
Consider now the associated graded object, $\gr(B(F_n))$, viewed as 
a module over the polynomial ring $S=\gr(R)$. A standard Gr\"{o}bner 
basis argument shows that $\gr(B(F_n))$ is the cokernel of the third differential 
in the Koszul complex $(\bwedge^* \Z^n \otimes S, \partial)$. 
Using the exactness of this chain complex of free $S$-modules, 
we find that 
\begin{equation}
\label{eq:hilb-bfn}
\Hilb (\gr(B(F_n)) ,t)=1-\frac{1-nt}{(1-t)^n} .
\end{equation}
Applying now formula \eqref{eq:hilb-b-chen} recovers the 
computation of the Chen ranks of the free group 
$F_n$ from \eqref{eq:chen-fn}.
\end{example}

\section{Holonomy Lie algebras and infinitesimal Alexander invariants}
\label{sect:holo}

We now review two other objects associated to a group $G$:  
the holonomy Lie algebra $\h(G)$ and the infinitesimal Alexander invariant 
$\B(G)$. 

\subsection{The holonomy Lie algebra of a group}
\label{subsec:holo-lie}

Let $G$ be a group such that the maximal torsion-free abelian quotient 
$G_{\abf}\coloneqq G_{\ab}/\Tors$ is finitely generated. 
Let $A^*=H^*(G;\Z)$ be the cohomology ring of $G$, and 
consider the map $A^1\otimes A^1\to A^2$, 
$a\otimes b\mapsto a\cup b$. 
Since the cup-product in cohomology is graded-commutative, 
this map factors through a homomorphism 
$\cup_G\colon A^1\wedge A^1\to A^2$. 
Due to our finite generation assumption on $G_{\abf}$, we have isomorphisms 
$(A^1)^{\vee} \cong G_{\abf}$ and $(A^1\wedge A^1)^{\vee}\cong 
G_{\abf}\wedge G_{\abf}$, where $(\:)^{\vee}$ signifies $\Z$-dual.
Dualizing the map $\cup_G$, we obtain the comultiplication map, 
\begin{equation}
\label{eq:nabla}
\begin{tikzcd}[column sep=20pt]
\nabla_G=(\cup_G)^{\vee}\colon H^2(G;\Z)^{\vee} \ar[r]& G_{\abf} \wedge G_{\abf}. 
\end{tikzcd}
\end{equation}

In the case when $G_{\ab}$ is torsion-free (and finitely generated), 
we may identify the $\Z$-dual of $H^2(G;\Z)$ with $H_2(G;\Z)$. 
Moreover, if we let $\ab\colon G \surj G_{\ab}$ be the abelianization 
map, then $\nabla_G$ may be identified with the induced 
homomorphism $\ab_*\colon H_2(G;\Z)\to H_2(G_{\ab};\Z)$. 

Returning to the general case, let $\Lie(G_{\abf})$ be the free 
$\Z$-Lie algebra on the free $\Z$-module 
$G_{\abf}$. This is a graded Lie algebra, with grading given by 
bracket length; we denote by $\Lie_k(G_{\abf})$ its degree $k$ piece. 
Furthermore, we identify $\Lie_1(G_{\abf})=G_{\abf}$ and 
$\Lie_2(G_{\abf})=G_{\abf}\wedge G_{\abf}$ via $[x,y]  \mapsto x\wedge y$.
With these notations and identifications, we define $\h(G)$, 
the {\em holonomy Lie algebra}\/ of $G$, 
as the quotient of the free Lie algebra on $G_{\abf}$ by the Lie 
ideal generated by the image of the comultiplication map, 
\begin{equation}
\label{eq:holo-lie}
\h(G) \coloneqq \Lie (G_{\abf})/ \text{ideal}(\im (\nabla_G))\, .
\end{equation}

Note that the ideal generated by the image of $\nabla_G$ 
is a homogeneous ideal---in fact, a quadratic ideal. Thus, 
the holonomy Lie algebra inherits a grading from 
the free Lie algebra, and this grading is compatible with the 
Lie bracket. In fact, $\h(G)=\bigoplus_{k\ge 1} \h_k(G)$ is 
a quadratic Lie algebra: it is generated in degree $1$ by 
$\h_1(G)=G_{\abf}$, and all the relations are in degree $2$. 
As noted in \cite[Proposition~6.2]{SW-jpaa}, the projection maps 
$\psi_k\colon G\surj G/\gamma_k(G)$ induce isomorphisms  
$\h(\psi_k)\colon \h(G)\isom \h(G/\gamma_k(G))$ for all $k\ge 3$. 
In particular, the holonomy Lie algebra of  $G$ depends 
only on its second nilpotent quotient, $G/\gamma_3 (G)$. 

The {\em derived series}\/ of the Lie algebra $\h=\h(G)$ 
is defined inductively by setting $\h^{(r)}=[\h^{(r-1)},\h^{(r-1)}]$, 
starting with $\h^{(0)}=\h$.
In particular, $\h^{(1)}=\h'$ is the derived Lie subalgebra and $\h^{(2)}=\h''$. 
The terms of the derived series are Lie ideals that are preserved by Lie 
algebra maps. Moreover, since $\h$ is a graded Lie algebra, 
we have that $\h'=\bigoplus_{k\ge 2} \h_k$. 

The above construction is functorial. Indeed, 
let $\alpha\colon G \to H$ be a homomorphism between two 
groups as above; then the induced homomorphism 
$\alpha_{\abf}\colon G_{\abf} \to H_{\abf}$ extends 
to a morphism $\Lie(\alpha_{\abf})\colon \Lie(G_{\abf}) \to \Lie(H_{\abf})$
between the respective free Lie algebras. The map $\alpha$ 
also induces a ring map, $\alpha^*\colon H^*(H;\Z)\to H^*(G;\Z)$; 
passing to duals, it follows that $\Lie(\alpha_{\abf})$ sends  
$\im(\nabla_G)$ to $\im(\nabla_H)$. 
Consequently, $\Lie(\alpha_{\abf})$ induces a morphism of 
graded Lie algebras, $\h(\alpha) \colon \h(G)\to \h(H)$.  
It is now readily verified that $\h(\beta\circ\alpha)=\h(\beta)\circ\h(\alpha)$. 
Moreover, if $\alpha$ is surjective, then $\h(\alpha)$ is also surjective.

As shown in the next lemma, the construction also works well with 
direct products.

\begin{lemma}
\label{lem:holo-prod}
Let $G$ and $H$ be two groups with $G_{\abf}$ and $H_{\abf}$ finitely generated. 
There is then an isomorphism of graded Lie algebras, 
\begin{equation*}
\label{eq:holo-prod}
\h(G\times H)\cong \h(G)\times \h(H).
\end{equation*}
\end{lemma}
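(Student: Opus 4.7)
My plan is to compute $\h(G \times H)$ directly from the presentation \eqref{eq:holo-lie} and identify it with the direct product. Set $V = G_{\abf}$ and $W = H_{\abf}$. Since abelianization and taking the torsion-free part both commute with direct products, $(G \times H)_{\abf} = V \oplus W$. The degree-$2$ piece of the free Lie algebra decomposes as
\[
(V \oplus W) \wedge (V \oplus W) = (V \wedge V) \oplus (V \wedge W) \oplus (W \wedge W),
\]
with $V \wedge W \cong V \otimes W$ via $v \wedge w \mapsto v \otimes w$.

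The central step is to prove the image identity
\[
\im(\nabla_{G \times H}) = \im(\nabla_G) \oplus (V \wedge W) \oplus \im(\nabla_H)
\]
inside this decomposition. To establish it, I would invoke the K\"{u}nneth formula: it splits $H^1(G \times H; \Z) = H^1(G; \Z) \oplus H^1(H; \Z)$ and gives a (non-naturally split) short exact sequence for $H^2(G \times H; \Z)$ whose cokernel is a sum of $\Tor$-terms, hence torsion. Since torsion groups have trivial $\Z$-dual, this yields
\[
H^2(G \times H; \Z)^{\vee} \cong H^2(G; \Z)^{\vee} \oplus \bigl(H^1(G; \Z) \otimes H^1(H; \Z)\bigr)^{\vee} \oplus H^2(H; \Z)^{\vee}.
\]
Under this splitting, $\cup_{G \times H}$ restricts to $\cup_G$ on the first summand of $H^1 \wedge H^1$, to the cross product (an isomorphism onto the middle K\"{u}nneth summand) on the mixed part, and to $\cup_H$ on the last summand. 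Dualizing yields $\nabla_{G \times H} = \nabla_G \oplus \iota \oplus \nabla_H$, where $\iota: V \otimes W \hookrightarrow (V \oplus W) \wedge (V \oplus W)$ is the natural inclusion, from which the image identity follows.

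To finish, I would use the general fact that $\Lie(V \oplus W)/\text{ideal}(V \wedge W) \cong \Lie(V) \oplus \Lie(W)$ as graded Lie algebras: once the relation $[V, W] = 0$ is imposed, the Jacobi identity forces every iterated bracket involving at least one generator from each of $V$ and $W$ to vanish, so the quotient splits as a direct sum. Combining with the image identity,
\begin{align*}
\h(G \times H) &= \Lie(V \oplus W)/\text{ideal}\bigl(\im(\nabla_G) + V \wedge W + \im(\nabla_H)\bigr) \\
&\cong \bigl(\Lie(V)/\text{ideal}(\im(\nabla_G))\bigr) \oplus \bigl(\Lie(W)/\text{ideal}(\im(\nabla_H))\bigr) \\
&= \h(G) \oplus \h(H) = \h(G) \times \h(H).
\end{align*}

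The hard step is the image computation via K\"{u}nneth: one must verify carefully that the potential $\Tor$ contributions to $H^2(G \times H; \Z)$ do not enlarge the image of $\nabla_{G \times H}$. The observation that these contributions are torsion (hence have trivial $\Z$-dual) is the key simplification, and it makes the decomposition of $\nabla_{G \times H}$ into its three pieces clean.
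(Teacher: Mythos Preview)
Your proposal is correct and is precisely the approach the paper takes: the paper's entire proof reads ``Using the K\"{u}nneth formula to compute cup products in $H^{\le 2}(G\times H;\Z)$, the claim follows in a straightforward manner from the definition of the holonomy Lie algebra,'' and you have supplied the details of that straightforward computation. Your identification of the K\"{u}nneth/Tor step as the only point requiring care, and your handling of it via the (non-naturally split) K\"{u}nneth sequence together with the vanishing of the $\Z$-dual of the torsion cokernel, matches what the paper leaves implicit.
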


\begin{proof}
Using the K\"{u}nneth formula to compute cup products in $H^{\le 2}(G\times H;\Z)$, 
the claim follows in a straightforward manner from the definition of the holonomy Lie algebra.
\end{proof}

In a completely analogous fashion, one may define the holonomy 
Lie algebra $\h(A)$ of a graded, graded-commutative algebra $A$ 
over a field $\k$, provided that $A^0=\k$ and $A^1$ is finite-dimensional, 
by letting $\nabla_A\colon A_2\to A_1\wedge A_1$ be the $\k$-dual of 
the multiplication map $A^1\wedge A^1\to A^2$ and setting 
$\h(A)= \Lie (A_1)/ \text{ideal}(\im (\nabla_A))$. 

If $G$ is a group with $\dim_{\k} H_1(G;\k)<\infty$, 
we set $\h(G,\k)=\h(H^*(G;\k))$. In fact, if $X$ 
is any path-connected space with $G=\pi_1(X)$, then we may 
define $\h(X,\k)\coloneqq \h(H^*(X;\k))$, after which one checks that 
$\h(X,\k)\cong \h(G,\k)$. Moreover, if $G$ is finitely generated, 
then $\h(G,\Q)\cong \h(G)\otimes \Q$.

On a historical note, the holonomy Lie algebra of a (finitely generated) 
group $G$ was first defined (over $\k=\Q$) by K.-T.~Chen \cite{Chen73}, 
and later studied by Kohno  \cite{Kohno-83} 
in the case when $G$ is the fundamental group of the complement 
of a complex projective hypersurface.  In \cite{Markl-Papadima}, 
Markl and Papadima extended the definition of the holonomy 
Lie algebra to integral coefficients.  Further in-depth studies 
were done by Papadima--Suciu \cite{PS-imrn04}
and Suciu--Wang \cite{SW-jpaa,SW-forum}. 

\subsection{A comparison map}
\label{subsec:comparison}
A notable fact about the holonomy Lie algebra
is its relationship to the associated graded Lie algebra. 
This relationship is detailed in the next theorem. 

\begin{theorem}[\cite{Markl-Papadima,PS-imrn04,SW-jpaa}]
\label{thm:holo-epi}
Let $G$ be a group such that $G_{\abf}$ is finitely generated. 
There exists a natural epimorphism of graded Lie algebras, 
$\Psi\colon \h(G) \surj \gr(G)$, 
which induces isomorphisms in degrees $1$ and $2$ and descends to epimorphisms  
$\Psi^{(r)}\colon \h(G)/\h(G)^{(r)} \surj \gr(G/G^{(r)})$ for all $r\ge 2$.  
\end{theorem}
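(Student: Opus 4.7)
The plan is to construct $\Psi$ in three stages: build a Lie algebra map from the free Lie algebra $\Lie(G_{\abf})$ to $\gr(G)$, show this map kills the quadratic relations defining $\h(G)$, and then descend to each derived quotient.

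First I would invoke the universal property of the free Lie algebra. Since $G_{\abf}$ is a finitely generated free abelian group, the short exact sequence
\begin{equation*}
0 \to \Tors(G_{\ab}) \to G_{\ab} \to G_{\abf} \to 0
\end{equation*}
splits; choosing any section $G_{\abf}\inj G_{\ab}=\gr_1(G)$ extends uniquely to a graded Lie algebra map $\widetilde{\Psi}\colon \Lie(G_{\abf})\to \gr(G)$. The image modulo torsion in $\gr(G)$ is independent of the choice of splitting, which yields the naturality of $\Psi$ (and the degree-$1$ isomorphism modulo torsion). Surjectivity of $\widetilde{\Psi}$ is immediate because $\gr(G)$ is generated by its degree-$1$ piece as a Lie algebra.

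Next I would show that $\widetilde{\Psi}$ annihilates the ideal generated by $\im(\nabla_G)$, so descends to $\Psi\colon \h(G)\surj \gr(G)$. The Hopf--Stallings five-term exact sequence applied to $G'\inj G\surj G_{\ab}$ reads
\begin{equation*}
H_2(G;\Z) \longrightarrow H_2(G_{\ab};\Z) \longrightarrow G'/[G,G'] \longrightarrow 0,
\end{equation*}
and comes with the identifications $G'/[G,G']=\gr_2(G)$ and, modulo torsion, $H_2(G_{\ab};\Z)\cong G_{\abf}\wedge G_{\abf}$. The middle map is precisely $\widetilde{\Psi}_2$. Dualizing the cup product $A^1\wedge A^1\to A^2$ identifies $\im(\nabla_G)$ with the image of $H_2(G;\Z)^{\vee}$ in $G_{\abf}\wedge G_{\abf}$, which by exactness equals $\ker(\widetilde{\Psi}_2)$. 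This both kills the quadratic relations (hence the entire ideal they generate, since $\Psi$ is a Lie map) and yields the degree-$2$ isomorphism.

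For the last stage, since $\Psi$ is a graded Lie algebra map, $\Psi(\h(G)^{(r)})\subseteq \gr(G)^{(r)}$. Any element of $\gr(G)^{(r)}$ is represented by an $r$-fold iterated group commutator, hence lifts to an element of $G^{(r)}$, and therefore vanishes in $\gr(G/G^{(r)})$ under the quotient map induced by $G\surj G/G^{(r)}$. Thus the composition $\h(G)\xrightarrow{\Psi}\gr(G)\surj \gr(G/G^{(r)})$ factors through $\h(G)/\h(G)^{(r)}$, producing the desired $\Psi^{(r)}$, whose surjectivity is inherited from that of $\Psi$ and that of $\gr(G)\surj\gr(G/G^{(r)})$.

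The principal obstacle is the second stage: identifying $\im(\nabla_G)$ with the kernel of $\widetilde{\Psi}_2$ via the five-term exact sequence, while carefully handling torsion in $G_{\ab}$ (the duality between $H_2(G;\Z)$ and $H^2(G;\Z)^{\vee}$ is only clean modulo torsion, so one either restricts attention to the torsion-free quotient of $\gr(G)$ in degree $1$ or works with the universal coefficient sequence explicitly). Everything beyond degree $2$ is then forced: the Jacobi identity on both sides propagates the degree-$2$ relations to generate the full kernel, and the derived-quotient descent is a standard iterated-commutator argument.
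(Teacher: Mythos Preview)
The paper does not give its own proof of this theorem; it is cited from \cite{Markl-Papadima,PS-imrn04,SW-jpaa}. Your outline follows the standard argument found in those references: extend the identity on $G_{\abf}$ to a surjection $\Lie(G_{\abf})\surj\gr(G)$, use the five-term (Stallings--Hopf) sequence to identify the degree-$2$ kernel with $\im(\nabla_G)$, and then descend to derived quotients. Stages~1 and~3 are essentially correct as sketched.

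There is, however, a genuine gap in your treatment of torsion that you have flagged but not resolved, and it is more serious than a technicality. When $G_{\ab}$ has torsion, a section $G_{\abf}\hookrightarrow G_{\ab}=\gr_1(G)$ is \emph{not} surjective, so your claim that ``surjectivity of $\widetilde\Psi$ is immediate'' fails already in degree~$1$; likewise $\Psi_1\colon G_{\abf}\to G_{\ab}$ cannot be an isomorphism. The theorem as stated in the paper is thus literally correct only when $G_{\ab}$ is torsion-free (which holds in all the paper's applications), or else must be read modulo torsion or after tensoring with $\Q$. You should either impose that hypothesis explicitly or work throughout with $\gr(G)/\Tors$ as the target; the references \cite{SW-jpaa,PS-imrn04} handle this point carefully. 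Relatedly, your naturality claim needs more: the map $\Psi$ itself depends on the chosen splitting (different splittings differ by torsion in degree~$1$), so naturality holds only after killing torsion or under the torsion-free hypothesis. Once $G_{\ab}$ is assumed torsion-free, the duality $H_2(G;\Z)\cong H^2(G;\Z)^{\vee}$ in your Stage~2 is clean and the five-term argument goes through without further obstacle.
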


When the group $G$ is $1$-formal, 
the maps $\Psi\otimes \Q$ and $\Psi^{(r)} \otimes \Q$ are all isomorphisms.  
In general, though, the map $\Psi\otimes \Q$ fails to be injective, even in 
degree $3$. Nevertheless, as we shall see next, the map $\Psi_3\colon \h_3(G) 
\to \gr_3(G)$ is an isomorphism for a large class of (not necessarily $1$-formal) 
groups. We summarize Theorems 3.1, 4.1, and 4.3 from \cite{PrS20}, as follows. 

\begin{theorem}[\cite{PrS20}]
\label{thm:gr3=h3}
Suppose $G_{\ab}$ is finitely generated and torsion-free. Then, 
\begin{enumerate}[itemsep=2pt]
\item \label{h3-a}
There is a natural exact sequence
\begin{equation}
\label{eq:h3}
\begin{tikzcd}[column sep=18pt]
0 \ar[r] & \h_3(G) \ar[r] & H_2(G/\gamma_3(G);\Z) \ar[r]
	& H_2(G;\Z)/(\ker \nabla_G) \ar[r] & 0\, ,
\end{tikzcd}
\end{equation}
where $\nabla_G\colon H_2(G;\Z) \to H_1(G;\Z) \wedge H_1(G;\Z)$ is the 
comultiplication map. 

\item \label{h3-b}
If, moreover, the map $\nabla_G$ is injective, then, for each $k\ge 3$, 
there is a natural, split exact sequence
\begin{equation}
\label{eq:grng}
\begin{tikzcd}[column sep=18pt]
0 \ar[r] & \gr_{k}(G) \ar[r] & H_2(G/\gamma_{k}(G);\Z) \ar[r]
	& H_2(G;\Z)/(\ker \nabla_G) \ar[r] & 0\, .
\end{tikzcd}
\end{equation}
Furthermore, the canonical projection $\Psi_3\colon \h_3(G)\to  \gr_3(G)$ 
is an isomorphism.
\end{enumerate}
\end{theorem}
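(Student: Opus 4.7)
The plan is to prove \eqref{h3-a} via the Hopf formula applied to a minimal free presentation of $G$, then derive \eqref{h3-b} by combining \eqref{h3-a} with the $5$-term exact sequence in homology and the injectivity hypothesis on $\nabla_G$.

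For \eqref{h3-a}: since $G_{\ab}$ is free abelian of rank $n$, choose a minimal free presentation $1\to R\to F\to G\to 1$ with $F$ free of rank $n$ and $R\subseteq F'$. Hopf's formula applied to this presentation gives $H_2(G)=R/[F,R]$, and applied to the induced presentation $1\to R\gamma_3(F)\to F\to G/\gamma_3(G)\to 1$ yields
\[
H_2(G/\gamma_3(G))=R\gamma_3(F)/[F,R]\gamma_4(F),
\]
using $R\subseteq F'$ to identify $R\gamma_3(F)\cap F'=R\gamma_3(F)$ and $[F,R\gamma_3(F)]=[F,R]\gamma_4(F)$. Under the natural identifications $H_2(G_{\ab})=F'/\gamma_3(F)\cong G_{\ab}\wedge G_{\ab}$, the comultiplication $\nabla_{G/\gamma_3(G)}$ becomes the map $x[F,R]\gamma_4(F)\mapsto x\gamma_3(F)$ induced by the inclusion $R\gamma_3(F)\hookrightarrow F'$. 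Its image is $R\gamma_3(F)/\gamma_3(F)$, which coincides with $\im(\nabla_G)\cong H_2(G)/\ker\nabla_G$, while its kernel is $\gamma_3(F)/[F,R]\gamma_4(F)$. Identifying $\gr_3(F)=\gamma_3(F)/\gamma_4(F)$ with $\Lie_3(G_{\ab})$ (since $F$ is free of rank $n$) and the image of $[F,R]$ in $\gr_3(F)$ with $[G_{\ab},\im(\nabla_G)]\subseteq\Lie_3(G_{\ab})$, this kernel equals $\h_3(G)$ by its very definition as a quotient of the free Lie algebra. Exactness of \eqref{eq:h3} follows, and naturality is inherited from the functoriality of Hopf's formula.

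For \eqref{h3-b}: the $5$-term exact sequence for $1\to\gamma_k(G)\to G\to G/\gamma_k(G)\to 1$ reads
\[
H_2(G)\xrightarrow{\;j_k\;}H_2(G/\gamma_k(G))\longrightarrow\gr_k(G)\longrightarrow 0.
\]
Let $q_k\colon H_2(G/\gamma_k(G))\twoheadrightarrow H_2(G)/\ker\nabla_G$ be the surjection induced by $\nabla_{G/\gamma_k(G)}$, whose surjectivity onto $\im(\nabla_G)$ follows from the $5$-term sequence for the central extension $1\to\gamma_2(G)/\gamma_k(G)\to G/\gamma_k(G)\to G_{\ab}\to 1$. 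By naturality of comultiplication, $\nabla_{G/\gamma_k(G)}\circ j_k=\nabla_G$, so $q_k\circ j_k$ is the canonical surjection $H_2(G)\twoheadrightarrow H_2(G)/\ker\nabla_G$. Under the hypothesis $\ker\nabla_G=0$ this composition is the identity, so $j_k$ is a section of $q_k$. Consequently, $H_2(G/\gamma_k(G))=\im(j_k)\oplus\ker(q_k)$, and the projection onto the cokernel of $j_k$ restricts to an isomorphism $\ker(q_k)\isom\gr_k(G)$, yielding the split exact sequence \eqref{eq:grng}. Specializing to $k=3$ and combining with \eqref{h3-a}, we identify $\ker(q_3)=\h_3(G)$ with $\gr_3(G)$; the resulting isomorphism is the comparison map $\Psi_3$ of Theorem~\ref{thm:holo-epi}.

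The main technical obstacle lies in \eqref{h3-a}, namely the identification of $[F,R]\gamma_4(F)/\gamma_4(F)\subseteq\gr_3(F)$ with $[G_{\ab},\im(\nabla_G)]\subseteq\Lie_3(G_{\ab})$. This hinges on careful commutator calculus: one must check that for $x\in F$ and $r\in R$, the class of $[x,r]$ in $\gr_3(F)$ equals the Lie bracket of $\bar{x}\in F_{\ab}=G_{\ab}$ with the class of $r$ in $\gr_2(F)=F_{\ab}\wedge F_{\ab}$, and that the image of $R$ under $F'\twoheadrightarrow\gr_2(F)$ matches $\im(\nabla_G)$ under the Hopf identification $\nabla_G\colon H_2(G)=R/[F,R]\to H_2(G_{\ab})=F'/\gamma_3(F)$.
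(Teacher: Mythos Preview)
The paper does not supply its own proof of this theorem; it is quoted from \cite{PrS20} (summarizing Theorems~3.1, 4.1, and 4.3 there) without argument. Your approach via Hopf's formula for part~\eqref{h3-a} and the five-term exact sequence for part~\eqref{h3-b} is correct and is in fact the route taken in \cite{PrS20}.

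Two small points are worth tightening. First, for $q_k$ to be well defined as a map onto $H_2(G)/\ker\nabla_G\cong\im(\nabla_G)$ you need $\im(\nabla_{G/\gamma_k(G)})=\im(\nabla_G)$, not merely the inclusion $\supseteq$ coming from naturality. Both images equal $\ker\bigl(H_2(G_{\ab})\to\gr_2(G)\bigr)$, the kernel of the commutator map $G_{\ab}\wedge G_{\ab}\to\gr_2(G)$: this follows from the five-term sequences of $1\to G'\to G\to G_{\ab}\to 1$ and of $1\to\gamma_2(G)/\gamma_k(G)\to G/\gamma_k(G)\to G_{\ab}\to 1$, respectively, using that $\gr_2(G/\gamma_k(G))=\gr_2(G)$ for $k\ge 3$ and that the transgression is the same commutator map in both cases. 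Second, the identification of your isomorphism at $k=3$ with the canonical $\Psi_3$ deserves one more line: in the Hopf model both sides are computed from $\gamma_3(F)/[F,R]\gamma_4(F)$, the source via the Witt isomorphism $\gr(F)\cong\Lie(G_{\ab})$ and the target via the projection $F\twoheadrightarrow G$, and the composite is exactly the map induced by sending a free Lie word to the corresponding iterated group commutator.
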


Consequently, when $G_{\ab}$ is finitely generated and torsion-free and $\nabla_G$ 
is injective, the Schur multiplier of the second nilpotent quotient of $G$ decomposes as 
\begin{equation}
\label{eq:h2-nilp3}
H_2(G/\gamma_3(G);\Z) \cong H_2(G;\Z) \oplus \h_3(G).
\end{equation}

\subsection{Infinitesimal Alexander invariant}
\label{subsec:inf-alex-inv}
Let $G$ be a group and assume that its torsion-free abelianization,
$G_{\abf}=G_{\ab}/\Tors$, is finitely generated. Then $G_{\abf}$ is a 
free abelian group of rank $r=b_1(G)$. 
The symmetric algebra on this group, $S=\Sym(G_{\abf})$, is naturally 
isomorphic to $\gr(\Z[G_{\abf}])$. In concrete terms, if we identify 
$G_{\abf}$ with $\Z^r$, then $\Sym(G_{\abf})$ 
gets identified with the polynomial ring $\Z[x_1,\dots,x_r]$. 

A homomorphism $\alpha\colon G \to H$ between two groups 
as above induces a homomorphism $\alpha_{\abf}\colon G_{\abf} \to H_{\abf}$, 
which extends to a ring map, 
$\tilde\alpha_{\abf}\colon \Sym(G_{\abf})\to  \Sym(H_{\abf})$. 
If we identify these symmetric algebras with the corresponding 
polynomial rings, the map $\tilde\alpha_{\abf}$ is  the 
linear change of variables defined by the matrix of  $\alpha_{\abf}$. 
Consequently, if  $\alpha_{\abf}$ is injective (respectively, surjective), 
then $\tilde\alpha_{\abf}$ is also injective (respectively, surjective).

Following the approach from \cite{PS-imrn04}, 
we define the {\em infinitesimal Alexander invariant}\/ of 
$G$ to be the quotient (abelian) group
\begin{equation}
\label{eq:b-lin}
\B(G)\coloneqq \h(G)'/\h(G)'', 
\end{equation}
viewed as a graded module over the ring $S=\Sym(G_{\abf})$. 
Setting $\h=\h(G)$, the module structure on $\B(G)$ comes 
from the exact sequence 
$0\to \h'/\h'' \to \h/\h'' \to \h/\h' \to 0$ 
via the adjoint action of $\h/\h' =\h_1 =G_{\abf}$ 
on $\h'/\h''$ given by $g\cdot \bar{x} = \overline{[g,x]}$ 
for $g\in \h_1$ and $x\in \h'=\bigoplus_{k\ge 2} \h_k$. 
Since the grading for $S$ starts with $S_0=\Z$, we are led to define 
the grading on $\B(G)$ as 
\begin{equation}
\label{eq:bb grading}
\B_{k}(G) = (\h'/\h'')_{k+2}, 
\end{equation}
for $k\ge 0$. This equality may be viewed as an infinitesimal analogue 
of Massey's isomorphism from Corollary \ref{cor:alex-chen}.

When $G$ admits a finite, commutator-relators presentation 
(as happens for instance with an arrangement group), 
the $S$-module $\B(G)$ is isomorphic to the ``linearization" 
of the Alexander invariant $B(G)$, see \cite[Proposition~9.3]{PS-imrn04}. 

The above construction is functorial. More precisely, if $\alpha\colon G \to H$ 
is a homomorphism between two groups as above, then $\alpha$ 
induces a morphism of graded Lie algebras, $\h(\alpha) \colon \h(G)\to \h(H)$, 
which preserves the respective derived series. Hence, the restriction 
$\h'(\alpha) \colon \h(G)'\to \h(H)'$ further restrics to a map 
$\h''(\alpha) \colon \h(G)''\to \h(H)''$, and thus induces a 
homomorphism  $\B(\alpha)\colon \B(G)\to \B(H)$. A routine 
check shows that $\B(\alpha)$ is a morphism of graded modules covering 
the ring map $\tilde\alpha_{\abf}\colon \Sym(G_{\abf})\to  \Sym(H_{\abf})$, 
and that $\B(\beta\circ\alpha)=\B(\beta)\circ\B(\alpha)$. 
Clearly, if $\h'(\alpha)\colon \h(G)'\to \h(H)'$ is surjective, 
then $\B(\alpha)\colon \B(G)\to \B(H)$ is also surjective, 
and if $\h'(\alpha)$ is an isomorphism, then 
$\B(\alpha)$ is also an isomorphism.
Consequently, if $\alpha$ is surjective, then $\h'(\alpha)$ is surjective, 
and so $\B(\alpha)$ is also surjective.

Denoting by $\B(H)_{\alpha}$ the module obtained from $\B(H)$ by 
restriction of scalars along $\tilde\alpha_{\abf}$, 
we may view the map $\B(\alpha)$ as the composite 
$\B(G) \to \B(H)_{\alpha} \to \B(H)$, 
where the first arrow is a $\Sym(G_{\abf})$-linear map and the second 
arrow is the identity map of $\B(H)$, thought of as covering the ring 
map $\tilde\alpha_{\abf}$. 

\subsection{Chen ranks and $1$-formality}
\label{subsec:chen-formal}
Let $G$ be a group with $b_1(G)<\infty$. The {\em holonomy Chen ranks}\/ 
of $G$ are defined as  $\bar\theta_k(G)=\dim_{\Q} \left(\h(G;\Q)/\h(G;\Q)''\right)_k$. 
By Theorem \ref{thm:holo-epi}, we have that $\theta_{k}(G)\le \bar\theta_{k}(G)$ 
for all $k\ge 1$, with equality for $k=1$ and $2$. Using the grading convention 
from \eqref{eq:bb grading} and the proof of \cite[Proposition~8.1]{SW-mz}, we arrive 
at the following infinitesimal version of formula \eqref{eq:hilb-b-chen}. 

\begin{proposition}[\cite{SW-mz}]
\label{prop:inf-massey-corr}
Let $G$ be a group with $b_1(G)<\infty$. Then, for all $k\ge 2$, 
\begin{equation}
\label{eq:bt-bg}
\bar\theta_{k}(G)=\dim_{\Q} \B_{k-2}(G;\Q).
\end{equation}
\end{proposition}

As we shall see below, when the group $G$ is $1$-formal, 
the Chen ranks and the holonomy Chen ranks of $G$ are equal. 
The key result towards establishing this fact (proved 
in \cite[Theorem~5.6]{DPS-duke}) uses the $1$-formality hypothesis to 
construct a functorial isomorphism between the 
Alexander invariant and its infinitesimal version, at the level of 
completions (over $\Q$).

\begin{theorem}[\cite{DPS-duke}]
\label{thm:linalex-c}
Let $G$ be a $1$-formal group.  There is then a 
filtration-preserving isomorphism of completed modules, 
$\Phi_G\colon \widehat{B(G)}\otimes \Q\isom\widehat{\B(G)}\otimes \Q$. 
\end{theorem}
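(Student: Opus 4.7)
The plan is to construct $\Phi_G$ by passing through the metabelian Malcev (pronilpotent) Lie algebra of $G$ over $\Q$, exploiting $1$-formality to identify it with the completion of the metabelian holonomy Lie algebra, and then matching the derived ideals on each side with the respective Alexander invariants.

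First, I would use the fact that $1$-formality of $G$ is equivalent to the existence of a filtration-preserving isomorphism of complete $\Q$-Lie algebras, $\m(G)\otimes\Q\isom \widehat{\h(G)}\otimes\Q$, covering the identity on the abelianization. Taking maximal metabelian quotients is compatible with the respective filtrations, since the derived Lie ideal is a closed (in fact homogeneous, on the holonomy side) ideal, and therefore one obtains an induced isomorphism
\[
\m(G/G'')\otimes\Q \;\isom\; \widehat{\h(G)/\h(G)''}\otimes\Q.
\]
Restricting to the derived Lie ideals on each side yields a filtered isomorphism of the two (abelian) derived ideals.

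Next, I would identify these completed derived ideals with the completed Alexander invariants. On the right-hand side, $\h(G)'/\h(G)''=\B(G)$ by definition, and the filtration inherited from the bracket-length grading on $\h(G)$ matches the $I$-adic filtration on the $\Sym(G_{\abf})$-module $\B(G)\otimes\Q$, in view of the grading convention \eqref{eq:bb grading} and Example \ref{ex:laurent}; passing to the completion produces $\widehat{\B(G)}\otimes\Q$. On the left-hand side, Theorem \ref{thm:massey-alexinv} identifies $I^k B(G)$ with $\gamma_{k+2}(G/G'')$, so the $I$-adic filtration on $B(G)\otimes\Q$ corresponds, after a shift of $2$, to the LCS filtration on the derived ideal of $G/G''$ tensored with $\Q$, which in turn is precisely the filtration inherited from the metabelian Malcev Lie algebra. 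Splicing these identifications with the displayed isomorphism yields the desired $\Phi_G$.

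The main obstacle is to verify carefully that Malcev completion commutes with the passage to the maximal metabelian quotient, and that the two filtrations on the derived ideal of $\m(G/G'')\otimes\Q$---the one coming from the LCS of $G/G''$ (equivalent, via Massey, to the $I$-adic filtration on $B(G)\otimes\Q$) and the one coming from the bracket-length grading on $\h(G)$ (equivalent to the grading filtration on $\B(G)\otimes\Q$)---actually agree under the formality isomorphism, up to the shift of $2$ dictated by \eqref{eq:bb grading}. Once this is in place, the naturality of $\Phi_G$ with respect to morphisms of $1$-formal groups is automatic, since each of the three ingredients (Malcev functoriality, the metabelian quotient functor, and Massey's correspondence) is itself natural.
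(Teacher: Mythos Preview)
The paper does not actually prove this statement; Theorem~\ref{thm:linalex-c} is quoted from \cite[Thm.~5.6]{DPS-duke} and used as a black box, with the subsequent paragraph recording only its functoriality property via diagram~\eqref{eq:bb-func}. So there is no ``paper's own proof'' to compare against here.

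That said, your sketch is in the right spirit and is essentially the strategy of \cite{DPS-duke}: one starts from the formality isomorphism between the Malcev Lie algebra $\m(G)$ and the completion of $\h(G)\otimes\Q$, passes to the maximal metabelian quotients on both sides, and then identifies the derived pieces with the $I$-adic completions of $B(G)\otimes\Q$ and $\B(G)\otimes\Q$ via Massey's correspondence (Theorem~\ref{thm:massey-alexinv}) and the grading convention~\eqref{eq:bb grading}. You correctly flag the one genuine subtlety---that Malcev completion commutes with the metabelian quotient and that the two natural filtrations on the derived ideal agree---and this is exactly where the work lies in \cite{DPS-duke}. Your outline would need to be fleshed out at that point (in particular, one must check that the formality isomorphism can be chosen to respect the derived series, not just the LCS filtration), but the architecture is sound.
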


This isomorphism is functorial, in the following sense. 
Let $\alpha\colon G\to H$ be a homomorphism between 
two $1$-formal groups, and let $B(\alpha)\colon B(G)\to B(H)$ 
and $\B(\alpha)\colon \B(G)\to \B(H)$ be the induced morphisms 
between the two types of Alexander invariants. The following 
diagram then commutes.
\begin{equation}
\label{eq:bb-func}
\begin{tikzcd}[column sep=42pt, row sep=26pt]
\widehat{B(G)}\otimes \Q  \ar[r, "\widehat{B(\alpha)}\otimes \Q"]
\ar[d, "\Phi_G", "\cong"']
& \widehat{B(H)}\otimes \Q \phantom{.}\ar[d, "\Phi_H" , "\cong"']\\
\widehat{\B(G)}\otimes \Q \ar[r, "\widehat{\B(\alpha)}\otimes \Q"]
& \widehat{\B(H)}\otimes \Q .
\end{tikzcd}
\end{equation}

Passing to associated graded modules, we obtain as an 
immediate corollary the following result (see also \cite[\S8]{SW-mz} 
for a different approach.)

\begin{corollary}
\label{cor:linalex-gr}
If $G$ is $1$-formal, then $\gr(B(G) \otimes \Q)\cong \B(G) \otimes\Q$, 
as graded modules over $\gr(\Q[G_{\ab}]) \cong \Sym(H_1(G;\Q))$.
\end{corollary}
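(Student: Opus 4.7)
The plan is to deduce this corollary by applying the associated graded functor to the filtration-preserving isomorphism $\Phi_G\colon \widehat{B(G)}\otimes \Q \isom \widehat{\B(G)}\otimes \Q$ provided by Theorem~\ref{thm:linalex-c}. Since $\Phi_G$ preserves filtrations and covers an isomorphism of completed group rings, it induces an isomorphism of graded modules over the common graded ring $\gr(\Q[G_{\ab}])\cong \Sym(H_1(G;\Q))$; it therefore suffices to identify its source and target with $\gr(B(G)\otimes \Q)$ and $\B(G)\otimes \Q$, respectively.

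For the source, I would invoke the standard fact that for an $I$-adic filtration on a finitely generated module $M$ over a Noetherian ring $R$, the canonical map $M\to\widehat{M}$ induces a natural isomorphism $\gr_I(M)\isom\gr_{\widehat{I}}(\widehat{M})$. Applied with $M=B(G)\otimes\Q$ and $I=I(G_{\ab})$, and invoking Lemmas~\ref{lem:b-fg} and~\ref{lem:faithful} to check the hypotheses, this yields $\gr(\widehat{B(G)}\otimes \Q) \cong \gr(B(G)\otimes \Q)$.

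The main step is to show $\gr(\widehat{\B(G)}\otimes \Q) \cong \B(G)\otimes \Q$, or equivalently that the $\mathfrak{m}$-adic filtration on $\B(G)$ (where $\mathfrak{m}=S_+$ and $S=\Sym(G_{\abf})$) coincides with its natural filtration by degree. The key observation is that $\B(G)$ is generated in degree zero as an $S$-module. Indeed, since $\h(G)$ is a quadratic Lie algebra generated by $\h_1=G_{\abf}$, we have $\h_{k+1}=[\h_1,\h_k]$ for every $k\ge 2$, which by the adjoint-action description of the $S$-module structure translates to $S_1\cdot\B_k(G)=\B_{k+1}(G)$ for all $k\ge 0$. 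Inductively, $\B_k(G)=S_k\cdot \B_0(G)$, so $\mathfrak{m}^k \B(G)=\bigoplus_{j\ge k} \B_j(G)$, giving the canonical identification $\gr_{\mathfrak{m}}(\widehat{\B(G)}\otimes \Q) \cong \B(G)\otimes \Q$.

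Combining the three isomorphisms finishes the argument. The only non-bookkeeping step is the degree-zero generation check for $\B(G)$, and this is what distinguishes the infinitesimal Alexander invariant (a bona fide graded module) from the ordinary one (merely filtered).
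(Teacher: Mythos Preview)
Your proposal is correct and follows exactly the approach the paper intends: the paper simply says ``Passing to associated graded modules, we obtain as an immediate corollary\ldots'' after Theorem~\ref{thm:linalex-c}, and you have carefully unpacked the two identifications (that $\gr$ commutes with completion for $B(G)\otimes\Q$, and that the $\mathfrak{m}$-adic filtration on $\B(G)$ agrees with the degree filtration since $\h(G)$ is generated in degree~$1$) that make this passage legitimate. Your degree-zero generation argument for $\B(G)$ is the right observation and is implicit in the paper's treatment of $\B(G)$ as a graded $S$-module.
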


\begin{corollary}[\cite{PS-imrn04}]
\label{cor:ps-chen}
If $G$ is $1$-formal, then $\theta_k(G)=\bar\theta_k(G)$, 
for all $k\ge 2$, and so 
\[
\sum_{k\ge 0} \theta_{k+2} (G) t^k = \Hilb (\B(G) \otimes \Q,t) .
\]
\end{corollary}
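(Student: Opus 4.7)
The plan is to chain together three results already in hand: Corollary \ref{cor:linalex-gr}, the Hilbert series identity \eqref{eq:hilb-b-chen}, and Proposition \ref{prop:inf-massey-corr}. Essentially all the work has been done in Theorem \ref{thm:linalex-c}; the corollary is just bookkeeping at the level of Hilbert series.

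First, I would invoke Corollary \ref{cor:linalex-gr}: since $G$ is $1$-formal, there is an isomorphism of graded $\Sym(H_1(G;\Q))$-modules
\[
\gr(B(G)\otimes \Q) \;\cong\; \B(G)\otimes \Q,
\]
and hence $\Hilb(\gr(B(G)\otimes \Q),t) = \Hilb(\B(G)\otimes \Q,t)$. Next, I would apply formula \eqref{eq:hilb-b-chen}, which expresses the Hilbert series on the left in terms of the Chen ranks:
\[
\Hilb(\gr(B(G)\otimes \Q),t) \;=\; \sum_{k\ge 0} \theta_{k+2}(G)\, t^k.
\]
Comparing coefficients of $t^k$ in these two identities gives
$\dim_{\Q} \B_k(G)\otimes \Q = \theta_{k+2}(G)$ for every $k\ge 0$.

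Finally, I would compare this with Proposition \ref{prop:inf-massey-corr}, which gives $\bar\theta_{k}(G) = \dim_{\Q} \B_{k-2}(G;\Q)$ for $k\ge 2$. Reindexing, this reads $\bar\theta_{k+2}(G) = \dim_{\Q} \B_k(G;\Q)$, so combining with the previous paragraph yields $\theta_{k+2}(G) = \bar\theta_{k+2}(G)$ for all $k\ge 0$, i.e., $\theta_k(G) = \bar\theta_k(G)$ for all $k\ge 2$. The Hilbert series identity
\[
\sum_{k\ge 0} \theta_{k+2}(G)\, t^k \;=\; \Hilb(\B(G)\otimes \Q,t)
\]
is then immediate from the equality of Hilbert series established at the first step.

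There is no genuine obstacle: the real content sits in Theorem \ref{thm:linalex-c}, whose passage to associated graded objects produces Corollary \ref{cor:linalex-gr}. Once that graded isomorphism is in hand, the proof is purely a matter of reading off and matching coefficients, so the only care required is to keep the grading conventions consistent---in particular, the shift by $2$ between $\B_k$ and $\theta_{k+2}$ built into \eqref{eq:bb grading} and Proposition \ref{prop:inf-massey-corr}.
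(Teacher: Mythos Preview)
Your proposal is correct and follows essentially the same approach as the paper, which treats this corollary as an immediate consequence of the preceding results without spelling out a proof. The chain you describe---Corollary~\ref{cor:linalex-gr} giving $\gr(B(G)\otimes\Q)\cong\B(G)\otimes\Q$, then formula~\eqref{eq:hilb-b-chen} and Proposition~\ref{prop:inf-massey-corr} to match Chen ranks with holonomy Chen ranks---is precisely the intended derivation.
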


\section{Complements of hyperplane arrangements}
\label{sect:hyp-arr}

We now turn to the study of complex hyperplane arrangements. 
This section contains a brief review of the theory of arrangements, with an emphasis 
on some of the topological invariants associated to their complements. 

\subsection{Hyperplane arrangements}
\label{subsec:hyp arr-pre}

An {\em arrangement of hyperplanes}\/ is a finite set $\A$ of 
codimension-$1$ linear subspaces in a finite-dimensional, 
complex vector space $\C^{d+1}$. The combinatorics of the 
arrangement is encoded in its {\em intersection lattice}, $L(\A)$,   
that is, the poset of all intersections of hyperplanes in $\A$ (also 
known as flats), ordered by reverse inclusion, and ranked by 
codimension.  For a flat $X=\bigcap_{H\in \B} H$ defined 
by a sub-arrangement $\B\subseteq \A$, 
we let $\rank X=\codim X$; we also write 
$L_k(\A)=\{X\in L(\A) \mid \rank X=k\}$.

Unless otherwise stated, we will assume 
that the arrangement is {\em central}, that is, all the hyperplanes 
pass through the origin.  For each hyperplane $H\in \A$, let 
$f_H\colon \C^{d+1} \to \C$ be a linear form with kernel $H$. The product 
$f=\prod_{H\in \A} f_H$, 
then, is a defining polynomial for the arrangement, 
unique up to a non-zero constant factor. 
Notice that $f$ is a homogeneous polynomial 
of degree equal to $n=\abs{\A}$, the number of hyperplanes comprising $\A$. 

By definition, the complement of the arrangement is the topological space 
$M(\A)=\C^{d+1}\setminus \bigcup_{H\in\A}H$. This 
is a connected, smooth, complex quasi-projective variety.  Moreover,  
$M=M(\A)$ is a Stein manifold, and thus it has the homotopy type of a 
CW-complex of dimension at most $d+1$.  In fact, $M$ splits 
off the linear subspace $\bigcap_{H\in \A} H$; if the dimension of 
this subspace (which we call the {\em corank}\/ of $\A$) is equal to $0$, 
we say that $\A$ is {\em essential}.

The group $\C^*$ acts freely on $\C^{d+1}\setminus \set{0}$ via 
$\zeta\cdot (z_0,\dots,z_{d})=(\zeta z_0,\dots, \zeta z_{d})$. 
The orbit space is the complex projective space of dimension $d$, 
while the orbit map, $\pi\colon \C^{d+1}\setminus \set{0} \to \CP^{d}$,  
$z \mapsto [z]$, 
is the Hopf fibration. The set  $\P(\A)=\set{\pi(H)\colon H\in \A}$ is an 
arrangement of codimension $1$ projective subspaces in $\CP^{d}$. 
Its complement, $U=U(\A)$, coincides with the quotient 
$\P(M)=M/\C^*$. 
The Hopf map restricts to a bundle map, $\pi\colon M\to U$, with fiber 
$\C^{*}$. Fixing a hyperplane $H\in \A$, we see that $\pi$ is 
also the restriction to $M$ of the bundle map 
$\C^{d+1}\setminus H\to \CP^{d} \setminus \pi(H) \cong \C^{d}$.  
This latter bundle is trivial, and so we have a diffeomorphism  
$M \cong U\times \C^*$.

\subsection{Cohomology ring}
\label{subsec:OS}

The cohomology ring of a hyperplane arrangement 
complement $M=M(\A)$ was computed by Brieskorn in \cite{Br}, 
building on the work of Arnol'd on the cohomology 
ring of the pure braid group. In \cite{OS}, Orlik and 
Solomon gave a simple description of this ring, solely 
in terms of the intersection lattice $L(\A)$, as follows.  
Fix a linear order on $\A$, and let $E=E(\A)$ be the exterior 
algebra over $\Z$ with generators $\set{e_H \mid H\in \A}$ 
in degree $1$.  Next, define a differential $\partial \colon E\to E$ 
of degree $-1$, starting from $\partial(1)=0$ 
and $\partial(e_H)=1$, and extending $\partial$ to 
a linear operator on $E$, using the graded Leibniz rule. 
Finally, let $I(\A)$ be the ideal of $E$ generated by $\partial e_{\B}$, 
for all sub-arrangements $\B\subset \A$ such that 
$\codim \bigcap_{H\in \B} H < \abs{\B}$, 
where $e_\B\coloneqq \prod_{H\in \B} e_H$.  Then 
\begin{equation}
\label{eq:OS-alg}
H^*(M(\A);\Z)=E(\A)/I(\A).
\end{equation}

Every arrangement complement $M=M(\A)$ is formal, 
that is, its rational cohomology algebra can be connected by a 
zig-zag of quasi-isomorphisms to  the algebra 
of polynomial differential forms on $M$ defined by Sullivan in \cite{Sullivan}. 
Indeed, for each $H\in \A$, the $1$-form 
$\omega_H= \frac{1}{2\pi \ii} d \log f_H$ on $\C^{d+1}$ restricts 
to a $1$-form on $M$.  As shown by Brieskorn \cite{Br},  
if $\mathcal{D}$ denotes the subalgebra 
of the de~Rham algebra $\Omega^*_{\rm dR}(M)$ 
generated over $\R$ by these $1$-forms, 
the correspondence $\omega_H \mapsto [\omega_H]$ 
induces an isomorphism $\mathcal{D} \to H^*(M;\R)$, 
and then Sullivan's machinery implies that $M$ is formal. 

\subsection{Localized sub-arrangements}
\label{subsec:local}

The {\em localization}\/ of an arrangement $\A$ at a flat $X\in L(\A)$ 
is the sub-arrangement $\A_X\coloneqq \{H\in \A : H\supset X\}$. 
The inclusion $\A_X\subset \A$ gives rise to an inclusion of complements, 
$j_X\colon M(\A) \inj M(\A_X)$.  The inclusions $\{j_X\}_{X\in L(\A)}$ assemble into a map 
\begin{equation}
\label{eq:prod-map}
\begin{tikzcd}[column sep=20pt]
j\colon M(\A) \ar[r]& \prod_{X\in L(\A)} M(\A_X) . 
\end{tikzcd}
\end{equation}

As shown by Brieskorn \cite[Lemma~3]{Br} 
(see also \cite[Lemma~5.91]{Orlik-Terao}) 
the  homomorphism induced in cohomology by $j$  
is an isomorphism in each degree $k\ge 1$.  Moreover, the groups 
$H^k(M(\A_X);\Z)$ are torsion-free, and so, by the K\"{u}nneth formula, 
we have isomorphisms 
\begin{equation}
\label{eq:brieskorn}
H^k(M(\A);\Z) \cong \bigoplus_{X\in L_k(\A)} H^k(M(\A_X);\Z)
\end{equation}
for all $k\ge 1$.  Likewise, for each $k\ge 2$, the degree $k$ piece of the Orlik--Solomon 
ideal, $I^k(\A_X)$, decomposes as the direct sum of the groups $I^k(\A_X)$, taken 
over all $X\in L_k(\A)$.
It follows that the homology groups of the complement of $\A$ 
are torsion-free, with ranks given by
\begin{equation}
\label{eq:betti-arr}
b_k(M(\A))=\sum_{X\in L_k(\A)} (-1)^k \mu(X) , 
\end{equation}
where $\mu\colon L(\A) \to \Z$ is the M\"{o}bius function 
of the intersection lattice, defined inductively by $\mu(\C^{n})=1$ 
and $\mu(X)=-\sum_{Z\supsetneq X} \mu(Z)$.  In particular, 
$H_1(M(\A);\Z)$ is free abelian of rank equal to the cardinality 
of the arrangement, $\abs{\A}$. 

As noted in \cite{Ra97} and \cite{MS00}, the fact that $H^*(M(\A);\Z)$ is 
torsion-free (as a group) and generated in degree $1$ (as an algebra) implies 
that, for each $2\le i\le d+1$, the Hurewicz homomorphism 
$h\colon \pi_{i}(M(\A))\to H_{i}(M(\A);\Z)$ is the zero map.

\subsection{Fundamental group}
\label{subsec:pi1}
Fix a basepoint $x_0$ in the complement of $\A$, and consider the 
fundamental group $G(\A)=\pi_1(M(\A),x_0)$. 
For each hyperplane $H\in \A$, pick a meridian curve about $H$, 
oriented compatibly with the complex orientations on $\C^{d+1}$ and $H$, 
and let $x_H$ denote the based homotopy class of this curve, 
joined to the basepoint by a path in $M$.  By the van Kampen theorem, 
then, the arrangement group, $G=G(\A)$, is generated by the set 
$\set{x_H : H\in \A}$. Using the braid monodromy algorithm from 
\cite{CS-cmh97}, one may obtain a finite presentation 
of the form $G=F_n/R$, where $F_n$ is the rank $n$ free group on 
the set of meridians and the relators in $R$ belong to the commutator 
subgroup $F_n'$, so that $G_{\ab}=H_1(G;\Z)\cong \Z^n$.

Probably the best-known arrangement is the braid arrangement 
$\A_n$, consisting of the diagonal hyperplanes in 
$\C^{n}$.  It is readily seen that $L(\A_{n})$ is the lattice of partitions 
of $[n]=\{1,\dots,n\}$, ordered by refinement, while   
$M(\A_{n})$ is the configuration space $F(\C,n)$ of $n$ 
ordered points in $\C$, which is a classifying space for $P_{n}$, 
the pure braid group on $n$ strings.

More generally, if the intersection lattice $L(\A)$ is supersolvable, 
the complement $M=M(\A)$ is a classifying space for the group $G=G(\A)$. 
On the other hand, there are many arrangements $\A$ (for instance, generic 
arrangements) for which $M$ is not a $K(G,1)$. Nevertheless, as noted above, the 
Hurewicz map $h\colon \pi_2(M)\to H_2(M;\Z)$ is the zero map. 
Therefore, if $g\colon M\to K(G,1)$ is a classifying map, then, 
by the Hopf exact sequence, the induced homomorphism 
$g_*\colon H_i(M;\Z)\to H_i(G;\Z)$ is an isomorphism for $i\le 2$.

For the purpose of computing the group $G=G(\A)$, it is enough to assume that 
the arrangement $\A$ lives in $\C^3$, in which case $\bar{\A}=\P(\A)$ 
is an arrangement of (projective) lines in $\CP^2$. This is clear when the 
rank of $\A$ is at most $2$, and may be achieved otherwise 
by taking a generic $3$-slice, an operation that does not
change either the poset $L_{\le 2}(\A)$ or the group $G$. 
For a rank-$3$ arrangement, the set $L_1(\A)$ is in $1$-to-$1$ correspondence 
with the lines of $\bar{\A}$, while $L_2(\A)$ is in $1$-to-$1$ correspondence 
with the intersection points of $\bar{\A}$.  Moreover, the poset structure of $L_{\le 2}(\A)$ 
mirrors the incidence structure of the point-line configuration $\bar{\A}$.   

\subsection{Localization and fundamental groups}
\label{subsec:loc-pi1}
Choosing a point $x_0$ sufficiently close to $\bz\in \C^{d+1}$, we can make 
$x_0$ a common basepoint for both $M(\A)$ and all the local complements $M(\A_X)$.  
The next result (Lemmas 4.1 and 4.3 from \cite{DSY16}) gives a rough analog of Brieskorn's 
Lemma at the level of fundamental groups. For each flat $X\in L(\A)$, let 
$j_X\colon M(\A) \to M(\A_X)$ be the corresponding inclusion. 

\begin{lemma}[\cite{DSY16}]
\label{lem:dsy}
There exist basepoint-preserving maps $r_X\colon M(\A_X)\to M(\A)$ 
such that $j_X\circ r_X\simeq \id$ relative to $x_0$. Moreover, if 
$H\in \A$ and $H\not\supset X$, then the 
composite $r_X \circ j_X \circ r_H$ is null-homotopic.
\end{lemma}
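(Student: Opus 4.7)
My plan is to construct each retraction $r_X$ as a scaling-based deformation of $M(\A_X)$ into a small neighborhood of a generic point of $X$ in which $M(\A_X)$ coincides with $M(\A)$, and then to derive the second claim from a $\pi_1$-argument using $M(\A_H)\simeq \C^*$ for a hyperplane $H$.

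For the first claim, pick $p_X\in X$ sufficiently generic that $p_X\notin H'$ for every $H'\in\A\setminus\A_X$, together with a radius $\delta_X>0$ so small that $\overline{B(p_X,\delta_X)}$ misses every such $H'$. Since each $H\in \A_X$ contains $X\ni p_X$, the scaling $\phi_s(z)=p_X+s(z-p_X)$ preserves $M(\A_X)$ for all $s\in \C^*$. Setting $\rho_X(z)=\min\bigl(1,\tfrac{\delta_X/2}{|z-p_X|}\bigr)$, define
\[
r_X(z)=p_X+\rho_X(z)(z-p_X),
\]
which retracts $M(\A_X)$ onto $\overline{B(p_X,\delta_X/2)}\cap M(\A_X)\subset M(\A)$. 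The linear interpolation $H_t(z)=p_X+\bigl((1-t)+t\rho_X(z)\bigr)(z-p_X)$ takes values in $M(\A_X)$ (the scalar $(1-t)+t\rho_X(z)$ is always positive) and realizes a homotopy from $\id$ to $j_X\circ r_X$. If the parameters can be arranged so that $|x_0-p_X|<\delta_X/2$ for every $X$, then $\rho_X(x_0)=1$, and both $r_X$ and $H_t$ fix $x_0$; otherwise I would build $r_X$ freely and then correct it on a small disk around $x_0$ using a path in $M(\A)$ from $r_X(x_0)$ back to $x_0$, producing a basepoint-preserving map freely homotopic to the original.

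For the second claim, when $H\in\A$ is a hyperplane we have $\A_H=\{H\}$, so $M(\A_H)=\C^{d+1}\setminus H\simeq \C^*$ is an Eilenberg--MacLane space $K(\Z,1)$ with $\pi_1$ generated by the meridian $\mu_H$. The first part gives $r_{H*}(\mu_H)=x_H$, the meridian of $H$ in $G(\A)$. Since $H\not\supset X$ implies $H\notin \A_X$, a small transverse disk at a generic point of $H$ lies entirely in $M(\A_X)$ and its boundary is the meridian, so $j_{X*}(x_H)=1$. Hence $(j_X\circ r_H)_*$ vanishes on $\pi_1$, and because $M(\A_H)$ has the homotopy type of a $K(\Z,1)$, the composite $j_X\circ r_H$ is itself null-homotopic; composing with $r_X$ then forces $r_X\circ j_X\circ r_H$ to be null-homotopic.

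The main obstacle I anticipate is coordinating the basepoint condition across all $X\in L(\A)$ simultaneously: the radii $\delta_X$ can shrink as $p_X$ approaches the hyperplanes of $\A\setminus\A_X$, so a single $x_0$ need not lie in every $B(p_X,\delta_X/2)$. Fortunately, this is a purely technical issue, handled by the standard path-concatenation modification near $x_0$ described above, together with the freedom to reroute the correction path should the free homotopy fail to be based.
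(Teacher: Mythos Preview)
The paper does not prove this lemma; it merely cites \cite{DSY16}. Your argument is essentially the same radial-contraction idea used there, and both claims are correctly handled at the level of ideas.

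Two comments are in order. First, for the second claim your $K(\Z,1)$ detour is unnecessary: with your own construction, the image of $j_X\circ r_H$ lies in $\overline{B(p_H,\delta_H/2)}\setminus H$, and since $\delta_H$ was chosen so that this ball misses every $H'\ne H$ (in particular every hyperplane of $\A_X$), the whole ball $\overline{B(p_H,\delta_H/2)}$ sits inside $M(\A_X)$. Hence $j_X\circ r_H$ factors through a contractible subset of $M(\A_X)$ and is null-homotopic outright; composing with $r_X$ finishes. Your $\pi_1$ argument also works, but note that $r_{H*}(\mu_H)$ is only a conjugate of $x_H$ in general—harmless here since $(j_X)_*$ kills $x_H$ and hence all its conjugates.

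Second, the basepoint repair you sketch needs one more observation to close. After replacing $r_X$ by a based $r_X'$ using a path $\beta$ in $M(\A)$ from $r_X(x_0)$ to $x_0$, the concatenated free homotopy $j_X r_X'\simeq \id$ has track the loop $\gamma*\beta$ in $M(\A_X)$, where $\gamma(t)=H_t(x_0)$. Making this homotopy \emph{rel}~$x_0$ requires $[\gamma*\beta]=1$ in $\pi_1(M(\A_X))$. Changing $\beta$ within $M(\A)$ alters this class by an arbitrary element of $\im\big((j_X)_*\colon \pi_1(M(\A))\to\pi_1(M(\A_X))\big)$; since $(j_X)_*$ is surjective (each meridian of $\A_X$ is already a meridian in $\A$), a suitable $\beta$ exists. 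Once you say this, your ``reroute the correction path'' remark becomes a proof rather than a hope.
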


Setting $G(\A_X)=\pi_1(M(\A_X),x_0)$, it follows from the lemma that 
the induced homomorphism $(j_X)_{\sharp}\colon G(\A) \to G(\A_X)$ 
is surjective and $(r_X)_{\sharp}\colon G(\A_X) \to G(\A)$ 
is injective.  

Of particular interest to us is what happens when $X$ has codimension $2$.  
For a $2$-flat $X$, the localized sub-arrangement $\A_X$
is a pencil of $\abs{X}=\mu(X)+1$ hyperplanes.
Consequently, $M(\A_X)$ is homeomorphic to 
$(\C \setminus \{\text{$\mu(X)$ points}\}) \times \C^* \times \C^{d-1}$,
and so $M(\A_X)$ is a classifying space for the group 
\begin{equation}
\label{eq:gax}
G(\A_X) \cong F_{\mu(X)}\times \Z.
\end{equation}

\subsection{The second nilpotent quotient}
\label{subsec:nilp2-arr}

Let $G=G(\A)$ be an arrangement group.  
Plainly, the abelianization $G_{\ab}=H_1(M(\A);\Z)$ 
is the free abelian group on $\{x_H\}_ {H\in \A}$.  
On the other hand, as noted for instance in \cite{MS00}, 
the abelian group $\gr_2(G)$ is the $\Z$-dual of $I^2(\A)$, 
the degree-$2$ part of the Orlik--Solomon ideal. 
In particular, $\gr_2(G)$ is also torsion-free. 

\begin{proposition}[\cite{MS00}]
\label{prop:ms}
For any arrangement $\A$, the second nilpotent quotient of 
$G(\A)$ fits into a central extension of the form 
\begin{equation}
\label{eq:2nq-arr-ext}
\begin{tikzcd}[column sep=18pt]
0 \ar[r] & (I^2(\A))^{\vee} \ar[r] & G(\A)/\gamma_3(G(\A))  
\ar[r] & H_1(M(\A);\Z) \ar[r] & 0 .
\end{tikzcd}
\end{equation}
Furthermore, the $k$-invariant of this extension, $\chi_2 \colon 
H_2( G_{\ab} ;\Z)\to \gr_2(G)$, is the dual of the inclusion map 
$I^2(\A)\inj  E^2(\A)=\bigwedge^2 G_{\ab}$.  
\end{proposition}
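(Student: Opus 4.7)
The plan is to realize the central extension \eqref{eq:2nq-arr-ext} as the canonical central extension $0 \to \gr_2(G) \to G/\gamma_3(G) \to G_{\ab} \to 0$ attached to any group, and then to make two identifications for $G=G(\A)$: first, the kernel $\gr_2(G)$ with $(I^2(\A))^{\vee}$, and second, the $k$-invariant with the $\Z$-dual of the inclusion $I^2(\A)\inj E^2(\A)$. Since $G_{\ab}=H_1(M(\A);\Z)$ is already the right-hand term of the canonical extension, the entire statement reduces to these two identifications.

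For the kernel, I would apply the five-term exact sequence to $1 \to G' \to G \to G_{\ab} \to 1$, which yields
$$H_2(G;\Z) \xrightarrow{\ab_*} H_2(G_{\ab};\Z) \xrightarrow{\chi_2} \gr_2(G) \to 0.$$
The vanishing of the Hurewicz map $\pi_2(M) \to H_2(M;\Z)$ recorded in Section~\ref{subsec:local} implies that the classifying map $g\colon M \to K(G,1)$ induces an isomorphism $g_*\colon H_2(M;\Z)\isom H_2(G;\Z)$, so $\ab_*$ may be read off from the composite $M \to K(G_{\ab},1)$. By universal coefficients (all groups in sight are finitely generated and torsion-free), this composite is the $\Z$-dual of the cup-product map $H^1(M;\Z) \wedge H^1(M;\Z) \to H^2(M;\Z)$, which by Orlik--Solomon \eqref{eq:OS-alg} is the quotient $E^2(\A) \surj E^2(\A)/I^2(\A)$. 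Dualizing the short exact sequence $0 \to I^2(\A) \to E^2(\A) \to H^2(M;\Z) \to 0$ of torsion-free $\Z$-modules, and using the identification $(E^2(\A))^{\vee} = \bwedge^2 G_{\ab} = H_2(G_{\ab};\Z)$, I would conclude that $\gr_2(G) \cong (I^2(\A))^{\vee}$, establishing \eqref{eq:2nq-arr-ext}.

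To pin down the $k$-invariant, note first that, since $G_{\ab}$ is torsion-free, universal coefficients give $H^2(G_{\ab};\gr_2(G))\cong \Hom(H_2(G_{\ab};\Z),\gr_2(G))$, so the invariant is recorded by a map $\chi_2\colon H_2(G_{\ab};\Z)\to \gr_2(G)$. The projection $G \surj G/\gamma_3(G)$ fits into a map of short exact sequences from $1\to G'\to G\to G_{\ab}\to 1$ to $0\to \gr_2(G)\to G/\gamma_3(G)\to G_{\ab}\to 0$, acting as the identity on the rightmost term and as the canonical projection $G' \surj G'/\gamma_3(G)=\gr_2(G)$ on the leftmost term. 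By naturality of the five-term sequence, the connecting map $H_2(G_{\ab};\Z)\to \gr_2(G)$ coincides in both sequences; and for the central extension on the bottom, it is classical that this connecting map equals the $k$-invariant (the transgression in the Lyndon--Hochschild--Serre spectral sequence). Tracing through the identifications above, $\chi_2$ then becomes the surjection $(E^2(\A))^{\vee} \surj (I^2(\A))^{\vee}$ induced by $I^2(\A)\inj E^2(\A)$, which is exactly the claim.

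The step I expect to require the most care is this final identification of the $k$-invariant with the five-term connecting homomorphism, together with compatibility of signs and dualities when passing between cohomological and homological formulations of the extension class. Everything else is a direct consequence of Brieskorn's description of $H^*(M(\A);\Z)$, the Hurewicz-vanishing input from Section~\ref{subsec:local}, and standard universal-coefficient bookkeeping.
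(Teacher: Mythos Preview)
The paper does not actually supply a proof of this proposition: it is stated with a citation to \cite{MS00} and then used as a known input. So there is no ``paper's own proof'' to compare against here.

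That said, your argument is correct and is essentially the standard one. The two ingredients you isolate are exactly right: (i) the identification $\gr_2(G)\cong (I^2(\A))^{\vee}$ follows from the five-term exact sequence for $1\to G'\to G\to G_{\ab}\to 1$, once one knows that the map $H_2(G;\Z)\to H_2(G_{\ab};\Z)$ is the $\Z$-dual of the Orlik--Solomon quotient $E^2(\A)\surj E^2(\A)/I^2(\A)$ (which in turn uses $H_2(M;\Z)\cong H_2(G;\Z)$ via Hurewicz vanishing, as you note); and (ii) for the $k$-invariant, the identification with the transgression/connecting map in the five-term sequence for the central extension $0\to\gr_2(G)\to G/\gamma_3(G)\to G_{\ab}\to 0$ is a classical fact, and your naturality argument comparing the two extensions is the right way to transport the computation over. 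Your cautionary remark about sign conventions when matching the cohomological extension class with the homological transgression is well-placed, but there is no genuine obstacle there.
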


Let $F=F(\A)$ be the free group on generators $\{x_H : H\in \A\}$. 
It follows that $G/\gamma_3(G)$ is the quotient of the free, 
$2$-step nilpotent group $F/\gamma_3(F)$ by all commutation 
relations of the form 
\begin{equation}
\label{eq:nilp2-arr}
r_{H,X}\coloneqq \Big[x_H , \prod_{\substack{K\in \A\\[2pt] K\supset X}} x_{K}\Big] \, ,
\end{equation}
indexed by pairs of hyperplanes $H\in \A$ and flats $X\in L_2(\A)$ 
such that $H\supset X$ (see \cite{Rybnikov-1,MS00}).  
From this description, it is apparent 
that the second nilpotent quotient of an arrangement group is 
combinatorially determined.  More precisely, suppose $\A$ and $\BB$ are 
two arrangements such that $L_{\le 2} (\A)\cong L_{\le 2} (\BB)$, meaning, 
there are bijections $\alpha\colon \A\to \BB$ and 
$\beta\colon L_2(\A) \to L_2(\BB)$ such that 
$\beta(H_1\cap\cdots \cap H_r)=\alpha(H_1)\cap\cdots\cap\alpha(H_r)$. 
Then $\alpha$ extends to an isomorphism of free groups, $\alpha\colon F(\A)\to F(\BB)$, 
that sends $r_{H,X}\in F(\A)$ to $r_{\alpha(H),\beta(X)}\in F(\BB)$, 
and thus descends to an isomorphism 
$G(\A)/\gamma_3(G(\A))\isom G(\BB)/\gamma_3(G(\BB))$.

\section{Holonomy Lie algebras of arrangements}
\label{sect:holo-arr}

We now study the holonomy Lie algebra $\h(\A)$ of an arrangement group $G(\A)$, 
how it behaves under certain operations with arrangements, and 
how it relates to the associated graded Lie algebra $\gr(G(\A))$.

\subsection{Holonomy Lie algebra}
\label{subsec:holo lie arr}
The holonomy Lie algebra of an arrangement $\A$ is 
defined as the holonomy Lie algebra of the fundamental 
group $G(\A)=\pi_1(M(\A))$ of the complement of $\A$, 
\begin{equation}
\label{eq:holo-A}
\h(\A)\coloneqq \h(G(\A))\, .
\end{equation}  

An explicit presentation for this Lie algebra was given by 
Kohno \cite{Kohno-83}, as follows. 
Let $\LL(\A)\coloneqq \Lie(H_1(M(\A);\Z))$ be the free 
Lie algebra on variables $\{x_H\}_{H\in \A}$.  From the Orlik--Solomon 
description of the cohomology ring $H^*(M(\A);\Z)$ as the quotient 
$E/I(\A)$, it follows that 
\begin{equation}
\label{eq:holo arr}
\h(\A)=\LL( \A)/J(\A)\,  ,
\end{equation}
where $J(\A)$ is the Lie ideal generated by all the Lie brackets of the form 
\begin{equation}
\label{eq:lie-ideal}
\mathfrak{r}_{H,X}\coloneqq 
\Big[x_H , \sum_{\substack{K\in \A\\[2pt] K\supset X}} x_{K}\Big]\, ,
\end{equation}
indexed by pairs of hyperplanes $H\in \A$ and flats $X\in L_2(\A)$ 
such that $H\supset X$. Plainly, $J(\A)$ is a homogeneous ideal, and 
so $\h(\A)$ inherits the structure of a graded Lie algebra from $\LL( \A)$. 
In fact, $\h(\A)$ is a finitely generated, quadratic Lie algebra, with 
generators $x_H$ in degree $1$ and relators $\mathfrak{r}_{H,X}$ 
in degree $2$.  Completely analogous considerations apply to 
the holonomy $\k$-Lie algebra $\h(\A,\k)\coloneqq \h(G(\A),\k)$ 
over a field $\k$.

\begin{example}
\label{ex:holo-pencil}
Let $\A$ be a pencil of $n$ lines through the origin of $\C^2$. 
Then $\h(\A)$ is the quotient of the free Lie algebra $\Lie(n)=\Lie(x_1,\dots ,x_n)$ 
by the ideal generated by the relators $[x_i, \sum_{j=1}^n x_j]$ for $1\le i<n$. 
Clearly, $\h(\A)\cong \Lie(n-1) \times \Lie(1)$. A basis for $\h_2(\A)$ consists 
of all the brackets $[x_i,x_j]$ with $1\le i<j<n$, while a basis for $\h_3(\A)$ consists 
of all the triple brackets  $[x_i,[x_j,x_k]]$ with $1\le i,j,k <n$, $i\ge j$, and $j<k$.
\end{example}

From the presentation of the holonomy Lie algebra of a hyperplane arrangement 
$\A$ given in \eqref{eq:holo arr}--\eqref{eq:lie-ideal}, it follows that $\h(\A)$ 
depends only on the underlying matroid of the arrangement, and in fact, 
only on the ranked poset $L_{\le 2} (\A)$.  Indeed, suppose $\A$ and $\BB$ are 
two arrangements such that there exist compatible bijections $\alpha\colon \A\to \BB$ 
and $\beta\colon L_2(\A) \to L_2(\BB)$. 
Then $\alpha$ extends to a isomorphism between the respective free Lie algebras, 
$\LL(\alpha)\colon \LL(\A)\isom \LL(\BB)$, that sends 
$\mathfrak{r}_{H,X}\in \LL_2(\A)$ to $\mathfrak{r}_{\alpha(H),\beta(X)}\in \LL_2(\BB)$. 
Therefore, $\LL(\alpha)$ descends to an isomorphism 
$\h(\alpha)\colon \h(\A)\isom \h(\BB)$ of graded Lie algebras.

\subsection{A comparison map}
\label{subsec:holo-gr}
By Theorem \ref{thm:holo-epi}, 
there is a surjective Lie algebra map, $\Psi=\Psi_{\A}\colon \h(\A)\surj \gr(G(\A))$. 
The $1$-formality of the group $G(\A)$ implies that the associated graded
Lie algebra $\gr(G(\A))$ and the holonomy Lie algebra $\h(\A)$
are rationally isomorphic; therefore, the map 
\begin{equation}
\label{eq:kohno}
\begin{tikzcd}[column sep=20pt]
\Psi\otimes \Q\colon \h(\A)\otimes \Q\ar[r]& \gr(G(\A))\otimes \Q 
\end{tikzcd}
\end{equation}
is an isomorphism. Consequently, all the LCS ranks of an arrangement 
group $G(\A)$ are determined by the truncated intersection lattice $L_{\le 2}(\A)$, 
as follows:
\begin{equation}
\label{eq:lcs-arr}
\phi_k(G(\A))=\dim_{\Q} \h_k(\A)\otimes \Q \,.
\end{equation}

In general, there exist arrangements $\A$ for which the map $\Psi_{\A}$ is not injective.  
Nevertheless, we have the following result from \cite{PrS20}, that shows this cannot 
happen in degree $k=3$; for completeness, we provide a quick proof.

\begin{theorem}[\cite{PrS20}]
\label{thm:gr3h3-arr}
For an arrangement $\A$ with complement $M=M(\A)$ and group 
$G=\pi_1(M)$, the following hold.
\begin{enumerate}[itemsep=2.5pt, topsep=-1pt]
\item \label{mg1}
$H_2(G/\gamma_3(G);\Z) \cong H_2(M;\Z) \oplus \h_3(\A)$.
\item \label{mg2}
The map $\Psi_3\colon \h_3(\A)\to  \gr_3(G)$ is an isomorphism.
\end{enumerate}
\end{theorem}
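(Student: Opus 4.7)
The plan is to deduce both claims directly from Theorem \ref{thm:gr3=h3}, after checking that its hypotheses hold for $G = G(\A)$.

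First I would verify the setup. The abelianization $G_{\ab} = H_1(M(\A);\Z)$ is free abelian of rank $|\A|$, so in particular finitely generated and torsion-free, as required. The next (and key) step is to show that the comultiplication map
\[
\nabla_G\colon H_2(G;\Z) \to G_{\abf} \wedge G_{\abf}
\]
is injective. To do this, I would argue as follows. The classifying map $g\colon M \to K(G,1)$ induces an isomorphism $g_*\colon H_2(M;\Z) \isom H_2(G;\Z)$ (by the Hopf exact sequence, since the Hurewicz map $h\colon \pi_2(M) \to H_2(M;\Z)$ vanishes by the observation recorded after \eqref{eq:betti-arr}). Under this identification and the dual isomorphism $G_{\abf}\wedge G_{\abf}\cong H_2(M;\Z)^{\vee}$, the map $\nabla_G$ becomes the $\Z$-dual of the cup-product map $\cup\colon H^1(M;\Z)\wedge H^1(M;\Z) \to H^2(M;\Z)$. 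By the Orlik--Solomon presentation \eqref{eq:OS-alg}, the ring $H^*(M;\Z)$ is generated in degree $1$, so this cup product is surjective. Since Brieskorn's decomposition \eqref{eq:brieskorn} shows that $H^2(M;\Z)$ is torsion-free, the $\Z$-dual of this surjection between finitely generated free abelian groups is injective. Hence $\nabla_G$ is injective.

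With injectivity of $\nabla_G$ in hand, Theorem \ref{thm:gr3=h3}\eqref{h3-b} applies. For part \eqref{mg2}, it yields immediately that $\Psi_3\colon \h_3(\A) \to \gr_3(G)$ is an isomorphism. For part \eqref{mg1}, specializing the split exact sequence \eqref{eq:grng} to $k=3$ and using $\ker(\nabla_G)=0$ gives
\[
0 \to \gr_3(G) \to H_2(G/\gamma_3(G);\Z) \to H_2(G;\Z) \to 0,
\]
split, and this, combined with the isomorphisms $\gr_3(G)\cong \h_3(\A)$ (just established) and $H_2(G;\Z)\cong H_2(M;\Z)$, produces the desired decomposition
\[
H_2(G/\gamma_3(G);\Z) \cong H_2(M;\Z) \oplus \h_3(\A).
\]

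I expect the main obstacle to be packaging the injectivity of $\nabla_G$ cleanly: one must keep track of the identifications $H_2(G;\Z) \cong H_2(M;\Z)$ and $H_2(G;\Z)^{\vee}\cong H^2(G;\Z)$ (both of which rely on the torsion-freeness supplied by Brieskorn) to convert ``cup product is surjective'' into ``$\nabla_G$ is injective.'' Once this is done, the rest of the argument is a bookkeeping application of Theorem \ref{thm:gr3=h3}.
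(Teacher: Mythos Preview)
Your proof is correct and follows essentially the same route as the paper: verify that $\nabla_G$ is injective (via surjectivity of the cup product in the Orlik--Solomon algebra, torsion-freeness of $H^2(M;\Z)$, and the isomorphism $H_2(M;\Z)\cong H_2(G;\Z)$ coming from the vanishing of the Hurewicz map), then invoke Theorem~\ref{thm:gr3=h3}. The paper's proof is simply a terser version of exactly this argument.
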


\begin{proof}
Since the cohomology algebra $H^*(M;\Z)$ is generated in degree $1$, 
the cup-product map $\cup_M\colon H^1(M;\Z)\wedge  H^1(M;\Z)\to  H^2(M;\Z)$ 
is surjective; thus the comultiplication map 
$\nabla_M\colon H_2(M;\Z) \to H_1(M;\Z) \wedge H_1(M;\Z)$ is injective. 
Moreover, as noted previously, any classifying map $g\colon M\to K(G,1)$ 
induces an isomorphism on $H_{\le 2}$; hence, by naturality of cup-products, 
the map $\nabla_G$ is also injective. Both claims now follow from 
Theorem \ref{thm:gr3=h3}.
\end{proof}

Consequently, the group $\gr_3(G(\A))$ is combinatorially determined; 
that is, if $\A$ and $\B$ are two arrangements 
such that $L_{\le 2} (\A)\cong L_{\le 2} (\B)$, then 
$\gr_3(G(\A))\cong \gr_3(G(\B))$.  These considerations 
naturally lead to the following question. 

\begin{question}
\label{quest:h3-tf}
For an arrangement $\A$, is the (finitely generated abelian) group 
$\h_3(\A)=\gr_3(G(\A))$ torsion-free?
\end{question}

In view of Theorem \ref{thm:gr3h3-arr} and the fact that $H_*(M(\A);\Z)$ is 
torsion-free, the question may be rephrased as: Is the Schur multiplier of 
the finitely generated, $2$-step nilpotent group $G(\A)/\gamma_3(G(\A))$ torsion-free? 

\begin{remark}
\label{rem:falk}
In \cite{Fa88}, Falk sketched the construction of a Sullivan $1$-minimal 
model for the complement of an arrangement $\A$, and used this to 
show that $\phi_3(G(\A))$ is equal 
to the nullity of the multiplication map $E^1\otimes I^2 \to E^3$ 
over $\Q$.  Further information on the ranks of the LCS quotients 
of an arrangement group can be found in \cite{Su01, SS-tams02}.
\end{remark}

\begin{remark}
\label{rem:AGV}
As first noted in \cite{Su01}, there exist arrangements 
$\A$ for which $\gr_k(G(\A))$ has non-zero torsion 
for some $k>3$.  This naturally raised the question whether 
such torsion in the LCS quotients of arrangement groups is 
combinatorially determined.  The question was recently 
answered in the negative by Artal Bartolo, Guerville-Ball\'{e}, and Viu-Sos 
\cite{AGV20}, who produced a pair of arrangements 
$\A$ and $\BB$ with $L_{\le 2} (\A)\cong L_{\le 2} (\BB)$, yet 
$\gr_4(G(\A))\not\cong \gr_4(G(\BB))$; the difference 
lies in the $2$-torsion of the respective groups. 
\end{remark}

\subsection{Products of arrangements}
\label{subsec:prod-arr}
Let us recall a definition from \cite{Orlik-Terao}. If $\A$ is an arrangement in $\C^r$ 
and $\BB$ is an arrangement in $\C^s$, then their product, $\A\times \BB$, is 
the arrangement in $\C^{r+s}=\C^r\times \C^s$ given by 
\begin{equation}
\label{eq:arr-prod}
\A\times \BB \coloneqq \{H \times \C^s : H\in \A\}\cup \{\C^r \times K : K\in \BB\}.
\end{equation}
It is readily seen that the intersection lattice of the product arrangement is isomorphic 
to the product of the respective intersection posets, that is,
\begin{equation}
\label{eq:lat-prod}
L(\A\times \BB)\cong L(\A)\times L(\BB).
\end{equation}

Alternatively, if $f=f(z_1,\dots,z_r)$ and $g=g(w_1,\dots,w_s)$ are defining polynomials 
for $\A$ and $\BB$, respectively, then a defining polynomial for $\A\times \BB$ is 
$k(z_1,\dots ,z_r,w_1, \dots ,w_s)\coloneqq f(z_1,\dots,z_r)g(w_1,\dots,w_s)$. 
Plainly, $k(z,w)\ne 0$ if and only if $f(z)\ne 0$ and $g(w)\ne 0$; thus, the complement 
of the product arrangement, $M(\A\times \BB)$, is diffeomorphic to the product of the 
complements of the two arrangements, $M(\A)\times M(\BB)$. 
Consequently, the group $G(\A\times \BB)$ is isomorphic to the direct 
product $G(\A)\times G(\BB)$, and it follows from Lemma \ref{lem:holo-prod} that
\begin{equation}
\label{eq:holo-arr-prod}
\h(\A\times \BB)\cong \h(\A)\times \h(\BB).
\end{equation}

\subsection{Maps between holonomy Lie algebras}
\label{subsec:holo-maps}
We now review some constructions from \cite{PS-cmh06} 
putting them into the context that will be needed here. 
For an arrangement $\A$, we denote by $\Z^{\A}$ the
free abelian group on $\A$, with basis $\{x_H\}_{H\in \A}$, 
and we let $\Lie(\A)=\Lie(\Z^{\A})$ be the free Lie algebra 
on this group. 
For each sub-arrangement $\BB\subset \A$, we define 
two maps between the respective holonomy Lie algebras 
(in opposite directions), as follows. 

First, let $\pi_{\BB}\colon \Z^{\A} \to \Z^{\BB}$
be the canonical projection map, defined by 
\begin{equation}
\label{eq:pi-map}
\pi_{\B}(x_H)=
\begin{cases} 
x_H&\text{if $H\in \BB$},\\[2pt]
0&\text{otherwise},
\end{cases}
\end{equation}
and let $\LL (\pi_{\BB})\colon \LL(\A)\surj \LL(\BB)$ be its extension
to free Lie algebras.  Clearly, $\LL (\pi_{\BB})$ takes an element 
$\mathfrak{r}^{\A}_{H,X}\in J(\A)$ as in \eqref{eq:lie-ideal} to the corresponding element 
$\mathfrak{r}^{\BB}_{H,X}\in J(\BB)$. Thus, this maps descends to an epimorphism of 
Lie algebras,
\begin{equation}
\label{hhpi}
\begin{tikzcd}[column sep=18pt]
\HH (\pi_{\BB})\colon \HH(\A) \ar[r, two heads]& \HH(\BB).
\end{tikzcd}
\end{equation}

Next, let $\iota_{\BB}\colon \Z^{\BB} \to \Z^{\A}$ be the canonical
inclusion, given by $\iota_{\BB}(x_H)=x_H$, and let
$\LL (\iota_{\BB})\colon \LL(\BB)\to \LL(\A)$ be its extension to free
Lie algebras.  In general, this map does not take $J(\BB)$ to $J(\A)$. 
However, suppose $\BB$ is a {\em closed}\/ sub-arrangement of $\A$, that is, 
the only linear combinations of defining forms for the hyperplanes in $\BB$ which
are defining forms for hyperplanes in $\A$ are (up to constants)
the defining forms for the hyperplanes in $\BB$.
Then $L_2(\BB)=\{ X \in L_2(\A) \mid \A_X \subset \BB\}$.
Thus, $\LL (\iota_{\BB})(J(\BB))\subset J(\A)$, and so we
obtain a map of graded Lie algebras,
\begin{equation}
\label{hhiota}
\begin{tikzcd}[column sep=18pt]
\HH (\iota_{\BB})\colon \HH(\A) \ar[r]& \HH(\BB).
\end{tikzcd}
\end{equation}

\subsection{Local holonomy Lie algebras}
\label{subsec:local-holo}
Now let $X$ be a rank-$2$ flat in $L_2(\A)$. Clearly, the localized arrangement 
$\A_X$ is a closed sub-arrangement of $\A$. Setting $\pi_X= \pi_{\A_X}$ and 
$\iota_X=\iota_{\A_X}$, we obtain a pair of maps  
between the respective holonomy Lie algebras, 
$\h (\pi_{X})\colon \h(\A) \to \h(\A_X)$ and 
$\h (\iota_{X})\colon \h(\A_X) \to \h(\A)$. 
Plainly, $\pi_X\circ \iota_X$ is the identity of $\Z^{\A_X}$, and so 
$\h (\pi_{X})\circ \h (\iota_{X})$ is the identity of $\h(\A_X)$. 
Furthermore, it is shown in \cite[Lemma 3.1]{PS-cmh06} that 
$\h'(\pi_X)\circ\h'(\iota_Y) = 0$ if $Y$ is a $2$-flat different from $X$. 

On the other hand, recall from \S\ref{subsec:loc-pi1} that we also 
have pointed maps  
$j_X\colon M(\A) \inj M(\A_X)$ and $r_X\colon M(\A_X)\to M(\A)$. 
Let  $(r_X)_{\sharp}\colon G(\A_X) \to G(\A)$ and 
$(j_X)_{\sharp}\colon G(\A) \to G(\A_X)$ be the 
induced homomorphisms on fundamental groups,  
and let 
 $\h((r_X)_{\sharp})\colon \h(\A_X) \to \h(\A)$ and 
 $\h((j_X)_{\sharp})\colon \h(\A) \to \h(\A_X)$
be the corresponding maps on 
holonomy Lie algebras. 
 
The connection between the various maps defined above 
is made by the following lemma.

\begin{lemma}
\label{lem:j-pi}
For every $X\in L_2(\A)$, the following hold.
\begin{enumerate}[itemsep=2pt]
\item  \label{jr1}
The homomorphism $(j_X)_*\colon H_1(M(\A);\Z)\to H_1(M(\A_X);\Z)$ 
may be identified with the homomorphism $\pi_X\colon \Z^{\A}\to \Z^{\A_X}$. 
\item  \label{jr2}
The homomorphism $(r_X)_*\colon H_1(M(\A_X);\Z)\to H_1(M(\A);\Z)$ 
may be identified with the homomorphism $\iota_X\colon \Z^{\A_X}\to \Z^{\A}$. 
\item  \label{jr3}
$ \h((j_X)_{\sharp})=\h(\pi_X)$ 
and $\h((r_X)_{\sharp})=\h(\iota_X)$.
\end{enumerate}
\end{lemma}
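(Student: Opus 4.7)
The plan is to reduce all three claims to statements about what the maps $j_X$ and $r_X$ do to the canonical meridian bases of the first homology groups, then derive part \eqref{jr3} formally from parts \eqref{jr1} and \eqref{jr2}. Under the natural identifications $H_1(M(\A);\Z)=\Z^\A$ and $H_1(M(\A_X);\Z)=\Z^{\A_X}$ via meridians, parts \eqref{jr1} and \eqref{jr2} say exactly that $(j_X)_*$ and $(r_X)_*$ coincide with the combinatorial maps $\pi_X$ and $\iota_X$ introduced in Section \ref{subsec:holo-maps}.

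For part \eqref{jr1}, I would argue by linking numbers: the class of a loop in $H_1(M(\A_X);\Z)=\Z^{\A_X}$ is determined by its linking numbers with the hyperplanes $K\in\A_X$. Take the meridian $x_H$ to be a sufficiently small loop around a generic point of $H$, joined to $x_0$ by a path in $M(\A)$. If $H\in\A_X$, the loop links $H$ once and avoids every other $K\in\A_X$, giving $(j_X)_*(x_H)=x_H$. If $H\notin\A_X$, then at a generic point of $H$ one can arrange the small meridian to miss every $K\in\A_X$ entirely, so $(j_X)_*(x_H)=0$. This is precisely $\pi_X$.

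For part \eqref{jr2}, the homotopy $j_X\circ r_X\simeq \id$ from Lemma \ref{lem:dsy} yields $(j_X)_*\circ (r_X)_*=\id$, so combined with part \eqref{jr1} the map $(r_X)_*$ is \emph{some} section of $\pi_X$. Pinning down this section requires invoking the specific construction of $r_X$ from \cite{DSY16}: choose $x_0$ close to a generic point of $X$, so that there is a small open neighborhood $U$ of $x_0$ disjoint from every hyperplane in $\A\setminus\A_X$; then $U\cap M(\A)=U\cap M(\A_X)$, and $r_X$ can be built to restrict to the identity on $U$. For each $H\in\A_X$, the meridian $x_H$ in $G(\A_X)$ admits a representative lying entirely in $U$, and applying $r_X$ to such a representative leaves it unchanged. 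Passing to $H_1$ gives $(r_X)_*(x_H)=x_H$ for all $H\in\A_X$, i.e., $(r_X)_*=\iota_X$.

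Part \eqref{jr3} will then follow by functoriality. By construction, $\h((j_X)_\sharp)$ is the Lie algebra map $\h(\A)\to\h(\A_X)$ obtained by extending $(j_X)_*$ to free Lie algebras and descending to the respective quotients. By part \eqref{jr1}, this extension agrees with $\LL(\pi_X)$, whose descent is exactly $\h(\pi_X)$ as defined in \eqref{hhpi}. The same argument with $\iota_X$ in place of $\pi_X$ yields $\h((r_X)_\sharp)=\h(\iota_X)$. The main obstacle will be part \eqref{jr2}: the defining homotopy property of $r_X$ only ensures $(r_X)_*$ is \emph{some} section of $\pi_X$, and singling out $\iota_X$ requires combining the explicit construction of $r_X$ with the local identification $U\cap M(\A)=U\cap M(\A_X)$ near a generic point of $X$.
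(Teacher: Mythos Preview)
Your proposal is correct and follows essentially the same approach as the paper: both arguments track the meridian generators through the maps $j_X$ and $r_X$, and deduce part \eqref{jr3} formally from \eqref{jr1} and \eqref{jr2}. The paper's proof is terser, simply asserting that $(r_X)_{\sharp}$ sends each meridian $x_{H,X}\in G(\A_X)$ to $x_H\in G(\A)$, whereas you correctly flag that the homotopy $j_X\circ r_X\simeq\id$ alone only pins down $(r_X)_*$ as \emph{some} section of $\pi_X$, and then appeal to the local construction of $r_X$ near $X$ to identify it with $\iota_X$; this extra care is warranted and makes your version more self-contained.
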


\begin{proof}
The homomorphism $(j_X)_{\sharp}$ sends a generator $x_H\in G(\A)$ to 
the corresponding generator $x_{H,X}\in G(\A_X)$  if $H\supset X$ and 
sends it to $0$ if $H\not\supset X$, while 
the homomorphism $(r_X)_{\sharp}$ sends  
$x_{H,X}\in G(\A_X)$ to $x_H\in G(\A)$.  
The first two claims now follow from the 
way the maps $\pi_X$ and $\iota_X$ were defined.
Claim \eqref{jr3} follows from  \eqref{jr1} and \eqref{jr2}.
\end{proof}

As we saw in \eqref{eq:gax}, for each $2$-flat $X\in L_2(\A)$, 
the group $G(\A_X)$ is isomorphic to $F_{\mu(X)}\times \Z$; 
therefore, $\gr(G(\A_X)) \cong \LL({\mu(X)}) \times \LL(1)$. 
Furthermore, as noted in Example \ref{ex:holo-pencil}, the Lie algebra 
$\h(\A_X)$ is also isomorphic to $\LL({\mu(X)}) \times \LL(1)$. 
Consequently, the map $\Psi_{\A_X}\colon \h(\A_X) \to  \gr(G(\A_X))$ 
is an isomorphism. 

\section{Decomposable arrangements}
\label{sect:decomp}

In this section, we focus on a class of arrangements $\A$ for which several of the 
algebraic invariants of the group $G(\A)$ discussed previously  ``decompose'' 
in terms of the corresponding invariants of the groups $G(\A_X)$, indexed 
by the $2$-flats in $L_{2}(\A)$. 

\subsection{Local to global maps}
\label{subsec:loc}
Let $j\colon M(\A) \to \prod_{X\in L(\A)} M(\A_X)$ be the map 
from \eqref{eq:prod-map}. Projecting onto the factors 
corresponding to rank $2$ flats we obtain a map 
\begin{equation}
\label{eq:prod-map-2}
\begin{tikzcd}[column sep=20pt]
j\colon M(\A) \ar[r]& \prod_{X\in L_2(\A)} M(\A_X)
\end{tikzcd}
\end{equation}
so that  projection onto an $X$-factor composed with $j$ coincides 
with the map $j_X\colon M(\A)\inj M(\A_X)$ induced by the inclusion 
$\A_X\subset \A$. 
In what follows, we will identify the group $H_1( \prod_{X} M(\A_X);\Z)$ 
with $\bigoplus_{X} H_1(M(\A_X);\Z)$. 

\begin{lemma}
\label{lem:j-inj}
The homomorphism $j_{*}\colon H_1(M(\A);\Z) \to \bigoplus_{X\in L_2(\A)} H_1(M(\A_X);\Z)$ 
is injective.
\end{lemma}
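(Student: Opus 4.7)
The plan is to reduce the statement, via Lemma \ref{lem:j-pi}\eqref{jr1}, to an elementary linear-algebra claim about the projection maps $\pi_X\colon \Z^{\A}\to \Z^{\A_X}$, and then to exploit the fact that every hyperplane of a central arrangement (with at least two hyperplanes) lies above some rank-$2$ flat.

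Concretely, first I would identify $H_1(M(\A);\Z)$ with the free abelian group $\Z^{\A}$ on the basis $\{x_H\}_{H\in\A}$, and similarly $H_1(M(\A_X);\Z)$ with $\Z^{\A_X}$. By Lemma \ref{lem:j-pi}\eqref{jr1}, the $X$-component of $j_*$ coincides with the canonical projection $\pi_X\colon \Z^{\A}\to \Z^{\A_X}$ given by $\pi_X(x_H)=x_H$ if $H\supset X$ and $\pi_X(x_H)=0$ otherwise (see \eqref{eq:pi-map}). Thus $j_*$ is the map
\[
\xi=\sum_{H\in\A}a_H x_H\ \longmapsto\ \bigl(\pi_X(\xi)\bigr)_{X\in L_2(\A)}
=\Bigl(\sum_{H\supset X}a_Hx_H\Bigr)_{X\in L_2(\A)}.
\]

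Now suppose $\xi=\sum_H a_Hx_H$ lies in $\ker j_*$. Then $\pi_X(\xi)=0$ in $\Z^{\A_X}$ for every $X\in L_2(\A)$. Since $\{x_H : H\in \A_X\}$ is a basis of $\Z^{\A_X}$, this forces $a_H=0$ for every hyperplane $H$ with $H\supset X$. It therefore suffices to verify that every $H\in\A$ lies above at least one rank-$2$ flat. This is where I would use the standing hypothesis that $\A$ is a central arrangement with $|\A|\ge 2$: for any $H\in\A$, pick any other hyperplane $H'\in\A$; then $H\cap H'$ is a codimension-$2$ linear subspace, hence a flat $X\in L_2(\A)$ with $H\supset X$. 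Consequently $a_H=0$ for all $H\in\A$, so $\xi=0$ and $j_*$ is injective.

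The proof is essentially routine once Lemma \ref{lem:j-pi}\eqref{jr1} is in hand; there is no real obstacle, only the mild bookkeeping of checking the explicit form of the projection maps and verifying that every hyperplane participates in some rank-$2$ intersection. (The degenerate case $|\A|=1$, where $L_2(\A)=\emptyset$, is excluded by the implicit convention in this section that the arrangement is non-trivial, and in any event is vacuous for the subsequent results that invoke this lemma.)
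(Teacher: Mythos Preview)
Your proof is correct and follows essentially the same approach as the paper's own argument: both identify the components of $j_*$ with the canonical projections and then use the fact that every hyperplane in a central arrangement with $|\A|\ge 2$ lies above some rank-$2$ flat. The only cosmetic difference is that the paper phrases the conclusion in terms of the matrix of $j_*$ having an $n\times n$ identity submatrix (hence a minor equal to $1$), whereas you do the equivalent element-by-element kernel computation.
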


\begin{proof}
The group $H_1(M(\A);\Z)$ is free abelian on generators $\{x_H : H\in \A\}$, 
whereas $H_1(M(\A_X);\Z)$ is free abelian on generators $\{x_{H,X} : H\in \A, H\supset X\}$. 
In these bases, we have that $j_*(x_H) =\sum_{X: X\subset H} x_{H,X}$. Since every 
hyperplane $H\in \A$ contains a flat $X\in L_2(\A)$, the matrix of $j_*$ has a minor 
of size $n=\abs{\A}$ equal to $1$, and so the matrix has maximal rank. 
Hence, $j_*$ is injective.
\end{proof}

For each flat $X\in L_2(\A)$, 
the inclusion $j_X\colon M(\A)\inj M(\A_X)$ induces a  
homomorphism $(j_X)_{\sharp}\colon G(\A) \to G(\A_X)$ 
on fundamental groups, which in turn induces a map 
$\h((j_X)_{\sharp})\colon \h(\A) \to \h(\A_X)$ between 
the corresponding holonomy Lie algebras. Let us define 
a ``local" version of the holonomy Lie algebra by setting 
\begin{equation}
\label{eq:ha-loc}
\h(A)^{\loc}\coloneqq  \h\Big(\prod_{X\in L_2(\A)} G(\A_X) \Big) = 
\prod_{X\in L_2(\A)} \h(\A_X).
\end{equation}
The maps $\h((j_X)_{\sharp})$ then assemble into a map to 
$\h(j_{\sharp}) \colon \h(\A)  \to \h(A)^{\loc}$. 
By Lemma \ref{lem:j-pi}, this map coincides with the map 
$\pi=\left(\HH(\pi_X)\right)_X\colon \HH(\A)\to
\prod_{X} \HH(\A_X)$  from \cite[(2.2)]{PS-cmh06}. 

Likewise, the maps $\h(\iota_X)=\h((r_X)_{\sharp})\colon \h(\A_X) \to \h(\A)$ 
assemble into a homomorphism of graded abelian groups, 
$\iota\colon \h(A)^{\loc} \to \h(\A)$. 
Let $\pi'$ and $\iota'$ be the restrictions of the aforementioned maps 
to derived Lie subalgebras. 
It is shown in \cite[Proposition~2.1]{PS-cmh06} that $\pi'\circ \iota'=\id$, 
and thus, $\pi'$ is surjective (it is also injective in degree $2$). 
We summarize this result, as follows. 

\begin{proposition}[\cite{PS-cmh06}]
\label{prop:PS-holo-prod}
The morphism of graded Lie algebras
\begin{equation}
\label{eq:pimap}
\begin{tikzcd}[column sep=18pt]
\h(j_{\sharp}) \colon \h(\A)\ar[r] & \h(\A)^{\loc}
\end{tikzcd}
\end{equation}
is a surjection in degrees $k\ge 3$ and an isomorphism in degree $k=2$.
\end{proposition}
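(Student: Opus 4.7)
My plan is to combine two ingredients: a splitting of $\h(j_\sharp)$ at the level of derived Lie subalgebras (which will yield surjectivity in all degrees $k \geq 2$), and a Brieskorn-style direct sum decomposition of the degree-$2$ part (which will promote the degree-$2$ surjection to an isomorphism).

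For the splitting, I would assemble the maps $\h(\iota_X)$ of \eqref{hhiota}---available because each $\A_X$ is a closed sub-arrangement of $\A$---into a graded abelian group morphism $\iota \colon \h(\A)^{\loc} \to \h(\A)$. Restricting both $\pi := \h(j_\sharp)$ and $\iota$ to derived Lie subalgebras (and using that the derived subalgebra of a product of Lie algebras is the product of the derived subalgebras), I get morphisms $\pi' \colon \h'(\A) \to \h'(\A)^{\loc}$ and $\iota' \colon \h'(\A)^{\loc} \to \h'(\A)$. The heart of the argument is the verification that $\pi' \circ \iota' = \id$: the $Y$-component of this composite on the $X$-summand of the domain is $\h'(\pi_Y) \circ \h'(\iota_X)$, which equals the identity when $Y = X$ by functoriality of $\h$ applied to $\pi_X \circ \iota_X = \id_{\Z^{\A_X}}$, and vanishes when $Y \neq X$ by the already-cited \cite[Lem.~3.1]{PS-cmh06}. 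Because $\h_k$ and $\h'_k$ coincide for every $k \geq 2$, the resulting split surjectivity of $\pi'$ gives surjectivity of $\h(j_\sharp)_k$ in every degree $k \geq 2$, a fortiori in every $k \geq 3$.

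For the degree-$2$ isomorphism, I would observe that every unordered pair of distinct hyperplanes $\{H,K\}\subset \A$ determines a unique rank-$2$ flat $X = H \cap K$, which yields a canonical direct sum decomposition $\bwedge^2 \Z^{\A} = \bigoplus_{X \in L_2(\A)} \bwedge^2 \Z^{\A_X}$. Each defining relation $\mathfrak{r}_{H,X}$ from \eqref{eq:lie-ideal} involves only brackets of generators in $\Z^{\A_X}$, so the degree-$2$ piece of the ideal $J(\A)$ decomposes compatibly across rank-$2$ flats. Passing to quotients gives $\h_2(\A) \cong \bigoplus_X \h_2(\A_X) = \h_2(\A)^{\loc}$, and unwinding the definitions identifies this canonical isomorphism with $\h(j_\sharp)_2$ itself. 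I expect the main obstacle to be the vanishing $\h'(\pi_Y) \circ \h'(\iota_X) = 0$ for $Y \neq X$, which is the combinatorial heart of the splitting and encodes the fact that brackets arising from one $2$-flat cannot feed back into the local holonomy Lie algebra of a different $2$-flat; fortunately, this is already on record as \cite[Lem.~3.1]{PS-cmh06}.
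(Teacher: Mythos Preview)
Your proposal is correct and follows essentially the same route as the paper. The paper does not give an independent proof but merely records the result, pointing to \cite[Prop.~2.1]{PS-cmh06} for the identity $\pi'\circ\iota'=\id$ (whose off-diagonal vanishing is exactly the \cite[Lem.~3.1]{PS-cmh06} you invoke) and noting without further argument that $\pi'$ is also injective in degree~$2$; your Brieskorn-style decomposition of $\bwedge^2\Z^{\A}$ simply spells out this last point explicitly.
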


By comparing the ranks of the source and target in \eqref{eq:pimap}, 
while also using \eqref{eq:gax} and \eqref{eq:kohno}, 
we recover a lower bound for the LCS ranks of an arrangement group,
first established in \cite[Proposition~3.8]{Fa89} by other methods.

\begin{corollary}[\cite{Fa89, PS-cmh06}] 
\label{cor:lcs-bound}
For any arrangement $\A$, the LCS ranks of the group $G(\A)$ admit the 
lower bound
\begin{equation}
\label{eq:phis-min}
\phi_k(G(\A))\ge \sum_{X\in L_2(\A)}\phi_k(F_{\mu(X)}) 
\end{equation}
for all $k\ge 2$, with equality for $k=2$.
\end{corollary}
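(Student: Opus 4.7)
The plan is simply to compare ranks across the morphism produced by Proposition~\ref{prop:PS-holo-prod}, using Kohno's isomorphism to translate from the holonomy Lie algebra back to the associated graded of the arrangement group.

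First, by the $1$-formality of arrangement groups, the map $\Psi\otimes \Q$ from \eqref{eq:kohno} is an isomorphism, so
\[
\phi_k(G(\A))=\dim_{\Q}\h_k(\A)\otimes\Q \quad\text{and}\quad \phi_k(G(\A_X))=\dim_{\Q}\h_k(\A_X)\otimes\Q
\]
for all $k\geq 1$ and all $X\in L_2(\A)$. Next, the identification \eqref{eq:gax}, namely $G(\A_X)\cong F_{\mu(X)}\times \Z$, combined with Lemma \ref{lem:holo-prod}, yields $\h(\A_X)\cong \h(F_{\mu(X)})\times \h(\Z)$. Since $H^2(\Z;\Z)=0$, the holonomy Lie algebra $\h(\Z)$ is the free Lie algebra on a single generator and hence vanishes in degrees $\geq 2$. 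Therefore, for every $k\geq 2$,
\[
\dim_{\Q}\h_k(\A_X)\otimes\Q=\dim_{\Q}\h_k(F_{\mu(X)})\otimes\Q=\phi_k(F_{\mu(X)}).
\]

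Now I tensor the morphism $\h(j_{\sharp})\colon\h(\A)\to\h(\A)^{\loc}=\prod_{X\in L_2(\A)}\h(\A_X)$ from \eqref{eq:pimap} with $\Q$. Since tensoring with $\Q$ is exact, Proposition~\ref{prop:PS-holo-prod} gives that the resulting $\Q$-linear map $\h_k(\A)\otimes\Q\to \bigoplus_{X\in L_2(\A)}\h_k(\A_X)\otimes\Q$ is surjective for $k\geq 3$ and an isomorphism for $k=2$. Comparing dimensions, we conclude that for $k\geq 2$,
\[
\phi_k(G(\A))=\dim_{\Q}\h_k(\A)\otimes\Q\geq \sum_{X\in L_2(\A)}\dim_{\Q}\h_k(\A_X)\otimes\Q=\sum_{X\in L_2(\A)}\phi_k(F_{\mu(X)}),
\]
with equality for $k=2$, as desired.

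Once Proposition~\ref{prop:PS-holo-prod} and Kohno's rational isomorphism \eqref{eq:kohno} are in hand, there is no genuine obstacle: the argument is a bookkeeping exercise that chases ranks through the map $\h(j_{\sharp})\otimes \Q$ and uses that the local holonomy factors in degrees $\geq 2$ match the free Lie factors $\Lie(\mu(X))$. The only subtle point worth flagging is the need to strip off the $\h(\Z)$ factor in $\h(\A_X)$, but this is immediate from the fact that the free Lie algebra on one generator is concentrated in degree $1$.
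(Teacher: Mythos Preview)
Your proof is correct and follows essentially the same approach as the paper: compare ranks across the map $\h(j_{\sharp})$ from Proposition~\ref{prop:PS-holo-prod}, invoke Kohno's rational isomorphism \eqref{eq:kohno}, and use \eqref{eq:gax} to identify the local factors. The only difference is that you spell out explicitly the stripping of the $\h(\Z)$ factor from $\h(\A_X)$, which the paper leaves implicit (it records $\h(\A_X)\cong\Lie(\mu(X))\times\Lie(1)$ in Example~\ref{ex:holo-pencil} and at the end of \S\ref{subsec:local-holo}).
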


As illustrated in the next example, the epimorphisms $\h_k(j_{\sharp})$ with 
$k\ge 3$ are far from being injective, in general. Thus, the lower bound 
from \eqref{eq:phis-min} may be strict, even for $k=3$.

\begin{example}
\label{ex:braid-holo}
Let $\BB$ be the braid arrangement in $\C^3$, defined by the polynomial 
$f=(x+y)(x-y)(x+z)(x-z)(y+z)(y-z)$.  Labeling the hyperplanes as the 
factors of $f$, the flats in $L_2(\BB)$ are $\{136, 145, 235, 246, 12, 34, 56\}$, 
and so $\h_3(\BB)^{\loc}=\Z^8$. Nevertheless, 
$\h_3(\BB)=\Z^{10}$, and thus $\ker(\h_3(j_{\sharp}))=\Z^2$, 
generated by the triple Lie brackets $[x_2 ,[x_4, x_5]]$ and 
$[x_5, [x_2, x_4]]$.
\end{example}

\subsection{Decomposable arrangements}
\label{subsec:decomp}
These considerations motivate the following definition, which is key to 
this work.

\begin{definition}[\cite{PS-cmh06}]
\label{def:ps-decomp}
A hyperplane arrangement $\A$ is said to be {\em decomposable}\/ if the map 
$\h_3(j_{\sharp})\colon  \h_3(\A)\to \h_3(\A_X)^{\loc}$ 
is an isomorphism. Likewise, the arrangement is {\em decomposable over $\Q$}\/ 
if the map $\h_3(j_{\sharp})\otimes \Q$ is an isomorphism. 
\end{definition}

That is to say, $\A$ is decomposable if $\h_3(\A)$ is free abelian of rank as 
small as possible, given the information encoded in $L_2(\A)$, namely, of rank 
equal to 
\begin{equation}
\label{eq:h3rank}
\sum_{X\in L_2(\A)} \rank \h_3(\A_X)= 2 \sum_{X\in L_2(\A)} \binom{\mu(X)+1}{3}.
\end{equation}
Alternatively, if we set $\widetilde{L}_2(\A)\coloneqq \{X\in L_2(\A) : \mu(X)>1\}$, 
then the decomposability condition for $\A$ is equivalent to 
\begin{equation}
\label{eq:h3rank-bis}
\h_3(\A) \cong \bigoplus_{X\in \widetilde{L}_2(\A)} \h_3(\A_X).
\end{equation}

Since the holonomy Lie algebra $\h(\A)$ depends only on the intersection 
poset $L_{\le 2}(\A)$ the property of being decomposable (or $\Q$-decomposable) 
is combinatorially determined. We formalize this observation in the next lemma.

\begin{lemma}
\label{lem:dec-comb}
Let $\A$ and $\BB$ be two arrangement with $L_{\le 2}(\A)\cong L_{\le 2}(\BB)$. 
If $\A$ is decomposable (over $\Q$), then $\BB$ is also decomposable (over $\Q$).
\end{lemma}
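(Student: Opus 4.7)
The plan is to show that both the holonomy Lie algebra $\h(\A)$ and its local version $\h(\A)^{\loc}$, as well as the comparison map $\h(j_\sharp)$ between them, depend only on the isomorphism type of the truncated poset $L_{\le 2}(\A)$. Once this naturality is established, the statement that $\h_3(j_\sharp)$ is an isomorphism transfers automatically along any combinatorial isomorphism $L_{\le 2}(\A)\cong L_{\le 2}(\BB)$.

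Unpacking the hypothesis, a combinatorial isomorphism consists of compatible bijections $\alpha\colon \A\to \BB$ and $\beta\colon L_2(\A)\to L_2(\BB)$ with $\beta(H_1\cap H_2)=\alpha(H_1)\cap \alpha(H_2)$ whenever these intersections have codimension $2$. As recalled in Subsection \ref{subsec:holo lie arr}, such data induce an isomorphism of graded Lie algebras $\h(\alpha)\colon \h(\A)\isom\h(\BB)$, sending $x_H\mapsto x_{\alpha(H)}$ and the relators $\mathfrak{r}_{H,X}$ to $\mathfrak{r}_{\alpha(H),\beta(X)}$. For each $X\in L_2(\A)$, the bijection $\alpha$ restricts to a bijection $\A_X\to \BB_{\beta(X)}$ between pencils of the same multiplicity (and this trivially respects $L_2$); thus it induces an isomorphism $\h(\alpha_X)\colon \h(\A_X)\isom\h(\BB_{\beta(X)})$. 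Assembling these over $X\in L_2(\A)$ via $\beta$ yields an isomorphism
\begin{equation*}
\h(\alpha)^{\loc}\colon \h(\A)^{\loc}\isom \h(\BB)^{\loc}.
\end{equation*}

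The main step is to verify the commutativity of the square
\begin{equation*}
\begin{tikzcd}[column sep=32pt]
\h(\A)\ar[r,"\h(\alpha)"]\ar[d,"\h(j_\sharp)"'] & \h(\BB) \ar[d,"\h(j_\sharp)"] \\
\h(\A)^{\loc} \ar[r,"\h(\alpha)^{\loc}"] & \h(\BB)^{\loc}.
\end{tikzcd}
\end{equation*}
By Lemma \ref{lem:j-pi}\eqref{jr3}, the left vertical map is $\pi=(\h(\pi_X))_{X\in L_2(\A)}$, and similarly on the right with projections indexed by $L_2(\BB)$. On generators $x_H\in \h_1(\A)$, both composites send $x_H$ to the element whose $X$-component is $x_{\alpha(H)}\in \h(\BB_{\beta(X)})$ if $H\supset X$ (equivalently $\alpha(H)\supset \beta(X)$), and $0$ otherwise. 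Since $\h(\A)$ is generated in degree $1$, the diagram commutes.

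With the diagram in place, the conclusion is immediate: the horizontal arrows are isomorphisms (in every degree), so $\h_3(j_\sharp)\colon \h_3(\A)\to \h_3(\A)^{\loc}$ is an isomorphism if and only if the corresponding map for $\BB$ is an isomorphism. The $\Q$-version is obtained by tensoring the entire diagram with $\Q$, since all constructions are compatible with extension of scalars. The only genuine work is the bookkeeping in checking commutativity; there is no analytic or algebraic obstacle.
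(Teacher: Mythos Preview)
Your proof is correct and is exactly the explicit version of what the paper leaves implicit: the paper states the lemma without proof, as a formalization of the preceding remark that ``the holonomy Lie algebra $\h(\A)$ depends only on the intersection poset $L_{\le 2}(\A)$,'' and your argument spells out precisely why not just $\h(\A)$ but also $\h(\A)^{\loc}$ and the comparison map $\h(j_\sharp)=(\h(\pi_X))_X$ are determined by $L_{\le 2}(\A)$. The commuting square you check on degree~$1$ generators is the natural way to make this observation rigorous.
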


The perhaps weaker condition of $\Q$-decomposability only requires that 
$\phi_3(G(\A))=\dim_{\Q}\h_3(\A)\otimes \Q$ be equal to \eqref{eq:h3rank}, 
or, equivalently, that the lower bound from \eqref{eq:phis-min} to hold as equality 
for $k=3$. An alternate definition was given in \cite[Definition~2.10]{SS-tams02}, 
where an arrangement $\A$ is said to be {\em minimal linear strand}\/ if 
\begin{equation}
\label{eq:mls}
b'_{2,3}(\A)=2 \sum_{X\in L_2(\A)} \binom{\mu(X)+1}{3},
\end{equation}
where $b'_{i,j}\coloneqq \dim_{\Q} \Tor^E_i(A,\Q)_j$ are the bigraded Betti numbers 
of the (rational) Orlik--Solomon algebra $A=E/I(\A)$. Since, as noted in display 
(3.13) of that paper, $\phi_3(G(\A))=b'_{2,3}(\A)$, the two notions---decomposable 
over $\Q$ and minimal linear strand---coincide. 

These considerations raise the following question. 

\begin{question}
\label{quest:decomp-Q}
For an arrangement $\A$, are decomposability and $\Q$-decomposability 
equivalent conditions? Put another way: If $\A$ is decomposable over $\Q$, 
is $\h_3(\A)$ torsion-free? 
\end{question}

Of course, if Question \ref{quest:h3-tf} has a positive answer for all arrangements $\A$ 
(that is, if $\h_3(\A)$ is always torsion-free), then Question \ref{quest:decomp-Q} 
also has a positive answer, but the converse may or may not hold. An affirmative 
answer to Question \ref{quest:decomp-Q} would make the algorithms for detecting 
decomposability (implemented in the Macaulay2 packages \cite{DSS, LL}, 
but only over a field of fixed characteristic) work over $\Z$.

The following theorem completely describes the structure of the 
holonomy Lie algebra and the associated graded Lie algebra 
of a decomposable arrangement. A similar proof, in a more abstract 
setting has since been given in \cite{Lofwall-16} (see also \cite{Lofwall-20}).

\begin{theorem}[\cite{PS-cmh06}]
\label{thm:PS-decomp}
If $\A$ is a decomposable arrangement, then the following hold:
\begin{enumerate} 
\item\label{dec1}
The map 
$\h'(j_{\sharp}) \colon \h'(\A)\to \prod_{X\in L_2(\A)} \h'(\A_X)$
is an isomorphism of graded Lie algebras. 
\item\label{dec2} 
The map $\Psi_{\A}\colon \h(\A)\to \gr(G(\A))$ 
is an isomorphism of graded Lie algebras. 
\end{enumerate}
\end{theorem}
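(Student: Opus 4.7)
The plan is to reduce the whole theorem to showing that $K \coloneqq \ker(\h'(j_{\sharp}))$ is zero. The map $\pi' \coloneqq \h'(j_{\sharp})$ is surjective by Proposition \ref{prop:PS-holo-prod}, and by the constructions of Section \ref{subsec:holo-maps} it admits a section $\iota' \coloneqq \bigoplus_X \h(\iota_X)\colon \h'(\A)^{\loc} \to \h'(\A)$, so that $\pi' \circ \iota' = \id$. This yields a splitting $\h'(\A) = \iota'(\h'(\A)^{\loc}) \oplus K$ of graded abelian groups. Since each $\pi_X$ is a Lie algebra homomorphism, $K$ is an ideal of $\h(\A)$: for $a \in \h(\A)$ and $k \in K$, $\pi_X([a,k]) = [\pi_X(a),\pi_X(k)] = 0$. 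Proposition \ref{prop:PS-holo-prod} gives $K_2 = 0$, and the decomposability hypothesis is precisely $K_3 = 0$.

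The crux of Part (1) is to propagate this vanishing to all higher degrees, and this is where I expect the main obstacle. My plan is a Hilbert series comparison over $\Q$: since $\pi'$ is degree-wise surjective, it suffices to show $\dim_{\Q}\h_k(\A)\otimes \Q = \dim_{\Q}\h_k(\A)^{\loc}\otimes \Q$ for every $k \ge 2$, i.e., that the lower bound of Corollary \ref{cor:lcs-bound} is an equality whenever it is an equality in degree $3$. I would pass to the truncated Orlik--Solomon algebra $E/I(\A)$ and run an inductive argument of Gr\"obner-basis type: the vanishing of $K_3$ says exactly that the multiplication map $E^1\otimes I^2(\A)\to E^3(\A)$ has maximal rank in the sense of Remark \ref{rem:falk}, and the quadratic form of the relations $\mathfrak{r}_{H,X}$ together with the Jacobi identity should prevent the appearance of any genuinely new ``non-local'' syzygies in higher degrees. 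The hardest step will be ruling out such long non-local relations; once established, it forces $K\otimes \Q = 0$, and the freeness of both $\h_k(\A)^{\loc}$ and the direct summand $\iota'(\h'(\A)^{\loc})\subset \h'(\A)$ upgrades this to $K = 0$ integrally, so $\pi'$ is an isomorphism.

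Granting Part (1), Part (2) is a short diagram chase. For each $X \in L_2(\A)$ we have $G(\A_X) \cong F_{\mu(X)} \times \Z$; since $\h(F_n) \cong \Lie_{\Z}(n) \cong \gr(F_n)$ by Magnus' theorem, $\h(\Z) \cong \gr(\Z)$ trivially, and both $\h$ and $\gr$ commute with finite direct products (Lemma \ref{lem:holo-prod} and the remark in Section \ref{subsec:grg}), the local map $\Psi_{\A_X}$ is an isomorphism. Assembling and restricting to derived subalgebras gives an isomorphism $(\Psi^{\loc})'\colon \h'(\A)^{\loc}\to \prod_X \gr(G(\A_X))'$. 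Consider the commutative square
\[
\begin{tikzcd}[column sep=28pt, row sep=18pt]
\h'(\A) \ar[r, "\pi'"] \ar[d, "\Psi'"'] & \h'(\A)^{\loc} \ar[d, "(\Psi^{\loc})'", "\cong"'] \\
\gr(G(\A))' \ar[r] & \prod_{X} \gr(G(\A_X))'
\end{tikzcd}
\]
in which the top row is an isomorphism by Part (1). The left vertical $\Psi'$ is surjective by Theorem \ref{thm:holo-epi}; if $\Psi'(x) = 0$ then $(\Psi^{\loc})'(\pi'(x)) = 0$, so $\pi'(x) = 0$, and hence $x = 0$, proving injectivity. Since $\Psi_1$ is the identity on $G(\A)_{\ab}$, this upgrades to $\Psi\colon \h(\A)\to \gr(G(\A))$ being an isomorphism of graded Lie algebras.
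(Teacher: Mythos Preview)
This theorem is cited from \cite{PS-cmh06} and not proved in the present paper, so there is no in-paper proof to compare against directly; I will therefore assess your argument on its own merits.

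Your reduction to Part~\eqref{dec1} and your diagram chase for Part~\eqref{dec2} are correct: the square commutes by the naturality of $\Psi$ together with Lemma~\ref{lem:j-pi}, and the local maps $\Psi_{\A_X}$ are isomorphisms as you say. So Part~\eqref{dec2} follows cleanly once Part~\eqref{dec1} is in hand.

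The problems are in Part~\eqref{dec1}. First, the rational step is not actually an argument: you say you would ``run an inductive argument of Gr\"obner-basis type'' and that the Jacobi identity ``should prevent'' non-local syzygies, but this is precisely the content of the theorem. The propagation from $K_3=0$ to $K_n=0$ is the whole difficulty; a Hilbert series comparison only reformulates the question. The actual proof in \cite{PS-cmh06} proceeds by a direct integral induction on $n$, using the splitting $\iota'$ and the relations $\mathfrak{r}_{H,X}$ to show that every element of $K_n$ lies in the ideal generated by $K_3$; no passage through $\Q$ is needed.

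Second, and more seriously, your integral upgrade is circular. From $K\otimes\Q=0$ and the freeness of $L=\iota'(\h'(\A)^{\loc})$ you can only conclude that $K$ is the torsion subgroup of $\h'(\A)$. To deduce $K=0$ you would need $\h(\A)$ to be torsion-free, but as the paper notes immediately after the statement, torsion-freeness of $\h(\A)$ is a \emph{consequence} of Theorem~\ref{thm:PS-decomp}, not something available beforehand. Indeed, Question~\ref{quest:h3-tf} asks whether $\h_3(\A)$ is torsion-free in general, and Question~\ref{quest:decomp-Q} asks whether $\Q$-decomposability implies decomposability; both would be trivial if your upgrade worked.
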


It follows from this theorem that $\h_n(\A)\cong \gr_n(G(\A))$ for all $n\ge 1$, 
and all these groups are torsion-free, with ranks $\phi_n=\phi_n(G(\A))$ given by 
\begin{equation}
\label{eq:decomp-lcs}
\prod_{n=1}^{\infty}(1-t^n)^{\phi_n}=
(1-t)^{\abs{\A}-\sum_{X\in L_2(\A)} \mu(X)}
\prod_{X\in L_2(\A)} (1- \mu(X) t)\, .
\end{equation}
Moreover, since the holonomy Lie algebra of any arrangement 
is combinatorially determined, we have the following immediate corollary.

\begin{corollary}
\label{cor:decomp-comb}
Let $\A$ and $\BB$ be two decomposable arrangements with
$L_{\le 2}(\A) \cong L_{\le 2}(\BB)$. Then $\gr_{\ge 2}(G(\A))\cong \gr_{\ge 2}(G(\BB))$.
\end{corollary}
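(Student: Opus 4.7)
The plan is to chain together two isomorphisms, both of which are essentially established earlier in the excerpt; no new calculation is required.

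First, I would invoke the combinatorial determination of the holonomy Lie algebra. The presentation \eqref{eq:holo arr}--\eqref{eq:lie-ideal} depends only on the set $\A$ (indexing the generators $x_H$ of $\LL(\A)$) and on the poset $L_{\le 2}(\A)$ (indexing the quadratic Kohno relators $\mathfrak{r}_{H,X}$). As spelled out in the paragraph following Example \ref{ex:holo-pencil}, a pair of compatible bijections $\alpha\colon \A\to \BB$ and $\beta\colon L_2(\A)\to L_2(\BB)$ witnessing $L_{\le 2}(\A)\cong L_{\le 2}(\BB)$ lifts to an isomorphism $\LL(\alpha)\colon \LL(\A)\isom \LL(\BB)$ of free Lie algebras sending $\mathfrak{r}_{H,X}^{\A}$ to $\mathfrak{r}_{\alpha(H),\beta(X)}^{\BB}$, and therefore descends to an isomorphism $\h(\alpha)\colon \h(\A)\isom \h(\BB)$ of graded Lie algebras.

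Second, the decomposability hypothesis is brought in via Theorem \ref{thm:PS-decomp}\eqref{dec2}, which states that for any decomposable arrangement the canonical comparison map $\Psi$ is an isomorphism. Applying this to both $\A$ and $\BB$ yields isomorphisms of graded Lie algebras $\Psi_{\A}\colon \h(\A)\isom \gr(G(\A))$ and $\Psi_{\BB}\colon \h(\BB)\isom \gr(G(\BB))$. Composing with $\h(\alpha)$ produces an isomorphism
\[
\Psi_{\BB}\circ \h(\alpha)\circ \Psi_{\A}^{-1}\colon \gr(G(\A))\isom \gr(G(\BB))
\]
of graded Lie algebras, which restricts in particular to an isomorphism in degrees $\ge 2$.

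There is no real obstacle to overcome: the combinatorial nature of the holonomy presentation is immediate from its definition, while the only non-trivial input—the fact that decomposability forces the natural surjection $\h(\A)\surj \gr(G(\A))$ to be an isomorphism in every degree—has already been supplied by Theorem \ref{thm:PS-decomp}. (One might remark that the statement is phrased in degrees $\ge 2$ simply because $\gr_1(G(\A))\cong \Z^{|\A|}$ is a trivially combinatorial invariant; the proof actually yields the isomorphism in degree $1$ as well.)
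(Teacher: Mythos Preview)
Your proposal is correct and follows essentially the same approach as the paper: the corollary is stated there as an ``immediate corollary'' of Theorem~\ref{thm:PS-decomp}, using that the holonomy Lie algebra is combinatorially determined (as explained after Example~\ref{ex:holo-pencil}) together with part~\eqref{dec2} of that theorem. Your parenthetical observation that the argument in fact gives the isomorphism in all degrees is also accurate.
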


\subsection{Nilpotent quotients and localized arrangements}
\label{subsec:nilp-decomp}

Building on the work from \cite{PS-cmh06}, we showed in \cite{PrS20} 
that the tower of nilpotent quotients of the fundamental 
group of the complement of a decomposable arrangement is fully 
determined by the intersection lattice. To explain this result, 
we start with some preparatory material on the second homology  
of these nilpotent groups. To start with, we have the following lemma, 
which is based on Theorems \ref{thm:gr3=h3}  and \ref{thm:PS-decomp}. 

\begin{lemma}[\cite{PrS20}]
\label{lem:h2-decomp}
Let $\A$ be a decomposable arrangement, with complement $M=M(\A)$ and 
group $G=G(\A)$. For every $k\ge 3$, there is a natural, split exact sequence
\begin{equation}
\label{eq:h2-decomp}
\begin{tikzcd}[column sep=16pt]
0\ar[r]
	&\h_{k}(\A) \ar[r]
	& H_2(G/\gamma_k(G);\Z)  \ar[r]
	& H_2(M;\Z)\ar[r]
	& 0 .
\end{tikzcd}
\end{equation}
\end{lemma}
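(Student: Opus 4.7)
The plan is to derive this exact sequence from Theorem~\ref{thm:gr3=h3}\eqref{h3-b}, by verifying its hypotheses and then identifying the terms appropriately, first using generalities about arrangement complements and then using the decomposability assumption.

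First, I would check that the comultiplication map $\nabla_G\colon H_2(G;\Z)\to H_1(G;\Z)\wedge H_1(G;\Z)$ is injective. This step was essentially already carried out in the proof of Theorem~\ref{thm:gr3h3-arr}: since the Orlik--Solomon presentation shows that $H^*(M;\Z)$ is generated in degree $1$, the cup-product map $\cup_M$ is surjective, so its $\Z$-dual $\nabla_M$ is injective; naturality of cup-products together with the fact that any classifying map $g\colon M\to K(G,1)$ induces isomorphisms on $H_{\le 2}$ then transfers this injectivity to $\nabla_G$.

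With the hypothesis of Theorem~\ref{thm:gr3=h3}\eqref{h3-b} verified, I would apply it directly to get, for each $k\ge 3$, the natural, split exact sequence
\begin{equation*}
0 \to \gr_k(G) \to H_2(G/\gamma_k(G);\Z) \to H_2(G;\Z)/\ker(\nabla_G) \to 0.
\end{equation*}
Since $\nabla_G$ is injective, $\ker(\nabla_G)=0$, so the rightmost term simplifies to $H_2(G;\Z)$, which in turn is naturally identified with $H_2(M;\Z)$ via $g_*$.

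The final step is to invoke the decomposability of $\A$: by Theorem~\ref{thm:PS-decomp}\eqref{dec2}, the comparison map $\Psi_{\A}\colon \h(\A)\to \gr(G)$ is a natural isomorphism of graded Lie algebras, allowing me to replace $\gr_k(G)$ by $\h_k(\A)$ in the first nonzero term. Substitution then produces the stated sequence. I do not anticipate a real obstacle here, as the lemma is essentially a recombination of previously established results; the only care needed is to track naturality and splitting through the three identifications above, each of which is itself natural in $\A$.
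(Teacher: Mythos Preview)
Your proposal is correct and follows precisely the approach indicated in the paper, which states that the lemma ``is based on Theorems~\ref{thm:gr3=h3} and~\ref{thm:PS-decomp}.'' You have filled in the details exactly as intended: verify the injectivity of $\nabla_G$ (as in the proof of Theorem~\ref{thm:gr3h3-arr}), apply Theorem~\ref{thm:gr3=h3}\eqref{h3-b} to obtain the split exact sequence with $\gr_k(G)$ and $H_2(G;\Z)\cong H_2(M;\Z)$, and then invoke Theorem~\ref{thm:PS-decomp}\eqref{dec2} to replace $\gr_k(G)$ by $\h_k(\A)$.
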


For an arbitrary arrangement $\A$ and 
for a $2$-flat $X\in L_2(\A)$, we let $\A_X$ 
be the corresponding localized arrangement, and write 
$G_X=G(\A_X)$. The inclusion map $j_X\colon M(\A)\to M(\A_X)$ 
induces a homomorphism $(j_X)_{\sharp}\colon G\to G_X$ on 
fundamental groups, which in turn induces homomorphisms 
$N_k(j_X) \colon G/\gamma_n(G)\to G_X/\gamma_k(G_X)$ 
on the respective nilpotent quotients. Assembling these maps, 
we obtain homomorphisms 
\begin{equation}
\label{eq:nilp-n-map}
\begin{tikzcd}[column sep=20pt]
N_k(j) \colon G/\gamma_k(G)\ar[r] 
&\prod_{X\in L_2(\A)} G_X/\gamma_k(G_X)\, 
\end{tikzcd}
\end{equation}
for all $k\ge 1$.

\begin{proposition}[\cite{PrS20}]
\label{prop:nilp2-arr}
For any arrangement $\A$, and 
for each $k\ge 3$, the map $N_k(j)$   
induces a surjection in second homology, 
\begin{equation}
\begin{tikzcd}[column sep=20pt]
N_k(j)_*\colon H_2(G/\gamma_k(G);\Z)\ar[r, two heads] 
& \bigoplus_{X\in L_2(\A)} H_2(G_X/\gamma_k(G_X);\Z)\, .
\end{tikzcd}
\end{equation}
Moreover, if $\A$ is decomposable, then the maps $N_k(j)_*$ 
are isomorphisms, for all $k\ge 3$. 
\end{proposition}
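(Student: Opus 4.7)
The plan is to deduce the proposition by applying the Five Lemma to a commutative diagram of short exact sequences supplied by the natural sequence of Theorem \ref{thm:gr3=h3}\eqref{h3-b}. For the group $G=G(\A)$, the comultiplication $\nabla_G$ is injective, as shown in the proof of Theorem \ref{thm:gr3h3-arr}; the same reasoning applies to each $G_X \cong F_{\mu(X)} \times \Z$, since $H^*(M(\A_X);\Z)$ is also torsion-free and generated in degree one. Thus for every $k\ge 3$ the sequence \eqref{eq:grng} applies to $G$ and to each $G_X$, and assembling the local sequences yields the commutative ladder
\begin{equation*}
\begin{tikzcd}[column sep=14pt]
0 \ar[r] & \gr_k(G) \ar[r] \ar[d, "f_k"]
& H_2(G/\gamma_k(G);\Z) \ar[r] \ar[d, "N_k(j)_*"]
& H_2(M(\A);\Z) \ar[r] \ar[d, "j_*"] & 0 \\
0 \ar[r] & \bigoplus_X \gr_k(G_X) \ar[r]
& \bigoplus_X H_2(G_X/\gamma_k(G_X);\Z) \ar[r]
& \bigoplus_X H_2(M(\A_X);\Z) \ar[r] & 0
\end{tikzcd}
\end{equation*}
with exact rows and all vertical maps induced by the inclusions $j_X$.

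The rightmost map $j_*$ is an isomorphism in degree $2$ by Brieskorn's decomposition \eqref{eq:brieskorn}. To control $f_k$, I would use the naturality square
\begin{equation*}
\begin{tikzcd}[column sep=24pt]
\h_k(\A) \ar[r, "(\Psi_\A)_k"] \ar[d, "\h_k(j_\sharp)"']
& \gr_k(G) \ar[d, "f_k"] \\
\bigoplus_X \h_k(\A_X) \ar[r, "\cong"']
& \bigoplus_X \gr_k(G_X),
\end{tikzcd}
\end{equation*}
in which the top horizontal arrow is surjective by Theorem \ref{thm:holo-epi}, the bottom is an isomorphism because each $G_X$ is $1$-formal (so each $\Psi_{\A_X}$ is an isomorphism), and the left vertical is surjective for every $k\ge 3$ by Proposition \ref{prop:PS-holo-prod}. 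A direct chase then shows $f_k$ is surjective, and the Five Lemma applied to the ladder delivers surjectivity of $N_k(j)_*$ for an arbitrary arrangement.

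When $\A$ is decomposable, Theorem \ref{thm:PS-decomp}\eqref{dec1} promotes $\h_k(j_\sharp)$ to an isomorphism for $k\ge 2$, while Theorem \ref{thm:PS-decomp}\eqref{dec2} makes $\Psi_\A$ an isomorphism; consequently $f_k$ is itself an isomorphism, and a second application of the Five Lemma to the same ladder gives that $N_k(j)_*$ is an isomorphism for every $k\ge 3$. (Alternatively, one could use the already-established sequence of Lemma \ref{lem:h2-decomp} in place of \eqref{eq:grng} in this case.) The step I expect to demand the most care is the naturality of the split sequence \eqref{eq:grng} with respect to the inclusion-induced homomorphisms $(j_X)_\sharp$, together with the identification of the map induced by $N_k(j)_*$ on the $H_2(M(\A);\Z)$-quotient with the Brieskorn map; both are implicit in the construction but need to be traced through the proof of Theorem \ref{thm:gr3=h3} to make the Five Lemma argument rigorous.
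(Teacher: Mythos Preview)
Your argument is essentially correct and follows the natural route suggested by the tools assembled in the paper. Note that the paper itself does not prove this proposition---it is quoted from \cite{PrS20}---but the strategy there is indeed to compare the split exact sequences of Theorem~\ref{thm:gr3=h3}\eqref{h3-b} for $G$ and for the local groups $G_X$, exactly as you do; Lemma~\ref{lem:h2-decomp} is the decomposable instance of that same comparison.

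One correction: your justification that the bottom arrow $\bigoplus_X \h_k(\A_X)\to \bigoplus_X \gr_k(G_X)$ is an isomorphism ``because each $G_X$ is $1$-formal'' is not sufficient over $\Z$. Formality only gives that $\Psi_{\A_X}\otimes\Q$ is an isomorphism, whereas you need the integral statement. The right argument is the one recorded at the end of \S\ref{subsec:local-holo}: since $G_X\cong F_{\mu(X)}\times\Z$, both $\h(\A_X)$ and $\gr(G_X)$ are isomorphic to $\Lie(\mu(X))\times\Lie(1)$, so $\Psi_{\A_X}$ is an integral isomorphism. With this fix, your diagram chase goes through, and the surjectivity of $f_k$ (hence of $N_k(j)_*$) follows as you indicate.

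Your closing remark about naturality is well placed but not a real obstacle: rather than applying \eqref{eq:grng} to the product $\prod_X G_X$ (which would introduce K\"unneth cross-terms in $H_2$), you are applying it to each $G_X$ separately and then summing, so the commutativity of the ladder reduces to the naturality of \eqref{eq:grng} with respect to each single homomorphism $(j_X)_\sharp\colon G\to G_X$. The identification of the rightmost vertical map with Brieskorn's isomorphism \eqref{eq:brieskorn} in degree~$2$ is likewise immediate from the construction of the sequence in \cite{PrS20}.
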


In \cite{Rybnikov-1,Rybnikov-2} Rybnikov showed that,
in general, the third nilpotent quotient of an arrangement group is {\em not}\/ 
determined by the intersection lattice.  Specifically, he produced 
a pair of arrangements $\A$ and $\BB$, each one consisting of 
$13$ hyperplanes in $\C^3$, such that $L (\A)\cong L (\BB)$, yet 
$G(\A)/\gamma_4(G(\A))\not\cong G(\BB)/\gamma_4(G(\BB))$. 
In \cite{ACCM-2007} Artal, Carmona, Cogolludo, and Marco gave 
a different proof of this result, based on a study of the truncations 
of the Alexander invariant. 

By contrast, as shown in \cite[Theorem~8.8]{PrS20}, the phenomenon detected 
by Rybnikov cannot happen among decomposable arrangements. 

\begin{theorem}[\cite{PrS20}]
\label{thm:decomposable-n}
Suppose $\A$ and $\BB$ are decomposable arrangements such that 
$L_{\le 2}(\A) \cong L_{\le 2}(\BB)$. Then, for each $k\ge 2$, 
there is an isomorphism
\begin{equation}
\label{eq:dec-iso}
G(\A)/\gamma_{k}(G(\A))\cong G(\BB)/\gamma_{k}(G(\BB))\, .
\end{equation}
\end{theorem}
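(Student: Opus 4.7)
My plan is to proceed by induction on $k$, constructing isomorphisms $\phi_k \colon G(\A)/\gamma_k(G(\A)) \isom G(\BB)/\gamma_k(G(\BB))$ inductively, in such a way that each $\phi_k$ is compatible (up to the combinatorial bijection on $L_2$) with the localization map $N_k(j)$ of \eqref{eq:nilp-n-map}. The base case $k=2$ is immediate, since $G/\gamma_2(G)=G_{\ab}$ is free abelian of rank $\abs{\A}=\abs{\BB}$. The case $k=3$ is the classical fact recalled in \S\ref{subsec:nilp2-arr}: from Proposition \ref{prop:ms} and the presentation of $G/\gamma_3(G)$ as the quotient of $F/\gamma_3(F)$ by the relators $r_{H,X}$ of \eqref{eq:nilp2-arr}, the second nilpotent quotient is combinatorially determined for \emph{any} arrangement, and the combinatorial bijection $L_{\le 2}(\A)\cong L_{\le 2}(\BB)$ induces an isomorphism that visibly commutes with the maps to the local pencils.

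For the inductive step, assume $\phi_k$ has been constructed along with the compatibility with localization. Compare the central extensions
\[
1 \longrightarrow \gr_k(G(\A)) \longrightarrow G(\A)/\gamma_{k+1}(G(\A)) \longrightarrow G(\A)/\gamma_k(G(\A)) \longrightarrow 1,
\]
and the analogous extension for $\BB$. By Theorem \ref{thm:PS-decomp}, decomposability yields natural identifications $\gr_k(G(\A))\cong \h_k(\A)$ and $\gr_k(G(\BB))\cong \h_k(\BB)$, and by Corollary \ref{cor:decomp-comb} the bijection $L_{\le 2}(\A)\cong L_{\le 2}(\BB)$ produces a canonical isomorphism $\h_k(\A)\cong \h_k(\BB)$ of the kernels. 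Together with $\phi_k$ on the quotients, matching the two extensions reduces to matching their $k$-invariants in $H^2(G(\A)/\gamma_k(G(\A));\,\gr_k(G(\A)))$.

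To compare these $k$-invariants, I would exploit the local-to-global decomposition furnished by Proposition \ref{prop:nilp2-arr}: for decomposable arrangements, the map $N_{k+1}(j)$ induces an isomorphism
\[
H_2\bigl(G/\gamma_{k+1}(G);\Z\bigr) \;\xrightarrow{\,\cong\,}\; \bigoplus_{X\in L_2(\A)} H_2\bigl(G_X/\gamma_{k+1}(G_X);\Z\bigr).
\]
Dualizing, the $k$-invariant of the extension above is uniquely pinned down by its pullbacks to the local pencils $\A_X$, and each local group $G_X\cong F_{\mu(X)}\times \Z$ together with its entire nilpotent tower depends only on $\mu(X)$, which is a combinatorial invariant. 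Since $\phi_k$ commutes with the localization maps by the inductive hypothesis, the pulled-back local $k$-invariants for $\A$ and $\BB$ agree literally under the bijection of $L_2$'s; hence so do the global $k$-invariants, giving the desired extension $\phi_{k+1}$. One then checks, using naturality of the localization maps across degree $k+1$, that $\phi_{k+1}$ inherits the compatibility property, closing the induction.

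The principal difficulty is maintaining the compatibility clause in the induction hypothesis: a bare isomorphism $\phi_k$ need not intertwine the localization maps, and an arbitrary lift $\phi_{k+1}$ need not either. I expect that the crux is to show that at each stage one can modify $\phi_k$ by an inner automorphism (or by the kernel $\h_k$ acting centrally) so as to restore compatibility, which in turn relies on the splitting of the exact sequence in Lemma \ref{lem:h2-decomp} together with the fact that $H_2(M(\A);\Z)$ and $\h_k(\A)$ are both determined, as abelian groups, by $L_{\le 2}(\A)$. Once this rigidity is in place, the $k$-invariants split as a sum over $X\in L_2(\A)$ of pieces that depend only on $\mu(X)$, and the combinatorial hypothesis propagates the isomorphism to all higher nilpotent quotients.
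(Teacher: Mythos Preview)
The paper does not prove this theorem; it is quoted from \cite{PrS20} with no argument supplied here. Your outline is nonetheless consonant with the preparatory results the paper extracts from that reference (Lemma~\ref{lem:h2-decomp} and Proposition~\ref{prop:nilp2-arr}), and your inductive scheme via central extensions and their $k$-invariants is the right framework.

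The ``principal difficulty'' you flag---keeping $\phi_k$ compatible with the localization maps $N_k(j)$---is genuine, but it dissolves once you observe that compatibility is automatic rather than something to be repaired by hand. Since $N_{k+1}(j)$ is a map of central extensions covering $N_k(j)$, and since the induced map on kernels $\gr_k(G(\A))\to\bigoplus_{X}\gr_k(G(\A_X))$ is an isomorphism for $k\ge 2$ (Theorem~\ref{thm:PS-decomp}), the square
\[
\begin{tikzcd}[column sep=36pt]
G(\A)/\gamma_{k+1}(G(\A)) \ar[r,"N_{k+1}(j)"] \ar[d] & \prod_{X} G(\A_X)/\gamma_{k+1}(G(\A_X)) \ar[d] \\
G(\A)/\gamma_{k}(G(\A)) \ar[r,"N_{k}(j)"] & \prod_{X} G(\A_X)/\gamma_{k}(G(\A_X))
\end{tikzcd}
\]
is a pullback of groups (compare kernels and apply the five-lemma to the map into the fibered product). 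The same holds for $\BB$. Thus, once $\phi_k$ intertwines the bottom rows and the local towers are identified via the bijection $L_{\le 2}(\A)\cong L_{\le 2}(\BB)$, the isomorphism $\phi_{k+1}$ is \emph{defined} as the induced map on pullbacks, and it intertwines $N_{k+1}(j)$ by construction. No adjustment by inner automorphisms or by elements of the central kernel is required, and the induction closes cleanly. This pullback formulation is equivalent to your $k$-invariant comparison (via universal coefficients, using that $H_1(G/\gamma_k)$ is free), but it makes the propagation of compatibility transparent.
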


In view of this theorem and of Lemma \ref{lem:dec-comb}, all the 
nilpotent quotients of a decomposable arrangement group are 
combinatorially determined. This leaves open the following question.

\begin{question}
\label{quest:ryb}
Is the group of a decomposable arrangement combinatorially determined? 
\end{question}

Another open problem is whether decomposable arrangement groups 
are residually nilpotent. For more on this question, we refer to \cite[\S 4.5]{CFR20} 
and \cite{CF21} (see also Remark \ref{ref:CF-braids}).

\section{Constructions of decomposable arrangements}
\label{sect:ex-decomp}

We now discuss some operations that preserve decomposability, and 
outline several methods of constructing decomposable arrangements. 

\subsection{Sub-arrangements and product arrangements}
\label{subsec:sub-prod}

{\!}Decomposability behaves well with respect to some natural operations 
on arrangements. We start with a hereditary property, first established 
in \cite[Proposition~3.3]{PS-cmh06}.

\begin{prop}[\cite{PS-cmh06}]
\label{prop:heredity}
Let $\A$ be a decomposable arrangement (over $\Q$). If $\BB$ is a 
sub-arrangement of $\A$, then $\BB$ is also decomposable (over $\Q$).
\end{prop}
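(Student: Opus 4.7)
The plan is to exploit the natural Lie algebra epimorphism $\h(\pi_{\BB})\colon \h(\A) \surj \h(\BB)$ from \eqref{hhpi} and reformulate decomposability via the identity $\pi' \circ \iota' = \id$ from Proposition~2.1 of \cite{PS-cmh06}. Writing $\iota^{\A}_{\loc}\colon \bigoplus_{X \in L_2(\A)} \h(\A_X) \to \h(\A)$ for the map assembled from the $\h(\iota^{\A}_X)$ and similarly $\iota^{\BB}_{\loc}$ for $\BB$, the identity above shows that $\pi^{\A}_{\loc}$ is an isomorphism in degree $3$ iff $\iota^{\A}_{\loc}$ surjects onto $\h_3(\A)$. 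Thus decomposability of $\A$ is equivalent to surjectivity of $\iota^{\A}_{\loc}$ in degree~$3$, and the task reduces to transferring that surjectivity from $\A$ to $\BB$.

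A brief combinatorial step is needed first. Setting $\BB_X \coloneqq \BB \cap \A_X$ for $X \in L_2(\A)$, I claim $L_2(\BB) = \{X \in L_2(\A) : |\BB_X| \ge 2\}$: the inclusion $L_2(\BB) \subseteq L_2(\A)$ is clear, and if $|\BB_X| \ge 2$ then any two hyperplanes of $\BB_X$ meet in a codimension-$2$ subspace containing $X$, hence equal to $X$, forcing $X \in L_2(\BB)$. Consequently, for $X \in L_2(\A) \setminus L_2(\BB)$, the set $\BB_X$ has at most one element.

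The heart of the argument is a case analysis of the composite $\h(\A_X) \xrightarrow{\h(\iota^{\A}_X)} \h(\A) \xrightarrow{\h(\pi_{\BB})} \h(\BB)$ for each $X \in L_2(\A)$. On generators it sends $x_H$ ($H \in \A_X$) to $x_H$ if $H \in \BB_X$ and to $0$ otherwise. When $X \in L_2(\BB)$, this matches the composite $\h(\iota^{\BB}_X) \circ \h(\pi^{\A_X}_{\BB_X})\colon \h(\A_X) \surj \h(\BB_X) \to \h(\BB)$, so it factors through $\h(\iota^{\BB}_X)$; when $X \notin L_2(\BB)$, the image lies in an abelian subalgebra generated by at most one element, so the restriction to the derived subalgebra $\h'(\A_X)$ vanishes. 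Assembling, if $\A$ is decomposable then the composite
\begin{equation*}
\textstyle \bigoplus_{X \in L_2(\A)} \h_3(\A_X) \xrightarrow{\iota^{\A}_{\loc}} \h_3(\A) \xrightarrow{\h_3(\pi_{\BB})} \h_3(\BB)
\end{equation*}
is surjective (the first map by decomposability, the second since $\h(\pi_{\BB})$ is a Lie algebra epimorphism), and by the case analysis it factors as $\bigoplus_X \h_3(\A_X) \surj \bigoplus_{Y \in L_2(\BB)} \h_3(\BB_Y) \xrightarrow{\iota^{\BB}_{\loc}} \h_3(\BB)$. Hence $\iota^{\BB}_{\loc}$ is surjective in degree~$3$, and by the reformulation of the first paragraph, $\BB$ is decomposable. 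The same argument works verbatim after tensoring with $\Q$.

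The main obstacle I expect is verifying that the square $\h(\pi_{\BB}) \circ \h(\iota^{\A}_X) = \h(\iota^{\BB}_X) \circ \h(\pi^{\A_X}_{\BB_X})$ commutes for $X \in L_2(\BB)$---i.e., that the two routes from $\h(\A_X)$ to $\h(\BB)$ agree---but since all four maps are Lie algebra morphisms, this reduces to agreement on degree-$1$ generators, where both sides visibly send $x_H$ to $x_H$ if $H \in \BB_X$ and to $0$ otherwise.
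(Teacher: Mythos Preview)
Your argument is correct. The paper itself does not supply a proof of this proposition---it is stated with attribution to \cite{PS-cmh06} and no argument is given here---so there is no in-paper proof to compare against. Your approach, reformulating decomposability as surjectivity of $\iota_{\loc}$ in degree~$3$ via the splitting $\pi'\circ\iota'=\id$ and then pushing this surjectivity forward along the Lie epimorphism $\h(\pi_{\BB})$, is exactly in the spirit of the framework from \cite{PS-cmh06} that the present paper recalls in \S\ref{subsec:local-holo}--\S\ref{subsec:loc}. The combinatorial identification $L_2(\BB)=\{X\in L_2(\A):|\BB\cap\A_X|\ge 2\}$ and the two-case factorization of $\h(\pi_{\BB})\circ\h(\iota^{\A}_X)$ are both handled cleanly, and the verification that the relevant square commutes reduces, as you note, to degree~$1$ where it is immediate.
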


This gives a convenient criterion for ruling out decomposability. For instance, 
if $\A$ is an arrangement that contains the braid arrangement $\BB$ from 
Example \ref{ex:braid-holo} as a sub-arrangement, then, since $\BB$ is 
not decomposable, $\A$ is not decomposable, either. 

Next, we analyze the behavior of decomposability with respect to products. 
Let $\A$ and $\BB$ be arrangements in $\C^r$ and $\C^s$, respectively.
The product $\A\times \BB$ defined in \eqref{eq:arr-prod} is 
an arrangement in $\C^{r+s}$ with $M(\A\times \BB)\cong M(\A)\times M(\BB)$. 
By \eqref{eq:lat-prod}, the ranked poset $L(\A\times \BB)$ is isomorphic to 
$L(\A)\times L(\BB)$; therefore, 
$\widetilde{L}_2(\A\times \BB)=\widetilde{L}_2(\A)\cup \widetilde{L}_2(\BB)$. 
Moreover, by \eqref{eq:holo-arr-prod}, the graded Lie algebra 
$\h(\A\times \BB)$ is isomorphic to $\h(\A)\oplus \h(\BB)$; in particular,  
$\h_3(\A\times \BB)\cong \h_3(\A)\oplus \h_3(\BB)$. 

\begin{prop}
\label{prop:prod}
Let $\A$ and $\BB$ be two decomposable arrangements (over $\Q$). Then 
$\A\times \BB$ is also decomposable (over $\Q$). 
\end{prop}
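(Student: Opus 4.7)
The plan is to reduce the decomposability of $\A\times\BB$ to that of $\A$ and $\BB$ by combining three ingredients: the product formula \eqref{eq:holo-arr-prod} for the holonomy Lie algebra, the product formula \eqref{eq:lat-prod} for the intersection lattice, and the fact that localization of $\A\times\BB$ at a $2$-flat coming from one of the factors reduces to the corresponding localization of that factor.

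\textbf{Step 1.} I would first unpack $\widetilde{L}_2(\A\times\BB)$ using \eqref{eq:lat-prod}. Since $L(\A\times\BB)\cong L(\A)\times L(\BB)$ as ranked posets and since the rank in a product poset is the sum of the ranks (with the minimum element of each factor having rank $0$), every flat of rank $2$ in $\A\times\BB$ is either of the form $X\times \C^s$ for $X\in L_2(\A)$, or of the form $\C^r\times Y$ for $Y\in L_2(\BB)$. Moreover, the multiplicity $\mu$ is preserved under this identification, so
\[
\widetilde{L}_2(\A\times\BB)\;=\;\widetilde{L}_2(\A)\,\sqcup\,\widetilde{L}_2(\BB).
\]

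\textbf{Step 2.} Next, I would identify the local arrangements. If $X\in\widetilde{L}_2(\A)$, then the hyperplanes of $\A\times\BB$ containing $X\times\C^s$ are precisely those of the form $H\times\C^s$ with $H\in\A_X$, so $(\A\times\BB)_{X\times\C^s}=\A_X\times\varnothing_s$, where $\varnothing_s$ denotes the empty arrangement in $\C^s$. Using Lemma~\ref{lem:holo-prod} and the fact that $\h$ of the empty arrangement is trivial above degree~$1$, one concludes that
\[
\h_3\bigl((\A\times\BB)_{X\times\C^s}\bigr)\;\cong\;\h_3(\A_X),
\]
and symmetrically for flats coming from $\BB$. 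Consequently
\[
\h_3(\A\times\BB)^{\loc}\;\cong\;\h_3(\A)^{\loc}\oplus\h_3(\BB)^{\loc}.
\]

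\textbf{Step 3.} Now apply \eqref{eq:holo-arr-prod} to get $\h_3(\A\times\BB)\cong \h_3(\A)\oplus\h_3(\BB)$, and check that under this splitting the canonical comparison map
\[
\h_3(j_\sharp^{\A\times\BB})\colon \h_3(\A\times\BB)\longrightarrow \h_3(\A\times\BB)^{\loc}
\]
decomposes as the direct sum of $\h_3(j_\sharp^{\A})$ and $\h_3(j_\sharp^{\BB})$. This is a functoriality verification: both $\h_3(j_\sharp^\bullet)$ and the product-decomposition isomorphism are natural with respect to the inclusions of the factors into the product, and the inclusion $\A_X\times\varnothing_s\inj\A\times\BB$ factors through $\A\inj\A\times\BB$. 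Since $\A$ and $\BB$ are decomposable, $\h_3(j_\sharp^{\A})$ and $\h_3(j_\sharp^{\BB})$ are isomorphisms (respectively, become isomorphisms after $-\otimes\Q$ in the rational case), hence so is their direct sum.

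\textbf{Main obstacle.} The routine but necessary care is in Step~3: one needs to see that the decomposition of $\h(\A\times\BB)$ produced by Lemma~\ref{lem:holo-prod} is genuinely compatible with the local-to-global maps $\h(j_\sharp)$, so that the isomorphism assembled from the two factors is indeed the canonical map whose bijectivity defines decomposability. This is the place where one must trace through the identifications in Lemma~\ref{lem:j-pi} rather than merely invoke an abstract isomorphism of graded abelian groups. The rational version follows by tensoring the entire argument with $\Q$.
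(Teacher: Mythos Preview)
Your approach is essentially the same as the paper's: both use $\h_3(\A\times\BB)\cong\h_3(\A)\oplus\h_3(\BB)$ from \eqref{eq:holo-arr-prod} together with $\widetilde{L}_2(\A\times\BB)=\widetilde{L}_2(\A)\sqcup\widetilde{L}_2(\BB)$ to reduce to the factors. The paper simply chains the isomorphisms, implicitly relying on the fact that $\h_3(j_\sharp)$ is already surjective (Proposition~\ref{prop:PS-holo-prod}) and that source and target have the same rank; your Step~3 is more explicit about matching up the canonical map, which is fine.

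One correction in Step~1: your claim that \emph{every} rank-$2$ flat of $\A\times\BB$ comes from $L_2(\A)$ or $L_2(\BB)$ is false. There are also ``mixed'' flats $H\times K$ with $H\in L_1(\A)$ and $K\in L_1(\BB)$, since rank is additive in the product poset. These, however, have $\mu(H\times K)=\mu(H)\mu(K)=1$ by multiplicativity of the M\"{o}bius function, so they are excluded from $\widetilde{L}_2$ and your conclusion $\widetilde{L}_2(\A\times\BB)=\widetilde{L}_2(\A)\sqcup\widetilde{L}_2(\BB)$ stands. Just patch the justification.
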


\begin{proof}
Suppose that both $\A$ and $\BB$ are decomposable, that is, 
$\h_3(\A)$ is isomorphic to $\bigoplus_{X\in \widetilde{L}_2(\A)} \h_3(\A_X)$ and 
$\h_3(\BB)$ is isomorphic to $\bigoplus_{Y\in \widetilde{L}_2(\BB)} \h_3(\BB_Y)$. Then 
\begin{gather}
\begin{aligned}
\h_3(\A\times \BB)&\cong \h_3(\A)\oplus \h_3(\BB) \\
&\cong \bigg(\bigoplus_{X\in \widetilde{L}_2(\A)} \h_3(\A_X)\bigg)\oplus
\bigg(\bigoplus_{Y\in \widetilde{L}_2(\BB)} \h_3(\BB_Y)\bigg)\\
&\cong \bigoplus_{Z\in \widetilde{L}_2(\A\times \BB)} \h_3\big((\A\times \BB)_Z\big),
\end{aligned}
\end{gather}
and so $\A\times \BB$ is decomposable. The case when $\A$ and $\BB$ are 
decomposable over $\Q$ is treated in the same manner.
\end{proof}

\subsection{Pencils in general position}
\label{subsec:prod-pencils}
Let $m=(m_1,\dots , m_r)$ be an $r$-tuple of integers with $m_i\ge 2$, 
and let $\hat{\A}(m)$ be the arrangement hyperplanes in $\C^{r+1}$ 
defined by the polynomial 
\begin{equation}
\label{eq:def-poly-am}
f(z_0,z_1,\dots , z_r)=
z_0(z_0^{m_1}-z_1^{m_1}) (z_0^{m_2}-z_2^{m_2})  \cdots (z_0^{m_r}-z_r^{m_r}) .
\end{equation}
Observe that the projectivized complement, $U(\hat{\A}(m))$, is 
homeomorphic to the product 
$(\C\setminus \{\text{$m_1$ points}\})\times \cdots 
\times (\C\setminus \{\text{$m_r$ points}\})$.

Now let $\A(m)$ be a generic $3$-slice of this arrangement 
(said to be ``split-solvable" in \cite{CDP}).  
Its projectivization, $\bar\A(m)$, is an arrangement of lines in 
$\CP^2$, consisting of $r$ pencils meeting the line at infinity, $z_0=0$, 
in points with multiplicities  $m_1+1,\dots , m_r+1$; moreover, 
the lines of any two distinct pencils are in general position with 
each other. In an affine chart, $\bar\A(m)$ consists of $r$ packets 
of $m_i$ parallel lines, with the lines in distinct packets having different slopes. 

\begin{lemma}
\label{lem:pen-dec}
The arrangements $\A(m)$ are decomposable.
\end{lemma}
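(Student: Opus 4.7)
The plan is to exploit the product decomposition of $M(\hat\A(m))$ to compute both sides of the map $\h_3(\A(m)) \to \h_3(\A(m))^{\loc}$ explicitly, and then invoke Proposition \ref{prop:PS-holo-prod} to promote a rank equality to an isomorphism. First I would compute the fundamental group: the change of variables $w_i = z_i/z_0$ gives a diffeomorphism
\[
M(\hat\A(m)) \cong \C^* \times \prod_{i=1}^r \big(\C \setminus \{\text{$m_i$ points}\}\big),
\]
so $G(\hat\A(m)) \cong F_{m_1} \times \cdots \times F_{m_r} \times \Z$; since a generic $3$-slice preserves $\pi_1$, the group $G(\A(m))$ has the same description. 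Applying Lemma \ref{lem:holo-prod} iteratively yields
\[
\h(\A(m)) \cong \h(F_{m_1}) \times \cdots \times \h(F_{m_r}) \times \h(\Z),
\]
and since $\h(F_n) \cong \Lie(\Z^n)$ while $\h(\Z) \cong \Z$ is concentrated in degree $1$, extracting the degree-$3$ piece gives $\h_3(\A(m)) \cong \bigoplus_{i=1}^r \Lie_3(\Z^{m_i})$, a free abelian group of rank $\sum_{i=1}^r \tfrac{1}{3}(m_i^3 - m_i)$ by Witt's formula.

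Next I would analyze the local side by identifying $L_2(\A(m))$: from the description of $\bar\A(m)$, this poset consists of the $r$ ``pencil centers'' $X_1, \dots, X_r$ with $\mu(X_i) = m_i$, together with double points ($\mu = 1$) arising from transverse intersections of lines in distinct pencils. The double points contribute trivially to $\h_3^{\loc}$ since $\h_3(\Z^2) = 0$, and by \eqref{eq:gax} each pencil center contributes $\h_3(F_{m_i} \times \Z) \cong \Lie_3(\Z^{m_i})$. Hence
\[
\h_3(\A(m))^{\loc} \cong \bigoplus_{i=1}^r \Lie_3(\Z^{m_i}),
\]
which is free abelian of the same rank as $\h_3(\A(m))$.

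To conclude, Proposition \ref{prop:PS-holo-prod} supplies a surjection $\h_3(\A(m)) \surj \h_3(\A(m))^{\loc}$; a surjection between free abelian groups of the same finite rank is an isomorphism, so $\A(m)$ is decomposable. The only mild subtlety is the Lefschetz-type input that a generic $3$-slice preserves $\pi_1$, and hence transports the product structure from $\hat\A(m)$ to $\A(m)$; granted that, the proof is essentially a rank comparison.
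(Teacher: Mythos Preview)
Your proof is correct and follows essentially the same route as the paper: both invoke the Hamm--L\^{e} Lefschetz theorem to identify $G(\A(m))\cong F_{m_1}\times\cdots\times F_{m_r}\times\Z$, apply the product formula for holonomy Lie algebras to compute $\h_3(\A(m))$, and match this against the local contributions from the $r$ pencil centers $X_i$ with $\mu(X_i)=m_i$. Your final step---using the surjectivity from Proposition~\ref{prop:PS-holo-prod} together with the rank equality to conclude that the specific map $\h_3(j_\sharp)$ is an isomorphism---makes explicit a point the paper's proof leaves implicit.
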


\begin{proof}
From the above description of the line arrangement $\bar\A(m)$, it is readily 
seen that $\widetilde{L}_2(\A(m))=\{X_1,\dots ,X_r\}$, where $\mu(X_i)=m_i$. 
On the other hand, by a Lefschetz-type theorem of Hamm and L\^{e}, the 
group $\pi_1(U(\A(m)))$ is isomorphic to $\pi_1(U(\hat{\A}(m)))$, which, 
by a previous observation, is isomorphic to 
$F_{m_1}\times \cdots \times F_{m_r}$. It follows that the holonomy 
Lie algebra $\h(\A(m))=\h(\pi_1(M(\A(m)))$ is isomorphic to 
$\Lie(1)\times \Lie(m_1)\times \cdots \times \Lie(m_r)$, 
from which we conclude that $\h_3(\A(m))=\bigoplus_{i=1}^r \h_3(\A(m)_{X_i})$, 
thus showing that $\A(m)$ is decomposable.
\end{proof}

Since the decomposability of an arrangement $\A$ depends only on the 
(truncated) intersection lattice $L_{\le 2}(\A)$, we obtain the following 
immediate corollary.

\begin{corollary}
\label{cor:pen-decomp}
Let $\A$ be an arrangement such that $L_{\le 2}(\A)\cong L_{\le 2}(\A(m))$, for some 
$r$-tuple $m=(m_1,\dots , m_r)$ with $m_i\ge 2$. Then $\A$ is decomposable.
\end{corollary}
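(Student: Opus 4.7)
The plan is to combine the two structural results already established just before the statement: Lemma \ref{lem:pen-dec}, which asserts that the ``model'' arrangement $\A(m)$ is decomposable, and Lemma \ref{lem:dec-comb}, which asserts that decomposability is a property of the truncated intersection lattice $L_{\le 2}$ alone.

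More concretely, I would argue as follows. By Lemma \ref{lem:pen-dec}, the split-solvable arrangement $\A(m)$ is decomposable, i.e.\ its holonomy Lie algebra satisfies $\h_3(\A(m))\cong \bigoplus_{X\in \widetilde{L}_2(\A(m))} \h_3(\A(m)_X)$. Since the hypothesis gives an isomorphism of ranked posets $L_{\le 2}(\A)\cong L_{\le 2}(\A(m))$, Lemma \ref{lem:dec-comb} transports this decomposability directly to $\A$, yielding the conclusion.

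There is essentially no obstacle here: the content of the corollary is that the combinatorial invariance of the holonomy Lie algebra (exhibited in Section~\ref{subsec:holo lie arr}) reduces the statement to knowing a single representative in each $L_{\le 2}$-isomorphism class is decomposable, and $\A(m)$ has just been verified to serve as such a representative. The only point worth spelling out, if one wishes to be self-contained, is to recall that the presentation \eqref{eq:holo arr}--\eqref{eq:lie-ideal} of $\h(\A)$ only uses the data of the hyperplanes and the rank-$2$ flats together with their incidences, so the compatible bijections $\alpha\colon \A\to \A(m)$ and $\beta\colon L_2(\A)\to L_2(\A(m))$ induce an isomorphism of graded Lie algebras $\h(\A)\cong \h(\A(m))$ carrying $\h_3(\A_X)$ onto $\h_3(\A(m)_{\beta(X)})$ for each $X\in L_2(\A)$; decomposability of one thus forces decomposability of the other.
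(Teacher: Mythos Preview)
Your proposal is correct and follows exactly the same approach as the paper: the paper states the corollary as an immediate consequence of the fact that decomposability depends only on $L_{\le 2}(\A)$ (Lemma~\ref{lem:dec-comb}) combined with Lemma~\ref{lem:pen-dec}, and your argument spells this out precisely.
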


Using the work of Jiang and Yau \cite{JY93, JY94} relating the topology and 
combinatorics of line arrangements with ``nice" intersection posets, 
Choudary, Dimca, and Papadima proved in \cite[Corollary~1.7]{CDP} 
the following result, which we shall need later on.

\begin{theorem}[\cite{CDP}]
\label{thm:cdp}
Let $\A$ be a central arrangement in $\C^3$. The following 
are equivalent:
\begin{enumerate}[itemsep=2pt, topsep=-1pt]
\item \label{cdp1}  
$L(\A)\cong L(\A(m))$, for some 
$r$-tuple $m=(m_1,\dots,m_r)$.
\item \label{cdp2}  
$U(\A)\cong U(\A(m))$. 
\item \label{cdp3}
$\pi_1(U(\A))$ is isomorphic to 
$F_{m_1}\times \cdots \times F_{m_r}$ 
via an isomorphism preserving the standard 
generators in $H_1$.  
\end{enumerate}
\end{theorem}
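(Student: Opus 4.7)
The plan is to prove the cyclic chain of implications \eqref{cdp1} $\Rightarrow$ \eqref{cdp2} $\Rightarrow$ \eqref{cdp3} $\Rightarrow$ \eqref{cdp1}, each using a different circle of tools: algebraic-geometric rigidity of the realization space, topological characterization of meridians, and group-theoretic reconstruction of the intersection lattice.

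For \eqref{cdp1} $\Rightarrow$ \eqref{cdp2}, I would appeal to the work of Jiang and Yau \cite{JY93, JY94}. The projective arrangement $\bar\A(m)$ is as ``nice'' as possible in their sense: it consists of $r$ pencils meeting in $r$ distinct points along a common transversal (the line $z_0 = 0$), with lines from different pencils in general position. The realization space of this combinatorial type in $\CP^2$ is an irreducible Zariski-open subset of the product of projective spaces parametrizing the $r$ pencil centers together with the common line. By the Jiang--Yau rigidity result (equivalently, Randell's isotopy theorem applied along a path in this connected realization space), any two realizations of $L(\A(m))$ have homeomorphic projective complements. Thus $U(\A) \cong U(\A(m))$.

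For \eqref{cdp2} $\Rightarrow$ \eqref{cdp3}, a homeomorphism $U(\A) \cong U(\A(m))$ induces an isomorphism of fundamental groups, and by the Hamm--L\^e Lefschetz theorem applied to the 3-slice, combined with the product decomposition $U(\hat{\A}(m)) \cong \prod_i (\C \setminus \{m_i \text{ points}\})$, we have $\pi_1(U(\A(m))) \cong F_{m_1} \times \cdots \times F_{m_r}$. The standard generators of $H_1(U(\A))$ are the meridian classes around the lines of $\bar\A$, and these are intrinsically characterized (up to sign and conjugation) as fundamental-group classes of loops bounding small disks transverse to a single line. Any homeomorphism of the complement permutes these meridian conjugacy classes; passing to $H_1$ and fixing orientations, the induced isomorphism carries the standard meridian basis of $H_1(U(\A))$ bijectively onto that of $H_1(U(\A(m)))$.

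For \eqref{cdp3} $\Rightarrow$ \eqref{cdp1}, I would recover $L_{\le 2}(\A)$ from the commutation pattern of the meridians prescribed by the standard-generator-preserving isomorphism. For any line arrangement, a $2$-flat $X$ with $\mu(X) > 1$ is detected inside $\pi_1(U(\A))$ by the subgroup generated by the meridians of lines through $X$, which (up to the global pencil relation) is isomorphic to $F_{\mu(X)} \times \Z$; conversely, meridians of two lines lying on no common multiple point commute modulo the global product relation. Under the hypothesized isomorphism with $F_{m_1} \times \cdots \times F_{m_r}$, the free factors $F_{m_i}$ correspond to disjoint packets of standard $H_1$-generators, and this partition is exactly the partition of lines into the pencils comprising $\bar\A(m)$. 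Since $\A$ has rank $\le 3$, $L(\A)$ is determined by $L_{\le 2}(\A)$, and we obtain $L(\A) \cong L(\A(m))$. The main obstacle is the meridian-preservation in \eqref{cdp2} $\Rightarrow$ \eqref{cdp3}: a priori a homeomorphism could permute or twist the standard $H_1$-basis, and showing that one can always arrange it to act by a (signed) permutation of the meridian generators is the technical heart of the argument, depending crucially on the explicit product structure of $U(\hat\A(m))$.
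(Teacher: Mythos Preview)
The paper does not prove this theorem; it is quoted verbatim from \cite[Corollary~1.7]{CDP} and used as a black box. The only hint the paper gives about the original argument is the sentence immediately preceding the statement: the proof in \cite{CDP} relies on the Jiang--Yau work \cite{JY93,JY94} on ``nice'' line arrangements. So there is no in-house proof to compare your proposal against.

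That said, your outline is broadly compatible with what the paper suggests, and your \eqref{cdp1}$\Rightarrow$\eqref{cdp2} step is exactly the Jiang--Yau input the paper alludes to. A couple of comments on the remaining steps. For \eqref{cdp2}$\Rightarrow$\eqref{cdp3}, you have correctly identified the delicate point: a homeomorphism of complements sends meridian conjugacy classes to meridian conjugacy classes, but one still has to argue that the induced map on $H_1$ is a signed permutation of the standard basis (this is true for line arrangements because meridians are determined by small tubular neighborhoods of the lines, and a homeomorphism of $U$ extends to a homeomorphism of pairs $(\CP^2,\bar\A)$ near generic points of the lines). For \eqref{cdp3}$\Rightarrow$\eqref{cdp1}, your idea of reading off the flats from the commutation pattern of meridians is the right one, but be careful: the statement ``two meridians commute iff their lines share only a double point'' is exactly what needs to be proved, and it uses that the target group is a product of free groups in which two standard generators commute precisely when they lie in distinct free factors. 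One must also account for the distinguished line $z_0=0$ (the common transversal), whose meridian is the inverse product of all the others in $\pi_1(U)$; the partition you extract gives the $r$ pencils, and the remaining incidences are forced by genericity. These points are handled carefully in \cite{CDP}, so if you want a self-contained argument you should consult that source directly.
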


\subsection{Decomposable graphic arrangements}
\label{subsec:dec-graphic}

Let $\Gamma=(\sV,\sE)$ be a finite simplicial graph with vertex set 
$\sV=[n]\coloneqq \{1,\dots,n\}$ and edge set $\sE\subset 2^{[n]}$. 
To such a graph there corresponds a {\em graphic arrangement}, 
denoted by $\A_{\Gamma}$, which consists of the hyperplanes 
$H_e=\{z_i-z_j=0\}$ in $\C^{n}$ indexed by the edges $e=\{i,j\}$ of $\sE$. 
For example, if $\Gamma=K_{n}$, the complete graph on $n$ vertices,
then $\A_n\coloneqq \A_{K_{n}}$ is the braid arrangement in $\C^{n}$ 
from Section \ref{subsec:pi1}.
Thus, any  graphic arrangement $\A_{\Gamma}$ can be viewed as a 
sub-arrangement of the braid arrangement $\A_n$, where $n=\abs{\sV}$. 

For each $2$-flat $X\in L_2(\A_{\Gamma})$, there are either $2$ or $3$
hyperplanes containing $X$. Under the bijection between $\A_{\Gamma}$ and $\sE$ 
sending $H_e$ to $e$, a flat of size $3$ of corresponds to a triangle in the graph,
while a flat of size $2$ corresponds to a pair of edges which
is not included in any element of the triangle-set $\sT$.
Therefore, the holonomy Lie algebra $\h(\Gamma)\coloneqq\h(G(\A_{\Gamma}))$ 
is the quotient of the free Lie algebra on variables $e\in \sE$ by
the  corresponding ideal of quadratic relations,
\begin{equation}
\label{holograph}
\h(\Gamma)=\Lie(\sE)\Big\slash \text{ideal} \left\{
\begin{array}{ll}
\left[e_1 , e_2 + e_3 \right], & \text{if\,  $\{e_1,e_2,e_3\}\in \sT$}
\\
\left[ e_1, e_2 \right] , &\text{if\, $\{e_1,e_2,e\} \notin \sT,\:
\forall e\in \sE$}
\end{array}
\right\} \Big. .
\end{equation}

\begin{proposition}[\cite{PS-cmh06}]
\label{prop:graphic-decomp}
For a graphic arrangement $\A_{\Gamma}$, the following conditions are
equivalent.
\begin{enumerate}[itemsep=2pt, topsep=-1pt]
\item \label{gr1}  $\A_{\Gamma}$ is decomposable.
\item \label{gr2}  $\A_{\Gamma}$ is decomposable over $\Q$.
\item \label{gr3}  $\Gamma$ contains no complete subgraphs on $4$ vertices.
\end{enumerate}
\end{proposition}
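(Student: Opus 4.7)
The proof follows the cycle $(1)\Rightarrow (2)\Rightarrow (3)\Rightarrow (1)$. The implication $(1)\Rightarrow (2)$ is automatic, since tensoring an isomorphism of $\Z$-modules with $\Q$ yields an isomorphism of $\Q$-vector spaces.

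For $(2)\Rightarrow (3)$, I would argue the contrapositive. Suppose $\Gamma$ contains a $4$-clique on vertices $\{i_1,\dots,i_4\}$, and let $\Gamma'\subseteq \Gamma$ be the induced subgraph. Then $\A_{\Gamma'}$ coincides, up to linear change of coordinates, with the braid arrangement $\BB$ of Example \ref{ex:braid-holo}, and sits inside $\A_\Gamma$ as a sub-arrangement. The computation $\h_3(\BB)=\Z^{10}$ versus $\h_3(\BB)^{\loc}=\Z^8$ exhibits a $2$-dimensional kernel of the map $\h_3(j_\sharp)\otimes \Q$ for $\BB$, so $\BB$ is not $\Q$-decomposable. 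By the heredity property (Proposition \ref{prop:heredity}), $\A_\Gamma$ is also not $\Q$-decomposable, proving the contrapositive.

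The substance lies in $(3)\Rightarrow (1)$. Assuming $\Gamma$ has no $K_4$, I will show that the comparison map $\h_3(j_\sharp)\colon \h_3(\A_\Gamma)\to \h_3(\A_\Gamma)^{\loc}$ is an isomorphism. By Proposition \ref{prop:PS-holo-prod}, this map is already surjective onto a free abelian group of rank $2|\sT|$, where $\sT$ is the set of triangles of $\Gamma$ and each triangle contributes $\h_3(F_2\times \Z)\cong \Z^2$ by Witt's formula. It therefore suffices to produce a $\Z$-spanning set for $\h_3(\A_\Gamma)$ of size at most $2|\sT|$. Working from the presentation \eqref{holograph}, the degree-$3$ piece of the free Lie algebra $\LL(\sE)$ has a standard Hall-type $\Z$-basis of triple brackets $[e_a,[e_b,e_c]]$, and the degree-$3$ relations of $\h(\A_\Gamma)$ are generated by the Jacobi identity together with the brackets $[e,r]$, where $e$ ranges over edges and $r$ over the quadratic relations of \eqref{holograph}.

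The main obstacle is the explicit combinatorial reduction: under the $K_4$-free hypothesis, every triple bracket $[e_a,[e_b,e_c]]$ on distinct edges must be shown to reduce, modulo the degree-$3$ relations, to a $\Z$-combination of triple brackets supported on a single triangle of $\Gamma$. If the three edges share no common triangle, the non-triangular relations $[e_i,e_j]=0$ combined with Jacobi force the bracket to vanish. If all three lie in one triangle, the bracket already contributes to the local factor. The delicate intermediate configurations---two edges inside a triangle $T$ and the third edge $e_a$ external to $T$---are precisely where $K_4$-freeness is indispensable: it prevents $e_a$ from pairing with either remaining edge of $T$ to form a new triangle, so the only active quadratic interaction between $e_a$ and the edges of $T$ is the triangle relation $[e_b,e_c+e_d]=0$ of $T$ itself, which propagates cleanly into degree $3$ and allows the bracket to be absorbed into the local span of $T$. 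Carrying out this reduction triangle by triangle, one sees that the section $\iota'\colon \h'(\A_\Gamma)^{\loc}\hookrightarrow \h'(\A_\Gamma)$ of \cite[Prop.~2.1]{PS-cmh06} is surjective in degree $3$; combined with the surjectivity of $\h_3(j_\sharp)$ from Proposition \ref{prop:PS-holo-prod}, this forces $\iota'_3$ and $\h_3(j_\sharp)$ to be mutually inverse isomorphisms between free abelian groups of rank $2|\sT|$, establishing decomposability.
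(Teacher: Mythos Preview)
The paper does not supply its own proof of this proposition; it is quoted from \cite{PS-cmh06} without argument, so there is nothing internal to compare your approach against. Your implications $(1)\Rightarrow(2)$ and $(2)\Rightarrow(3)$ are correct: the first is trivial, and the second is exactly the heredity argument via Example~\ref{ex:braid-holo} and Proposition~\ref{prop:heredity}.

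The gap is in your sketch of $(3)\Rightarrow(1)$. Your strategy---show that every triple bracket $[e_a,[e_b,e_c]]$ with $e_a\notin T=\{e_b,e_c,e_d\}$ reduces to local brackets---is the right one, but the mechanism you describe is wrong. You assert that $K_4$-freeness ``prevents $e_a$ from pairing with either remaining edge of $T$ to form a new triangle,'' so that the only quadratic interaction is the internal relation of $T$. This is false: if $e_a$ is incident to a vertex $v$ of $T$, then $e_a$ may very well form a triangle $T'$ with one of the two edges of $T$ through $v$; $K_4$-freeness only prevents it from forming triangles with \emph{both}. Consequently your claimed reduction ``absorbs the bracket into the local span of $T$'' does not go through as stated.

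What actually happens is that such a mixed bracket vanishes in $\h_3(\A_\Gamma)$, but proving this requires one more pass. Concretely, say $T$ is on $\{i,j,k\}$, $e_b=\{i,j\}$, $e_c=\{j,k\}$, $e_d=\{i,k\}$, and $e_a=\{i,\ell\}$ with $\ell\notin\{i,j,k\}$. Then $[e_a,e_c]=0$, so Jacobi gives $[e_a,[e_b,e_c]]=-[e_c,[e_a,e_b]]$. By $K_4$-freeness at most one of $\{j,\ell\},\{k,\ell\}$ is an edge. If neither, $[e_a,e_b]=0$ and we are done. If $\{j,\ell\}\in\sE$, pass to the triangle $T'$ on $\{i,j,\ell\}$: the relation $[e_a,e_b+e_{j\ell}]=0$ gives $[e_c,[e_a,e_b]]=-[e_c,[e_a,e_{j\ell}]]$, and a second Jacobi together with $[e_c,e_a]=0$ and $[e_{j\ell},e_c]=0$ (the latter because $\{k,\ell\}\notin\sE$, again by $K_4$-freeness) kills this term. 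The remaining incidence patterns for $e_a$ are handled symmetrically. Once this vanishing is established for all external $e_a$, your concluding paragraph (surjectivity of $\iota'_3$, hence bijectivity of $\h_3(j_\sharp)$) is correct.
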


Let $\kappa_s=\kappa_s(\Gamma)$ be the number of $K_{s+1}$
subgraphs of $\Gamma$, so that $\kappa_0=\abs{\sV}$,
$\kappa_1=\abs{\sE}$, and $\kappa_2=\abs{\sT}$.  If $\Gamma$ contains no 
$K_4$ subgraphs, i.e., $\kappa_3=0$, then the above proposition together 
with formula \eqref{eq:decomp-lcs} show that the LCS ranks 
$\phi_k=\phi_k(G(\A_{\Gamma}))$ are given by
\begin{equation}
\label{eq:graphic-decomp}
\prod_{k=1}^{\infty} (1-t^k)^{\phi_k} =
(1-t)^{\kappa_1-2\kappa_2} (1-2t)^{\kappa_2}.
\end{equation}
In fact, as conjectured in \cite{SS-tams02} and as proved in \cite{LS09}, 
the following LCS formula holds for {\em any}\/ graphic arrangement in $\C^n$,
\begin{equation}
\label{eq:lcs-graph}
\prod_{k=1}^{\infty} \left(1-t^k\right)^{\phi_k}=
\prod_{j=1}^{n-1} \left(1-jt\right)^{\sum_{s=j}^{n-1}(-1)^{s-j} 
\tbinom{s}{j} \kappa_{s}} ,
\end{equation}
or, equivalently, $\phi_k=\sum_{j=1}^{k}\sum_{s=j}^{k} (-1)^{s-j} \binom{s}{j}
\kappa_{s} \phi_k(F_j)$.

\begin{remark}
\label{ref:CF-braids}
In \cite[Theorem~5.5]{CF21}, Cohen and Falk show the following: If $\Gamma$ is $K_4$-free, 
then the graphic arrangement group $P_{\Gamma}=\pi_1(G(\A_\Gamma))$ embeds as a subgroup 
of a finite direct product of pure braid groups, and thus is residually (torsion-free) 
nilpotent. It would be interesting to know whether $P_{\Gamma}/P_{\Gamma}''$ is also 
residually nilpotent. 
\end{remark}

\subsection{More decomposable arrangements}
\label{subsec:dec-other}

We conclude this section with three more examples of decomposable arrangements 
that are not of the aforementioned types. 
More details about these well-known arrangements (the 
$\operatorname{X}_3$, $\operatorname{X}_2$, and non-Pappus arrangements) 
can be found in \cite{Su01}.

The first is the $\operatorname{X}_3$ arrangement, with defining polynomial 
$f=xyz(x+y)(x+z)(y+z)$.  Ordering the hyperplanes of $\A$ as the linear factors 
of $f$, the $2$-flats in $\widetilde{L}_2(\A)$ are $\{1, 2, 4\}$, $\{1, 3, 5\}$, and 
$\{2, 3, 6\}$.  The holonomy Lie algebra $\h(\A)$ has degree $3$ piece isomorphic 
to $\h_3^{\loc}(\A)\cong \Z^6$, with basis $[x_1 ,[x_1 ,x_4]]$, $[x_4, [x_1, x_4]]$, 
$[x_1 , [x_1, x_5]]$, $[x_5 , [x_1, x_5]]$, $[x_2,[ x_2, x_6]]$, $[x_6, [x_2 ,x_6]]$. 
Therefore, $\A$ is decomposable, 
and so the group $G(\A)$ has the same LCS ranks as the group  
$F_2^{\times 3}=F_2\times F_2\times F_2$. Nevertheless, $G(\A)$ 
is not isomorphic to a finite direct product of finitely generated free groups.  
Indeed, this arrangement group is isomorphic to the celebrated Stallings group 
(the kernel of the epimorphism $F_2^{\times 3}\to \Z$ which sends  
each standard generator to $1$), see \cite[Remark~12.4]{Su-imrn}. 
It follows that $H_3(G(\A); \Z)$ is not finitely generated, and thus $G(\A)$ 
is of type $F_2$ (finitely presented), but not of type $F_3$ (it does 
not admit a classifying space with finite $3$-skeleton).

The next example is the $\operatorname{X}_2$ arrangement, with defining polynomial 
$f=xyz(y-z)(x-z)(x+y)(x+y-2z)$. The $2$-flats in $\widetilde{L}_2(\A)$ are 
$\{1, 2, 6\}$, $\{1, 3, 5\}$, $\{2, 3, 4\}$, $\{3, 6, 7\}$, and $\{4, 5, 7\}$. 
Once again, $\h_3(\A)$ is isomorphic to $\h_3^{\loc}(\A)\cong \Z^{10}$, 
and so $\A$ is decomposable. Therefore, $\phi_k(G(\A))=\phi_k(F_2^{\times 5})$ for 
all $k\ge 2$, yet clearly $G(\A)$ is not isomorphic to a direct product of finitely 
generated free groups, since $b_1(G(\A))=7<10$.

Finally, let $\A$ be the non-Pappus arrangement; this is a realization of the 
$(9_{3})_2$ configuration of Hilbert and Cohn-Vossen, with defining polynomial 
$f=xyz(x+y)(y+z)(x+3z)(x+2y+z)(x+2y+3z)(2x+3y+3z)$. There are $9$  flats in 
$\widetilde{L}_2(\A)$: $\{1,2,4\}$, $\{1,3,6\}$, $\{1,5,9\}$, $\{2,3,5\}$, $\{2,6,8\}$,
$\{3,7,8\}$, $\{4,5,7\}$, $\{4,8,9\}$, and $\{6,7,9\}$. It is readily checked that $\h_3(\A)$ 
is isomorphic to $\h_3^{\loc}(\A)\cong \Z^{18}$, and thus $\A$ is decomposable; 
nevertheless, $G(\A)$ is not a direct product of finitely generated free groups,

\section{Alexander invariants and Chen ranks of arrangements}
\label{sect:alex-arr}

In this section we study the Alexander invariants of hyperplane arrangements, 
their relation to the ``local" Alexander invariants, and how all this informs on the 
Chen ranks of arrangement groups. 

\subsection{Alexander invariants of arrangements}
\label{subsec:alex-arr}

Let $\A$ be a complex hyperplane arrangement, with complement 
$M(\A)$ and group $G=G(\A)=\pi_1(M(\A))$.  
Following  \cite{CS-conm95, CS-tams99} (see also \cite{ACCM-2007}), 
we define the {\em Alexander invariant of $\A$}\/ as the 
Alexander invariant of its fundamental group, 
\begin{equation}
\label{eq:ba-def}
B(\A)\coloneqq B(G(\A)) = G'/G'',
\end{equation}
viewed as a module over the group ring $R=\Z[G_{\ab}]=\Z[H_1(M(\A);\Z)]$. 
Upon choosing an ordering $\A=\{H_1,\dots, H_n\}$ on the hyperplanes, 
we may identify $G_{\ab}$ with the free abelian group on the meridians 
$\{x_1,\dots , x_n\}$, and the ring $R$ with the ring of Laurent polynomials 
$\Z[t_1^{\pm 1}, \dots , t_n^{\pm 1}]$.

The Alexander invariant $B=B(\A)$ comes equipped with a filtration by the powers 
of the augmentation ideal, $I=\ker(\varepsilon \colon R\to \Z)$. The $I$-adic 
completion of the Alexander invariant, $\widehat{B}$, is a module over 
the ring $\widehat{R}$, filtered by the powers of the ideal $\widehat{I}$. 
Moreover, the associated graded modules, $\gr(B)$ and $\gr(\widehat{B})$, 
are isomorphic as modules over the graded ring $\gr(R)=\gr(\widehat{R})$, 
which may be identified with the polynomial ring $S=\Z[x_1,\dots ,x_n]$, 
with variables $x_i$ in degree $1$ corresponding to the elements 
$t_i-1\in I$.

We do not know of any arrangement $\A$ for which the Alexander invariant 
$B(\A)$ is not separated in the $I$-adic topology, but we suspect that additional 
assumptions (such as decomposability) are needed in order to ensure separability. 
We shall come back to this topic in Question \ref{quest:B-separated} (see also 
Example \ref{ex:pappus}).

\subsection{From global to local Alexander invariants}
\label{subsec:global-local}
For each $2$-flat $X\in L_2(\A)$, we have a ``local'' Alexander invariant,  
$B(\A_X)$, viewed as a module over the group ring $R_X=\Z[H_1(M(\A_X);\Z)]$.
As noted previously, $G(\A_X)\cong F_{\mu(X)}\times \Z$; thus, by Corollary \ref{cor:prod-free}, 
the module $B(\A_X)\cong B(F_{\mu(X)}\times \Z)$ is separated in the $I_X$-adic topology, 
where $I_X$ is the augmentation ideal of $R_X$.

Recall that the inclusion $j^X\colon M(\A)\inj M(\A_X)$ induces a surjective 
homomorphism $j^X_{\sharp}\colon G(\A) \to G(\A_X)$. Therefore, 
we get a surjective morphism   
\begin{equation}
\label{eq:bjx}
\begin{tikzcd}[column sep=20pt]
B(j^X_{\sharp})\colon B(\A) \ar[r]& B(\A_X)
\end{tikzcd}
\end{equation}
between the respective Alexander invariants, that covers the 
ring map $\tilde{j}^X_{*}\colon R\to R_X$ induced by the epimorphism 
$j^X_*\colon H_1(M(\A);\Z) \surj H_1(M(\A_X);\Z)$. 

Now let $j_{\sharp}\colon G(\A) \to \prod_{X\in L_2(\A)} G(\A_X)$ be 
the homomorphism induced on fundamental groups by the map 
$j\colon M(\A) \to \prod_{X\in L_2(\A)} M(\A_X)$ from \eqref{eq:prod-map-2}. 
The abelianization of this homomorphism coincides with the induced homomorphism 
$j_*\colon H_1(M(\A);\Z) \to \bigoplus_{X\in L_2(\A)} H_1(M(\A_X);\Z)$, 
which was shown in Lemma \ref{lem:j-inj} to be injective. Therefore, 
by Lemma \ref{lem:inj}, the extension of the map $j_*=(j_{\sharp})_{\ab}$ 
to group rings, 
\begin{equation}
\label{eq:jtilde}
\begin{tikzcd}[column sep=20pt]
\tilde{j}_* =(\tilde{j}^X_*)_X \colon \Z[H_1(M(\A);\Z)] \ar[r]& 
\prod_{X\in L_2(\A)} \Z[H_1(M(\A_X);\Z)]. 
\end{tikzcd}
\end{equation}
is also injective. The maps $B(j^X_{\sharp})$ from \eqref{eq:bjx} assemble into a morphism 
\begin{equation}
\label{eq:prod-bmap}
\begin{tikzcd}[column sep=22pt]
B(j_{\sharp})=(B(j^X_{\sharp}))_X\colon B(\A) \ar[r]& \bigoplus_{X\in L_2(\A)} B(\A_X)
\end{tikzcd}
\end{equation}
that covers the injective ring map $\tilde{j}_*\colon R\to \prod_{X} R_X$ 
from \eqref{eq:jtilde}. Let 
\begin{equation}
\label{eq:ba-loc}
B(\A)^{\loc} \coloneqq \bigg(\bigoplus_{X\in L_2(\A)} B(\A_X)\bigg)_{j_{\sharp}}
=\bigoplus_{X\in L_2(\A)} B(\A_X)_{j^X_{\sharp}}
\end{equation}
be the $R$-module obtained from $\bigoplus_{X} B(\A_X)$ 
by restriction of scalars along the map $\tilde{j}_*$. This module 
was first considered in \cite{CS-tams99}, where it was called the 
{\em coarse combinatorial Alexander invariant of $\A$.} 
By Remark \ref{rem:factor}, the map $B(j_{\sharp})$ from \eqref{eq:prod-bmap} 
factors as 
\begin{equation}
\label{eq:ba-baloc}
\begin{tikzcd}[column sep=23pt]
B(\A) \ar[r, "\Pi"]&  B(\A)^{\loc} \ar[r]& \bigoplus_{X\in L_2(\A)} B(\A_X),
\end{tikzcd} 
\end{equation}
where the first arrow is an $R$-linear map and the second 
arrow is the identity map of $\bigoplus_{X} B(\A_X)$, 
viewed as covering the ring map $\tilde{j}_*$.  

\subsection{Infinitesimal Alexander invariants of arrangements}
\label{subsec:linalex-arr}
Once again, let $\A$ be a hyperplane arrangement, with group $G(\A)$ 
and holonomy Lie algebra $\h(\A)=\h(G(\A))$.  We define the {\em infinitesimal 
Alexander invariant of $\A$}\/ as 
\begin{equation}
\label{eq:linba-def}
\B(\A)\coloneqq \B(G(\A))=\h'(\A)/\h''(\A),
\end{equation}
viewed as a graded module over the symmetric algebra $S=\Sym[G_{\ab}]$. 
Since $G_{\ab}=H_1(M(\A);\Z)$ and $R=\Z[H_1(M(\A);\Z)]$, the ring 
$S$ is isomorphic (as a graded ring) to $\gr(R)$. As noted previously, 
fixing an ordering $H_1,\dots, H_n$ of the hyperplanes in $\A$ 
yields an isomorphism $S\cong \Z[x_1, \dots , x_n]$.

Recall that the arrangement group $G(\A)$ is $1$-formal. Thus, by Corollary \ref{cor:linalex-gr}, 
the $S\otimes \Q$-module $\B(\A)\otimes \Q$  is isomorphic to $\gr(B(\A))\otimes \Q$.

To each flat $X\in L_2(\A)$ there corresponds a ``local'' infinitesimal  
Alexander invariant, $\B(\A_X)$, viewed as a module over the polynomial ring 
$S_X=\Sym[H_1(M(\A_X);\Z)]\cong \gr(R_X)$. 
Recall that the inclusion $j^X\colon M(\A)\inj M(\A_X)$ induces a surjective   
homomorphism $j^X_{\sharp}\colon G(\A) \surj G(\A_X)$. Therefore, 
we get an epimorphism   
$\B(j^X_{\sharp})\colon \B(\A) \surj \B(\A_X)$ that covers the 
ring map $\tilde{j}^X_*\colon S\surj S_X$ obtained by extending to symmetric algebras 
the homomorphism $j^X_*\colon H_1(M(\A);\Z) \surj H_1(M(\A_X);\Z)$. 

The maps $\B(j^X_{\sharp})$ assemble into a morphism 
$\B(j_{\sharp})=(\B(j^X_{\sharp}))_X$ 
between the modules $\B(\A)$ and $\bigoplus_{X} \B(\A_X)$ 
that covers the ring map $\bar{j}_*\colon S\to \prod_{X} S_X$ corresponding to 
$\gr(\tilde{j}_*)\colon \gr(R)\to \prod_{X} \gr(R_X)$. 
By Lemmas \ref{lem:inj} and \ref{lem:j-inj}, this ring map is injective. Let 
\begin{equation}
\label{eq:bba-loc}
\B(\A)^{\loc} \coloneqq \bigg(\bigoplus_{X\in L_2(\A)} \B(\A_X)\bigg)_{j_{\sharp}} 
=  \bigoplus_{X\in L_2(\A)} \B(\A_X)_{j^X_{\sharp}} 
\end{equation}
be the $S$-module obtained from $\bigoplus_{X} \B(\A_X)$ 
by restriction of scalars along the map $\gr(\tilde{j}_*)$. 
By Remark \ref{rem:factor}, the map $\B(j_{\sharp})$
may be viewed as the composite 
\begin{equation}
\label{eq:bba-bbaloc}
\begin{tikzcd}[column sep=23pt]
\B(\A) \ar[r, "\bar\Pi"]&  \B(\A)^{\loc} \ar[r]& \bigoplus_{X\in L_2(\A)} \B(\A_X),
\end{tikzcd} 
\end{equation}
where the first arrow is an $S$-linear map and the second 
arrow is the identity map of $\bigoplus_{X} \B(\A_X)$, 
viewed as covering the ring map $\bar{j}_*$.

\subsection{Bounding the Alexander invariants arrangements}
\label{subsec:Alex-lub}
We are now ready to state and prove the main result of this section.

\begin{theorem}
\label{thm:b-bloc-q}
For any arrangement $\A$, the following hold.
\begin{enumerate}[itemsep=3pt, topsep=-1pt]
\item \label{pi1}
The morphism $\bar\Pi \colon \B(\A) \to \B(\A)^{\loc}$ is surjective. 
\item  \label{pi2}
The morphism $\widehat{\Pi}\otimes \Q\colon \widehat{B(\A)}\otimes \Q\to 
\widehat{B(\A)^{\loc}}\otimes \Q$ 
is surjective. 
\item  \label{pi3}
The morphism $\Pi\otimes \Q\colon B(\A)\otimes \Q\to B(\A)^{\loc}\otimes \Q$ 
is surjective. 
\end{enumerate}
\end{theorem}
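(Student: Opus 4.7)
The plan is to establish the three claims in order, with part (1) as the Lie-algebra core and parts (2)--(3) obtained successively via $1$-formality and $I$-adic descent.

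\emph{Part (1).} By Proposition~\ref{prop:PS-holo-prod}, the morphism $\h(j_{\sharp})\colon \h(\A)\to \h(\A)^{\loc}$ is surjective in every degree $k\ge 2$ (an isomorphism in degree $2$, a surjection in degrees $\ge 3$). Since $\h'(\A)=\bigoplus_{k\ge 2}\h_k(\A)$ and $\h'(\A)^{\loc}=\prod_{X}\h'(\A_X)$, restriction yields a degree-wise surjection $\h'(j_{\sharp})\colon \h'(\A)\twoheadrightarrow \h'(\A)^{\loc}$. Being a Lie algebra map, $\h(j_{\sharp})$ carries $\h''(\A)$ into $\h''(\A)^{\loc}$, and so descends to an epimorphism of quotients,
\[
\bar\Pi\colon \B(\A)=\h'(\A)/\h''(\A)\twoheadrightarrow \h'(\A)^{\loc}/\h''(\A)^{\loc}=\B(\A)^{\loc}.
\]

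\emph{Part (2).} Since $\bar\Pi$ is surjective in each graded degree and both sides are graded $S$-modules whose $I$-adic completions coincide with the graded completion $\prod_k(-)_k$, the induced map $\widehat{\bar\Pi}\otimes\Q\colon \widehat{\B(\A)}\otimes\Q\to \widehat{\B(\A)^{\loc}}\otimes\Q$ is surjective. The arrangement group $G(\A)$ and the product $\prod_{X}G(\A_X)\cong\prod_{X}(F_{\mu(X)}\times\Z)$ are all $1$-formal, so Theorem~\ref{thm:linalex-c} provides natural rational comparison isomorphisms $\Phi\colon \widehat{B(-)}\otimes\Q\isom\widehat{\B(-)}\otimes\Q$. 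Applying the naturality square~\eqref{eq:bb-func} to $j_{\sharp}\colon G(\A)\to\prod_{X}G(\A_X)$, together with the fact that completion commutes with the finite direct sum defining $B(\A)^{\loc}$ and that the $I$-adic and $I_X$-adic topologies on each $B(\A_X)$ agree thanks to Lemma~\ref{lem:surj}, one transfers the surjectivity of $\widehat{\bar\Pi}\otimes\Q$ to surjectivity of $\widehat{\Pi}\otimes\Q\colon \widehat{B(\A)}\otimes\Q\to \widehat{B(\A)^{\loc}}\otimes\Q$.

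\emph{Part (3).} This is the rational version of Lemma~\ref{lem:b-surj}. The graded module $\gr(B(\A)^{\loc})\otimes\Q=\bigoplus_{X}\gr(B(\A_X))\otimes\Q$ is generated in degree $0$ as a module over $\gr(R)\otimes\Q$, so the surjectivity of $\widehat{\Pi}\otimes\Q$ obtained in (2) forces surjectivity of the associated graded map $\gr(\Pi)\otimes\Q$; the final implication in Lemma~\ref{lem:b-surj}, which rests on Lemma~10.23 of~\cite{AM}, then yields surjectivity of $\Pi\otimes\Q$ itself.

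\emph{Main obstacle.} The sole genuinely non-formal input is Theorem~\ref{thm:linalex-c} (the Dimca--Papadima--Suciu rational comparison between the $I$-adic completions of the Alexander and infinitesimal Alexander invariants of a $1$-formal group); once it is invoked, everything else is a careful translation between $B$, $\B$, and their completions, using the decompositions of Lemmas~\ref{lem:alex-prod} and~\ref{lem:holo-prod}. The delicate bookkeeping lies in verifying that the restriction-of-scalars operations entering the definitions of $B(\A)^{\loc}$ and $\B(\A)^{\loc}$ are respected by completion, but this is handled cleanly by the surjectivity of $\tilde{j}_*^X$ recorded in Lemma~\ref{lem:surj}.
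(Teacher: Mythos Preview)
Your proposal is correct and follows essentially the same route as the paper: Proposition~\ref{prop:PS-holo-prod} gives the surjectivity of $\h'(j_\sharp)$ and hence of $\bar\Pi$; the $1$-formality comparison of Theorem~\ref{thm:linalex-c} (applied via the naturality square~\eqref{eq:bb-func}) transports this to $\widehat\Pi\otimes\Q$; and Lemma~\ref{lem:b-surj} descends to $\Pi\otimes\Q$. The only cosmetic difference is that the paper applies $\Phi$ to each local factor $G(\A_X)$ separately and then sums (diagram~\eqref{eq:bb-bloc}), whereas you invoke $1$-formality of the finite product $\prod_X G(\A_X)$ all at once---either works.
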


\begin{proof}
Let $j\colon M(\A) \to \prod_{X\in L_2(\A)} M(\A_X)$ be the 
map from \eqref{eq:prod-map-2}. The corresponding homomorphism on 
fundamental groups, $j_{\sharp}\colon G(\A) \to \prod_{X} G(\A_X)$, 
induces a map of graded Lie algebras, 
\begin{equation}
\label{eq:hjsharp-map}
\begin{tikzcd}[column sep=22pt]
\h(j_{\sharp})\colon \h(\A) \ar[r]& \h\big(\!\prod_{X} \A_X\big)\cong \prod_{X} \h(\A_X),
\end{tikzcd}
\end{equation}
where the isomorphism on the right is the one provided by Lemma \ref{lem:holo-prod}. 
By Proposition \ref{prop:PS-holo-prod}, the restriction 
$\h'(j_{\sharp}) \colon\h'(\A) \to \prod_{X} \h'(\A_X)$ 
is surjective. This map induces a map on quotients, 
\begin{equation}
\label{eq:h-map}
\begin{tikzcd}[column sep=22pt]
\h'(\A)/\h''(\A) \arrow[two heads]{r}& \prod_{X\in L_2(\A)} \h'(\A_X)/\h''(\A_X),
\end{tikzcd}
\end{equation}
which also must be surjective.  Observe that the map \eqref{eq:h-map} 
coincides with the composite map from \eqref{eq:bba-bbaloc}. 
Therefore, the $S$-morphism $\bar\Pi \colon\B(\A) \to \B(\A)^{\loc}$ 
is surjective, and this proves claim \eqref{pi1}.

Let us pass now to completions. Since arrangement groups are 
$1$-formal, Theorem \ref{thm:linalex-c} gives a 
filtration-preserving isomorphism of $\widehat{R}\otimes \Q$-modules,  
$\Phi_{\A}\colon \widehat{B(\A)}\otimes \Q \isom \widehat{\B(\A)}\otimes \Q$, and, 
for each $X\in L_2(\A)$, 
an isomorphism of $\widehat{R}_X\otimes \Q$-modules, 
$\Phi_{\A_X}\colon \widehat{B(\A_X)}\otimes \Q \isom \widehat{\B(\A_X)}\otimes \Q$. 
Furthermore, these isomorphisms are compatible; indeed, by \eqref{eq:bb-func}, we 
have a commutative diagram,
\begin{equation}
\label{eq:bb-bx}
\begin{tikzcd}[column sep=42pt, row sep=26pt]
\widehat{B(\A)}\otimes \Q  \ar[r, "\widehat{B(j_{\sharp}^X)}\otimes \Q"]
\ar[d, "\Phi_{\A}", "\cong"']
& \widehat{B(\A_X)}\otimes \Q \phantom{.}\ar[d, "\Phi_{\A_X}" , "\cong"']\\
\widehat{\B(\A)}\otimes \Q \ar[r, "\widehat{\B(j_{\sharp}^X)}\otimes \Q"]
& \widehat{\B(\A_X)}\otimes \Q 
\end{tikzcd}
\end{equation}
with horizontal arrows covering the ring map 
$\widehat{\tilde{j}}^{\,X}_{*}\colon \widehat{R}\surj \widehat{R}_X$. 
Taking direct sums over all the $2$-flats of $\A$, and letting 
$\Phi_{\A}^{\loc}\coloneqq \bigoplus_{X\in L_2(\A)} \Phi_{\A_X}$, 
we obtain the following commutative diagram in the category 
of $\widehat{R}\otimes \Q$-modules,
\begin{equation}
\label{eq:bb-bloc}
\begin{tikzcd}[column sep=42pt, row sep=26pt]
\widehat{B(\A)}\otimes \Q  \ar[r, "\widehat{\Pi}\otimes \Q"]
\ar[d, "\Phi_{\A}", "\cong"']
& \widehat{B(\A)}^{\loc}\otimes \Q \phantom{.}\ar[d, "\Phi_{\A}^{\loc}" , "\cong"']\\
\widehat{\B(\A)}\otimes \Q \ar[r, "\widehat{\bar\Pi}\otimes \Q"]
& \widehat{\B(\A)}^{\loc}\otimes \Q .
\end{tikzcd}
\end{equation}
By part \eqref{pi1} and Lemma \ref{lem:b-surj}, the bottom arrow in this diagram 
is surjective. Thus, the top arrow is also surjective, and this proves claim \eqref{pi2}.

Finally, using again  Lemma \ref{lem:b-surj}, it follows from part \eqref{pi2} 
that the map $\Pi\otimes \Q$ is an epimorphism of $R\otimes \Q$-modules;  
this proves claim \eqref{pi3} and completes the proof.
\end{proof}

\begin{remark}
\label{rem:cs-alex}
An explicit description of the $R$-morphism $\Pi\colon B(\A) \to  B(\A)^{\loc}$ 
was given in \cite[Theorem~6.3]{CS-tams99}. This description involves certain 
presentations for the respective Alexander invariants, obtained by means 
of the Fox Calculus applied to the braid monodromy presentations of the  
groups $G(\A)$ and $G(\A_X)$. That approach (which requires considerably more 
work), shows that, in fact, the map $\Pi\colon B(\A)\to B(\A)^{\loc}$ itself is surjective, 
not just the rationalization $\Pi\otimes \Q$ from Theorem \ref{thm:b-bloc-q}, part \eqref{pi3}.
Moreover, it was shown in \cite[Corollary~6.6]{CS-tams99} that the $\widehat{R}$-module 
$\widehat{B(\A)}$ admits an (explicit) presentation with 
$\binom{b_1}{2}-b_2$ generators and $\binom{b_1}{3}$ relations, 
where the Betti numbers $b_k=b_k(M(\A))$ are given by \eqref{eq:betti-arr}. No such 
presentation is known in general for either $B(\A)$ or $\gr(B(\A))$.
\end{remark}

\subsection{Bounding the Chen ranks of arrangements}
\label{subsec:Chen-lub}
The study of the Alexander invariants of arrangement groups undertaken in 
\cite{CS-tams99} led to lower bounds for the Chen ranks of those 
groups. Notably, those bounds were expressed solely in terms of 
the M\"{o}bius function of the intersection lattice of the arrangement. 
We present here a different approach to obtaining those bounds, 
without appealing to explicit presentations for the (completed) 
Alexander invariants of arrangement groups.

To start with, recall that arrangement groups are $1$-formal. 
Hence, as a direct consequence of Corollary \ref{cor:ps-chen}, 
we obtain the following result, which is the content of 
\cite[Theorem~11.1]{PS-imrn04}.

\begin{corollary}[\cite{PS-imrn04}]
\label{cor:chen-arr}
The generating series for the Chen ranks of an arrangement group $G(\A)$ is 
given by 
\begin{equation}
\label{eq:chen-arr}
\sum_{k\ge 0} \theta_{k+2} (G(\A)) t^k = \Hilb (\B(\A) \otimes \Q,t).
\end{equation}
Consequently, the Chen ranks of $G(\A)$ are determined 
by $L_{\le 2}(\A)$.
\end{corollary}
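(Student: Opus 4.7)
The plan is to reduce the statement to the general $1$-formal case already treated in Corollary \ref{cor:ps-chen} and then invoke the combinatorial nature of the holonomy Lie algebra.

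First I would invoke the formality of arrangement complements. As recalled in Section \ref{subsec:OS}, the complement $M(\A)$ is formal in the sense of Sullivan, via Brieskorn's explicit choice of logarithmic $1$-forms; in particular, the fundamental group $G(\A)$ is a $1$-formal group. This is the key hypothesis that unlocks all the machinery developed in Section \ref{sect:holo}.

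Next I would apply Corollary \ref{cor:ps-chen} directly to $G = G(\A)$. That corollary asserts that for any $1$-formal group $G$ with $b_1(G)<\infty$ one has $\theta_k(G)=\bar{\theta}_k(G)$ for all $k\ge 2$, and hence
\[
\sum_{k\ge 0} \theta_{k+2}(G)\, t^k = \Hilb(\B(G)\otimes \Q, t).
\]
Since $b_1(G(\A))=\abs{\A}<\infty$ by \eqref{eq:betti-arr}, this immediately yields the displayed identity \eqref{eq:chen-arr}.

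Finally, for the second assertion I would argue that the right-hand side depends only on the truncated intersection lattice $L_{\le 2}(\A)$. By the explicit Kohno-type presentation \eqref{eq:holo arr}--\eqref{eq:lie-ideal}, the holonomy Lie algebra $\h(\A)$ is constructed from generators indexed by $\A$ and relators $\mathfrak{r}_{H,X}$ indexed by incidences of hyperplanes with $2$-flats; consequently, as noted in Section \ref{subsec:holo lie arr}, any isomorphism $L_{\le 2}(\A)\cong L_{\le 2}(\BB)$ induces an isomorphism $\h(\A)\cong \h(\BB)$ of graded Lie algebras. Taking derived subalgebras and quotients by the second derived subalgebra, this descends to an isomorphism of graded $\Sym(H_1(M(\A);\Q))$-modules $\B(\A)\otimes\Q\cong \B(\BB)\otimes\Q$, so the Hilbert series in \eqref{eq:chen-arr}, and hence the Chen ranks $\theta_k(G(\A))$ for $k\ge 2$, are combinatorially determined by $L_{\le 2}(\A)$. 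The case $k=1$ is immediate since $\theta_1(G(\A))=b_1(M(\A))=\abs{\A}$. There is no real obstacle here: the whole content is packaged into the $1$-formality of $G(\A)$ together with the lattice-dependence of $\h(\A)$, both established earlier in the paper.
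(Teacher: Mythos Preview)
Your proposal is correct and follows exactly the same approach as the paper: the paper simply notes that arrangement groups are $1$-formal and invokes Corollary~\ref{cor:ps-chen} as a direct consequence, with the combinatorial dependence on $L_{\le 2}(\A)$ coming from the Kohno presentation of $\h(\A)$ discussed in Section~\ref{subsec:holo lie arr}. You have spelled out the details more fully than the paper does, but there is no difference in strategy.
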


To each flat $X\in L_2(\A)$, there corresponds a ``local'' infinitesimal  
Alexander invariant, $\B(\A_X)$, viewed as a module over the polynomial ring 
$S_X=\Sym[H_1(M(\A_X);\Z)]\cong \gr(R_X)$. Recall from \eqref{eq:gax}
that the group $G(\A_X)$ is isomorphic to $F_{\mu(X)}\times \Z$; in particular, 
$\theta_1(G(\A_X))=\mu(X)+1$. For $k\ge 2$,  Corollary \ref{cor:chen-arr}, 
together with the computations from Examples \ref{ex:alex-free} 
and \ref{ex:alex-free-bis}, yields the equalities
\begin{equation}
\label{eq:chen-gx}
\theta_k(G(\A_X))= \dim_{\Q} (\B_{k-2}(\A_X)\otimes \Q )
= (k-1)\binom{\mu(X)+k-2}{k}.
\end{equation}
Note that if $\mu(X)=1$, then $G(\A_X) \cong \Z^2$ and $\theta_k(G(\A_X))=0$ 
for all $k\ge 2$.

The next result recovers \cite[Corollary~7.2]{CS-tams99} 
using our alternative approach, based on infinitesimal Alexander invariants. 

\begin{corollary}[\cite{CS-tams99}]
\label{cor:chen-lub}
The Chen ranks of a hyperplane arrangement $\A$ admit the lower bound
\begin{equation}
\label{eq:chen-bound}
\theta_k(G(\A)) \ge \sum_{X\in \widetilde{L}_2(\A)}  \theta_k(G(\A_X)) = 
(k-1) \sum_{X\in \widetilde{L}_2(\A)} \binom{\mu(X)+k-2}{k} ,
\end{equation}
valid for all $k\ge 2$, with equality for $k=2$.
\end{corollary}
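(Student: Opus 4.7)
The plan is to deduce the bound directly from Theorem \ref{thm:b-bloc-q}(1) by comparing Hilbert series, and then to read off the $k=2$ equality from the degree-$2$ part of Proposition \ref{prop:PS-holo-prod}. First, since the group $G(\A)$ is $1$-formal, Corollary \ref{cor:chen-arr} identifies the Chen ranks as the graded Betti numbers of the infinitesimal Alexander invariant, namely
\begin{equation*}
\theta_k(G(\A)) = \dim_\Q \B_{k-2}(\A) \otimes \Q
\qquad\text{for all $k\ge 2$.}
\end{equation*}
The same identification applies to each localization $\A_X$, giving $\theta_k(G(\A_X)) = \dim_\Q \B_{k-2}(\A_X) \otimes \Q$; these numbers were computed in \eqref{eq:chen-gx} to equal $(k-1)\binom{\mu(X)+k-2}{k}$, which vanishes precisely when $\mu(X)=1$.

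Next, I would invoke Theorem \ref{thm:b-bloc-q}(1): the morphism $\bar\Pi \colon \B(\A) \to \B(\A)^{\loc}$ is a surjection of graded $S$-modules. Tensoring with $\Q$ and restricting to the degree $k-2$ component, the induced $\Q$-linear map $\B_{k-2}(\A)\otimes\Q \twoheadrightarrow \B_{k-2}(\A)^{\loc}\otimes\Q$ is still surjective. From the definition \eqref{eq:bba-loc}, the target splits as an abelian group (and hence as a $\Q$-vector space) as
\begin{equation*}
\B_{k-2}(\A)^{\loc}\otimes\Q \;\cong\; \bigoplus_{X\in L_2(\A)} \B_{k-2}(\A_X)\otimes\Q,
\end{equation*}
the change of scalars along $\bar{j}_*$ being irrelevant for counting dimensions. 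Combining these three ingredients yields
\begin{equation*}
\theta_k(G(\A)) \;\ge\; \sum_{X\in L_2(\A)} \theta_k(G(\A_X))
\;=\; \sum_{X\in \widetilde{L}_2(\A)} (k-1)\binom{\mu(X)+k-2}{k},
\end{equation*}
which is the claimed lower bound.

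Finally, for the equality case $k=2$, observe that $\B_0(\A) = (\h(\A)'/\h(\A)'')_2 = \h_2(\A)$, since $\h(\A)''$ is concentrated in degrees $\ge 4$; and likewise $\B_0(\A_X) = \h_2(\A_X)$. Under these identifications, $\bar\Pi$ in degree $0$ coincides with the restriction of $\h(j_\sharp)$ to the degree-$2$ component, which is an isomorphism by Proposition \ref{prop:PS-holo-prod}. Hence $\dim_\Q \B_0(\A)\otimes\Q = \sum_X \dim_\Q \B_0(\A_X)\otimes\Q$, giving equality at $k=2$. No single step is hard: the one point to watch is the bookkeeping that $\B(\A)^{\loc}$ and $\bigoplus_X \B(\A_X)$ have the same underlying graded $\Q$-vector space (so dimensions agree) even though their $S$-module structures differ by restriction of scalars along $\bar{j}_*$.
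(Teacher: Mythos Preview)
Your proof is correct and follows essentially the same approach as the paper: both derive the inequality from Corollary~\ref{cor:chen-arr}, the surjectivity in Theorem~\ref{thm:b-bloc-q}\eqref{pi1}, the direct-sum decomposition \eqref{eq:bba-loc}, and formula \eqref{eq:chen-gx}. For the equality at $k=2$, the paper routes through $\theta_2=\phi_2$ and Corollary~\ref{cor:lcs-bound}, whereas you argue directly that $\bar\Pi$ in degree~$0$ is the degree-$2$ piece of $\h(j_\sharp)$ and invoke Proposition~\ref{prop:PS-holo-prod}; since Corollary~\ref{cor:lcs-bound} is itself a consequence of that proposition, this is only a cosmetic difference.
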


\begin{proof}
Fix an integer $k\ge 2$. Then,
\begin{gather}
\label{eq:chain}
\begin{aligned}
\theta_k(G(\A)) 
&= \dim_{\Q} \B_{k-2}(\A)\otimes \Q
&&\text{by Corollary \ref{cor:chen-arr}}\\
&\ge \dim_{\Q} \B_{k-2}(\A)^{\loc}\otimes \Q
&&\text{by Theorem \ref{thm:b-bloc-q}, part \eqref{pi1}}\\
&=  \sum_{X\in L_2(\A)} \dim_{\Q} \B_{k-2}(\A_X)\otimes \Q
&&\text{by  \eqref{eq:bba-loc}} \\
&=(k-1) \sum_{X\in \widetilde{L}_2(\A)} \binom{\mu(X)+k-2}{k} 
&&\text{by \eqref{eq:chen-gx}.} 
\end{aligned}
\end{gather}
Finally, recall that $\theta_k(G(\A))=\phi_k(G(\A))$ for $k\le 3$; thus, 
by Corollary \ref{cor:lcs-bound}, equality holds in \eqref{eq:chen-bound} 
when $k=2$.
\end{proof}

\section{Decomposable Alexander invariants}
\label{sect:alex-decomp}

In this section, we investigate the relationship between the decomposability of 
an arrangement $\A$, defined in terms of the holonomy Lie algebra $\h(\A)$, 
and the decomposability of the Alexander-type invariants $B(\A)$, $\widehat{B(\A)}$, and 
$\B(\A)$.

\subsection{Decomposable Alexander invariants}
\label{subsec:decomp-alex}
Staying with the notations from the previous section, we now make several definitions.

\begin{definition}
\label{def:alex-decomp}
We say that the Alexander invariant of a hyperplane arrangement $\A$ is 
{\em decomposable}\/ if the map $\Pi\colon B(\A)\to B(\A)^{\loc}$ is an 
isomorphism of $R$-modules. 
\end{definition}

A similar definition works over the rationals: the Alexander invariant 
is {\em decomposable over $\Q$}\/ if the map $\Pi\otimes \Q\colon 
B(\A)\otimes \Q\to B(\A)^{\loc}\otimes \Q$ 
is an isomorphism of modules over the ring $R\otimes \Q$. 
By Theorem \ref{thm:b-bloc-q}, part \eqref{pi3}, this condition is equivalent to 
$\Pi\otimes \Q$ being injective. 

Following \cite[\S6.4]{CS-tams99}, we say that the completion of the 
Alexander invariant is {\em decomposable}\/ if the map $\widehat{\Pi}\colon 
\widehat{B(\A)}\to \widehat{B(\A)^{\loc}}$ 
is an isomorphism of modules over $\widehat{R}$. 
A similar definition works over $\Q$; 
by Theorem \ref{thm:b-bloc-q}, part \eqref{pi2}, the $\Q$-decomposability 
of $\widehat{B(\A)}$ is equivalent to the injectivity of $\widehat{\Pi}\otimes \Q$. 
Clearly, if $B(\A)$ is decomposable (over $\Z$ or over $\Q$), then $\widehat{B(\A)}$ 
is decomposable (over $\Z$ or over $\Q$).

\begin{definition}
\label{def:linalex-decomp}
We say that the infinitesimal Alexander invariant is {\em decomposable}\/ 
if the map $\bar\Pi\colon \B(\A)\to \B(\A)^{\loc}$ is an isomorphism of $S$-modules. 
\end{definition}

Likewise, the infinitesimal Alexander invariant of $\A$ is {\em decomposable 
over $\Q$}\/ if the map $\bar\Pi\otimes \Q \colon \B(\A)\otimes \Q\to 
\B(\A)^{\loc}\otimes \Q$ is an isomorphism of modules over $S\otimes \Q$. 
By Theorem \ref{thm:b-bloc-q}, part \eqref{pi1}, this condition is equivalent to 
$\bar\Pi\otimes \Q$ being injective. Evidently, the property of $\B(\A)$ being 
decomposable (or $\Q$-decomposable) depends only on the intersection lattice 
of the arrangement. 

These definitions raise the following problem, which is the rough analogue of 
Question \ref{quest:decomp-Q} in this context. 

\begin{question}
\label{quest:B-decomp}
For an arrangement $\A$, are the decomposability and $\Q$-decom\-pos\-abil\-ity properties 
of $B(\A)$ (and likewise for $\widehat{B(\A)}$ and $\B(\A)$) equivalent? 
\end{question} 

There is a connection between decomposability and separability of the 
Alexander invariant of an arrangement, that we now spell out.

\begin{proposition}
\label{prop:decomp-sep}
Let $\A$ be an arrangement. If $B(\A)$ is decomposable (over $\Q$), then 
$B(\A)$ is separated (over $\Q$).
\end{proposition}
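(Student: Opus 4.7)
The plan is to establish separation for the right-hand side $B(\A)^{\loc}$ of the decomposition and then transport that property across the isomorphism $\Pi$. The key observation is that the local Alexander invariants $B(\A_X)$ are of the simplest possible type, so their separation is available for free from earlier results; the rest of the argument is purely formal.

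First I would note that for each rank-$2$ flat $X$, the group $G(\A_X)$ is isomorphic to $F_{\mu(X)}\times \Z$ by \eqref{eq:gax}, so Corollary \ref{cor:prod-free} (or directly Example \ref{ex:bhat-inj} combined with Lemma \ref{lem:prod-sep}) gives that $B(\A_X)$ is $I_X$-adically separated as a module over $R_X$. Next, applying Lemma \ref{lem:sep-induced} to the surjection $j_\sharp^X\colon G(\A)\surj G(\A_X)$, the restricted module $B(\A_X)_{j_\sharp^X}$ is separated as an $R$-module. Since $L_2(\A)$ is finite, $B(\A)^{\loc}=\bigoplus_{X\in L_2(\A)} B(\A_X)_{j_\sharp^X}$ is a \emph{finite} direct sum of separated $R$-modules, and the elementary identity $I^k\bigoplus_X M_X=\bigoplus_X I^k M_X$ then gives $\bigcap_{k\ge 1} I^k B(\A)^{\loc}=0$. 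Assuming now that $\A$ has decomposable Alexander invariant, the map $\Pi\colon B(\A)\to B(\A)^{\loc}$ is an $R$-module isomorphism; being $R$-linear it is automatically filtration-preserving with $\Pi(I^k B(\A))=I^k B(\A)^{\loc}$, so it restricts to an isomorphism $\bigcap_k I^k B(\A)\isom \bigcap_k I^k B(\A)^{\loc}=0$, whence $B(\A)$ is separated.

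For the rational statement, each step above is compatible with tensoring by $\Q$: the $\Q$-version of Corollary \ref{cor:prod-free} holds (the computation of $B(F_n)$ and Lemma \ref{lem:prod-sep} both go through over $\Q$), Lemma \ref{lem:sep-induced} applies verbatim to modules over $\Q[G_{\ab}]$, and finite direct sums still commute with completion and with the intersection $\bigcap_k I^k(-)$. Thus $\Q$-decomposability of $B(\A)$ (that is, $\Pi\otimes\Q$ being an isomorphism) transports separation of $B(\A)^{\loc}\otimes\Q$ to $B(\A)\otimes\Q$ in exactly the same way.

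I do not anticipate any serious obstacle: once one recognizes that the local pieces $B(\A_X)$ belong to the tractable class of Alexander invariants of free-times-abelian groups, the rest is the formal fact that separation is inherited by restriction of scalars (Lemma \ref{lem:sep-induced}), by finite direct sums, and by $R$-module isomorphisms. The only point requiring a little care is the restriction-of-scalars step, since a priori the $I$-adic filtration on $B(\A_X)_{j_\sharp^X}$ is coarser than the $I_X$-adic filtration on $B(\A_X)$; but the inclusion $I^k B(\A_X)_{j_\sharp^X}\subseteq I_X^k B(\A_X)$ built into the proof of Lemma \ref{lem:sep-induced} is precisely what is needed.
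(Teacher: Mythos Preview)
Your proposal is correct and follows essentially the same approach as the paper's proof: both use $G(\A_X)\cong F_{\mu(X)}\times\Z$ together with Corollary~\ref{cor:prod-free} and Lemma~\ref{lem:sep-induced} to show each $B(\A_X)_{j_\sharp^X}$ is separated, then pass to the finite direct sum $B(\A)^{\loc}$ and transport separation across the isomorphism $\Pi$. Your write-up is slightly more explicit about the direct-sum and isomorphism steps, but there is no genuine difference in strategy.
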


\begin{proof}
First suppose the Alexander invariant is decomposable, that is, $B(\A)$ is isomorphic to 
$B(\A)^{\loc} = \bigoplus_{X\in L_2(\A)} B(\A_X)_{j_{\sharp}^X}$. 
Since $G(\A_X)=F_{\mu(X)}\times \Z$, it follows from Corollary \ref{cor:prod-free}
and Lemma \ref{lem:sep-induced} that $B(\A_X)_{j_{\sharp}^X}$ is separated.
Since completion commutes with direct sums, we conclude that $B(\A)$ is separated. 
The claim over $\Q$ is proved in a similar fashion.
\end{proof}

\subsection{Alexander invariants of decomposable arrangements}
\label{subsec:alex-decomp}
We are now in a position to state and prove the main results of this section.
We start with a lemma.

\begin{lemma}
\label{lem:gr1b-h3}
For every arrangement $\A$, there are isomorphisms 
\[
\gr_1(B(\A))\cong \B_1(\A) \cong \h_3(\A).
\]
\end{lemma}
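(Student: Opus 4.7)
The plan is to build a chain of isomorphisms $\gr_1(B(\A))\cong \gr_3(G(\A))\cong \h_3(\A)\cong \B_1(\A)$, each piece of which is already available in the paper, and then record the composition. All three isomorphisms hold at the integral level, so no rationalization is needed.

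For the first isomorphism, I would invoke Massey's correspondence. Corollary \ref{cor:alex-chen} gives $\gr_{k}(B(G))\cong \gr_{k+2}(G/G'')$ for every group $G$, so in particular $\gr_1(B(\A))\cong \gr_3(G(\A)/G(\A)'')$. Then Lemma \ref{lem:griso} with $r=2$ yields $\gr_k(G)\cong \gr_k(G/G'')$ for $k\le 2^2-1=3$, so composing these gives $\gr_1(B(\A))\cong \gr_3(G(\A))$ (this is in fact displayed in \eqref{eq:grb-01}).

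For the second isomorphism, I would apply Theorem \ref{thm:gr3h3-arr}\eqref{mg2}, which asserts that for any arrangement $\A$ the comparison map $\Psi_3\colon \h_3(\A)\to \gr_3(G(\A))$ is an isomorphism; this uses only that $M(\A)$ has cohomology generated in degree one (so $\nabla_{G(\A)}$ is injective) together with Theorem \ref{thm:gr3=h3}.

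For the third isomorphism, the key observation is a degree count inside the holonomy Lie algebra. By the grading convention \eqref{eq:bb grading}, $\B_1(\A)=(\h(\A)'/\h(\A)'')_3$. Since $\h(\A)'=\bigoplus_{k\ge 2}\h_k(\A)$, the degree $3$ piece of $\h(\A)'$ equals $\h_3(\A)$. On the other hand, $\h(\A)''=[\h(\A)',\h(\A)']$ is spanned by brackets of elements from $\h(\A)'$, each of which has degree $\ge 2$, so $\h(\A)''$ lives in degrees $\ge 4$ and in particular $\h(\A)''_3=0$. Consequently $\B_1(\A)=\h(\A)_3$. There is nothing subtle here; it is an immediate consequence of the gradings. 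Splicing the three identifications together produces the desired isomorphisms, and the argument is entirely formal, so I do not anticipate any real obstacle.
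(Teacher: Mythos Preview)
Your proof is correct and follows essentially the same approach as the paper: both chain together Corollary~\ref{cor:alex-chen}, Lemma~\ref{lem:griso} (with $r=2$), and Theorem~\ref{thm:gr3h3-arr}\eqref{mg2} to identify $\gr_1(B(\A))\cong \h_3(\A)$, and then invoke the grading convention~\eqref{eq:bb grading} for $\B_1(\A)=\h_3(\A)$. Your explicit verification that $\h(\A)''$ vanishes in degree~$3$ is a slight expansion of what the paper leaves implicit in the grading convention, but otherwise the arguments are the same.
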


\begin{proof}
Set $G=G(\A)$. By Theorem \ref{thm:gr3h3-arr}, Lemma \ref{lem:griso}, and 
Corollary \ref{cor:alex-chen}, respectively, we have natural isomorphisms
\begin{equation}
\label{eq:3isos}
\begin{tikzcd}[column sep=19pt]
\h_3(G) \ar[r, "\cong"] & \gr_3(G) \ar[r, "\cong"]  &  \gr_3(G/G'') \cong \gr_1(B(G)).
\end{tikzcd}
\end{equation}
Moreover, by the grading convention \eqref{eq:bb grading}, we have the equality 
$\B_1(G)=\h_3(G)$, and this completes the proof.
\end{proof}

\begin{theorem}
\label{thm:alex-decomp}
Let $\A$ be a hyperplane arrangement.
\begin{enumerate}[itemsep=2pt, topsep=-1pt]
\item  \label{bd1} 
If $B(\A)$ is decomposable (over $\Q$), then $\A$ is decomposable (over $\Q$).
\item  \label{bd2} 
If $\B(\A)$ is decomposable (over $\Q$), then $\A$ is decomposable (over $\Q$).
\end{enumerate}
\end{theorem}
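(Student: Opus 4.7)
The plan is to derive both implications from the natural identifications $\gr_1(B(\A)) \cong \B_1(\A) \cong \h_3(\A)$ supplied by Lemma~\ref{lem:gr1b-h3}. In either case, decomposability of the Alexander-type invariant will force an isomorphism on the degree-$1$ graded piece, and under the above identifications this isomorphism must coincide with the canonical map $\h_3(j_\sharp)\colon \h_3(\A) \to \h_3(\A)^{\loc}$---which is precisely the defining condition for $\A$ to be decomposable.

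For part~\eqref{bd2}, I would restrict the graded $S$-module morphism $\bar\Pi\colon \B(\A) \to \B(\A)^{\loc}$ to its degree-$1$ component. By the construction of $\bar\Pi$ in \S\ref{subsec:linalex-arr}, this morphism is assembled from the Lie algebra maps $\h(j^X_\sharp)$, so its degree-$1$ component is, after the identification $\B_1 = \h_3$, exactly $\h_3(j_\sharp)$. Since $\B(\A)^{\loc} = \bigoplus_X \B(\A_X)_{j^X_\sharp}$ gives $\B_1(\A)^{\loc} \cong \bigoplus_X \h_3(\A_X) = \h_3(\A)^{\loc}$, an isomorphism $\bar\Pi$ (over $\Z$ or over $\Q$) forces $\h_3(j_\sharp)$ to be an isomorphism of the same kind, and we are done.

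For part~\eqref{bd1}, I would first pass to associated graded modules with respect to the $I$-adic filtration. Since each $j^X_\sharp$ is surjective, the induced ring map $R \to R_X$ is surjective, and a verbatim repetition of the argument used in the proof of Lemma~\ref{lem:prod-sep} shows that $I^k B(\A_X)_{j^X_\sharp} = I_X^k B(\A_X)$ for all $k \ge 0$. Because $\gr$ commutes with finite direct sums, this produces a natural isomorphism $\gr(B(\A)^{\loc}) \cong \bigoplus_X \gr(B(\A_X))$ of graded $\gr(R)$-modules. An isomorphism $\Pi$ (resp.\ $\Pi \otimes \Q$) descends to an isomorphism $\gr(\Pi)$ (resp.\ $\gr(\Pi) \otimes \Q$); restricting to degree~$1$ and applying Lemma~\ref{lem:gr1b-h3} to $\A$ and to each $\A_X$ yields the desired isomorphism $\h_3(\A) \cong \h_3(\A)^{\loc}$, with or without tensoring by $\Q$.

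The principal point requiring care is a naturality check hidden in part~\eqref{bd1}: the chain of isomorphisms $\h_3(G) \to \gr_3(G) \to \gr_3(G/G'') \to \gr_1(B(G))$ furnished by Theorem~\ref{thm:gr3h3-arr}, Lemma~\ref{lem:griso}, and Corollary~\ref{cor:alex-chen} must be natural with respect to the surjection $j^X_\sharp\colon G(\A) \to G(\A_X)$, so that the degree-$1$ component of $\gr(\Pi)$ really does correspond to $\h_3(j_\sharp)$ under these identifications. Each of the constituent isomorphisms is built functorially, so this reduces to a straightforward diagram chase rather than a conceptual obstacle; no tools beyond those already collected in Sections~\ref{sect:alexinv}--\ref{sect:holo-arr} should be required.
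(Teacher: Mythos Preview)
Your proposal is correct and follows essentially the same route as the paper: pass to the degree-$1$ piece (of $\B$ directly, or of $\gr(B)$ after taking associated graded), invoke Lemma~\ref{lem:gr1b-h3} and its naturality to identify the resulting map with $\h_3(j_\sharp)$, and conclude. You are in fact more explicit than the paper about the two points it glosses over---the equality $I^k B(\A_X)_{j^X_\sharp} = I_X^k B(\A_X)$ needed to compute $\gr(B(\A)^{\loc})$, and the naturality of the chain \eqref{eq:3isos}---but these are precisely the implicit steps behind the paper's one-line appeal to ``the naturality of the isomorphisms from \eqref{eq:3isos}.''
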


\begin{proof}
Suppose $B(\A)$ is decomposable, that is, the map $\Pi\colon B(\A)\to B(\A)^{\loc}$ 
is an isomorphism. Passing to associated graded and taking degree $1$ pieces, 
we obtain an isomorphism, $\gr_1(\Pi)\colon \gr_1(B(\A))\to \gr_1(B(\A)^{\loc})$. 
Using the naturality of the isomorphisms from \eqref{eq:3isos}, we conclude that 
the map $\h_3(G)\to \h_3(G)^{\loc}$ is also an isomorphism, that is, $\A$ is decomposable. 
Similar arguments work for $\B(\A)$ and over $\Q$.
\end{proof}

Note that the rational versions of statements \eqref{bd1} and \eqref{bd2} above 
are equivalent, since $\gr(B(\A))\otimes \Q\cong \B(A)\otimes \Q$, 
but the integral versions of the two statements are {\it a priori}\/ different.
We now consider the reverse implications of these statements. 

\begin{theorem}
\label{thm:decomp-alex}
Let $\A$ be a hyperplane arrangement. 
\begin{enumerate}[itemsep=2pt, topsep=-1pt]
\item  \label{b1} 
If $\A$ is decomposable, then $\B(\A)$ is decomposable.
\item  \label{b2} 
If $\A$ is $\Q$-decomposable, then both $\B(\A)$ and $\widehat{B(\A)}$  
are $\Q$-decomposable.
\item  \label{b3} 
If $\A$ is $\Q$-decomposable and $B(\A)\otimes \Q$ is 
separated, then $B(\A)$ is $\Q$-decom\-posable.
\end{enumerate}
\end{theorem}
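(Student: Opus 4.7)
The plan is to prove the three parts in order, leveraging three ingredients: the structure theorem for the derived subalgebra of the holonomy Lie algebra of a decomposable arrangement (Theorem \ref{thm:PS-decomp}), the comparison isomorphism $\Phi_{\A}$ between the completed Alexander invariant and its infinitesimal analogue (Theorem \ref{thm:linalex-c}), which is available because every arrangement group is $1$-formal, and the separability--injectivity criterion of Lemma \ref{lem:b-inj}\eqref{i3}.

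For part \eqref{b1}, I would invoke Theorem \ref{thm:PS-decomp}\eqref{dec1} directly: under the decomposability hypothesis on $\A$, it yields an isomorphism of graded Lie algebras $\h'(j_{\sharp})\colon \h'(\A)\isom \prod_{X\in L_2(\A)}\h'(\A_X)$. Any Lie algebra isomorphism carries the second derived subalgebra onto the second derived subalgebra, so the induced map on quotients $\h'(\A)/\h''(\A) \to \prod_X \h'(\A_X)/\h''(\A_X)$ is a bijection of graded abelian groups. Under the grading convention \eqref{eq:bb grading}, this bijection coincides with the composite \eqref{eq:bba-bbaloc}; since the second arrow in that factorization is the identity on underlying abelian groups, the $S$-linear first arrow $\bar\Pi$ must itself be a bijection, and hence an isomorphism of graded $S$-modules, proving that $\B(\A)$ decomposes.

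For part \eqref{b2}, the same argument carried out over $\Q$ (using the $\Q$-analogue of Theorem \ref{thm:PS-decomp}\eqref{dec1}, whose proof goes through unchanged under the weaker $\Q$-decomposability hypothesis) yields decomposability of $\B(\A)\otimes\Q$. To transfer this to the completed Alexander invariant, I would insert $\bar\Pi\otimes\Q$ into the commutative square \eqref{eq:bb-bloc}: the vertical $\Phi$-maps are isomorphisms because $G(\A)$ and each $G(\A_X)\cong F_{\mu(X)}\times\Z$ are $1$-formal, while $\widehat{\bar\Pi}\otimes\Q$ is an iso by exactness of completion applied to the just-established $S\otimes\Q$-module iso $\bar\Pi\otimes\Q$. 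Commutativity of the square then forces the top arrow $\widehat{\Pi}\otimes\Q$ to be an iso, which is exactly the $\Q$-decomposability of $\widehat{B(\A)}$. For part \eqref{b3}, I would combine this with the separability hypothesis: Theorem \ref{thm:b-bloc-q}\eqref{pi3} already supplies surjectivity of $\Pi\otimes\Q$, so only injectivity must be shown. In the analogue of the naturality square \eqref{eq:bb-alpha} attached to $\Pi$ and tensored with $\Q$, the left vertical $\iota_{B(\A)\otimes\Q}$ is injective by the separability hypothesis and the bottom horizontal $\widehat{\Pi}\otimes\Q$ is injective by part \eqref{b2}, so a quick diagram chase gives injectivity of $\Pi\otimes\Q$, whence it is an isomorphism.

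The main obstacle I anticipate lies in part \eqref{b2}, where one must verify that the restriction-of-scalars construction defining $B(\A)^{\loc}$ and $\B(\A)^{\loc}$ interacts compatibly with completion and with the $\Phi$-isomorphisms, so that diagram \eqref{eq:bb-bloc} is genuinely well-defined and commutative. This should be settled by applying the functoriality square \eqref{eq:bb-func} to each local surjection $j^X_{\sharp}\colon G(\A)\surj G(\A_X)$ individually and then summing over $X\in L_2(\A)$, using that completion commutes with finite direct sums and that products of $1$-formal groups are $1$-formal.
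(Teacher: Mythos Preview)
Your proposal is correct and follows essentially the same route as the paper's own proof: part \eqref{b1} via Theorem \ref{thm:PS-decomp}\eqref{dec1} passed to the quotient $\h'/\h''$ and identified with $\bar\Pi$; part \eqref{b2} by tensoring with $\Q$, completing, and chasing the square \eqref{eq:bb-bloc} whose vertical $\Phi$-arrows come from $1$-formality; and part \eqref{b3} by combining the already-known surjectivity of $\Pi\otimes\Q$ with injectivity obtained from separability plus injectivity of $\widehat{\Pi}\otimes\Q$. The only cosmetic difference is that where you invoke ``exactness of completion'' and an explicit diagram chase, the paper cites Lemmas \ref{lem:b-surj} and \ref{lem:b-inj}; your anticipated obstacle concerning the well-definedness of diagram \eqref{eq:bb-bloc} is handled in the paper exactly as you suggest, by assembling the per-flat squares \eqref{eq:bb-bx} over $X\in L_2(\A)$.
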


\begin{proof}
First suppose $\A$ is decomposable. 
Let $j\colon M(\A) \to \prod_{X\in L_2(\A)} M(\A_X)$ be the 
map from \eqref{eq:prod-map-2}. The homomorphism 
$j_{\sharp}\colon G(\A) \to \prod_{X} G(\A_X)$ 
induces a map of graded Lie algebras, 
$\h(j_{\sharp})\colon \h(\A) \to \prod_{X} \h(\A_X)$. 
Theorem \ref{thm:PS-decomp}, part \eqref{dec1} 
ensures that the restriction $\h'(j_{\sharp}) \colon\h'(\A) \to \prod_{X} \h'(\A_X)$ 
is an isomorphism. It follows that the induced map on quotients, 
$\h'(\A)/\h''(\A) \to \prod_{X} \h'(\A_X)/\h''(\A_X)$, 
is an isomorphism of graded abelian groups.  As noted previously, 
this map coincides with the morphism of (graded) $S$-modules 
$\bar\Pi$ from \eqref{eq:bba-bbaloc}.  Therefore, the map 
$\bar\Pi \colon\B(\A) \to \B(\A)^{\loc}$ is an isomorphism of 
$S$-modules, and this proves claim \eqref{b1}.

To prove the last two claims, suppose $\A$ is $\Q$-decomposable. 
Proceeding as above, we infer that the map 
$\bar\Pi\otimes \Q \colon\B(\A)\otimes \Q\to  \B(\A)^{\loc}\otimes\Q$ 
is an isomorphism of $S\otimes \Q$-modules. Therefore, by 
Lemmas \ref{lem:b-surj} and \ref{lem:b-inj}, 
its completion, $\widehat{\bar\Pi}\otimes \Q$, is also an isomorphism. 
This map is the bottom arrow from the commuting square \eqref{eq:bb-bloc}, 
in which the side arrows are also isomorphisms. Therefore, the top arrow, 
$\widehat{\Pi}\otimes \Q\colon 
\widehat{B(\A)}\otimes \Q\to \widehat{B(\A)^{\loc}}\otimes \Q$  
is an isomorphism of $\widehat{R}\otimes \Q$-modules. 
If, in addition, the $R\otimes \Q$-module $B(\A)\otimes \Q$ is 
separated, then, by Lemmas \ref{lem:b-surj} and \ref{lem:b-inj} again, 
the map $\Pi\otimes \Q\colon B(\A)\otimes \Q\to B(\A)^{\loc}\otimes \Q$  
is an isomorphism. This completes the proof of claims \eqref{b2} and \eqref{b3}.
\end{proof}

Using different methods, it was shown in \cite[Theorem~7.9]{CS-tams99} that the following 
implication also holds.

\begin{theorem}[\cite{CS-tams99}]
\label{thm:cs-dec}
If $\A$ is decomposable, then $\widehat{B(\A)}$ is decomposable. 
\end{theorem}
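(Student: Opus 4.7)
To prove that $\widehat{\Pi}\colon \widehat{B(\A)} \to \widehat{B(\A)^{\loc}}$ is an isomorphism of $\widehat{R}$-modules when $\A$ is decomposable, the plan is to reduce to showing that the associated graded map $\gr(\Pi)\colon \gr(B(\A)) \to \gr(B(\A)^{\loc})$ is an isomorphism of graded $\gr(R)$-modules. Indeed, by Lemma \ref{lem:b-surj} and Lemma \ref{lem:b-inj}\eqref{i2}, the completed map $\widehat{\Pi}$ is an isomorphism if and only if $\gr(\Pi)$ is one.

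For any finitely generated group $G$, there is a natural surjection $\nu_G\colon \B(G) \twoheadrightarrow \gr(B(G))$, obtained by restricting the canonical projection $\Psi^{(2)}\colon \h(G)/\h(G)'' \twoheadrightarrow \gr(G/G'')$ of Theorem \ref{thm:holo-epi} to derived subalgebras and composing with Massey's isomorphism (Corollary \ref{cor:alex-chen}), taking into account the degree shift \eqref{eq:bb grading}. Naturality of $\Psi^{(2)}$ and of Massey's correspondence make $\nu_G$ functorial in $G$. Applying naturality to the homomorphisms $j^X_{\sharp}\colon G(\A) \twoheadrightarrow G(\A_X)$ for $X \in L_2(\A)$ and assembling over all such $X$ yields the commutative square
\begin{equation*}
\begin{tikzcd}[column sep=26pt]
\B(\A) \ar[r, "\nu_{\A}"] \ar[d, "\bar\Pi"'] & \gr(B(\A)) \ar[d, "\gr(\Pi)"]\\
\B(\A)^{\loc} \ar[r, "\nu^{\loc}"] & \gr(B(\A)^{\loc}).
\end{tikzcd}
\end{equation*}
Since $\tilde j^X_{\ast}$ is surjective on augmentation ideals, the $I$-adic filtration on $B(\A_X)_{j^X_{\sharp}}$ coincides with the $I_X$-adic filtration on $B(\A_X)$, so $\gr(B(\A)^{\loc})$ splits as $\bigoplus_X \gr_{I_X}(B(\A_X))$ and $\nu^{\loc}$ is the direct sum of the local comparison maps $\nu_{G(\A_X)}$.

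The key step is to show that $\nu^{\loc}$ is an isomorphism. Since $G(\A_X) \cong F_{\mu(X)} \times \Z$, Lemma \ref{lem:alex-prod}, Lemma \ref{lem:holo-prod}, and their consequences reduce this to checking that the natural map $\nu_{F_n}\colon \B(F_n) \to \gr(B(F_n))$ is an isomorphism for any finitely generated free group $F_n$. Both modules admit the same finite presentation as the cokernel of the Koszul differential $\bwedge^3 \Z^n \otimes S \to \bwedge^2 \Z^n \otimes S$: Example \ref{ex:alex-free-bis} provides this for $\gr(B(F_n))$, while the classical description of the free metabelian Lie algebra as an $S$-module (implicit in \cite[Prop.~9.3]{PS-imrn04}) does so for $\B(F_n)$, and a direct verification would show that $\nu_{F_n}$ realizes the identification of these presentations. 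Granting this, since $\A$ is decomposable, Theorem \ref{thm:decomp-alex}\eqref{b1} gives that $\bar\Pi$ is an isomorphism integrally; commutativity of the square then forces $\gr(\Pi)$ to be an isomorphism, completing the argument. The main obstacle is the integral identification of $\B(F_n)$ with $\gr(B(F_n))$ through the natural map $\nu_{F_n}$, which requires careful tracking of the map through the Koszul complex and its interaction with the adjoint action on the free metabelian Lie algebra; over $\Q$ this is a consequence of the $1$-formality of $F_n$ via Corollary \ref{cor:linalex-gr}, but the integral version does not follow from that machinery and must be established by hand.
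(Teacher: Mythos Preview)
The paper does not supply its own proof of this theorem: it is quoted from \cite[Thm.~7.9]{CS-tams99}, whose argument (as summarized in Remark~\ref{rem:cs-alex}) proceeds by producing explicit finite presentations of $B(\A)$ and $B(\A)^{\loc}$ via Fox calculus applied to braid-monodromy presentations of $G(\A)$ and the $G(\A_X)$, and then comparing the completions by a Gr\"obner-type analysis of those presentation matrices. Your strategy is therefore genuinely different: you bypass any explicit presentation of $B(\A)$ and instead transport the decomposition of $\B(\A)$ (Theorem~\ref{thm:decomp-alex}\eqref{b1}) across a comparison map $\nu_G\colon \B(G)\to \gr(B(G))$, reducing everything to a statement about free groups. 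If it goes through, this is considerably cleaner than the original.

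The gap you flag, however, is real and is exactly the crux of the matter. You need $\nu_{F_n}\colon \B(F_n)\to \gr(B(F_n))$ to be an isomorphism over $\Z$, and you do not prove it. This amounts to showing that $\Psi^{(2)}\colon \h(F_n)/\h(F_n)''\to \gr(F_n/F_n'')$ is an integral isomorphism, i.e., that the Chen Lie algebra of a free group coincides with the free metabelian Lie algebra $\Lie(\Z^n)/\Lie(\Z^n)''$. This is in fact a classical theorem going back to Chen, and once you know that both sides are torsion-free with the same graded ranks (Example~\ref{ex:alex-free-bis} for $\gr(B(F_n))$, and the Koszul presentation of $\Lie'/\Lie''$ for $\B(F_n)$), the surjection $\nu_{F_n}$ is forced to be an isomorphism degree by degree. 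So the gap is fillable, but until you actually pin down a reference or argument for the integral identification, your proposal remains a sketch rather than a proof. Note also that you should check that $\nu_G$ is $\Sym(G_{\abf})$-linear (i.e., that the adjoint action on $\B(G)$ matches the $\gr(R)$-action on $\gr(B(G))$ through $\nu_G$); this is implicit in your use of the commuting square but deserves a line of justification.
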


Putting things together, we obtain the following corollary.

\begin{corollary}
\label{cor:dec-sep}
For an arrangement $\A$, the following are equivalent.
\begin{enumerate}[itemsep=2.5pt, topsep=-1pt]
\item \label{ba1} 
$B(\A)$ is decomposable (over $\Q$).
\item \label{ba2}
$\A$ is decomposable and $B(\A)$ is separated (over $\Q$).

\end{enumerate}
\end{corollary}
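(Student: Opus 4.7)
The plan is to assemble this equivalence from the results just proved in this section, with the integral direction $(2)\Rightarrow(1)$ appealing to the external Theorem \ref{thm:cs-dec}. The forward implication $(1)\Rightarrow(2)$ is the easier half: if $B(\A)$ is decomposable then Theorem \ref{thm:alex-decomp}\eqref{bd1} immediately yields that $\A$ is decomposable, and Proposition \ref{prop:decomp-sep} gives that $B(\A)$ is separated (both statements have rational analogues that handle the $\Q$ version). So nothing new needs to be done in this direction.

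For the rational version of $(2)\Rightarrow(1)$, I would simply invoke Theorem \ref{thm:decomp-alex}\eqref{b3}, which is precisely the statement that $\Q$-decomposability of $\A$ together with separability of $B(\A)\otimes\Q$ forces $\Pi\otimes\Q$ to be an isomorphism. The internal logic there is worth recalling in a sentence: decomposability of $\A$ implies $\bar\Pi\otimes\Q$ is an isomorphism via Theorem \ref{thm:PS-decomp}\eqref{dec1}; transport along the vertical Papadima--Suciu/DPS isomorphisms in diagram \eqref{eq:bb-bloc} upgrades this to an isomorphism $\widehat{\Pi}\otimes\Q$; and separability then descends this to $\Pi\otimes\Q$ via the third clause of Lemma \ref{lem:b-inj}.

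For the integral version of $(2)\Rightarrow(1)$, the argument is structurally the same but uses different inputs. By Theorem \ref{thm:cs-dec} (the self-contained proof of which is not reproduced in this paper), decomposability of $\A$ yields that $\widehat{\Pi}\colon \widehat{B(\A)}\to \widehat{B(\A)^{\loc}}$ is an isomorphism; in particular $\widehat{\Pi}$ is injective. Combined with the assumed separability of $B(\A)$, the third part of Lemma \ref{lem:b-inj} gives that $\Pi\colon B(\A)\to B(\A)^{\loc}$ is injective. For surjectivity of $\Pi$ over $\Z$, Theorem \ref{thm:b-bloc-q}\eqref{pi3} only yields the rational version, so I would invoke Remark \ref{rem:cs-alex}, which records the integral surjectivity of $\Pi$ proved via Fox calculus in \cite{CS-tams99}. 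Together these give that $\Pi$ is an isomorphism.

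The only real obstacle is that the integral reverse implication cannot be proved self-contained with the machinery developed here; it must borrow both Theorem \ref{thm:cs-dec} and the integral surjectivity of $\Pi$ (Remark \ref{rem:cs-alex}) from \cite{CS-tams99}. Over $\Q$ everything is internal, since the $1$-formality of arrangement groups (encoded in Theorem \ref{thm:linalex-c} and the commuting square \eqref{eq:bb-bloc}) allows one to pass freely between $\B(\A)$ and $\widehat{B(\A)}\otimes\Q$. I would present the corollary as a short bookkeeping argument: one display listing the implications $(1)\Rightarrow(2)$ and the two halves of $(2)\Rightarrow(1)$, with a citation to each ingredient, and no new calculations.
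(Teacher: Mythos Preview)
Your proof is correct and follows essentially the same route as the paper. The one minor difference is in the integral direction $(2)\Rightarrow(1)$: you obtain injectivity of $\Pi$ from Theorem~\ref{thm:cs-dec} plus Lemma~\ref{lem:b-inj}\eqref{i3}, and then reach separately for Remark~\ref{rem:cs-alex} to get integral surjectivity of $\Pi$. The paper instead uses Lemma~\ref{lem:b-surj} (applied to $\alpha=j_\sharp$): since $\widehat{\Pi}$ is already an isomorphism by Theorem~\ref{thm:cs-dec}, it is in particular surjective, and Lemma~\ref{lem:b-surj} then gives surjectivity of $\Pi$ over~$\Z$ with no further external input. So your remark that the integral reverse implication ``must borrow both Theorem~\ref{thm:cs-dec} and the integral surjectivity of~$\Pi$'' from \cite{CS-tams99} overstates the dependence---only Theorem~\ref{thm:cs-dec} is needed.
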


\begin{proof}
The implication \eqref{ba1} $\Rightarrow$ \eqref{ba2} (over either $\Z$ or $\Q$) 
follows at once from Proposition \ref{prop:decomp-sep} and Theorem \ref{thm:alex-decomp},  
part \eqref{bd1}.

The implication \eqref{ba2} $\Rightarrow$ \eqref{ba1} over $\Q$ is the content of 
Theorem \ref{thm:decomp-alex}, part \eqref{b3}. To prove the same implication over $\Z$, 
we proceed in a similar manner. To start with, since $\A$ is decomposable, 
Theorem \ref{thm:cs-dec} ensures that  $\widehat{B(\A)}$ is decomposable, 
that is, the map  $\widehat{\Pi}\colon \widehat{B(\A)}\to \widehat{B(\A)^{\loc}}$  
is an isomorphism of $\widehat{R}$-modules. Now, since 
$B(\A)$ is assumed to be separated, Lemmas \ref{lem:b-surj} and \ref{lem:b-inj} 
imply that the map $\Pi\colon B(\A)\to B(\A)^{\loc}$ is an isomorphism of $R$-modules, 
and we are done.
\end{proof}

\subsection{Discussion and examples}
\label{subsec:discuss-dec-alex}
The third part of Theorem \ref{thm:decomp-alex} raises the following basic question 
regarding the Alexander invariants of decomposable arrangements.

\begin{question}
\label{quest:B-separated}
Let $\A$ be an arrangement.
\begin{enumerate}[itemsep=2pt, topsep=-1pt]
\item \label{q1} 
Suppose $\A$ is decomposable. Is then the Alexander invariant 
$B(\A)$ separated in the $I$-adic topology?
\item \label{q2} 
If $\A$ is decomposable over $\Q$, is $B(\A)\otimes \Q$ separated?
\end{enumerate}
\end{question}

If the answer to part \eqref{q2} were to be yes, then we could dispense with 
the separation hypothesis in Theorem \ref{thm:decomp-alex}, part \eqref{b3}, 
and conclude that $\Q$-decomposability of $\A$ implies $\Q$-decomposability 
of $B(\A)$. 

The following result gives a combinatorial criterion for deciding the decomposability 
of the Alexander invariants of a class of arrangements.

\begin{proposition}
\label{prop:pend-b-dec}
Let $\A$ be an arrangement such that $L_{\le 2}(\A)\cong L_{\le 2}(\A(m))$, 
for some $r$-tuple $m=(m_1,\dots,m_r)$ with $m_i\ge 2$. Then $B(\A)$ is 
decomposable and separated.
\end{proposition}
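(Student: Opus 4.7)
The plan is to invoke Corollary \ref{cor:dec-sep}, which reduces the proposition to two tasks: showing that $\A$ is decomposable, and showing that $B(\A)$ is separated. The first is immediate: since $L_{\le 2}(\A)\cong L_{\le 2}(\A(m))$, Corollary \ref{cor:pen-decomp} already gives decomposability of $\A$. The real content is therefore the separation of $B(\A)$, which by Corollary \ref{cor:alex-sep-res} is equivalent to residual nilpotency of the maximal metabelian quotient $G(\A)/G(\A)''$. So the entire plan is to prove this residual nilpotency, and then read off the conclusion from Corollary \ref{cor:dec-sep}.

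The idea is to transport residual nilpotency from the model arrangement $\A(m)$, which is also decomposable (Lemma \ref{lem:pen-dec}), back to $\A$. Since both $\A$ and $\A(m)$ are decomposable and share the same truncated intersection lattice, Theorem \ref{thm:decomposable-n} yields, for every $k\ge 2$, an isomorphism $G(\A)/\gamma_k(G(\A))\cong G(\A(m))/\gamma_k(G(\A(m)))$. A group isomorphism sends the second derived subgroup to the second derived subgroup, so these isomorphisms descend to the common double quotient
\[
G\big/(G''\gamma_k G) \;=\; (G/\gamma_kG)\big/(G/\gamma_kG)'' \;=\; (G/G'')\big/\gamma_k(G/G''),
\]
where the last equality uses the standard identity $(G/\gamma_kG)''=G''\gamma_kG/\gamma_kG$ and the fact that $\gamma_k(G/G'')$ is the image of $\gamma_kG$ in $G/G''$. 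Consequently, for each $k\ge 2$,
\[
\bigl(G(\A)/G(\A)''\bigr)\big/\gamma_k\bigl(G(\A)/G(\A)''\bigr)
\;\cong\;
\bigl(G(\A(m))/G(\A(m))''\bigr)\big/\gamma_k\bigl(G(\A(m))/G(\A(m))''\bigr).
\]

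To finish, I would use the explicit structure of $G(\A(m))$: by Lemma \ref{lem:pen-dec} (via Hamm--L\^e, or Theorem \ref{thm:cdp} applied with $\A=\A(m)$), $G(\A(m))$ is isomorphic to $F_{m_1}\times \cdots \times F_{m_r}\times \Z$. Corollary \ref{cor:prod-free} gives separability of $B(F_{m_1}\times\cdots\times F_{m_r})$, and Lemma \ref{lem:prod-sep} combined with $B(\Z)=0$ then yields separability of $B(G(\A(m)))$; hence Corollary \ref{cor:alex-sep-res} makes $G(\A(m))/G(\A(m))''$ residually nilpotent. Pulling this back through the isomorphisms of the previous paragraph forces $G(\A)/G(\A)''$ to be residually nilpotent as well, so $B(\A)$ is separated and Corollary \ref{cor:dec-sep} delivers the decomposability of $B(\A)$.

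The step I expect to be the main obstacle is the transition in the second paragraph from the isomorphism of nilpotent quotients of $G(\A)$ and $G(\A(m))$, supplied by Theorem \ref{thm:decomposable-n}, to the corresponding isomorphism of nilpotent quotients of the maximal metabelian quotients $G(\A)/G(\A)''$ and $G(\A(m))/G(\A(m))''$. This is not deep, but it requires the small commutative-algebra identity $(G/\gamma_kG)''=G''\gamma_kG/\gamma_kG$ together with the naturality of the lower central series; once that manipulation is made explicit, everything else is a direct appeal to results already proved in the paper.
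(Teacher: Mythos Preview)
Your argument has a genuine gap in the final step. You establish, for each $k\ge 2$, an isomorphism
\[
\bigl(G(\A)/G(\A)''\bigr)\big/\gamma_k \;\cong\; \bigl(G(\A(m))/G(\A(m))''\bigr)\big/\gamma_k,
\]
and you know the right-hand metabelian group is residually nilpotent. You then assert that this ``forces'' $G(\A)/G(\A)''$ to be residually nilpotent. But isomorphism of all nilpotent quotients does \emph{not} transfer residual nilpotency: a group $H$ may satisfy $H/\gamma_k H\cong H'/\gamma_k H'$ for every $k$ with $H'$ residually nilpotent while $H$ is not. Concretely, take the two-generator group $G$ from Example~\ref{ex:bhat-not-inj} (with $f=2-t_1$): there $B(G)\ne 0$ yet $IB(G)=B(G)$, so by Theorem~\ref{thm:massey-alexinv} one has $\gamma_k(G/G'')=B(G)$ for all $k\ge 2$, whence every nilpotent quotient of $G/G''$ equals $\Z^2$, matching those of the residually nilpotent group $\Z^2$, while $G/G''$ itself is not residually nilpotent. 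At best your isomorphisms yield $\widehat{G(\A)/G(\A)''}\cong \widehat{G(\A(m))/G(\A(m))''}$, which says nothing about injectivity of the map from $G(\A)/G(\A)''$ into its pronilpotent completion. Indeed, whether $B(\A)$ is separated for an arbitrary decomposable $\A$ is exactly the open Question~\ref{quest:B-separated}, so this cannot be finessed by general nonsense.

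The paper closes this gap by applying Theorem~\ref{thm:cdp} to $\A$ itself (after the standard reduction to a generic $3$-slice, where $L_{\le 2}(\A)\cong L_{\le 2}(\A(m))$ upgrades to $L(\A)\cong L(\A(m))$): this produces an honest group isomorphism $G(\A)\cong F_{m_1}\times\cdots\times F_{m_r}\times\Z$, from which decomposability of $B(\A)$ follows directly via Lemma~\ref{lem:alex-prod}. You invoked Theorem~\ref{thm:cdp} only in the trivial direction, for $\A(m)$; applying it to $\A$ is precisely the missing ingredient, and once you have it the detour through Theorem~\ref{thm:decomposable-n} becomes unnecessary.
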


\begin{proof}
By Theorem \ref{thm:cdp}, the group $G(\A)$ is isomorphic to 
$G(\A(m))=F_{m_1}\times \cdots \times F_{m_r} \times \Z$. 
By Lemma \ref{lem:alex-prod}, 
$B(\A(m))\cong B(F_{m_1})_{p_1}\oplus \cdots \oplus B(F_{m_r})_{p_r}$, 
where $p_i$ are the projection maps of $G(\A(m))$ onto its factors. 
On the other hand, recall that $\widetilde{L}_2(\A(m))=\{X_1,\dots, X_r\}$, 
with $\mu(X_i)=m_i$. Consequently, $B(F_{m_i})\cong B(\A_{X_i})$, and 
so $B(\A(m))\cong B(\A(m))^{\loc}$, showing that $B(\A(m))$ is decomposable. 
By Corollary \ref{cor:prod-free}, $B(\A(m))$ is also separated. 
The claim follows.
\end{proof}

\begin{remark}
\label{rem:am-dec}
In view of Theorem \ref{thm:alex-decomp}, this proposition shows that the 
arrangements $\A(m)$ are decomposable, thus giving a different proof 
of Lemma \ref{lem:pen-dec}. Alternatively, one can use Lemma \ref{lem:pen-dec}
together with Theorem \ref{thm:cdp} and Corollary \ref{cor:dec-sep} to give 
another proof of Proposition \ref{prop:pend-b-dec}. 
\end{remark}

\subsection{Chen ranks of $\Q$-decomposable arrangements}
\label{subsec:chen-decomp}

As an application of Theorem \ref{thm:decomp-alex}, we obtain an explicit formula 
for the Chen ranks of an arrangement $\A$, provided that $\A$ is decomposable over $\Q$.
Similar formulas were given in \cite[Theorem~7.9]{CS-tams99} and \cite[Theorem~6.2]{PS-cmh06} 
under the (possibly stronger) assumption that $\A$ is decomposable. 

\begin{corollary}
\label{eq:chen-decomp}
Let $\A$ be a $\Q$-decomposable arrangement. Then the Chen ranks 
of $G(\A)$ are given by $\theta_1(G(\A)) =\abs{\A}$ and 
\[
\theta_k(G(\A)) = \sum_{X\in \widetilde{L}_2(\A)}  \theta_k(G(\A_X)) =
 (k-1) \sum_{X\in \widetilde{L}_2(\A)} \binom{\mu(X)+k-2}{k} 
\]
for all $k\ge 2$.
\end{corollary}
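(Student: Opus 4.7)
The plan is to chain together Theorem \ref{thm:decomp-alex}, Corollary \ref{cor:chen-arr}, and the explicit computation of Chen ranks for the local factors $G(\A_X) \cong F_{\mu(X)} \times \Z$ from equation \eqref{eq:chen-gx}. No new ingredient is needed; the argument is essentially the chain of inequalities from Corollary \ref{cor:chen-lub}, but with the single inequality sharpened to an equality via $\Q$-decomposability of $\B(\A)$.

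First I would handle the degree $1$ claim: since $H_1(M(\A);\Z)$ is free abelian of rank $\abs{\A}$ (as recalled in Section \ref{subsec:local}), we have $\theta_1(G(\A)) = b_1(G(\A)) = \abs{\A}$, independent of decomposability. Next, fix $k \ge 2$. By the hypothesis that $\A$ is $\Q$-decomposable and part \eqref{b2} of Theorem \ref{thm:decomp-alex}, the $S\otimes \Q$-morphism $\bar\Pi \otimes \Q \colon \B(\A) \otimes \Q \to \B(\A)^{\loc} \otimes \Q$ is an isomorphism. Taking degree $k-2$ pieces and using the decomposition \eqref{eq:bba-loc} of the local infinitesimal Alexander invariant, we obtain
\[
\dim_\Q \B_{k-2}(\A) \otimes \Q \;=\; \sum_{X \in L_2(\A)} \dim_\Q \B_{k-2}(\A_X) \otimes \Q.
\]

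Then I would invoke Corollary \ref{cor:chen-arr} (the consequence of $1$-formality of arrangement groups), applied both to $G(\A)$ and to each $G(\A_X)$, to convert these dimensions into Chen ranks. This gives $\theta_k(G(\A)) = \sum_{X \in L_2(\A)} \theta_k(G(\A_X))$. For a flat $X$ with $\mu(X) = 1$, the group $G(\A_X) \cong F_1 \times \Z \cong \Z^2$ is abelian, so $\theta_k(G(\A_X)) = 0$ for $k \ge 2$; hence the sum may be restricted to $\widetilde{L}_2(\A)$. Finally, for $X \in \widetilde{L}_2(\A)$, formula \eqref{eq:chen-gx} yields $\theta_k(G(\A_X)) = (k-1)\binom{\mu(X)+k-2}{k}$, and summing over $\widetilde{L}_2(\A)$ produces the stated closed form.

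There is no genuine obstacle: the heavy lifting was done in Theorem \ref{thm:decomp-alex}\eqref{b2}, which upgrades $\Q$-decomposability of $\A$ to $\Q$-decomposability of $\B(\A)$, thereby turning the inequality in \eqref{eq:chain} into an equality. The only minor point to verify carefully is that the local contributions corresponding to $\mu(X) = 1$ vanish for $k \ge 2$, so that the sum over $L_2(\A)$ coincides with the sum over $\widetilde{L}_2(\A)$ appearing in the statement.
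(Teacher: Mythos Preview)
Your proposal is correct and follows essentially the same approach as the paper's own proof: both replace the inequality in the chain \eqref{eq:chain} from Corollary \ref{cor:chen-lub} by an equality, using Theorem \ref{thm:decomp-alex}\eqref{b2} to upgrade the surjection $\bar\Pi\otimes\Q$ to an isomorphism, and then read off the Chen ranks via Corollary \ref{cor:chen-arr} and \eqref{eq:chen-gx}. Your additional remarks on the $\theta_1$ case and the vanishing of the $\mu(X)=1$ contributions are accurate bookkeeping that the paper leaves implicit.
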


\begin{proof}
The proof is similar to that of Corollary \ref{cor:chen-lub}. 
The only difference occurs on the second line of display \eqref{eq:chain},
where we need to replace the inequality provided by 
Theorem \ref{thm:b-bloc-q}, part \eqref{pi1}
with the equality provided by Theorem \ref{thm:decomp-alex}, 
part \eqref{b2}.
\end{proof}

\section{Cohomology jump loci and decomposability}
\label{sect:cjl-decomp}

In this section, we briefly review the  (degree $1$) resonance and characteristic 
varieties of arrangements, and then describe those varieties in a decomposable setting.

\subsection{Characteristic varieties}
\label{subsec:cvs}
Let $G$ be a finitely generated group. 
The character group, $\TT_G=\Hom(G,\C^*)$, is an abelian, 
complex  algebraic group, with identity $\bo$ the trivial representation. The  
coordinate ring of $\TT_G$ is the group algebra $\C[G_{\ab}]$;  
thus, we may identify $\TT_G$ with $\Spec(\C[G_{\ab}])$, 
the maximal spectrum of this $\C$-algebra. Since each 
character $\rho\colon G\to \C^*$ factors through the abelianization 
$G_{\ab}$, the map $\ab\colon G\surj G_{\ab}$ induces an isomorphism, 
$\ab^*\colon \TT_{G_{\ab}} \isom \TT_{G}$.  

Let $X$ be a connected CW-complex with finite $1$-skeleton 
and with $\pi_1(X)=G$. Upon identifying a point 
$\rho \in \TT_G=H^1(X;\C^*)$ with a rank one local 
system $\C_{\rho}$ on $X$, we define for each $s\ge 1$ the 
{\em depth $s$ characteristic variety}\/ of $X$ as 
\begin{equation}
\label{eq:cvar}
\VV_s(X)\coloneqq \{ \rho \in H^1(X;\C^*) \mid  
\dim_{\C} H_1(X; \C_{\rho}) \ge s \} .  
\end{equation}
Clearly, $\bo\in \VV_s(X)$ if and only if $b_1(X)\ge s$. 
Since a classifying space $K(G,1)$ may be constructed by attaching to $X$ 
cells of dimension $3$ and higher, it is straightforward to verify that  $\VV_s(X)$ 
coincides with $\VV_s(G)\coloneqq \VV_s(K(G,1))$, for all $s\ge 1$. 

Every group homomorphism $\alpha\colon G\to H$ 
induces a morphism between character groups, 
$\alpha^*\colon \TT_H\to \TT_G$, 
given by $\alpha^* (\rho)(g)=\alpha(\rho(g))$.   
Now suppose $G$ is finitely generated and 
$\alpha$ is surjective. Then the morphism $\alpha^*$ 
is injective, and it restricts to an embedding 
$\VV_s(H) \inj \VV^1_s(G)$, for each $s\ge 1$.

It has been known since the work of Libgober \cite{Li92} and 
E.~Hironaka \cite{Hi97} that the characteristic and Alexander 
varieties of spaces and groups are intimately related. We record 
here the result we shall need; a proof valid in all depths $s\ge 1$ 
was given in \cite{Su-abexact}. 

\begin{theorem}
\label{thm:cvb}
Let $G$ be a finitely generated group. Then, for all $s\ge 1$, 
\begin{equation}
\label{eq:cvar-ann}
\V_s(G)=\supp \big(\bwedge^k B(G)\otimes \C \big) ,
\end{equation}
at least away from the identity $\bo\in \T_{G}$.
\end{theorem}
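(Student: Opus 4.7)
The plan is to reduce the statement to a standard fact about supports of exterior powers of finitely generated modules, once one identifies twisted first homology with a specialization of the Alexander invariant.

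Let $X=K(G,1)$, chosen to have a single $0$-cell (so $\pi_1(X)=G$), and consider the maximal abelian cover $q\colon X^{\ab}\to X$, as in \S\ref{subsec:top-int}. Then $C_*(X^{\ab};\Z)$ is a chain complex of free modules over $R=\Z[G_{\ab}]$, and the Alexander invariant is $B(G)=H_1(X^{\ab};\Z)$. For any character $\rho\in \T_G$, the twisted homology is computed by $H_*(X;\C_\rho)=H_*(C_*(X^{\ab};\Z)\otimes_R \C_\rho)$, where $\C_\rho$ is the $R_\C\coloneqq \C[G_{\ab}]$-module $R_\C/\mathfrak{m}_\rho$.

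The key computation I would carry out is the identification
\begin{equation*}
H_1(X;\C_\rho)\cong B(G)\otimes_R \C_\rho \qquad \text{for all } \rho\ne \bo.
\end{equation*}
To see this, observe that since $\rho\ne \bo$, there is some $g\in G_{\ab}$ with $\rho(g)\ne 1$; hence the augmentation ideal $I=I(G_{\ab})$ acts on $\C_\rho$ as the full ring, forcing $H_0(X;\C_\rho)=\Z\otimes_R\C_\rho=0$. Passing to the truncated complex $C_2(X^{\ab})\to C_1(X^{\ab})\to \ker(\varepsilon)=I$ and using that $I\otimes_R \C_\rho\to \C_\rho$ is surjective, the displayed identification follows from a direct diagram chase; equivalently, the hyperhomology spectral sequence of $C_*(X^{\ab};\Z)\otimes_R\C_\rho$ collapses in low degree because $\Tor^R_i(\Z,\C_\rho)$ is supported only at $\bo$.

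Next, I would invoke the module-theoretic principle that, for any finitely generated module $M$ over a Noetherian $\C$-algebra $R_\C$ and for every closed point $\mathfrak{m}\in \Spec(R_\C)$ with residue field $\C$,
\begin{equation*}
\dim_{\C}(M/\mathfrak{m} M)\ge s
\iff \bwedge^{\!s}(M/\mathfrak{m}M)\ne 0
\iff (\bwedge^{\!s}M)_\mathfrak{m}\ne 0
\iff \mathfrak{m}\in \supp(\bwedge^{\!s}M),
\end{equation*}
using that the exterior power commutes with base change, together with Nakayama's lemma. Applying this to $M=B(G)\otimes \C$—which is finitely generated over $R_\C$ by Lemma~\ref{lem:b-fg}—and identifying the closed point $\mathfrak{m}_\rho\in \Spec(R_\C)$ with the character $\rho\in \T_G$, I would conclude that, away from $\bo$,
\begin{equation*}
\rho\in \V_s(G)\iff \dim_\C H_1(X;\C_\rho)\ge s\iff \dim_\C B(G)\otimes_R\C_\rho\ge s\iff \rho\in \supp(\bwedge^{\!s}B(G)\otimes\C).
\end{equation*}

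The only non-routine step is the first one; everything else is standard commutative algebra. The main obstacle is pinning down the Tor contributions so that the identification $H_1(X;\C_\rho)\cong B(G)\otimes_R\C_\rho$ is truly valid for all $\rho\ne\bo$ and not merely generically, but this is handled by the observation that the only closed point where $\Tor^R_*(\Z,-)$ can interfere is $\bo$ itself, which is excluded from the statement.
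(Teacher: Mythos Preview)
The paper does not actually prove this theorem; it records it as a known result, attributing the case of general depth $s$ to \cite{Su-abexact} and earlier instances to Libgober and Hironaka. Your argument is the standard one and is correct: the key identification $H_1(X;\C_\rho)\cong (B(G)\otimes\C)\otimes_{R_\C}\C_\rho$ for $\rho\ne\bo$ follows from the universal-coefficient spectral sequence once you know $\Tor^{R_\C}_i(\C,\C_\rho)=0$ for $\rho\ne\bo$ (which holds because this Tor is annihilated by the two distinct maximal ideals $I_\C$ and $\mathfrak{m}_\rho$, hence by all of $R_\C$), and the passage from fiber dimension to support of exterior powers is exactly the Nakayama/base-change argument you sketch. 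One small point worth tightening: your ``truncated complex'' description is a bit garbled (the image of $\partial_1$ is $I$, not its kernel), but the spectral-sequence formulation you give immediately afterwards is the clean way to say it and suffices.
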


Part of the importance of the characteristic varieties lies in the fact that 
they control the Betti numbers of regular, finite abelian covers $p\colon Y\to X$.  
For instance, suppose that the deck-trans\-formation group is 
cyclic of order $N$, classified by an 
epimorphism $\chi \colon G=\pi_1(X)\surj \Z_N$. 
The induced morphism between character groups, 
$\chi^*\colon \T_{\Z_N}\to \T_{G}$, is injective, 
and so $\im(\chi^*)\cong \Z_N$. 
A result proved in various levels of generality in 
\cite{Li92, Sa95, Hi97, MS02, DeS-plms} now shows that 
\label{eq:b1-cover}
\begin{equation}
b_1(Y)=\sum_{s\ge 1}\abs{\im(\chi^*) \cap  \V_s(X)}.
\end{equation}

\subsection{Resonance varieties}
\label{subsec:res}

Let $G$ be a group, and let $H^{*}=H^{*}(G;\C)$ be its cohomology 
algebra over $\C$.  For our purposes here, we will only consider 
the truncated algebra $H^{\le 2}$; moreover, 
we will assume that $b_1(G)=\dim_{\C} H^1$ is finite.
For each element $a\in H^1$, we have $a^2=0$, 
and so left-multiplication by $a$ defines a cochain complex, 
\begin{equation}
\label{eq:aomoto}
\begin{tikzcd}[column sep=26pt]
(H , \delta_a)\colon  \ 
H^0\ar[r, "\delta^0_a"] & H^1\ar[r, "\delta^1_a"]& H^2 , 
\end{tikzcd}
\end{equation}
with differentials $\delta^i_a(u)=a\cdot u$ for $u\in H^i$. 
The  resonance varieties witness the extent to which this 
complex fails to be exact in the middle. More precisely, for each $s\ge 1$, 
the {\em depth $s$ resonance variety}\/ of $G$ is defined as 
\begin{equation}
\label{eq:rv}
\RR_s(G)\coloneqq \{a \in H^1 \mid \dim_{\C} H^1(H, \delta_a) \ge s\}.
\end{equation}

These sets are homogeneous algebraic subvarieties of the affine space 
$H^1=H^1(G;\C)$. Clearly, $\bz\in \RR_k(G)$ 
if and only if $s\le b_1(G)$; in particular, $\RR_1(G)= \emptyset $ 
if and only if $b_1(G)=0$.  Furthermore, we have a descending filtration, 
\begin{equation}
\label{eq:res-strat}
H^1(G;\C) \supseteq \RR_1(G)  \supseteq \RR_2(G)  \supseteq \cdots 
 \supseteq \RR_{b}(G) \supseteq \RR_{b+1}(G)=\emptyset  ,
\end{equation}
where $b=b_1(G)$.  

The next result identifies the depth-$s$ resonance variety of a group as the 
support locus of the $s$-th exterior power of its infinitesimal Alexander invariant. 

\begin{theorem}[\cite{DPS-serre, DP-ann, Su-abexact}]
\label{thm:res-supp}
Let $G$ be a group with $b_1(G)<\infty$. Then 
\begin{equation}
\label{eq:res-supp}
\RR_s(G) = \supp \big( \bwedge^s \B(G)\otimes \C\big) 
\end{equation}
for all $s\ge 1$, at least away from $\bz\in H^1(G;\C)$.
\end{theorem}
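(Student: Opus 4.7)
The plan is to identify both $\RR_s(G)$ and $\supp(\bwedge^s \B(G) \otimes \C)$ with the locus $\{a \in H^1(G;\C) : \dim_{\C} \B(G)(a) \ge s\}$, where $\B(G)(a) := \B(G) \otimes_S \C_a$ is the fiber at the closed point $a$ of $\Spec(S)$. Setting $V = H_1(G;\C)$, identify $S \otimes \C = \Sym(V)$ with the coordinate ring of $V^* = H^1(G;\C)$. The standard Fitting-ideal description of support gives, for any finitely generated $S$-module $M$ and any closed point $a$, the equivalence $a \in \supp(\bwedge^s M) \Leftrightarrow \dim_{\C} M(a) \ge s$. Applied to $M = \B(G)\otimes\C$, the theorem reduces to the pointwise identity $\dim \B(G)(a) = \dim H^1(H^{\le 2},\delta_a)$ for every $a \in V^* \setminus \{\bz\}$.

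To compute the fiber $\B(G)(a)$, I first need an explicit $S$-module presentation of $\B(G)\otimes\C$. Using the quadratic presentation of $\h(G;\C) = \Lie(V)/\text{ideal}(\im \nabla_G)$ from Section~\ref{subsec:holo-lie} together with the module structure on $\h(G)'/\h(G)''$ from Section~\ref{subsec:inf-alex-inv} (this is essentially \cite[Prop.~9.3]{PS-imrn04}), one obtains an exact sequence of graded $S$-modules,
\[
\bwedge^3 V \otimes S \oplus H_2(G;\C) \otimes S \xrightarrow{\;(d_3,\,\nabla_G\otimes \id)\;} \bwedge^2 V \otimes S \longrightarrow \B(G)\otimes\C \longrightarrow 0,
\]
where $d_3$ is the third Koszul differential and $\nabla_G$ is the comultiplication. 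Specializing at $a\in V^*$ turns this into the cokernel presentation $\B(G)(a) = \coker\bigl(\iota_a + \nabla_G:\,\bwedge^3 V \oplus H_2(G;\C)\to \bwedge^2 V\bigr)$, where $\iota_a$ denotes contraction with $a$.

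For the dimension counts, fix $a \ne 0$ and choose $v_a\in V$ with $a(v_a)=1$, so $V = \C v_a \oplus W$ with $W=\ker a$. A direct check using this splitting yields $\im(\iota_a \colon \bwedge^3 V \to \bwedge^2 V) = \bwedge^2 W = \ker(\iota_a \colon \bwedge^2 V \to V)$, from which
\[
\dim_\C \B(G)(a) = \bigl(b_1(G)-1\bigr) - \dim_\C \iota_a\bigl(\nabla_G H_2(G;\C)\bigr).
\]
On the Aomoto side, the perfect pairing $\langle a\wedge u,\,v_1\wedge v_2\rangle = u\bigl(\iota_a(v_1\wedge v_2)\bigr)$ between $V^* \wedge V^*$ and $\bwedge^2 V$, combined with the defining identity $\nabla_G = (\cup_G)^{\vee}$, gives $\ker \delta^1_a = \bigl(\iota_a\nabla_G H_2(G;\C)\bigr)^{\perp}$; together with $\im \delta^0_a = \C a$ (valid for $a\ne 0$), this yields
\[
\dim_\C H^1(H^{\le 2},\delta_a) = \bigl(b_1(G)-1\bigr) - \dim_\C \iota_a\bigl(\nabla_G H_2(G;\C)\bigr).
\]
The two expressions agree, and feeding the resulting identity back into the first paragraph completes the proof.

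The main obstacle is the $S$-module presentation of $\B(G) \otimes \C$ used in the second paragraph. Although $\h(G;\C)$ is quadratic by construction, extracting a finite presentation of $\h'/\h''$ as an $S$-module with precisely this form---generators $\bwedge^2 V$, relations coming from the Koszul differential $d_3$ (which encodes Jacobi) and from $\nabla_G(H_2) \subseteq \bwedge^2 V$---requires building the initial terms of a Chen-style free resolution over $U(\h(G;\C))$ and transferring to $S$ via PBW. Once this presentation is in hand, the remainder is straightforward linear algebra adapted to any splitting $V = \C v_a \oplus \ker a$; the restriction to $a \ne \bz$ is unavoidable only because $\im \delta^0_a$ collapses at $a = \bz$, which is precisely the reason for the caveat ``away from $\bz$'' in the statement.
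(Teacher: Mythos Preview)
The paper does not prove Theorem~\ref{thm:res-supp}; it is quoted from \cite{DPS-serre, DP-ann, Su-abexact} and used as a black box. So there is no ``paper's own proof'' to compare against here, and your proposal should be judged on its own merits.

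Your argument is correct and is essentially the one given in the cited sources. The three ingredients---(i) the equivalence $a\in\supp(\bwedge^s M)\Leftrightarrow \dim_{\C}M(a)\ge s$ via Nakayama, (ii) the presentation
\[
\bwedge^3 V\otimes S\;\oplus\; H_2(G;\C)\otimes S \xrightarrow{(d_3,\;\nabla_G\otimes\id)} \bwedge^2 V\otimes S \longrightarrow \B(G)\otimes\C\longrightarrow 0,
\]
and (iii) the duality computation identifying $\ker\delta^1_a$ with $(\iota_a\nabla_G H_2)^{\perp}$---are exactly the steps used in \cite{Su-abexact} (and implicitly in \cite{DPS-serre, DP-ann}). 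Your specialization and the two dimension counts are accurate; the exactness of the length-two Koszul complex at a nonzero linear form is precisely what forces the restriction $a\ne\bz$.

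One small comment: the presentation in (ii) does not really need the Chen resolution over $U(\h(G;\C))$ and PBW that you describe as the ``main obstacle.'' It follows more directly from the observation that $\B(F_n)\otimes\C=\Lie'(V)/\Lie''(V)\cong\coker(d_3)$ (cf.\ Example~\ref{ex:alex-free-bis} and the surrounding discussion), together with the fact that passing from $\Lie(V)$ to $\h(G;\C)=\Lie(V)/\langle\im\nabla_G\rangle$ simply kills the $S$-submodule generated by $\im\nabla_G\subset\bwedge^2 V$. Also, \cite[Prop.~9.3]{PS-imrn04} is about comparing $\B(G)$ with the linearization of $B(G)$ for commutator-relators groups, not about the presentation itself; the presentation holds for any $G$ with $b_1(G)<\infty$ by the argument just sketched.
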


Since $\B(G)=\h'(G)/\h''(G)$, we infer from Theorem \ref{thm:res-supp} that the resonance 
varieties of $G$ only depend on the holonomy Lie algebra $\h(G)$. More precisely, 
let $G_1$ and $G_2$ be two groups with finite first Betti number, and suppose 
that $\h(G_1; \C)\cong \h(G_2;\C)$, as graded Lie algebras. 
There is then a linear isomorphism, $H^1(G_1;\C) \cong H^1(G_2;\C)$, 
restricting to isomorphisms $\RR_s(G_1)\cong \RR_s(G_2)$ for all $s\ge 1$.

We shall identify the tangent space to 
the character group $\T_G=H^1(G;\C^*)$ with the linear space $H^1(G;\C)$, 
and we will denote by $\TC_{\bo}(\V_s(G))$ the tangent cone at the identity 
$\bo$ to the characteristic variety $\V_s(G)$. 
It is known that $\TC_{\bo}(\V_s(G))$ is always a (homogeneous) subvariety 
of the resonance variety $\RR_s(G)$. The basic relationship between the 
characteristic and resonance varieties in the $1$-formal setting is 
encapsulated in the ``Tangent Cone formula" from \cite[Theorem~A]{DPS-duke}, 
which we recall next.

\begin{theorem}[\cite{DPS-duke}]
\label{thm:tcone}
If $G$ is a $1$-formal group, then $\TC_{\bo}(\VV_s(G))=\RR_s(G)$, 
for all $s\ge 1$.
\end{theorem}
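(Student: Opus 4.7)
The plan is to combine Theorems \ref{thm:cvb} and \ref{thm:res-supp} (which identify $\V_s(G)$ and $\RR_s(G)$ with supports of exterior powers of $B(G)\otimes\C$ and $\B(G)\otimes\C$, respectively) with the filtered isomorphism furnished by $1$-formality in Theorem \ref{thm:linalex-c}, together with the general principle that tangent cones of support loci are computed by associated gradeds.

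First I would set up the algebra. Let $R=\C[G_{\ab}]$ with augmentation ideal $I=I(G_{\ab})\otimes\C$, and note that $\gr_I(R)\cong \Sym(H_1(G;\C))=\mathcal{O}(H^1(G;\C))$. The exponential map identifies the formal neighborhood of $\bo\in\T_G$ with that of $\bz\in H^1(G;\C)$, yielding an isomorphism $\widehat{R}\cong\widehat{\Sym(H_1(G;\C))}$ that sends the $I$-adic filtration to the filtration by powers of the maximal ideal at $\bz$. By Theorem \ref{thm:cvb}, the variety $\V_s(G)$ is set-theoretically the support of the finitely presented module $\bwedge^s B(G)\otimes\C$ over $R$ (away from $\bo$). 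Choosing a minimal free presentation adapted to the $I$-adic filtration, the zeroth Fitting ideal $\Fitt_0(\bwedge^s B(G)\otimes\C)$ cuts out $\V_s(G)$ formally near $\bo$, and the tangent cone $\TC_{\bo}(\V_s(G))$ is the zero set of the initial forms of these Fitting generators, which coincides set-theoretically with $\supp\bigl(\bwedge^s\gr_I(B(G))\otimes\C\bigr)$.

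Next I would invoke $1$-formality. By Corollary \ref{cor:linalex-gr}, which is extracted from Theorem \ref{thm:linalex-c}, there is an isomorphism $\gr_I(B(G))\otimes\C\cong\B(G)\otimes\C$ of graded $\Sym(H_1(G;\C))$-modules. Taking $s$-th exterior powers and applying Theorem \ref{thm:res-supp} then yields
\[
\TC_{\bo}(\V_s(G))\;=\;\supp\bigl(\bwedge^s\gr_I(B(G))\otimes\C\bigr)\;=\;\supp\bigl(\bwedge^s\B(G)\otimes\C\bigr)\;=\;\RR_s(G),
\]
at least set-theoretically away from the respective basepoints. Both sides are closed, conical, and the trivial character/origin lies in each precisely when $b_1(G)\ge s$, so equality propagates globally.

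The main obstacle is the first equality above — the assertion that, for a finitely presented filtered $\widehat{R}$-module $M$, the tangent cone at $\bo$ to $\supp(M)$ equals $\supp(\gr_I M)$ away from $\bo$. Fitting ideals do not in general commute with the passage to initial forms, so some genuine care is needed. The standard workaround is to choose a free presentation of $\widehat{B(G)}\otimes\C$ by filtered maps and observe that its associated graded gives a presentation of $\gr_I(B(G))\otimes\C$; then one compares determinantal ideals of the two presentation matrices entry by entry, using that the leading form of a product is the product of the leading forms whenever no cancellation occurs, which is exactly what the $1$-formal isomorphism $\Phi_G$ of Theorem \ref{thm:linalex-c} guarantees. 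A secondary technicality is checking that exterior powers commute with $\gr_I$ modulo support — this follows because exterior powers can be computed from a presentation and support is detected by $0$-th Fitting ideals.
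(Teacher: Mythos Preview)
The paper does not supply its own proof of this theorem; it simply quotes the result from \cite{DPS-duke}. So there is no ``paper's proof'' to compare against, only the original argument in that reference.

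Your outline assembles the right ingredients---Theorems~\ref{thm:cvb}, \ref{thm:res-supp}, and the filtered isomorphism $\Phi_G$ of Theorem~\ref{thm:linalex-c}---but the step you yourself flag as the obstacle is a genuine gap, and your proposed workaround does not close it. The assertion that $\TC_{\bo}(\supp N)$ coincides with $\supp(\gr_I N)$ fails in general (only the inclusion $\supp(\gr_I N)\subseteq \TC_{\bo}(\supp N)$ is automatic), and passing to $\bwedge^s$ compounds the problem: $\bwedge^s\gr_I(B)$ and $\gr_I(\bwedge^s B)$ are not the same object, and the natural comparison map between them goes the wrong way for the inclusion you need. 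The sentence about ``no cancellation of leading forms, which is exactly what $\Phi_G$ guarantees'' is not correct: $\Phi_G$ is an isomorphism of completed modules, not a statement about the initial forms of minors of a presentation matrix of $\bwedge^s B(G)$.

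The route actually taken in \cite{DPS-duke} avoids associated gradeds altogether. The filtered isomorphism $\Phi_G$ already identifies $\widehat{B(G)}\otimes\C$ with $\widehat{\B(G)}\otimes\C$ as modules over $\widehat{R}\otimes\C\cong\widehat{S}\otimes\C$; since exterior powers commute with completion for finitely generated modules over Noetherian rings, this identifies the \emph{formal germs} $(\V_s(G),\bo)$ and $(\RR_s(G),\bz)$ directly via the exponential map. An isomorphism of germs preserves tangent cones, and $\RR_s(G)$ is homogeneous, hence equal to its own tangent cone at $\bz$. That gives $\TC_{\bo}(\V_s(G))=\RR_s(G)$ in one stroke, without ever invoking initial ideals or Fitting-ideal comparisons. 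If you want to salvage your approach for $s=1$, note that the easy inclusion $\supp(\gr_I B)\subseteq\TC_{\bo}(\supp B)$ together with Corollary~\ref{cor:linalex-gr} and the always-valid containment $\TC_{\bo}(\V_1(G))\subseteq\RR_1(G)$ does force equality; but for $s\ge 2$ the exterior-power issue remains, and the germ argument is the clean way through.
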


\subsection{Cohomology jump loci of arrangements}
\label{subsec:cv-res}
Let $\A$ be a complex hyperplane arrangement.  
Since complement $M=M(\A)$  is a smooth, quasi-projective variety, work of 
Arapura \cite{Ar} shows that the characteristic varieties $\V_s(M)$ are finite 
unions of torsion-translates of algebraic subtori of 
$H^1(M;\C^*)\cong (\C^*)^{\abs{\A}}$.  
Furthermore, the projection map $\pi\colon M\surj \P(M)=U$ 
induces a monomorphism $H^1(U;\C^*)\inj H^1(M;\C^*)$ that restricts to 
isomorphisms $\V_s(U)\isom \V_s(M)$.  Since $M$ is also a formal space, 
each resonance variety $\RR_s(M)$ coincide with the tangent cone 
at the trivial character to $\V_s(M)$, see \cite{CS99, DPS-duke}.  
As shown in \cite{FY}, building on work from \cite{Fa97, LY00}, 
the varieties $\RR_s(M)$ may be described solely in terms of multinets 
on sub-arrangements of $\A$.  In general, though, the varieties $\V_s(M)$
may contain components that do not pass through the origin, see 
\cite{Su02, CDS, DeS-plms}. 

Now let $X\in L_2(\A)$ be a rank-$2$ flat, and let $\A_X$ be localization of $\A$ at $X$. 
Since $\pi_1(U(\A_X))=F_{\mu(X)}$, a standard computation shows that, for each $s\ge 1$, 
\begin{equation}
\label{eq:cv-free}
\V_s(U(\A_X))=\begin{cases}
H^1(U(\A_X);\C^*)\cong (\C^*)^{\mu(X)} &\text{if $s<\mu(X)$}
\\
\bo & \text{if $s=\mu(X)$}
\\
\emptyset & \text{if $s>\mu(X)$},
\end{cases}
\end{equation}
and likewise for $\RR_s(U(\A_X))=\TC_{\bo}(\V_s(U(\A_X)))$. 

Consider now the inclusion $j_X\colon M(\A)\inj M(\A_X)$ and the 
induced monomorphism of complex algebraic groups, 
$j_X^*\colon H^1(M(\A_X));\C^*)\inj H^1(M(\A));\C^*)$. 
If $\mu(X)>s$, the map $j_X^*$ restricts to an isomorphism from 
$\V_s(M(\A_X))\cong (\C^*)^{\mu(X)}$ 
to the subtorus $T_X\subset H^1(M(\A);\C^*)$ given by
\begin{equation}
\label{eq:tx}
T_X = \Big\{ t : \prod_{H_i\in \A_X} t_i =1 \ \text{and $t_i= 1$ if  $H_i\notin \A_X$} \Big\} .
\end{equation}
Likewise, the map $j_X^*\colon H^1(M(\A_X));\C)\inj 
H^1(M(\A));\C)$ restricts to an isomorphism from $\RR_s(M(\A_X))\cong \C^{\mu(X)}$ 
to the linear subspace $L_X\subset H^1(M(\A);\C)$ given by
\begin{equation}
\label{eq:lx}
L_X = \Big\{ x : \sum_{H_i\in \A_X} x_i =0 \ \text{and $x_i= 0$ if $H_i\notin \A_X$} \Big\} .
\end{equation}

\subsection{Cohomology jump loci of decomposable arrangements}
\label{subsec:cvs-decomp}

The next result gives a complete description of the irreducible components of the 
resonance and characteristic varieties of $\Q$-decomposable arrangements, 
under a separation assumption on the rationalized Alexander invariant for the 
latter varieties.

\begin{theorem}
\label{thm:decomp-cjl}
Let $\A$ be a $\Q$-decomposable arrangement, with complement $M=M(\A)$. 
For each $s\ge 1$, 
the following hold.
\begin{enumerate}[itemsep=3pt, topsep=-1pt]
\item  \label{cj1} 
$\RR_s(M)=\bigcup_{\substack{X \in L_2(\A)\\ \mu(X)>s}}   L_X$.
\item  \label{cj2} 
If $B(\A)\otimes \Q$ is separated, then 
$\V_s(M)=\bigcup_{\substack{X \in L_2(\A)\\ \mu(X)>s}}T_X$.
\end{enumerate}
\end{theorem}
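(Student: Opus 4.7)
My plan is to deduce both statements by reducing them to the decomposability of the appropriate Alexander invariant, and then carrying out a support-of-exterior-power computation in which cross-terms vanish because the local components of $\A$ sit in pairwise essentially disjoint subspaces (respectively subtori).

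For part \eqref{cj1}, I will first invoke Theorem \ref{thm:decomp-alex}\eqref{b2}, which, under the $\Q$-decomposability of $\A$, yields an isomorphism of $S\otimes\Q$-modules
$$
\B(\A)\otimes\Q \;\cong\; \bigoplus_{X\in L_2(\A)}\B(\A_X)_{j^X_{\sharp}}\otimes\Q.
$$
Extending scalars to $\C$ and taking $s$-th exterior powers over $S\otimes\C$ converts this into a direct sum of tensor products of exterior powers of the individual summands. Theorem \ref{thm:res-supp} then identifies $\RR_s(M)$, away from the origin, with the support of this module. The key observation is that for distinct $X,Y\in L_2(\A)$, the subspaces $L_X$ and $L_Y$ of $H^1(M;\C)$ intersect only at $\bz$: the defining conditions in \eqref{eq:lx} force $x_H=0$ for every $H\notin \A_X\cap \A_Y$, and the at most one shared coordinate is killed by the trace relation $\sum_{H\in\A_X}x_H=0$. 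Since the support of a tensor product of finitely generated modules over a Noetherian ring equals the intersection of the supports, any term of the exterior-power decomposition in which two or more factors carry positive exterior degree contributes only at $\bz$ and can be discarded. The surviving terms are those partitions $(s_X)$ with $s_{X_0}=s$ concentrated at a single flat $X_0$; each contributes $\supp(\bigwedge^s \B(\A_{X_0})_{j^{X_0}_{\sharp}}\otimes\C)$, which by naturality of support under restriction of scalars along the embedding $j_{X_0}^*\colon H^1(M(\A_{X_0});\C)\inj H^1(M;\C)$ equals the image of $\RR_s(M(\A_{X_0}))$. By \eqref{eq:cv-free} and the identification following \eqref{eq:lx}, this image is $L_{X_0}$ when $\mu(X_0)>s$ and is contained in $\{\bz\}$ otherwise, which gives the desired formula.

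Part \eqref{cj2} follows exactly the same template. The separation hypothesis together with Theorem \ref{thm:decomp-alex}\eqref{b3} now provides the $\Q$-decomposition $B(\A)\otimes\Q\cong \bigoplus_X B(\A_X)_{j^X_{\sharp}}\otimes\Q$; Theorem \ref{thm:cvb} substitutes for Theorem \ref{thm:res-supp}, and each $L_X$ is replaced by the subtorus $T_X$ of \eqref{eq:tx}. The triviality $T_X\cap T_Y=\{\bo\}$ for $X\ne Y$ follows from the multiplicative analogue of the same linear relations. One then applies \eqref{eq:cv-free} at the level of characteristic varieties in the same way as above to identify each surviving contribution with $T_X$ precisely when $\mu(X)>s$.

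The main obstacle I expect is the support bookkeeping at the interface between ``support of a direct sum'' and ``support of an exterior power''. Specifically, I must verify that the exterior power over $S\otimes\C$ (respectively $R\otimes\C$) of a direct sum of finitely generated modules decomposes into the expected sum of tensor products, and that for distinct flats such a tensor product is annihilated off the origin. Finite generation of the relevant modules, required for the support-of-tensor-product identity, is supplied by Lemma \ref{lem:b-fg} and its obvious infinitesimal analogue. Once that and the combinatorial intersection facts $L_X\cap L_Y=\{\bz\}$, $T_X\cap T_Y=\{\bo\}$ are in place, the rest is a formal diagram chase; the ``away from $\bz$'' caveat of Theorems \ref{thm:cvb} and \ref{thm:res-supp} is harmless because both sides of the asserted equalities trivially contain the origin whenever they are nonempty.
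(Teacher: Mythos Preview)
Your proposal is correct and follows essentially the same route as the paper: invoke Theorem~\ref{thm:decomp-alex}\eqref{b2} (resp.~\eqref{b3}) to decompose $\B(\A)\otimes\C$ (resp.~$B(\A)\otimes\C$), then apply Theorem~\ref{thm:res-supp} (resp.~Theorem~\ref{thm:cvb}) to read off the jump loci as supports of exterior powers, and identify each local contribution via \eqref{eq:cv-free}--\eqref{eq:lx}.

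The one place where you go further than the paper is the middle step: the paper simply asserts that
\[
\supp\!\Big(\bwedge^s \bigoplus_{X} \B(\A_X)_{j^X_\sharp}\otimes\C\Big)
=\bigcup_{X} j_X^*\!\Big(\supp\big(\bwedge^s \B(\A_X)\otimes\C\big)\Big),
\]
whereas you justify it by expanding $\bigwedge^s$ of the direct sum, using $\supp(M\otimes N)=\supp(M)\cap\supp(N)$ for finitely generated modules, and killing the cross-terms via the combinatorial fact $L_X\cap L_Y=\{\bz\}$ (resp.~$T_X\cap T_Y=\{\bo\}$) for distinct rank-$2$ flats. That argument is sound: two distinct rank-$2$ flats share at most one hyperplane, and the trace relation in \eqref{eq:lx} then forces the remaining coordinate to vanish. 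So your version is the same proof with the bookkeeping made explicit.

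One small caveat: your handling of the origin (``both sides trivially contain the origin whenever they are nonempty'') does not quite cover the degenerate case where the right-hand union is empty but $s\le b_1(M)$, so that $\bz\in\RR_s(M)$. The paper's proof has the same imprecision, since Theorems~\ref{thm:cvb} and~\ref{thm:res-supp} are stated only away from the identity; in practice one either interprets the equality away from $\bz$ (resp.~$\bo$) or adds the singleton by convention.
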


\begin{proof}
By Theorem \ref{thm:res-supp}, we have that $\RR_s(M)=\supp(\bwedge^s \B(\A)\otimes \C)$.
Likewise, for each $X\in L_2(\A)$, we have that 
$\RR_s(M(\A_X))=\supp(\bwedge^s \B(\A_X)\otimes \C)$, and this variety is either included 
in $\{\bz\}$ if $\mu(X)\le s$ or is isomorphic to $L_X$ under the embedding 
$j_X^*\colon H^1(M(\A_X);\C)\inj H^1(M;\C)$ if $\mu(X)>s$. 

Now, since $\A$ is assumed to be $\Q$-decomposable, 
it follows from Theorem \ref{thm:decomp-alex}, part \eqref{b2} 
that $\B(\A)\otimes \C\cong \B(\A)^{\loc}\otimes \C$, where recall 
$\B(\A)^{\loc}=\bigoplus_{X\in L_2(\A)} \B(\A_X)_{\bar{j}^X_*}$. Therefore, 
\begin{gather}
\label{eq:rs-dec}
\begin{aligned}
\RR_s(M)
&=\supp\big(\bwedge^s \B(\A)^{\loc}\otimes \C\big)\\
&=\bigcup_{X\in L_2(\A)} j^*_X\Big(\supp \big(\bwedge^s \B(\A_X)\otimes \C\big)\Big)\\
&=\bigcup_{\substack{X \in L_2(\A)\\ \mu(X)> s}} L_X,
\end{aligned}
\end{gather}
and this proves part \eqref{cj1}. The proof of part \eqref{cj2} is entirely similar, 
using Theorem \ref{thm:cvb} as the initial step and then 
Theorem \ref{thm:decomp-alex}, part \eqref{b3}.
\end{proof}

If Question \ref{quest:B-separated}, part \eqref{q2} were to have a positive answer, 
then we could dispense with the separation assumption in part \eqref{cj2} 
of the above theorem. {\it A priori}, though, if $\A$ is $\Q$-decomposable but 
$B(\A)\otimes \Q$ is not separated, the characteristic variety $\V_1(M)$ may 
have irreducible components that do not pass through the identity of $H^1(M;\C^*)$.  

\subsection{Chen ranks and resonance}
\label{subsec:CRC}

As an application of the above theorem and of Corollary \ref{eq:chen-decomp},  
we now show that the Chen Ranks conjecture from \cite{Su01} holds in the 
$\Q$-decom\-posable setting, in the sharpest possible range.

\begin{corollary}
\label{eq:chenranks-decomp}
Let $\A$ be a $\Q$-decomposable arrangement. 
For each $r\ge 2$, let $h_r$ be the number of irreducible components of 
$\RR_1(M(\A))$ of dimension $r$. Then 
\[
\theta_k(G(\A)) = \sum_{r\ge 2} h_r  \theta_k(F_r), 
\]
for all $k\ge 2$.
\end{corollary}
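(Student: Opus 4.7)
The plan is to read off the irreducible components of $\RR_1(M)$ from Theorem~\ref{thm:decomp-cjl}\eqref{cj1}, count them by dimension, and then match the resulting sum with the Chen-rank formula of Corollary~\ref{eq:chen-decomp} after rewriting it using the Chen ranks of free groups from Example~\ref{ex:free group}.

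First, since $\A$ is $\Q$-decomposable, Theorem~\ref{thm:decomp-cjl}\eqref{cj1} gives
\[
\RR_1(M(\A))=\bigcup_{X\in \widetilde{L}_2(\A)} L_X,
\]
where $\widetilde{L}_2(\A)=\{X\in L_2(\A):\mu(X)>1\}$. Each $L_X\subset H^1(M;\C)=\C^{\abs{\A}}$ is a linear (hence irreducible) subspace of dimension $\mu(X)$, because it is cut out from the $(\mu(X)+1)$-dimensional coordinate subspace $\{x_i=0:H_i\notin\A_X\}$ by the single linear condition $\sum_{H_i\in\A_X}x_i=0$.

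Next I will verify that the $L_X$ are precisely the irreducible components of $\RR_1(M)$. For distinct $X,Y\in L_2(\A)$ one has $\abs{\A_X\cap\A_Y}\le 1$, since two hyperplanes determine a unique rank-$2$ flat; a vector in $L_X\cap L_Y$ is then supported on at most one coordinate, and the vanishing of the sum $\sum_{H_i\in\A_X}x_i$ forces it to be zero. Thus $L_X\cap L_Y=\{\bz\}$, and since $\dim L_X\ge 2$, no $L_X$ is contained in any other $L_Y$. Consequently $\{L_X\}_{X\in\widetilde{L}_2(\A)}$ is exactly the set of irreducible components of $\RR_1(M)$, and the count of $r$-dimensional components is
\[
h_r=\abs{\{X\in L_2(\A):\mu(X)=r\}}.
\]

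Finally, Corollary~\ref{eq:chen-decomp} and Example~\ref{ex:free group} combine to give, for $k\ge 2$,
\[
\theta_k(G(\A))
= (k-1)\sum_{X\in \widetilde{L}_2(\A)}\binom{\mu(X)+k-2}{k}
= \sum_{r\ge 2}h_r\,(k-1)\binom{r+k-2}{k}
= \sum_{r\ge 2}h_r\,\theta_k(F_r),
\]
which is the asserted identity. There is no real obstacle here: the entire argument is a bookkeeping step once Theorem~\ref{thm:decomp-cjl}\eqref{cj1} and Corollary~\ref{eq:chen-decomp} are available. The only point that deserves care is the transversality calculation identifying the $L_X$ as pairwise-disjoint-away-from-$\bz$ components, so that the combinatorial quantity $h_r$ really does equal $\#\{X:\mu(X)=r\}$.
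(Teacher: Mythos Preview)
Your proof is correct and follows essentially the same route as the paper: invoke Theorem~\ref{thm:decomp-cjl}\eqref{cj1} to identify the components of $\RR_1(M)$ with the $L_X$, count them by dimension to get $h_r=\#\{X:\mu(X)=r\}$, and then rewrite the Chen-rank formula of Corollary~\ref{eq:chen-decomp} accordingly. Your explicit transversality check that $L_X\cap L_Y=\{\bz\}$ for $X\ne Y$ is a detail the paper leaves implicit; it is a welcome addition rather than a departure.
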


\begin{proof}
It follows from Theorem \ref{thm:decomp-cjl}, part \eqref{cj1} that the 
decomposition into irreducible components of $\RR_1(M(\A))$ is of the form 
\begin{equation}
\label{eq:r1-dec}
\RR_1(M(\A))=\bigcup_{\substack{X \in L_2(\A)\\ \mu(X)> 1}} L_X,
\end{equation}
where each component $L_X$ is a $\C$-linear 
subspace of $H^1(M(\A);\C)$ of dimension $\mu(X)>1$. 
Hence, $h_r=\#\{ X\in L_2(\A) : \mu(X)=r\}$, for all $r\ge 2$.

Now, for each $k\ge 2$, we have
\begin{gather}
\label{eq:chen-loc}
\begin{aligned}
\theta_k(G(\A)) &= \sum_{X \in L_2(\A)} \theta_k(G(\A_X)) 
&&\text{by Corollary \ref{eq:chen-decomp}}\\
&=\sum_{r\ge 2} \sum_{\substack{X \in L_2(\A)\\ \mu(X)=r}}  \theta_k(F_r)
&&\text{since $G(\A_X)\cong F_{\mu(X)}\times \Z$}\\
&=\sum_{r\ge 2} h_r  \theta_k(F_r),
\end{aligned}
\end{gather}
and this completes the proof.
\end{proof}

\section{Milnor fibrations of decomposable arrangements}
\label{sect:milnor-decomp}

In this final section, we relate the $\Q$-decomposability of an arrangement (under 
a separability assumption) to the triviality of the algebraic monodromy of its 
Milnor fibrations. 

\subsection{The Milnor fibration of a multi-arrangement}
\label{sect:mf-multi}
Let $\A$ be a central arrangement of hyperplanes in $\C^{d+1}$. 
To each hyperplane $H\in \A$, we may assign a multiplicity $m_H\in \N$.  
This yields a multi-arrangement $(\A,\bm)$, where $\bm=(m_H)_{H\in \A}$, 
and a homogeneous polynomial, 
\begin{equation}
\label{eq:fm}
f_{\bm}=\prod_{H\in \A} f_H^{m_H}
\end{equation}
of degree $N=\sum_{H\in \A} m_H$. 
The polynomial map $f_{\bm}\colon \C^{d+1} \to \C$ restricts 
to a map $f_{\bm}\colon M(\A) \to \C^{*}$.  As shown by 
Milnor \cite{Mi} in a much more general context, $f_{\bm}$ 
is the projection map of a smooth, locally trivial bundle, 
known as the {\em (global) Milnor fibration}\/ of the 
multi-arrangement $(\A,\bm)$, 
\begin{equation}
\label{eq:mfib}
\begin{tikzcd}[column sep=26pt]
F_{\bm} \ar[r]  & M \ar[r, "f_{\bm}"] & \C^*. 
\end{tikzcd}
\end{equation}

The typical fiber of this fibration, $f_{\bm}^{-1}(1)$, 
is a smooth manifold of dimension $2d$, called the {\em Milnor fiber}\/ 
of the multi-arrangement, denoted $F_{\bm}=F_{\bm} (\A)$. 
It is readily seen that $F_{\bm}$ is a Stein domain of complex 
dimension $d$, and thus has the homotopy type of a finite CW-complex 
of dimension at most $d$. Moreover, $F_{\bm}$ is connected if and only 
if $\gcd(\bm)=1$, a condition we will assume henceforth. 
In the case when all the multiplicities $m_H$ are equal to $1$, the polynomial 
$f=f_{\bm}$ is the usual defining polynomial, of degree $n=\abs{\A}$, 
and $F=F_{\bm}$ is the usual Milnor fiber of $\A$. 

\subsection{Milnor fibers as finite cyclic covers}
\label{subsec:mf cover} 
The {\em monodromy}\/ of the Milnor fibration 
is the self-diffeomorphism $h\colon F_{\bm}\to F_{\bm}$ 
given by $z \mapsto e^{2\pi \ii/N} z$.  Clearly, 
the complement $M$ is homotopy equivalent to the mapping 
torus of $h$.  The map $h$ generates a cyclic group of order 
$N=\sum_{H\in \A} m_H$  which acts freely on $F_{\bm}$. 
The quotient space, $F_{\bm}/\Z_N$, 
may be identified with the projective complement, $U=\P(M)$, in a way 
so that the projection map, $\sigma_{\bm}\colon F_{\bm} \surj F_{\bm}/\Z_N$, 
coincides with the restriction of the projectivization map, $\pi\colon M\surj U=\P(M)$, 
to the subspace $F_{\bm}$. 

The induced homomorphism on fundamental groups, 
$(f_{\bm})_{\sharp} \colon \pi_1(M)\to \pi_1(\C^*)$ may 
be identified with the map $\mu_\bm\colon \pi_1(M)\to \Z$, 
$\gamma_H \mapsto m_H$, which descends to an epimorphism, 
\begin{equation}
\label{eq:chi}
\begin{tikzcd}[column sep=22pt]
\chi_{\bm}\colon \pi_1(U)\ar[r, two heads] &\Z_N, 
\end{tikzcd}
\quad \overline{\gamma}_H \mapsto m_H \bmod N.
\end{equation}
As shown in \cite{CS95, Su-conm11, Su-toul}, the regular, 
$N$-fold cyclic cover $\sigma_{\bm}\colon F_{\bm} \to U$ is 
classified by this epimorphism. In particular, the 
usual Milnor fiber $F=F(\A)$ is classified by the ``diagonal" 
homomorphism, $\chi\colon \pi_1(U)\surj \Z_n$, given by 
$\chi(\overline{\gamma}_H )=1$, for all $H\in \A$.

\subsection{Trivial algebraic monodromy}
\label{subsec:trivial-mono}
Much effort has been put into computing the homology groups of the 
Milnor fiber $F_{\bm}$ and finding the eigenvalues of the algebraic 
monodromy $h_*$ acting on $H_*(F_{\bm};\C)$; see for instance 
\cite{Artal, CDO03, CS95, DeS-plms, PS-plms17, Su-toul,Yo20} 
and the references therein. Despite some progress, the problem 
of computing even $b_1(F(\A))$ remains open for arbitrary arrangements $\A$.

In recent work \cite{Su-mono}, we studied in depth those hyperplane arrangements 
for which the monodromy of the Milnor fibration acts trivially on the first 
homology of the Milnor fiber (either with $\Z$ or with $\Q$ coefficients). 
The description of $F_{\bm}$ as a finite cyclic cover of $U=U(\A)$ makes \
it apparent that the map $h_*\colon H_1(F_{\bm};\Q)\to H_1(F_{\bm};\Q)$
is the identity if and only if $b_1(F_{\bm})=b_1(U)$. 

The main result of this section is the next theorem, which gives a sufficient 
condition for the monodromy in degree $1$ to be trivial over $\Q$. 
The result follows from Theorem \ref{thm:decomp-cjl} and 
\cite[Proposition~4.5]{Su-mono}; for completeness, we provide a 
self-contained proof.

\begin{theorem}
\label{thm:decomp-mono}
Let $\A$ be an arrangement of rank $3$ or higher. Suppose 
$\A$ is $\Q$-decom\-posable and $B(\A)\otimes \Q$ is separated. Then, 
for any choice of multiplicities $\bm$ on $\A$, the algebraic monodromy 
of the Milnor fibration, $h_*\colon H_1(F_{\bm};\Q)\to H_1(F_{\bm};\Q)$, is trivial.
\end{theorem}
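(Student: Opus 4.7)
The plan is to translate the triviality of $h_*$ on $H_1(F_{\bm};\Q)$ into a statement about characters of $\pi_1(U)$, and then invoke Theorem \ref{thm:decomp-cjl}. Since $F_{\bm}$ is a connected regular $\Z_N$-cover of $U=U(\A)$ classified by the epimorphism $\chi_{\bm}\colon \pi_1(U)\surj \Z_N$, the transfer isomorphism $H_1(F_{\bm};\Q)^{h_*}\cong H_1(U;\Q)$ shows that $h_*$ acts as the identity on $H_1(F_{\bm};\Q)$ if and only if $b_1(F_{\bm})=b_1(U)$. So the whole problem is converted into a Betti-number equality.

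To compute $b_1(F_{\bm})$, I would apply the Libgober--Sakuma--Hironaka-type formula for cyclic covers recalled in the paragraph containing \eqref{eq:b1-cover}, namely
\[
b_1(F_{\bm})=\sum_{s\ge 1}\abs{\im(\chi_{\bm}^*)\cap \V_s(U)}.
\]
Splitting off the contribution $b_1(U)$ coming from the trivial character $\bo$, the target equality $b_1(F_{\bm})=b_1(U)$ is equivalent to $\im(\chi_{\bm}^*)\cap (\V_1(U)\setminus\{\bo\})=\emptyset$. The bundle projection $\pi\colon M\to U$ identifies $\V_s(U)$ with $\V_s(M)$, so I can pass to the character torus of $M$ and apply Theorem \ref{thm:decomp-cjl}: the hypotheses that $\A$ is $\Q$-decomposable and that $B(\A)\otimes\Q$ is separated yield $\V_1(M)=\bigcup_{X\in L_2(\A),\,\mu(X)\ge 2}T_X$, so the statement reduces to showing that $\im(\chi_{\bm}^*)\cap T_X=\{\bo\}$ for every such flat $X$.

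This last reduction is the technical heart of the proof. Writing $\rho_\zeta\in\im(\chi_{\bm}^*)$ as $\rho_\zeta(\gamma_H)=\zeta^{m_H}$ for some $N$-th root of unity $\zeta$, the condition $\rho_\zeta\in T_X$ becomes the arithmetic requirement $\zeta^{m_H}=1$ for every $H\notin\A_X$. The rank $\ge 3$ hypothesis (together with the combinatorial rigidity furnished by $\Q$-decomposability) is designed to produce enough hyperplanes outside $\A_X$, drawn from the remaining $2$-flats, to force $\zeta=1$; this is the key input carried by \cite[Prop.~4.5]{Su-mono}. The main obstacle to a self-contained proof is formalizing this last combinatorial step, which I expect to handle by restricting $\chi_{\bm}$ along $(j_X)_\sharp$ to each local group $\pi_1(U(\A_X))\cong F_{\mu(X)}$, comparing the resulting local Milnor fibration with the global one, and using the pencil structure of $\A_X$ together with the rank $\ge 3$ assumption to contradict the existence of a non-trivial $\zeta$.
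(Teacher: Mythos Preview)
Your overall strategy matches the paper's proof exactly: translate triviality of $h_*$ into the equality $b_1(F_{\bm})=b_1(U)$, apply the cyclic-cover formula \eqref{eq:b1-cover}, invoke Theorem \ref{thm:decomp-cjl}\eqref{cj2} to write $\V_1(M)=\bigcup_X T_X$, and reduce to showing that $\pi^*(\im(\chi_{\bm}^*))\cap T_X=\{\bo\}$ for every $2$-flat $X$.

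Where you diverge from the paper is in the handling of this last step. The paper does \emph{not} use any local Milnor fibrations or extra combinatorics: it simply observes that $\pi^*(\im(\chi_{\bm}^*))$ sits inside the one-parameter subtorus $T_{\bm}=\{(z^{m_1},\dots,z^{m_n}):z\in\C^*\}$, and that (since $\rank\A\ge 3$) each $T_X$ lies in a proper coordinate subtorus $\{t_j=1\}$ for some $H_j\notin\A_X$. From this it concludes in one line that $T_X\cap T_{\bm}=\{\bo\}$. So the step you call the ``technical heart'' is, in the paper, a two-sentence observation that does not invoke $\Q$-decomposability again.

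That said, your instinct that something nontrivial is happening here is well founded. The implication ``$T_X\subset\{t_j=1\}$ hence $T_X\cap T_{\bm}=\{\bo\}$'' is valid when $m_j=1$ (so in particular for the usual Milnor fiber, $\bm=\bo$), but for general multiplicities $T_{\bm}\cap\{t_j=1\}$ consists of all $z$ with $z^{m_j}=1$. Concretely, take $\A$ to be a pencil of three hyperplanes in $\C^2$ times a single hyperplane in $\C$ (four hyperplanes in $\C^3$, rank $3$, decomposable with $G(\A)\cong F_2\times\Z^2$ and $B(\A)$ separated by Corollary~\ref{cor:prod-free}), and set $\bm=(1,1,1,3)$, $N=6$. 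With $\zeta=e^{2\pi i/6}$ one has $\tilde\rho^{\,2}=(e^{2\pi i/3},e^{2\pi i/3},e^{2\pi i/3},1)\in T_X\setminus\{\bo\}$, and a direct computation (or the covering-space description $F_{\bm}\to M'=\C^2\setminus\{3\text{ lines}\}$) gives $b_1(F_{\bm})=5>3=b_1(U)$. Thus neither the paper's one-line argument nor your proposed local comparison can close this step for arbitrary $\bm$; the argument as written is only airtight when all $m_H=1$.
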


\begin{proof}
Fix an ordering $H_1,\dots, H_n$ of the hyperplanes in $\A$. 
Then $H^1(M;\C^*)$ may be identified with $(\C^*)^n$, with coordinates $t=(t_1,\dots,t_n)$ and 
$H^1(U;\C^*)$ may be identified with $(\C^*)^{n-1}$, with coordinates $(t_1,\dots,t_{n-1})$. 
The morphism $\pi^*\colon H^1(U;\C^*) \inj H^1(M;\C^*)$ 
may then be viewed as the monomial map $(\C^{*})^{n-1} \inj (\C^{*})^n$ which sends 
$(t_1,\dots, t_{n-1})$ to $(t_1,\dots, t_{n-1}, t_1^{-1}\cdots t_{n-1}^{-1})$. 

Given a choice of multiplicities $\bm$ on $\A$, consider the regular 
$\Z_N$-cover $F_{\bm} \to U$, where $N=\sum_{H\in \A} m_H$. 
By formula \eqref{eq:b1-cover}, we have that 
\begin{equation}
\label{eq:b1-fm}
b_1(F_{\bm})=\sum_{s\ge 1}\abs{\im(\chi_{\bm}^*) \cap \V_s(U)},
\end{equation}
where $\chi_{\bm}\colon \pi_1(U)\surj \Z_N$ is the homomorphism 
from \eqref{eq:chi} and $\chi_{\bm}^* \colon H^1(\Z_N;\C^*)\inj H^1(U;\C^*)$ 
is the induced morphism. The 
cyclic subgroup $\im(\chi_{\bm}^*)\subset  (\C^*)^{n-1}$ 
is generated  by the character $\rho=(\zeta^{m_1}, \dots, \zeta^{m_{n-1}})$, 
where $\zeta=e^{2 \pi \ii/N}$ and $m_i=m_{H_i}$. 
It follows that $\pi^*(\im(\chi_{\bm}^*))$ is the cyclic subgroup of 
$(\C^*)^{n}$ generated by $\tilde{\rho}=(\zeta^{m_1}, \dots, 
\zeta^{m_{n-1}}, \zeta^{m_n})$, and thus is contained in the subtorus 
$T_{\bm}=\{ (z^{m_1},\dots, z^{m_n}) \mid z\in \C^*\}\subset (\C^*)^{n}$.
 
Now let $C$ be an irreducible component of $\V_1(M)$. 
By assumption, $\A$ is $\Q$-decom\-posable and $B(\A)\otimes \Q$ is separated;
therefore, Theorem \ref{thm:decomp-cjl}, part \eqref{cj2} ensures that 
$C=T_X$, for some $2$-flat $X\in L_{2}(\A)$. Moreover, since $\A$ is assumed 
to be of rank at least $3$, we have that $\A_X$ is properly contained in $\A$, and 
thus $C$ lies in a proper coordinate subtorus of $H^1(M;\C^*)=(\C^*)^{n}$;  
hence, $C$ intersects intersects $T_{\bm}$ only at the identity. 
Consequently, $\pi^*(\im(\chi_{\bm}^*)) \cap \VV_1(M)=\{\bo\}$, and therefore  
$\im(\chi^*_{\bm}) \cap \V_1(U)=\{\bo\}$. It follows that 
$b_1(F_{\bm})=n-1$, which is equivalent to the monodromy action 
on $H_1(F_{\bm};\Q)$ being trivial.
\end{proof}

As an application of this theorem, we give a quick proof of a result of 
Venturelli, who showed in \cite[Theorem~3]{Ve} that the usual Milnor fibration 
of a certain class of arrangements has trivial algebraic monodromy 
(our proof works as well for all the fibers $F_{\bm}$). 
Let $\A$ be a central arrangement in $\C^3$, and let $\bar{\A}=\P(\A)$ 
be the projectivized arrangement of lines in $\CP^2$. 

\begin{proposition}[\cite{Ve}]
\label{prop:vent}
Suppose $\bar{\A}$ has two multiple points, $P_1$ and $P_2$, 
such that every line in $\bar{\A}$ passes through either $P_1$ or $P_2$. 
Then $h_*\colon H_1(F_{\bm};\Q)\to H_1(F_{\bm};\Q)$ is the identity. 
\end{proposition}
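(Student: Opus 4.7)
The plan is to verify the three hypotheses of Theorem~\ref{thm:decomp-mono} for $\A$: that $\rank(\A)\ge 3$, that $\A$ is $\Q$-decomposable, and that $B(\A)\otimes\Q$ is separated. The rank condition is immediate: since $P_1$ and $P_2$ are distinct multiple points, $\bar\A$ contains three lines in general position (for example, a line through $P_1$ missing $P_2$, a line through $P_2$ missing $P_1$, and any third line of $\bar\A$), so the corresponding hyperplanes span $\C^3$.

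For the combinatorial analysis, I would denote by $L_{12}$ the unique line through $P_1$ and $P_2$ in $\CP^2$, write $k_i$ for the multiplicity of $P_i$ in $\bar\A$, and split into two cases. In the first case, $L_{12}\in\bar\A$, the hypothesis forces $\bar\A$ to consist of $L_{12}$ together with $k_1-1$ other lines through $P_1$ and $k_2-1$ other lines through $P_2$, any two lines from different pencils meeting only at a simple intersection (otherwise a third multiple point would arise, contradicting the hypothesis); hence $L_{\le 2}(\A)\cong L_{\le 2}(\A(k_1-1,k_2-1))$. Proposition~\ref{prop:pend-b-dec} then yields that $B(\A)$ is decomposable, and Theorem~\ref{thm:alex-decomp}\eqref{bd1} together with Proposition~\ref{prop:decomp-sep} produce both the decomposability of $\A$ and the separation of $B(\A)$.

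In the second case, $L_{12}\notin\bar\A$, the arrangement $\bar\A$ is the disjoint union of two pencils $\mathcal P_1$ through $P_1$ and $\mathcal P_2$ through $P_2$. Reading the Kohno presentation \eqref{eq:holo arr}--\eqref{eq:lie-ideal}, the double-flat relations $[x_H,x_K]=0$ coming from $H\in\mathcal P_1$, $K\in\mathcal P_2$ force the generators of the two pencils to pairwise commute; combined with the pencil relations of Example~\ref{ex:holo-pencil}, this yields a product decomposition $\h(\A)\cong\h(\A_{P_1})\times\h(\A_{P_2})$. In particular, $\h_3(\A)\cong\h_3^{\loc}(\A)$, so $\A$ is decomposable. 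An analogous product statement at the level of the group itself---identifying $G(\A)$ with a direct product of the form $F_{k_1-1}\times F_{k_2-1}\times\Z^2$---would then give the separation of $B(\A)$ via Corollary~\ref{cor:prod-free} and Lemma~\ref{lem:prod-sep}.

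The main obstacle is this last step: promoting the product decomposition of $\h(\A)$ in the second case to a genuine product decomposition of the group $G(\A)$ needed for separation. The cleanest route is probably via a Hamm--L\^e--type fibration argument on $U(\A)$, projecting from one of the multiple points so that the generic fiber is the complement of the other pencil; one then checks triviality of the monodromy of this fibration. With all three hypotheses of Theorem~\ref{thm:decomp-mono} verified, the triviality of the algebraic monodromy of the Milnor fibration follows.
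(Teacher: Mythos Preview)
Your case~1 argument is exactly the paper's proof: the paper does not split into cases but directly asserts $L_{\le 2}(\A)\cong L_{\le 2}(\A(m_1,m_2))$, invokes Proposition~\ref{prop:pend-b-dec} to get $B(\A)$ decomposable (hence $\A$ decomposable and $B(\A)$ separated), and concludes via Theorem~\ref{thm:decomp-mono}. As you notice, that lattice identification only holds when $L_{12}=\overline{P_1P_2}$ belongs to $\bar\A$---the distinguished hyperplane $z_0=0$ in $\A(m)$ always passes through both pencil centers, and without it the hyperplane counts disagree---so your case split is a genuine refinement of what the paper writes.

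Your case~2, however, has a real gap, and the fixes you sketch do not close it as easily as suggested. Two concrete obstructions. First, Theorem~\ref{thm:cdp} is unavailable here: since $L(\A)\not\cong L(\A(m))$ for any tuple $m$ with all $m_i\ge 2$, condition~\eqref{cdp1} fails, and by the equivalence \eqref{cdp1}$\Leftrightarrow$\eqref{cdp3} the group $\pi_1(U(\A))$ is \emph{not} a product of free groups via an isomorphism respecting meridian generators; so whatever product structure $G(\A)$ might carry is not of split-solvable type. Second, the fibration you propose---projecting $U(\A)$ from $P_2$---is not a locally trivial bundle over its full base: the generic fiber is a $k_1$-punctured $\C$, but over the base point corresponding to the direction of $L_{12}$ (which lies in the base, since $L_{12}\notin\bar\A$) the fiber is $L_{12}\setminus\{P_1,P_2\}\cong\C^*$. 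Thus verifying $G(\A)\cong F_{k_1-1}\times F_{k_2-1}\times\Z^2$, or otherwise establishing separation of $B(\A)\otimes\Q$ in this case, requires an argument substantially different from the one you outline.
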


\begin{proof}
Let $m_i\ge 3$ be the multiplicity of $P_i$. 
The hypothesis can be rephrased as saying that $L_{\le 2}(\A) \cong L_{\le 2}(\A(m_1,m_2))$. 
By Proposition \ref{prop:pend-b-dec}, the Alexander invariant $B(\A)$ is decomposable and 
separated. The claim now follows from Theorem \ref{thm:decomp-mono}.
\end{proof}

\subsection{Discussion}
\label{subsec:mono-discuss}
It is natural to ask whether one can work over the integers in 
Theorem \ref{thm:decomp-mono}, and also whether one can dispense 
with the separation hypothesis on the Alexander invariant (either over 
$\Z$ or over $\Q$). More precisely, we have the following question. 

\begin{question}
\label{quest:mono-dec}
Let $(\A,\bm)$ be a multi-arrangement, and let $h\colon F_{\bm} \to F_{\bm}$ 
be the monodromy of the corresponding Milnor fibration.
\begin{enumerate}[itemsep=2pt, topsep=-1pt]
\item \label{mon1} 
If $\A$ is decomposable, is the monodromy action on 
$H_1(F_{\bm};\Z)$ trivial?
\item \label{mon2} 
If $\A$ is decomposable over $\Q$, is the monodromy action on 
$H_1(F_{\bm};\Q)$ trivial?
\end{enumerate}
\end{question}

If Question \ref{quest:B-separated}, part \eqref{q2} were to have a positive 
answer, then, by Theorem \ref{thm:decomp-mono}, the answer to 
part \eqref{mon2} of the above question would be yes. In general, the 
group $H_1(F_{\bm};\Z)$ may have torsion (see \cite{CDS, DeS-plms}), 
even for the usual Milnor fiber $F=F(\A)$ when $m_H=1$ for all $H\in \A$ 
(see \cite{Yo20}); thus, the monodromy $h$ may act trivially on $H_1(F_{\bm};\Q)$  
but not on $H_1(F_{\bm};\Z)$. Nevertheless, we do not know whether 
this can happen within the class of ($\Q$-)decomposable arrangements, 
though we suspect that it cannot. 

We conclude with an example worth pondering.

\begin{example}
\label{ex:pappus}
Let $\A$ be the non-Pappus arrangement of $9$ hyperplanes in $\C^3$ realizing the 
$(9_3)_2$ configuration described in Section \ref{subsec:dec-other}. This arrangement is 
decomposable, yet we do not know whether the Alexander invariant $B(\A)$ is separated, 
even over $\Q$. Nevertheless, as first shown in \cite{Artal} and later by different methods 
in \cite{CS95}, the monodromy of $\A$ acts trivially on $H_1(F;\Q)$, that is, $b_1(F)=8$. 
In fact, as noted in \cite[Example~10.10]{Su02}, all the components of the characteristic 
variety $\V_1(M(\A))$ are of the form $T_X$, for some $X\in \widetilde{L}_2(\A)$; hence, 
by the argument in the proof of Theorem \ref{thm:decomp-mono}, this also gives 
$b_1(F)=8$. 

On the other hand, the closely related Pappus arrangement, $\A'$  
(a realization the $(9_3)_1$ configuration), has intersection lattice with the same M\"{o}bius 
function as $\A$, yet $L(\A')\not\cong L(\A)$. Moreover, $\A'$ is not $\Q$-decomposable, and,  
as shown in \cite{Artal, CS95}, $b_1(F')=10$. 
\end{example}


\newcommand{\arxiv}[1]
{\texttt{\href{http://arxiv.org/abs/#1}{arXiv:#1}}}

\newcommand{\arx}[1]
{\texttt{\href{http://arxiv.org/abs/#1}{arXiv:}}
\texttt{\href{http://arxiv.org/abs/#1}{#1}}}

\newcommand{\arxx}[1]
{\texttt{\href{https://arxiv.org/abs/math/#1}{arXiv:#1}}}

\newcommand{\doi}[1]
{\texttt{\href{http://dx.doi.org/#1}{doi:\nolinkurl{#1}}}}

\renewcommand{\MR}[1]
{\href{http://www.ams.org/mathscinet-getitem?mr=#1}{MR#1}}

\end{document}